\newtheorem{lem}{Lemma}[section]
\newtheorem{definition}[lem]{Definition}
\newtheorem{cor}[lem]{Corollary}
\newtheorem{thm}[lem]{Theorem}
\newtheorem*{thmA}{Theorem A}
\newtheorem*{thmB}{Theorem B}
\newtheorem*{thmC}{Theorem C}
\newtheorem*{thmD}{Theorem D}
\newtheorem{prop}[lem]{Proposition}
\theoremstyle{remark}
\newtheorem{rem}[lem]{Remark}
\DeclareMathOperator{\modd}{mod}
\DeclareMathOperator{\GL}{GL}
\newcommand{\tr}{\operatorname*{tr}}
\begin{document}

\title{The Langlands-Kottwitz approach for the modular curve}
\author{Peter Scholze}
\begin{abstract}
We show how the Langlands-Kottwitz method can be used to determine the local factors of the Hasse-Weil zeta-function of the modular curve at places of bad reduction. On the way, we prove a conjecture of Haines and Kottwitz in this special case.
\end{abstract}

\date{\today}
\maketitle
\tableofcontents
\pagebreak

\section{Introduction}

The aim of this paper is to extend the method of Langlands, \cite{Langlands}, and Kottwitz, \cite{KottwitzTO}, to determine the Hasse-Weil zeta function of some moduli schemes of elliptic curves with level-structure, at all places. Fix a prime $p$ and an integer $m\geq 3$ prime to $p$. Let $\mathcal{M}_m/ \mathbb{Z}[\frac 1m]$ be the moduli space of elliptic curves with level-$m$-structure and let $\pi_n: \mathcal{M}_{\Gamma(p^n),m}\longrightarrow \mathcal{M}_m$ be the finite covering by the moduli space of elliptic curves with Drinfeld-level-$p^n$-structure and level-$m$-structure. Inverting $p$, this gives a finite Galois cover $\pi_{n\eta}: \mathcal{M}_{\Gamma(p^n),m}[\frac 1p]\cong \mathcal{M}_{p^nm}\longrightarrow \mathcal{M}_m[\frac 1p]$ with Galois group $\mathrm{GL}_2(\mathbb{Z}/p^n\mathbb{Z})$.

We make use of the concept of semisimple trace, \cite{Rapoport}, cf. also \cite{HainesNgo}, section 3.1. Recall that in the case of a proper smooth variety $X$ over $\mathbb{Q}$ with good reduction at $p$, the local factor of the Hasse-Weil zeta function is given by
\begin{equation}\label{eqngoodred}
\log \zeta_p(X,s) = \sum_{r\geq 1} |\mathfrak{X}(\mathbb{F}_{p^r})| \frac{p^{-rs}}{r}\ ,
\end{equation}
for any proper smooth model $\mathfrak{X}$ over $\mathbb{Z}_{(p)}$ of $X$. This follows from the proper base change theorem for \'{e}tale cohomology and the Lefschetz trace formula.

In general, for the semisimple local factor, $\zeta_p^{\mathrm{ss}}$, and a proper smooth variety $X$ over $\mathbb{Q}$ with proper model $\mathfrak{X}$ over $\mathbb{Z}_{(p)}$, one has
\[
\log \zeta_p^{\mathrm{ss}}(X,s) = \sum_{r\geq 1} \sum_{x\in \mathcal{M}_m(\mathbb{F}_{p^r})} \mathrm{tr}^{\mathrm{ss}}(\Phi_{p^r}|(R\psi\overline{\mathbb{Q}}_{\ell})_x) \frac{p^{-rs}}{r}\ .
\]
Here $\Phi_{p^r}$ is a geometric Frobenius and $R\psi\overline{\mathbb{Q}}_{\ell}$ denotes the complex of nearby cycle sheaves. In the case that $\mathfrak{X}$ is smooth over $\mathbb{Z}_{(p)}$, this gives back \eqref{eqngoodred} since then $\zeta_p^{\mathrm{ss}}=\zeta_p$ and
\[
\mathrm{tr}^{\mathrm{ss}}(\Phi_{p^r}|(R\psi\overline{\mathbb{Q}}_{\ell})_x) = 1\ .
\]

Using the compatibility of the nearby cycles functor $R\psi$ with proper maps, we get in our situation
\[\log \zeta_p^{\mathrm{ss}}(\mathcal{M}_{\Gamma(p^n),m},s) = \sum_{r\geq 1} \sum_{x\in \mathcal{M}_m(\mathbb{F}_{p^r})} \mathrm{tr}^{\mathrm{ss}}(\Phi_{p^r}|(R\psi\mathcal{F}_n)_x) \frac{p^{-rs}}{r}\ ,\]
where $\mathcal{F}_n = \pi_{n\eta\ast} \overline{\mathbb{Q}}_{\ell}$ \footnote{For problems related to noncompactness of $\mathcal{M}_m$, see Theorem \ref{CohoVanish}.}. This essentially reduces the problem to that of computing the semisimple trace of Frobenius on the nearby cycle sheaves.

Our first result is a computation of the semisimple trace of Frobenius on the nearby cycles for certain regular schemes. Let $\mathcal{O}$ be the ring of integers in a local field $K$. Let $X/\mathcal{O}$ be regular and flat of relative dimension 1, with special fibre $X_s$. Let $X_{\eta^{\mathrm{ur}}}$ be the base-change of $X$ to the maximal unramified extension $K^{\mathrm{ur}}$ of $K$, let $X_{\mathcal{O}^{\mathrm{ur}}}$ be the base-change to the ring of integers in $K^{\mathrm{ur}}$ and let $X_{\overline{s}}$ be the geometric special fiber. Then we have $\iota: X_{\overline{s}}\longrightarrow X_{\mathcal{O}^{\mathrm{ur}}}$ and $j: X_{\eta^{\mathrm{ur}}}\longrightarrow X_{\mathcal{O}^{\mathrm{ur}}}$.

\begin{thmA} Assume that $X_s$ is globally the union of regular divisors. Let $x\in X_s(\mathbb{F}_q)$ and let $D_1,...,D_i$ be the divisors passing through $x$. Let $W_1$ be the $i$-dimensional $\overline{\mathbb{Q}}_{\ell}$-vector space with basis given by the $D_t$ and let $W_2$ be the kernel of the map $W_1\longrightarrow\overline{\mathbb{Q}}_{\ell}$ sending all $D_t$ to 1. Then there are canonical isomorphisms
\[(\iota^{\ast}R^kj_{\ast} \overline{\mathbb{Q}}_{\ell})_x\cong\left\{\begin{array}{ll} \overline{\mathbb{Q}}_{\ell} & k=0\\
W_1(-1) & k = 1\\
W_2(-2) & k = 2\\
0 & \mathrm{else}\ .\end{array} \right .\]
\end{thmA}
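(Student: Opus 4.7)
The stalk $(\iota^{\ast} R^k j_{\ast}\overline{\mathbb{Q}}_{\ell})_x$ equals $H^k(\mathrm{Spec}(A[\tfrac{1}{\pi}]),\overline{\mathbb{Q}}_{\ell})$, where $A = \mathcal{O}^{sh}_{X_{\mathcal{O}^{\mathrm{ur}}}, \bar x}$ is the strict henselization at a geometric point above $x$. Since $X/\mathcal{O}$ is regular and flat of relative dimension $1$, $A$ is a two-dimensional regular strictly henselian local ring, in particular a UFD. Writing $f_t \in A$ for a local equation of $D_t$, the regularity of $D_t$ forces $f_t \in \mathfrak{m}_A \setminus \mathfrak{m}_A^2$, so each $Y_t := V(f_t) \subset Z := \mathrm{Spec}(A)$ is a regular one-dimensional closed subscheme; set $Y := Y_1 \cup \cdots \cup Y_i$, the reduced special fibre. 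The excision triangle
\[ R\Gamma_{Y}(Z, \overline{\mathbb{Q}}_\ell) \longrightarrow R\Gamma(Z, \overline{\mathbb{Q}}_\ell) \longrightarrow R\Gamma(\mathrm{Spec}(A[\tfrac{1}{\pi}]), \overline{\mathbb{Q}}_\ell), \]
together with $R\Gamma(Z, \overline{\mathbb{Q}}_\ell) = \overline{\mathbb{Q}}_\ell$ (since $Z$ is strictly henselian local), reduces the problem to computing $R\Gamma_{Y}(Z, \overline{\mathbb{Q}}_\ell)$.

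I would then proceed by induction on $i$ via the Mayer--Vietoris triangle for closed supports
\[ R\Gamma_{Y' \cap Y_i}(Z) \longrightarrow R\Gamma_{Y'}(Z) \oplus R\Gamma_{Y_i}(Z) \longrightarrow R\Gamma_Y(Z), \]
where $Y' = Y_1 \cup \cdots \cup Y_{i-1}$. Because $A$ is a two-dimensional regular local UFD, the distinct irreducibles $f_s, f_t$ form a regular sequence, so $V(f_s, f_t)$ is zero-dimensional and is supported at the unique closed point $\{x\}$; consequently $Y' \cap Y_i$ is set-theoretically $\{x\}$. Absolute cohomological purity (Gabber) then yields
\[ R\Gamma_{Y_t}(Z, \overline{\mathbb{Q}}_\ell) \cong \overline{\mathbb{Q}}_\ell(-1)[-2], \qquad R\Gamma_{\{x\}}(Z, \overline{\mathbb{Q}}_\ell) \cong \overline{\mathbb{Q}}_\ell(-2)[-4]. \]
Unraveling the long exact sequence inductively shows that $R\Gamma_Y(Z, \overline{\mathbb{Q}}_\ell)$ is concentrated in degrees $2$ and $3$, with $H^2 \cong \overline{\mathbb{Q}}_\ell(-1)^i$ (one summand per component $Y_t$) and $H^3 \cong \overline{\mathbb{Q}}_\ell(-2)^{i-1}$ (one new summand added at each inductive step from $H^4_{\{x\}}(Z)$ via the connecting homomorphism). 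Feeding this back into the excision triangle yields the claimed dimensions and Tate twists.

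For the canonical identifications, $W_1(-1) \cong H^1(\mathrm{Spec}(A[\tfrac{1}{\pi}]), \overline{\mathbb{Q}}_\ell)$ matches the basis vector $D_t$ with the Gysin class of $Y_t$ in $H^2_Y(Z)(1)$; equivalently, via Kummer theory, it corresponds to the class of the unit $f_t$ on $\mathrm{Spec}(A[\tfrac{1}{\pi}])$. The identification $W_2(-2) \cong H^2(\mathrm{Spec}(A[\tfrac{1}{\pi}]),\overline{\mathbb{Q}}_\ell)$ comes from tracking the Mayer--Vietoris induction: each of the $(i-1)$ classes in $H^3_Y$ is produced by the connecting homomorphism from the fundamental class at $\{x\}$, and these naturally correspond to the $(i-1)$ linearly independent pairwise differences spanning $\ker(W_1 \to \overline{\mathbb{Q}}_\ell)$. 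The main obstacle, in my view, is not the dimension count (a routine long-exact-sequence argument) but verifying that the identification with $W_2$ is genuinely canonical and symmetric under permutations of the $D_t$; this uses the naturality of absolute purity and Mayer--Vietoris together with careful sign and ordering conventions in the iterated induction.
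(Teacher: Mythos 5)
Your approach is correct but takes a different route than the paper's. You work locally, identifying the stalk with $H^k(\mathrm{Spec}\,A[\tfrac1\pi],\overline{\mathbb{Q}}_\ell)$ for the strict henselization $A$, then deploy the excision triangle (closed/open decomposition) followed by an iterated Mayer--Vietoris on closed supports, feeding in absolute purity for each regular $Y_t$ and for the closed point. The paper instead builds a \emph{single} four-term exact sequence of complexes of injective sheaves on $X_{\mathcal{O}^{\mathrm{ur}}}$, namely
\[ 0\to W_2\otimes b_*b^!I^\bullet \to \textstyle\bigoplus_t b_{t*}b_t^!I^\bullet \to \iota^*I^\bullet \to \iota^*j_*j^*I^\bullet \to 0, \]
proves its exactness by decomposing injective sheaves along a stratification (following Rapoport--Zink), applies the same purity theorem to identify the first three terms on stalks, and then observes the resulting spectral sequence degenerates because the nonzero entries carry distinct Tate twists. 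Your decomposition buys a more elementary dimension count at the price of an ordering-dependent Mayer--Vietoris, so the identification $H^2(U)\cong W_2(-2)$ only emerges after reconciling the $i-1$ connecting-map classes with $\ker(W_1\to\overline{\mathbb{Q}}_\ell)$ in a permutation-invariant way --- a point you rightly flag as the real work. The paper's complex has $W_2$ built in as a coefficient from the start, so canonicity (and equivariance under the relevant group actions, needed later) is automatic; the trade is that one must verify exactness of the complex, which the paper does by an explicit injective-sheaf decomposition rather than by a purely formal triangle argument. Both hinge on the same absolute purity input (Thomason in the paper, Gabber in your write-up --- the same statement in this generality).
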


The main ingredient in the proof of this lemma is Thomason's purity theorem, \cite{Thomason}, a special case of Grothendieck's purity conjecture. Together with some general remarks made in Section \ref{RemarksSemisimple} this is enough to compute the semisimple trace of Frobenius. It is known that the assumptions of this lemma are fulfilled in the case of interest to us, as recalled in Section \ref{ModuliSpace}.

To state our second main result, we introduce some notation. For any integer $n\geq 0$, we define a function $\phi_{p,n}: \GL_2(\mathbb{Q}_{p^r})\longrightarrow \mathbb{Q}$. If $n=0$, it is simply $\frac{1}{p^r-1}$ times the characteristic function of the set
\[
\GL_2(\mathbb{Z}_{p^r})\left(\begin{array}{cc} p & 0 \\ 0 & 1 \end{array}\right)\GL_2(\mathbb{Z}_{p^r})\ .
\]
If $n>0$, we need further notation to state the definition. For $g\in \GL_2(\mathbb{Q}_{p^r})$, let $k(g)$ be the minimal integer $k$ such that $p^kg$ has integral entries. Further, let $\ell(g)=v_p(1-\tr g + \det g)$.\footnote{For a more conceptual interpretation of these numbers, see Section 14.} Then
\begin{itemize}
\item $\phi_{p,n}(g)=0$ except if $v_p(\det g)=1$, $v_p(\tr g)\geq 0$ and $k(g)\leq n-1$. Assume now that $g$ has these properties.
\item $\phi_{p,n}(g)=-1-p^r$ if $v_p(\tr g)\geq 1$,
\item $\phi_{p,n}(g)=1-p^{2\ell(g)r}$ if $v_p(\tr g)=0$ and $\ell(g)<n-k(g)$,
\item $\phi_{p,n}(g)=1+p^{(2(n-k(g))-1)r}$ if $v_p(\tr g)=0$ and $\ell(g)\geq n-k(g)$.
\end{itemize}

To any point $x\in \mathcal{M}_m(\mathbb{F}_{p^r})$, there is an associated element $\delta=\delta(x)\in \GL_2(\mathbb{Q}_{p^r})$, well-defined up to $\sigma$-conjugation. Its construction is based on crystalline cohomology and is recalled in Section 5.

Finally, let
\[
\Gamma(p^n)_{\mathbb{Q}_{p^r}} = \ker(\GL_n(\mathbb{Z}_{p^r})\longrightarrow \GL_n(\mathbb{Z}_{p^r}/p^n\mathbb{Z}_{p^r}) )\ .
\]
We normalize the Haar measure on $\GL_2(\mathbb{Q}_{p^r})$ by giving $\GL_2(\mathbb{Z}_{p^r})$ volume $p^r-1$.

\begin{thmB} \emph{(i)} The function $\phi_{p,n}$ lies in the center of the Hecke algebra of compactly supported functions on $\GL_2(\mathbb{Q}_{p^r})$ that are biinvariant under $\Gamma(p^n)_{\mathbb{Q}_{p^r}}$.

\emph{(ii)} For any point $x\in \mathcal{M}_m(\mathbb{F}_{p^r})$ with associated $\delta=\delta(x)$,
\[
\mathrm{tr}^{\mathrm{ss}}(\Phi_{p^r} | (R\psi\mathcal{F}_n)_x) = TO_{\delta\sigma}(\phi_{p,n})(TO_{\delta\sigma}(\phi_{p,0}))^{-1}\ .
\]

\emph{(iii)} For any irreducible admissible smooth representation $\pi$ of $\GL_2(\mathbb{Q}_{p^r})$ with
\[
\pi^{\Gamma(p^n)_{\mathbb{Q}_{p^r}}}\neq 0\ ,
\]the function $\phi_{p,n}$ acts through the scalar
\[
p^{\frac 12 r} \mathrm{tr}^{\mathrm{ss}}(\Phi_{p^r} | \sigma_{\pi})
\]
on $\pi^{\Gamma(p^n)_{\mathbb{Q}_{p^r}}}$, where $\sigma_{\pi}$ is the representation of the Weil-Deligne group of $\mathbb{Q}_{p^r}$ associated to $\pi$ by the Local Langlands Correspondence.
\end{thmB}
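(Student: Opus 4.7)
The three parts of Theorem B are tightly interrelated: part (iii) characterises $\phi_{p,n}$ by its action on irreducible admissible representations and so forces (i), while part (ii) is the geometric identity for which Theorem A has been tailored. I would organise the argument in three stages and treat the parts in the order (ii), (iii), (i).

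For part (ii), I would use that $\pi_n$ is finite, hence proper, so $R\psi\mathcal{F}_n \cong \pi_{n\ast} R\psi \overline{\mathbb{Q}}_\ell$, and therefore the semisimple trace at $x \in \mathcal{M}_m(\mathbb{F}_{p^r})$ becomes a sum of local contributions at the $\mathbb{F}_{p^r}$-points $y \in \pi_n^{-1}(x)$. By the Drinfeld-level-structure theory recalled in Section \ref{ModuliSpace}, the scheme $\mathcal{M}_{\Gamma(p^n),m}$ is regular with special fibre globally a union of regular divisors, so Theorem A computes each local contribution from the number of components meeting at $y$ and their multiplicities. The next step is to use Dieudonn\'e theory to identify $\pi_n^{-1}(x)(\mathbb{F}_{p^r})$, together with the combinatorial data of components through each $y$, with a set of lattice chains attached to the $\sigma$-conjugacy class of $\delta = \delta(x)$, and to rewrite the resulting sum as the twisted orbital integral $TO_{\delta\sigma}(\phi_{p,n})$. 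The piecewise definition of $\phi_{p,n}$, with cases distinguished by $v_p(\tr g)$ and the comparison of $\ell(g)$ with $n-k(g)$, should match exactly the stratification of $\pi_n^{-1}(x)$ and the values produced by Theorem A, while the normalisation factor $TO_{\delta\sigma}(\phi_{p,0})^{-1}$ absorbs both the automorphism group at $x$ and the unusual measure convention $\mathrm{vol}(\GL_2(\mathbb{Z}_{p^r})) = p^r - 1$.

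For part (iii), I would proceed by a case analysis on the type of irreducible admissible $\pi$ with $\pi^{\Gamma(p^n)_{\mathbb{Q}_{p^r}}} \neq 0$: unramified principal series, ramified principal series, twists of Steinberg, and supercuspidals. For each case the right-hand side $p^{r/2} \mathrm{tr}^{\mathrm{ss}}(\Phi_{p^r} | \sigma_\pi)$ can be read off the local Langlands parameter, while the left-hand side reduces, via the $\Gamma(p^n)_{\mathbb{Q}_{p^r}}$-biinvariance and the explicit support condition on $\phi_{p,n}$, to a finite integral against matrix coefficients indexed by the invariants $k(g)$ and $\ell(g)$. The unramified case is a Satake computation with $\phi_{p,0}$, and the remaining cases follow from a uniform description by the conductor of $\pi$. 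Part (i) then follows from (iii) by a Schur/Jacobson-density argument: an element of the Hecke algebra $\mathcal{H}(\GL_2(\mathbb{Q}_{p^r}), \Gamma(p^n)_{\mathbb{Q}_{p^r}})$ that acts by a scalar on every irreducible module is central, since irreducible modules separate the algebra.

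The main difficulty is Stage~1: one must show, uniformly over ordinary and supersingular $x$ and over all $n$, that the count of Drinfeld level-$p^n$ structures sitting on each stratum of $\pi_n^{-1}(x)$, weighted by the $W_1$/$W_2$ multiplicities from Theorem A, reproduces the numerical values $-1-p^r$, $1-p^{2\ell(g)r}$ and $1+p^{(2(n-k(g))-1)r}$ of $\phi_{p,n}$, and that this discrete sum equals the twisted orbital integral on the nose. A particularly subtle point is that one is working with the semisimple-trace refinement rather than the ordinary alternating sum of Frobenius traces, so the contributions from $W_1(-1)$ and $W_2(-2)$ must be read off with the correct semisimplification. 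Once Stage~1 is pinned down, Stages~2 and~3 are considerably more formal.
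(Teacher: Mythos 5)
Your plan diverges from the paper's argument in a way that matters, and your Stage~1 as described is unlikely to go through.

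The key structural difference is that the paper \emph{never} attempts the direct geometric identification you propose in Stage~1, namely matching $\pi_n^{-1}(x)(\mathbb{F}_{p^r})$ with lattice chains and reading off $TO_{\delta\sigma}(\phi_{p,n})$ from a point count. Instead the argument splits cleanly into two halves that never touch the orbital integral geometrically. First, the nearby-cycles computation (Theorem A plus Theorem \ref{SpecialFiber}) is used in Corollary \ref{CalcSemisimpleTrace} to express $\mathrm{tr}^{\mathrm{ss}}(\Phi_{p^r}h\,|\,(R\psi\mathcal{F}_n)_x)$ purely as a representation-theoretic quantity $c_r(N\delta,h)$ — traces of $h$ on inductions from the Borel of $\GL_2(\mathbb{Z}/p^n\mathbb{Z})$ — depending only on $\gamma=N\delta$ (ordinary case via $V_n$, supersingular case via $\mathrm{St}$). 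Second, and entirely separately, Theorem \ref{MainTheorem} proves by local harmonic analysis (explicit principal-series characters, Fourier inversion on $T(\mathbb{Q}_p)$, the twisted Weyl integration formula, Kazhdan density) that $TO_{\delta\sigma}(\phi_p\ast h') = TO_{\delta\sigma}(\phi_{p,0})\,c_r(\gamma,h)$. No Dieudonn\'e-module-to-lattice-chain dictionary appears, and in fact it is not clear one could weight a naive count of Drinfeld level structures by the $W_1/W_2$ multiplicities and land on the twisted orbital integral for every $\delta$: the piecewise values $-1-p^r$, $1-p^{2\ell r}$, $1+p^{(2(n-k)-1)r}$ are tied to the building of $\mathrm{PGL}_2$ (Section 14), not to the stratification of $\pi_n^{-1}(x)$. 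Also, $TO_{\delta\sigma}(\phi_{p,0})^{-1}$ does not ``absorb the automorphism group at $x$''; it is just the volume factor appearing on one side of the factorization, with no point-counting content.

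The second structural difference is the logical order. The paper \emph{defines} an element $\phi_p$ of the Bernstein center (Lemma \ref{FunctionExists}) by prescribing the eigenvalue $p^{r/2}\mathrm{tr}^{\mathrm{ss}}(\Phi_{p^r}|\sigma_\Pi)$ on every irreducible $\Pi$; regularity on the Bernstein variety is what needs checking, and is easy because the semisimple trace is insensitive to replacing a 1-dimensional twist by the corresponding Steinberg twist. For $\phi_p\ast e_{\Gamma(p^n)}$, parts (i) and (iii) then hold by construction, and part (ii) is Corollary \ref{corB}. The explicit $\phi_{p,n}$ is only reconciled with $\phi_p\ast e_{\Gamma(p^n)}$ in Section 14, by comparing orbital integrals (Proposition \ref{CompOrbitalIntegralPhi}, a combinatorial count over vertices of the building) and by proving centrality of $\phi_{p,n}$ via a one-parameter deformation $\phi_{p,n,t}$, chosen precisely to avoid the convergence/support issues that a direct conjugation-invariance check would hit. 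Your proposed route — work forward from the explicit $\phi_{p,n}$, prove (iii) by an exhaustive case analysis on $\pi$, deduce (i) by density — is not circular and the density step is fine in spirit (irreducible modules separate $\mathcal{H}(G,K)$ and the Bernstein description of the center is what makes the converse of Schur go through), but your Stage~2 would involve computing $\pi(\phi_{p,n})$ on principal series, Steinberg twists and supercuspidals by hand, which is precisely the labor the Bernstein-center definition is designed to bypass. In short: (i) and (iii) are built in by the abstract construction; the hard content of (ii) is a harmonic-analytic identity, not a lattice count; and the explicit formula is a consistency check done last, not the starting point.
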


Part (ii) furnishes an explicit formula for the semisimple trace of Frobenius on the nearby cycles. Also note that the description of the Bernstein center implies that part (i) and (iii) uniquely characterize the function $\phi_{p,n}$. In fact, we will use this as the definition and then verify that it agrees with the explicit function only at the end, in Section 14.

This proves a conjecture of Haines and Kottwitz in the special case at hand. The conjecture states roughly that the semisimple trace of a power of Frobenius on the $\ell$-adic cohomology of a Shimura variety can be written as a sum of products of a volume factor, an orbital integral away from $p$ and a twisted orbital integral of a function in the center of a certain Hecke algebra. This is provided by Corollary \ref{LKNew} in our case, upon summing over all isogeny classes.

In order to proceed further, one has to relate the twisted orbital integrals to usual orbital integrals. To accomplish this, we prove a base-change identity for central functions.

Let
\[
\Gamma(p^n)_{\mathbb{Q}_p} = \ker(\GL_n(\mathbb{Z}_p)\longrightarrow \GL_n(\mathbb{Z}/p^n\mathbb{Z}) )\ .
\]
Further, for $G=\GL_n(\mathbb{Q}_p)$ or $G=\GL_n(\mathbb{Q}_{p^r})$, let $\mathcal{Z}(G)$ be the Bernstein center of $G$, see Section 2. For any compact open subgroup $K$, we denote by $e_K$ its associated idempotent.

\begin{thmC} Assume
\[
f\in \mathcal{Z}(\mathrm{GL}_2(\mathbb{Q}_p))\ ,\ \phi\in \mathcal{Z}(\mathrm{GL}_2(\mathbb{Q}_{p^r}))
\]
are given such that for every tempered irreducible smooth representation $\pi$ of $\mathrm{GL}_2(\mathbb{Q}_p)$ with base-change lift $\Pi$, the scalars $c_{f,\pi}$ resp. $c_{\phi,\Pi}$ through which $f$ resp. $\phi$ act on $\pi$ resp. $\Pi$, agree: $c_{f,\pi} = c_{\phi,\Pi}$.

Assume that $h\in C_c^{\infty}(\mathrm{GL}_2(\mathbb{Q}_p))$ and $h^{\prime}\in C_c^{\infty}(\mathrm{GL}_2(\mathbb{Q}_{p^r}))$ are such that the twisted orbital integrals of $h^{\prime}$ match with the orbital integrals of $h$, cf. Definition \ref{DefAssoc}. Then also $f\ast h$ and $\phi\ast h^{\prime}$ have matching (twisted) orbital integrals.

Furthermore, $e_{\Gamma(p^n)_{\mathbb{Q}_p}}$ and $e_{\Gamma(p^n)_{\mathbb{Q}_{p^r}}}$ have matching (twisted) orbital integrals.
\end{thmC}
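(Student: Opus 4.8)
The plan is to reduce everything to a spectral statement and then exploit that central functions act by scalars.

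First I would record the spectral reformulation of Definition \ref{DefAssoc}, which is the base-change analogue of the fact that regular semisimple orbital integrals detect precisely the tempered characters. Fix, for every $\sigma$-stable irreducible smooth representation $\Pi$ of $\GL_2(\mathbb{Q}_{p^r})$, the Whittaker-normalized intertwiner $A_\Pi\colon \Pi\xrightarrow{\ \sim\ }\Pi\circ\sigma$; for a tempered irreducible $\pi$ of $\GL_2(\mathbb{Q}_p)$ let $\Pi$ be its base-change lift, which is again tempered and $\sigma$-stable. Then $h$ and $h'$ have matching (twisted) orbital integrals if and only if
\[
\mathrm{tr}(\pi(h)) = \mathrm{tr}(\Pi(h')\circ A_\Pi)
\]
for every such pair $(\pi,\Pi)$. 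The ``only if'' direction is the (twisted) Weyl integration formula together with the Shintani character identity $\Theta_\Pi^{A_\Pi}(\delta)=\Theta_\pi(N\delta)$; the ``if'' direction uses linear independence and density of tempered (twisted) characters --- the trace Paley--Wiener theorem of Bernstein--Deligne--Kazhdan and its twisted analogue --- together with the surjectivity of the norm map onto $\sigma$-stable tempered representations. This is standard $\GL_n$ base-change theory; for $\GL_2$ it may also be extracted from the work of Labesse or Rogawski.

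Granting this, the first assertion is formal. Let $\pi$ be tempered irreducible with base-change lift $\Pi$. Since $f$ and $\phi$ are central and $\pi,\Pi$ are irreducible, $\pi(f)=c_{f,\pi}\cdot\mathrm{id}$ and $\Pi(\phi)=c_{\phi,\Pi}\cdot\mathrm{id}$; in particular $\Pi(\phi)$ is a scalar operator, hence commutes with $A_\Pi$. Therefore
\[
\mathrm{tr}(\pi(f\ast h))=c_{f,\pi}\,\mathrm{tr}(\pi(h)),\qquad \mathrm{tr}(\Pi(\phi\ast h')\circ A_\Pi)=c_{\phi,\Pi}\,\mathrm{tr}(\Pi(h')\circ A_\Pi).
\]
By hypothesis $c_{f,\pi}=c_{\phi,\Pi}$, and since $h,h'$ are associated the right-hand factors agree by the reformulation; hence the two displayed traces coincide for all tempered $\pi$, and the ``if'' direction shows $f\ast h$ and $\phi\ast h'$ are associated. (Note $f\ast h$ and $\phi\ast h'$ are again compactly supported, so the statement is meaningful even though $f,\phi$ themselves need not be.)

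For the congruence idempotents I would argue directly by a lattice count; this is essentially the base-change fundamental lemma for the unit element (Kottwitz), refined by the level-$p^n$ condition, and it suffices to treat $\gamma=N\delta$ regular semisimple. Both sides vanish unless $v_p(\det\delta)=0$, i.e.\ $\delta\sigma$ admits a stable $\mathbb{Z}_{p^r}$-lattice, in which case $\delta\sigma$ is ``\'etale'' and, by Galois descent, $V:=(\mathbb{Q}_{p^r}^2)^{\delta\sigma}$ is a $2$-dimensional $\mathbb{Q}_p$-space on which $\gamma=(\delta\sigma)^r$ acts $\mathbb{Q}_p$-linearly, with $G_{\delta\sigma}(\mathbb{Q}_p)\cong G_\gamma(\mathbb{Q}_p)$; the assignments $L'\mapsto(L')^{\delta\sigma}$ and $L\mapsto L\otimes_{\mathbb{Z}_p}\mathbb{Z}_{p^r}$ give a $G_{\delta\sigma}$-equivariant bijection between $\delta\sigma$-stable $\mathbb{Z}_{p^r}$-lattices in $\mathbb{Q}_{p^r}^2$ and $\gamma$-stable $\mathbb{Z}_p$-lattices in $V$. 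Unravelling $\mathbf{1}_{\Gamma(p^n)_{\mathbb{Q}_{p^r}}}(g^{-1}\delta\sigma(g))\neq 0$: it forces $L':=g\mathbb{Z}_{p^r}^2$ to be $\delta\sigma$-stable and $g^{-1}\delta\sigma(g)\equiv 1\bmod p^n$; transporting $\delta\sigma$ to $(\mathbb{Z}_{p^r}/p^n)^2$ via $g$, the latter says this $\sigma$-semilinear operator is the standard one, so its $r$-th power $\overline{g^{-1}\gamma g}$ is trivial --- i.e.\ $\gamma$ acts trivially on $L/p^nL$ --- and, given this, the admissible $g$ in a fixed $\GL_2(\mathbb{Z}_{p^r})$-coset form a set of cardinality $|\GL_2(\mathbb{Z}/p^n\mathbb{Z})|$, by Lang's theorem over $\mathbb{Z}_{p^r}/p^n\mathbb{Z}$. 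On the $\GL_2(\mathbb{Q}_p)$-side, $\mathbf{1}_{\Gamma(p^n)_{\mathbb{Q}_p}}(g^{-1}\gamma g)\neq 0$ likewise requires $\gamma L=L$ and $\gamma$ trivial on $L/p^nL$ for $L=g\mathbb{Z}_p^2$, again with $|\GL_2(\mathbb{Z}/p^n\mathbb{Z})|$ admissible $g$ per coset. With the prescribed volume normalizations, the two (twisted) orbital integrals become the same weighted sum over corresponding lattices, so $O_{N\delta}(e_{\Gamma(p^n)_{\mathbb{Q}_p}})=TO_{\delta\sigma}(e_{\Gamma(p^n)_{\mathbb{Q}_{p^r}}})$, with both sides vanishing on non-norms. (Alternatively one can deduce this from the reformulation by checking $\dim\pi^{\Gamma(p^n)_{\mathbb{Q}_p}}=\mathrm{tr}(A_\Pi\mid\Pi^{\Gamma(p^n)_{\mathbb{Q}_{p^r}}})$ for tempered $\pi$ via Shintani descent, but the lattice count is more transparent.)

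The main obstacle is the first paragraph: installing the precise dictionary between matching of (twisted) orbital integrals and matching of tempered (twisted) characters, with the correct Whittaker normalization of the $A_\Pi$ and the matching Shintani identity. Once that is in place, the first assertion is purely formal --- central functions act by scalars, which can be pulled out of the trace and commuted past $A_\Pi$ --- and the statement about $e_{\Gamma(p^n)}$ is the classical base-change fundamental lemma together with the bookkeeping of the level-$p^n$ congruence via Lang's theorem.
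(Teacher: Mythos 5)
Your first assertion is proved by the same central idea as the paper: since $f$ and $\phi$ are central, $\pi(f)$ and $\Pi(\phi)$ are scalars that pull out of the trace and commute past the intertwiner, so the Shintani character identity for $(h,h')$ propagates to $(f\ast h,\phi\ast h')$. The difference is in the reverse direction. You posit a full two-sided equivalence between matching of (twisted) orbital integrals and matching of tempered (twisted) characters, and the ``if'' direction of that equivalence is where you lean on a twisted density/Paley--Wiener ingredient. The paper avoids a twisted density theorem altogether: it invokes the existence (from \cite{LanglandsBCGL2}, Prop.~6.2) of a function $f'\in C_c^\infty(\mathrm{GL}_2(\mathbb{Q}_p))$ whose orbital integrals match the twisted orbital integrals of $\phi\ast h'$; the problem then becomes a comparison of $f\ast h$ and $f'$ on the $\mathbb{Q}_p$-side alone, to which the ordinary Kazhdan density theorem applies to give vanishing of regular semisimple orbital integrals, and Clozel's descent (Prop.~7.2 of \cite{Clozel}) extends this to all semisimple elements. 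This last step matters: Definition~\ref{DefAssoc} requires matching at all semisimple $\gamma$, not just regular ones, and you have not addressed how your ``if'' direction covers that; nor have you addressed the sign in Definition~\ref{DefAssoc} for central $N\delta$ not coming from a central $\delta$.

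For the idempotents, your proposed lattice count is a genuinely different route from the paper's. The paper proves the character identity $\mathrm{tr}(e_{\Gamma(p^n)_{\mathbb{Q}_p}}|\pi)=\mathrm{tr}((e_{\Gamma(p^n)_{\mathbb{Q}_{p^r}}},\sigma)|\Pi)$ directly, by applying Corollary~\ref{BCUnit} (a convolution identity which encapsulates the finite-group norm-map count of Proposition~\ref{SizeTwistedvsNormal}, itself a Lang's-lemma argument) with $f$ the locally integrable character of $\pi$ on $\mathrm{GL}_2(\mathbb{Z}_p)$, $\delta=1$; it then reruns the pivot argument via an auxiliary $f'$. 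Your count is in the same spirit (Lang's theorem is the engine in both) but works on the level of lattices/orbital integrals rather than characters. If you pursue it, be aware that (a) on the twisted side $\Gamma(p^n)_{\mathbb{Q}_{p^r}}$ is not stable under $\sigma$-twisted conjugation by $\mathrm{GL}_2(\mathbb{Z}_{p^r})$, so the admissible $g$ in a fixed coset are cut out by $k\equiv k^\sigma\pmod{p^n}$ rather than forming a $\Gamma(p^n)$-coset — which is exactly where your ``Lang's theorem over $\mathbb{Z}_{p^r}/p^n$'' enters, and this should be spelled out; (b) you still need to identify $G_{\delta\sigma}(\mathbb{Q}_p)$ with $G_\gamma(\mathbb{Q}_p)$ compatibly with the Haar measures on both sides; (c) you must extend from regular semisimple $\gamma$ to all semisimple $\gamma$, including the central case and its sign.

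In summary: the plan is viable and the core idea of the first part coincides with the paper's, but you rely on a heavier spectral ingredient (twisted Paley--Wiener/density) which the paper replaces with a pivot through an associated function and untwisted Kazhdan density; you have a gap in going from regular semisimple to all semisimple (Clozel's descent); and the lattice-count derivation of the last assertion, while a legitimate alternative to the paper's character-identity route, needs the twisted-coset count and the measure/sign bookkeeping to be carried out carefully.
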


This generalizes the corresponding result for a hyperspecial maximal compact subgroup, known as the base-change fundamental lemma. Versions of this result for general groups and parahoric subgroups have recently been obtained by Haines, \cite{HainesBC}.

Together with the Arthur-Selberg Trace Formula and an analysis of the contribution of the `points at infinity', Theorem B and Theorem C imply the following theorem. Recall that there is a smooth projective curve $\overline{\mathcal{M}}_m$ over $\mathbb{Z}[\frac 1m]$ containing $\mathcal{M}_m$ as a fiberwise open dense subset.

\begin{thmD} Assume that $m$ is the product of two coprime integers, both at least $3$. Then the Hasse-Weil zeta-function of $\overline{\mathcal{M}}_{m}$ is given by
\[
\zeta(\overline{\mathcal{M}}_{m},s) = \prod_{\pi\in \Pi_{\mathrm{disc}}(\mathrm{GL}_2(\mathbb{A}),1)} L(\pi,s-\tfrac 12)^{\frac 12 m(\pi) \chi(\pi_{\infty}) \dim \pi_f^{K_m}}\ ,
\]
where $\Pi_{\mathrm{disc}}(\mathrm{GL}_2(\mathbb{A}),1)$ is the set of automorphic representations
\[
\pi=\pi_f \otimes \pi_{\infty}
\]
of $\mathrm{GL}_2(\mathbb{A})$ that occur discretely in $L^2(\mathrm{GL}_2(\mathbb{Q})\mathbb{R}^{\times} \backslash \mathrm{GL}_2(\mathbb{A}))$ such that $\pi_{\infty}$ has trivial central and infinitesimal character. Furthermore, $m(\pi)$ is the multiplicity of $\pi$ inside $L^2(\mathrm{GL}_2(\mathbb{Q})\mathbb{R}^{\times} \backslash \mathrm{GL}_2(\mathbb{A}))$, $\chi(\pi_{\infty})=2$ if $\pi_{\infty}$ is a character and $\chi(\pi_{\infty})=-2$ otherwise, and
\[
K_m = \{g\in \mathrm{GL}_2(\hat{\mathbb{Z}}) \mid g\equiv 1\modd m\}\ .
\]
\end{thmD}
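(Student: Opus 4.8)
The plan is to deduce Theorem D from Theorems B and C by running the Langlands--Kottwitz method for the modular curve, following the strategy of \cite{Langlands} and \cite{KottwitzTO} but now at a place of bad reduction. First I would use the reduction recorded in the introduction: after applying the compatibility of $R\psi$ with the proper map $\pi_n$ and the smooth compactification $\overline{\mathcal{M}}_m$, the quantity $\log\zeta_p^{\mathrm{ss}}(\mathcal{M}_{\Gamma(p^n),m},s)$ is a sum over $x\in\mathcal{M}_m(\mathbb{F}_{p^r})$ of the semisimple trace of Frobenius on $(R\psi\mathcal{F}_n)_x$, weighted by $p^{-rs}/r$; Theorem B(ii) rewrites each local term as $TO_{\delta(x)\sigma}(\phi_{p,n})\cdot TO_{\delta(x)\sigma}(\phi_{p,0})^{-1}$. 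Grouping the points $x$ into isogeny classes — parametrized as usual by elliptic-curve-with-isogeny data, i.e. by a $\gamma_0\in\mathrm{GL}_2(\mathbb{Q})$ up to conjugacy together with compatible local data $(\gamma_\ell)_{\ell\ne p}$ and $\delta$ at $p$ — the Kottwitz counting formula (the case at hand of the general form of \cite{KottwitzTO}, here genuinely needing bad-reduction input) expresses
\[
\sum_{x\in\mathcal{M}_m(\mathbb{F}_{p^r})}\mathrm{tr}^{\mathrm{ss}}(\Phi_{p^r}\mid(R\psi\mathcal{F}_n)_x)
= \sum_{(\gamma_0;(\gamma_\ell);\delta)} c(\gamma_0)\, O_{\gamma}(\mathbf{1}_{K^p})\, TO_{\delta\sigma}(\phi_{p,n}')
\]
with $\phi_{p,n}' = \phi_{p,n}/\phi_{p,0}$ suitably interpreted via Theorem B(ii), $c(\gamma_0)$ a volume/class-number factor, and $\mathbf{1}_{K^p}$ the unit in the prime-to-$p$ Hecke algebra cut out by $K_m$; this is exactly \emph{Corollary \ref{LKNew}} referenced in the text, so I would invoke it rather than reprove it. This identifies the local factor with the geometric side of a trace-formula expression.

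The second step is base change. Theorem C, applied with $f=e_{\Gamma(p^n)_{\mathbb{Q}_p}}$ versus $\phi=e_{\Gamma(p^n)_{\mathbb{Q}_{p^r}}}$ and more importantly with the central functions coming from $\phi_{p,n}$ (which by Theorem B(i) lie in the relevant Hecke center, and whose spectral action is pinned down by Theorem B(iii) to be $p^{r/2}\mathrm{tr}^{\mathrm{ss}}(\Phi_{p^r}\mid\sigma_\pi)$), produces a function $h$ on $\mathrm{GL}_2(\mathbb{Q}_p)$ whose orbital integrals match the twisted orbital integrals of the function appearing at $p$ in the counting formula above, and which acts on a representation $\pi_p$ of $\mathrm{GL}_2(\mathbb{Q}_p)$ by the scalar $p^{r/2}\mathrm{tr}^{\mathrm{ss}}(\Phi_{p^r}\mid\sigma_{\pi_p})$ on $\pi_p^{\Gamma(p^n)_{\mathbb{Q}_p}}$. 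Substituting this matching function converts the twisted orbital integrals into ordinary ones, so the geometric side of the counting formula becomes the geometric side of the Arthur--Selberg trace formula for $\mathrm{GL}_2/\mathbb{Q}$ applied to the test function $h\otimes\mathbf{1}_{K^p}\otimes f_\infty$, where $f_\infty$ is the Euler--Poincaré/pseudo-coefficient function at $\infty$ that accounts for the coherent cohomology of the curve (this is where $\chi(\pi_\infty)=\pm2$ and the condition "trivial central and infinitesimal character" enter). The stabilization is trivial for $\mathrm{GL}_2$, so I would pass directly to the spectral side.

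Third, I would expand the spectral side: it is a sum over $\pi=\pi_f\otimes\pi_\infty\in\Pi_{\mathrm{disc}}(\mathrm{GL}_2(\mathbb{A}),1)$ with coefficient $m(\pi)$, of $\operatorname{tr}(h\mid\pi_p)\cdot\operatorname{tr}(\mathbf{1}_{K^p}\mid\pi^p_f)\cdot\operatorname{tr}(f_\infty\mid\pi_\infty)$. Here $\operatorname{tr}(\mathbf{1}_{K^p}\mid\pi^p_f)=\dim(\pi^p_f)^{K^p}$, the factor at infinity is $\chi(\pi_\infty)$ up to sign bookkeeping, and — using Theorem B(iii) together with the fact that $h$ and $\phi_{p,n}$ have matching spectral data under base change — $\operatorname{tr}(h\mid\pi_p^{\Gamma(p^n)})=p^{r/2}\mathrm{tr}^{\mathrm{ss}}(\Phi_{p^r}\mid\sigma_{\pi_p})\dim\pi_p^{\Gamma(p^n)}$. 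Collecting the local dimensions at $p$ and away from $p$ into $\dim\pi_f^{K_m}$ (noting that the level $K_m$ has component $\Gamma(p^n)$ at $p$ after taking the limit/ appropriate $n$, and the prime-to-$p$ part is fixed), and summing $p^{r/2}\mathrm{tr}^{\mathrm{ss}}(\Phi_{p^r}\mid\sigma_{\pi_p})p^{-rs}/r$ over $r\ge1$, one recognizes $\log$ of the local $L$-factor $L_p(\pi_p,s-\tfrac12)$ — the shift by $\tfrac12$ coming precisely from the $p^{r/2}$ normalization in Theorem B(iii), which reflects the middle-dimensional Tate twist. Reassembling the Euler product over all $p$ (the good places being handled by the classical good-reduction computation \eqref{eqngoodred} and the usual unramified Langlands--Kottwitz argument) yields the asserted product formula, with exponent $\tfrac12 m(\pi)\chi(\pi_\infty)\dim\pi_f^{K_m}$.

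The main obstacle I anticipate is the archimedean and "points at infinity" bookkeeping rather than the $p$-adic harmonic analysis, which is exactly what Theorems B and C are designed to handle. Concretely: (a) pinning down the correct test function $f_\infty$ at $\infty$ and proving the required stable trace identity $\operatorname{tr}(f_\infty\mid\pi_\infty)=\chi(\pi_\infty)$ for all relevant $\pi_\infty$ (discrete series vs.\ the trivial/one-dimensional representations, which is why the two cases $\chi=\pm2$ appear and why one restricts to trivial infinitesimal character); and (b) controlling the contribution of the cusps of $\overline{\mathcal{M}}_m$, i.e.\ the boundary of the Borel--Serre or toroidal compactification, and checking that $\mathcal{M}_m$ being non-proper does not spoil the trace formula comparison — this is the point flagged by the footnote referring to Theorem \ref{CohoVanish}, and it forces the hypothesis that $m$ factors as a product of two coprime integers each $\ge3$ (which rigidifies the moduli problem enough to kill extraneous automorphisms and make the boundary contribution explicit). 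A secondary technical point is making the passage to the limit over $n$ (or choosing $n$ large enough relative to $m$) compatible with the identification $K_m|_p=\Gamma(p^n)$, and verifying that the Hecke operator normalization (volume of $\mathrm{GL}_2(\mathbb{Z}_{p^r})$ equal to $p^r-1$) threads consistently through the counting formula, the base-change identity, and the spectral expansion so that no stray powers of $p^r-1$ survive.
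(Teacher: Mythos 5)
Your proposal correctly reproduces the overall architecture of the paper's argument: the reduction to semisimple traces on nearby cycles via Theorem \ref{CohoVanish}, the isogeny-class decomposition yielding Corollary \ref{LKNew}, the conversion of twisted to ordinary orbital integrals via the base-change identity, the boundary contribution at the cusps (Section 11), and the comparison with the spectral side of the Arthur--Selberg trace formula with the Euler characteristic $\chi(\pi_\infty)$ at infinity. But there is a genuine gap in the last step, and it concerns precisely the word ``semisimple.''

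Everything the Langlands--Kottwitz machinery computes here is the \emph{semisimple} local factor $\zeta^{\mathrm{ss}}_p$, and the scalar by which the central function acts is $p^{r/2}\,\mathrm{tr}^{\mathrm{ss}}(\Phi_{p^r}\mid\sigma_{\pi_p})$. When you write ``summing $p^{r/2}\mathrm{tr}^{\mathrm{ss}}(\Phi_{p^r}\mid\sigma_{\pi_p})p^{-rs}/r$ over $r\ge 1$, one recognizes $\log$ of the local $L$-factor $L_p(\pi_p,s-\tfrac12)$,'' this is false at ramified primes: the sum you wrote is $\log\det(1-\Phi_p p^{-s+1/2}\mid\sigma_{\pi_p}^{\mathrm{ss}})^{-1}$, i.e.\ an $L$-factor of the \emph{semisimplification}, whereas $L(\pi_p,\cdot)$ is formed from the invariants of the full Weil--Deligne representation. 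For $\pi_p$ Steinberg, $\sigma_{\pi_p}^{\mathrm{ss}}$ has two unramified eigenlines while $L(\pi_p,\cdot)$ sees only one, so the two are genuinely different Euler factors. Likewise on the geometric side, $\zeta^{\mathrm{ss}}_p\ne\zeta_p$ whenever the monodromy on $H^1$ is nontrivial. The paper closes this gap with a weight argument that your sketch omits entirely: it first isolates the $H^0\oplus H^2$ piece (where inertia acts through a finite abelian quotient, so semisimple and actual traces coincide, and the identity reduces to class field theory for the connected components), and then on $H^1$ it invokes the Rapoport--Zink weight spectral sequence for semistable curves together with the monodromy conjecture in dimension one to show that passing from the semisimple determinant to the determinant on $I_{\mathbb{Q}_p}$-invariants removes exactly the weight-$2$ zeroes; dually, unitarity and genericity of $\pi_p$ (forced because $\pi_\infty$ is not a character) bound the poles of $L(\pi_p,s-\tfrac12)$ so that $L(\sigma_{\pi_p}^{\mathrm{ss}},\cdot)\rightsquigarrow L(\pi_p,\cdot)$ removes the same zeroes. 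Without some such argument the asserted product formula does not follow from the semisimple identity $2\,\mathrm{tr}^{\mathrm{ss}}(\Phi_p^r\mid H^{\ast})=\mathcal{L}(f)$.

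Two minor points: the function at infinity produces $L^2$- (equivalently $(\mathfrak{gl}_2,\mathrm{SO}_2)$-) cohomology, not ``coherent cohomology''; and the hypothesis that $m$ factor as a product of two coprime integers each $\geq 3$ is used so that, for each prime $p$, writing $m=p^n m'$ leaves a prime-to-$p$ level $m'\geq 3$, keeping the moduli problem fine at every place simultaneously.
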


\begin{rem} Of course, multiplicity 1 for $\mathrm{GL}_2$ tells us that $m(\pi)=1$.
\end{rem}

A much stronger version of this theorem has been proved by Carayol in \cite{Carayol}. Decompose (the cuspidal part of) the $\ell$-adic cohomology of the modular curves according to automorphic representations $\pi=\otimes \pi_p$ as
\[
\bigoplus \pi\otimes \sigma_{\pi}\ ,
\]
where $\sigma_{\pi}$ is a $2$-dimensional representation of the absolute Galois group of $\mathbb{Q}$. Then Carayol determines the restriction of $\sigma_{\pi}$ to the absolute Galois group of $\mathbb{Q}_p$, $p\neq \ell$, by showing that it is paired (up to an explicit twist) with $\pi_p$ through the Local Langlands Correspondence. In particular, their $L$-functions agree up to shift, which gives our Theorem D upon taking the product over all automorphic representations $\pi$.

It would not be a serious problem to extend the methods used here to prove that all local $L$-factors of $\sigma_{\pi}$ and $\pi$ agree (up to shift), by allowing the action of arbitrary Hecke operators prime to $p$ in our considerations in order to `cut out' a single representation $\pi$ in the cohomology. If one could prove that the local $\epsilon$-factors of $\sigma_{\pi}$ and $\pi$ agree as well, this would give a new proof of Carayol's result, but we do not see any way to check this.

It should be pointed out, however, that Carayol uses advanced methods, relying on the `local fundamental representation' constructed by Deligne in \cite{Deligne}, strong statements about nearby cycles, the consideration of more general Shimura curves and some instances of automorphic functoriality, notably the Jacquet-Langlands correspondence and base-change for $\GL_2$.

By contrast, except for base-change for $\GL_2$, all of these methods are avoided in this article\footnote{In the form that our article is written, it makes use of (unramified) base-change for $\GL_2$, but this is needed only for Theorem B and Theorem C and could be avoided if one is only interested in Theorem D. Only the spherical base-change identity is really needed, whose proof reduces to explicit combinatorics as in \cite{LanglandsBCGL2}.}. Our approach relies on the geometry of the modular curve itself, the main geometric ingredient being Theorem A which relies on Thomason's purity theorem.

We now briefly describe the content of the different sections.

Section 2 up to Section 7 mainly recall results that will be needed later. Here, the first two sections are of a representation-theoretic nature, describing some results from local harmonic analysis, in particular the base-change identity, Theorem C. The next sections are of a more algebro-geometric nature, describing results about the moduli space of elliptic curves with level-structure and particularly their bad reduction, in Section 4 and 6. We also recall the Langlands-Kottwitz method of counting points in Section 5 and the definition of the semisimple local factor in Section 7.

The Sections 8 and 9 are technically the heart of this work. In Section 8, we prove our result on vanishing cycles, Theorem A, which allows us to compute the semisimple trace of Frobenius in the given situation. Then, in Section 9, we rewrite this result in terms of twisted orbital integrals of certain functions naturally defined through the local Langlands correspondence and prove Theorem B, modulo the explicit formula for $\phi_{p,n}$.

The rest of the article, Sections 10 to 13, employs the standard method of comparing the Lefschetz and the Arthur-Selberg Trace Formula to prove Theorem D.

Finally, Section 14 provides the explicit formula for the function $\phi_{p,n}$ and finishes the proof of Theorem B.

{\bf Notation}. For any field $K$, we denote by $G_K=\mathrm{Gal}(\overline{K}/K)$ its absolute Galois group.

{\bf Acknowledgments}. I wish to thank everyone who helped me writing this paper. In particular, I heartily thank my advisor M. Rapoport for introducing me to this topic and his constant encouragement and interest in these ideas.

\section{The Bernstein center}\label{BernsteinCenter}

Let $G=\mathrm{GL}_n(F)$, where $F$ is a local field. Let $\mathcal{H}(G,K)$ be the Hecke algebra of locally constant functions on $G$ with compact support and biinvariant under $K$, for a compact open subgroup $K$ of $G$.

We will recall the description of the center of the Hecke algebras $\mathcal{H}(G,K)$ where $K$ ranges through compact open subgroups of $G$, cf. \cite{Bernstein}. Denote the center by $\mathcal{Z}(G,K)$ and let $\mathcal{Z}(G)=\lim \limits_{\longleftarrow} \mathcal{Z}(G,K)$. Note that this is not a subset of the Hecke algebra $\mathcal{H}(G)$. Rather, it is a subset of $\widehat{\mathcal{H}}(G)=\lim \limits_{\longleftarrow} \mathcal{H}(G,K)\supset \mathcal{H}(G)$ which can be identified (after choosing a Haar measure) with the space of distributions $T$ of $G$ such that $T\ast e_K$ is of compact support for all compact open subgroups $K$. Here $e_K$ is the idempotent associated to $K$, i.e. the characteristic function of $K$ divided by its volume. Then $\widehat{\mathcal{H}}(G)$ has an algebra structure through convolution, and its center is $\mathcal{Z}(G)$. In fact, $\mathcal{Z}(G)$ consists of the conjugation-invariant distributions in $\widehat{\mathcal{H}}(G)$.

Let $\hat{G}$ be the set of irreducible smooth representations of $G$ over $\mathbb{C}$. By Schur's lemma, we have a map $\phi: \mathcal{Z}(G)\longrightarrow \mathrm{Map}(\hat{G},\mathbb{C}^{\times})$. We will now explain how to describe the center explicitly using this map.

Let $P$ be a parabolic subgroup of $G$ with Levi subgroup $L\cong \prod_{i=1}^k \mathrm{GL}_{n_i}$ and fix a supercuspidal representation $\sigma$ of $L$. Let $D=(\mathbb{G}_m)^k$. Then we have a universal unramified character $\chi: L\longrightarrow \Gamma(D,\mathcal{O}_D)\cong \mathbb{C}[T_1^{\pm 1},\ldots,T_k^{\pm 1}]$ sending $(g_i)_{i=1,\ldots,k}$ to $\prod_{i=1}^k T_i^{v_p(\det(g_i))}$. We get a corresponding family of representations $\text{n-Ind}_{P}^{G} (\sigma\chi)$ of $G$ parametrized by the scheme $D$ (here $\text{n-Ind}$ denotes the normalized induction). We will also write $D$ for the set of representations of $G$ one gets by specializing to a closed point of $D$.

Let $\mathrm{Rep}\ G$ be the category of smooth admissible representations of $G$ and let
\[
(\mathrm{Rep}\ G)(L,D)
\]
be the full subcategory of $\mathrm{Rep}\ G$ consisting of those representations that can be embedded into a direct sum of representations in the family $D$.

\begin{thm} $\mathrm{Rep}\ G$ is the direct sum of the categories $(\mathrm{Rep}\ G)(L,D)$ where $(L,D)$ are taken up to conjugation.
\end{thm}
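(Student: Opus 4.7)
The plan is to follow Bernstein's original argument via cuspidal support. To each irreducible smooth representation $\pi$ of $G$ I would first attach a cuspidal support $(L,\sigma)$: by Jacquet's theorem $\pi$ embeds as a subrepresentation of $\text{n-Ind}_P^G(\sigma)$ for some parabolic $P$ with Levi $L\cong \prod \mathrm{GL}_{n_i}$ and some irreducible supercuspidal $\sigma$ of $L$. The Bernstein--Zelevinsky geometric lemma, applied to compare different realizations of $\pi$ inside such parabolic inductions, shows that $(L,\sigma)$ is well-defined up to $G$-conjugation; passing to the orbit $D$ of $\sigma$ under unramified twists of $L$ produces the inertial class $(L,D)$ attached to $\pi$, and it is immediate from the definition that $\pi$ belongs to $(\mathrm{Rep}\ G)(L,D)$.

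The next step is to prove mutual orthogonality: for $V\in (\mathrm{Rep}\ G)(L,D)$ and $W\in (\mathrm{Rep}\ G)(L',D')$ with $(L,D)$ not conjugate to $(L',D')$, one has $\mathrm{Hom}_G(V,W)=0$ (and similarly for higher $\mathrm{Ext}$'s). Since $V$ embeds into a sum of members of $D$ and dually $W$ is a quotient of such a sum for $D'$, this reduces via both Frobenius reciprocity and Bernstein's second adjointness to the statement that the supercuspidal supports appearing in the relevant Jacquet modules of $\text{n-Ind}_P^G(\sigma)$ and $\text{n-Ind}_{P'}^G(\sigma')$ lie in disjoint orbits, which is once more the geometric lemma. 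Consequently the $(\mathrm{Rep}\ G)(L,D)$ are Serre subcategories with no morphisms between distinct ones.

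To obtain the direct sum decomposition on all of $\mathrm{Rep}\ G$, I would pass to the category of nondegenerate $\mathcal{H}(G,K)$-modules for varying compact open subgroups $K$, using that every smooth representation is the union of the subrepresentations generated by its $K$-fixed vectors. For each fixed $K$, only finitely many inertial classes $(L,D)$ can give rise to a representation with a nonzero $K$-fixed vector, and one builds central idempotents $e_{(L,D)}\in \mathcal{Z}(G,K)$ separating the blocks by a partial-fraction construction on the torus $D$ of unramified twists, using that the central characters on different orbits $D$ are generically distinct. The main obstacle, as in Bernstein's paper, is the key finiteness theorem that $\mathcal{H}(G,K)$ is finitely generated as a module over its center $\mathcal{Z}(G,K)$, which controls how many orbits $D$ contribute and ensures that the $e_{(L,D)}$ sum to the identity of $\mathcal{H}(G,K)$; once this finiteness is established, compatibility with shrinking $K$ is formal, and passage to the inverse limit yields the block decomposition of $\mathrm{Rep}\ G$.
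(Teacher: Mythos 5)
The paper does not give a proof of this statement at all: it simply cites \cite{Bernstein}, Proposition 2.10. Your sketch correctly lists the main ingredients of Bernstein's argument (cuspidal support via Jacquet's theorem, well-definedness up to conjugacy via the geometric lemma, orthogonality of the inertial blocks, and the finiteness of $\mathcal{H}(G,K)$ over its center as the key nontrivial input), so it is a fair outline of the result the paper is delegating.

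One caution about the final step: the route you describe -- building central idempotents $e_{(L,D)}\in\mathcal{Z}(G,K)$ by a partial-fraction argument on $D$ -- leans on having a good handle on $\mathcal{Z}(G,K)$, and the explicit description of the Bernstein center as regular functions on $\bigsqcup_{(L,D)} D/W(L,D)$ (Theorem \ref{BernsteinCenterIsom}, i.e.\ Bernstein's Theorem 2.13) is usually proved \emph{after}, and as a consequence of, the block decomposition you are trying to establish. In Bernstein's write-up the decomposition is obtained first, by showing that the subcategories $(\mathrm{Rep}\,G)(L,D)$ are stable under subquotients, pairwise orthogonal, and that every object canonically splits according to cuspidal support; the stability of Jacquet functors and the noetherian/finiteness theorem are used to produce the splitting directly, not to build idempotents. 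If you do want to argue through idempotents in $\mathcal{H}(G,K)$, you need to produce them from the orthogonality of the blocks alone (essentially by decomposing $\mathcal{H}(G,K)$ as a left module over itself), being careful not to invoke the structure theorem for $\mathcal{Z}(G,K)$ that logically follows the decomposition. With that reordering, your outline matches Bernstein's proof.
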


\begin{proof} This is Proposition 2.10 in \cite{Bernstein}.
\end{proof}

Let $W(L,D)$ be the subgroup of $\mathrm{Norm}_G(L)/L$ consisting of those $n$ such that the set of representations $D$ coincides with its conjugate via $n$.

\begin{thm}\label{BernsteinCenterIsom} Fix a supercuspidal representation $\sigma$ of a Levi subgroup $L$ as above. Let $z\in \mathcal{Z}(G)$. Then $z$ acts by a scalar on $\emph{n-Ind}_{P}^{G}(\sigma\chi_0)$ for any character $\chi_0$. The corresponding function on $D$ is a $W(L,D)$-invariant regular function. This induces an isomorphism of $\mathcal{Z}(G)$ with the algebra of regular functions on $\bigcup \limits_{(L,D)} D/W(L,D)$.
\end{thm}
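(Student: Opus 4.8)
The plan is to deduce this from the category decomposition of the previous theorem together with a description of the endomorphism ring of a suitable projective generator of the block $(\mathrm{Rep}\ G)(L,D)$. First I would fix the block attached to $(L,D)$ and recall that, after choosing $\sigma$, the family $\mathrm{n\text{-}Ind}_P^G(\sigma\chi)$ over $D$ is a "universal" object in the block: more precisely, if $\Pi_D$ denotes the representation of $G$ on $\mathrm{n\text{-}Ind}_P^G(\sigma\chi)$ viewed as a module over $\mathcal{O}_D=\mathbb{C}[T_1^{\pm1},\dots,T_k^{\pm1}]$, then $\Pi_D$ is a finitely generated projective generator of $(\mathrm{Rep}\ G)(L,D)$ after passing to a large enough compact open $K$ (i.e. $\Pi_D^K$ is a projective generator of the corresponding block of the category of $\mathcal{H}(G,K)$-modules, cf. Bernstein's second adjointness / the theory of the $\mathcal{O}_D$-linear structure on parabolic induction). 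Then by Morita theory the block is equivalent to modules over $\mathrm{End}_G(\Pi_D)$, and the center of the block — which is by definition the corresponding factor of $\mathcal{Z}(G)$ — is the center of this endomorphism ring.

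The second step is to compute $\mathrm{End}_G(\Pi_D)$. Away from the walls (i.e. over the open locus of $D$ where $\mathrm{n\text{-}Ind}_P^G(\sigma\chi_0)$ is irreducible) $\Pi_D$ is a line bundle-like object and the endomorphism algebra is just $\mathcal{O}_D$; the finite group $W(L,D)$ acts on $D$ by permuting isomorphic members of the family, and intertwining operators $\mathrm{n\text{-}Ind}_P^G(\sigma\chi_0)\xrightarrow{\sim}\mathrm{n\text{-}Ind}_P^G(\sigma\, {}^n\chi_0)$ give elements of $\mathrm{End}_G(\Pi_D)$ realizing this $W(L,D)$-action. Thus $\mathrm{End}_G(\Pi_D)$ is an $\mathcal{O}_D$-algebra, generically a crossed product $\mathcal{O}_D \rtimes W(L,D)$, and in all cases its center is exactly the ring of $W(L,D)$-invariant regular functions $\mathcal{O}_D^{W(L,D)}=\Gamma(D/W(L,D),\mathcal{O})$ — the point being that $\mathcal{O}_D$ is its own centralizer in $\mathrm{End}_G(\Pi_D)$ (no nonzero $G$-endomorphism of a generic member is scalar while being nonscalar on a special member, by density), so the center of the crossed product is the invariants. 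Hence $z\in\mathcal{Z}(G)$ restricted to this block is determined by, and freely chooses, a function in $\mathcal{O}_D^{W(L,D)}$, and this function is by construction the scalar by which $z$ acts on $\mathrm{n\text{-}Ind}_P^G(\sigma\chi_0)$. Finally I would run this over all blocks $(L,D)$ (up to conjugation) and assemble the isomorphism $\mathcal{Z}(G)\cong \prod_{(L,D)}\mathcal{O}_D^{W(L,D)} = \Gamma\bigl(\bigsqcup_{(L,D)} D/W(L,D),\mathcal{O}\bigr)$, which is the asserted statement (the union in the theorem being the disjoint union of the affine varieties $D/W(L,D)$).

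The main obstacle — and really the only substantive input beyond the block decomposition — is proving that $\Pi_D$ is a projective generator and computing its endomorphism ring, i.e. controlling the intertwining operators among the induced representations $\mathrm{n\text{-}Ind}_P^G(\sigma\chi)$ as $\chi$ varies, including at the reducibility walls where one must check that $\mathrm{End}_G(\Pi_D)$ is flat over $\mathcal{O}_D^{W(L,D)}$ and has the expected center. This is exactly the content of Bernstein's analysis, and since the excerpt already cites \cite{Bernstein}, in practice I would simply invoke Proposition 2.10 (quoted above) together with the structural results of loc. cit. on the blocks $(\mathrm{Rep}\ G)(L,D)$, so the proof reduces to: (1) cite the decomposition; (2) identify the center of each block with $\mathrm{End}_G(\Pi_D)$'s center via Morita equivalence; (3) identify that center with $\mathcal{O}_D^{W(L,D)}$ using the intertwining operators; (4) observe compatibility with the scalar-action description. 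For $G=\mathrm{GL}_n$ one may also note the simplification that all the groups $W(L,D)$ are genuine Weyl-type groups acting on tori, which makes $D/W(L,D)$ a nice affine variety, but this is not needed for the proof.
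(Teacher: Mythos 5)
Your proposal correctly identifies the structure of Bernstein's proof (Morita equivalence of each block with modules over $\mathrm{End}_G(\Pi_D)$, where $\Pi_D$ is the parabolically induced family viewed as an $\mathcal{O}_D$-module, together with the computation that the center of this endomorphism ring is $\mathcal{O}_D^{W(L,D)}$), and you correctly note that in practice one invokes Bernstein. The paper's own proof is in fact just the citation to Bernstein, Theorem 2.13, so your proposal takes the same route while additionally summarizing the underlying argument.
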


\begin{proof} This is Theorem 2.13 in \cite{Bernstein}.
\end{proof}

\section{Base change}

We will establish a base change identity that will be used later. This also allows us to recall certain facts about base change of representations.

Let $\sigma$ be the lift of Frobenius on $\mathbb{Q}_{p^r}$.

\begin{definition} For an element $\delta\in \mathrm{GL}_2(\mathbb{Q}_{p^r})$, we let $N\delta = \delta\delta^{\sigma}\cdots\delta^{\sigma^{r-1}}$.
\end{definition}

One easily sees that the conjugacy class of $N\delta$ contains an element of $\mathrm{GL}_2(\mathbb{Q}_p)$.

For $\gamma\in \mathrm{GL}_2(\mathbb{Q}_p)$, define the centralizer
\[ G_{\gamma}(R) = \{g\in \mathrm{GL}_2(R) \mid g^{-1}\gamma g = \gamma \} \]
and for $\delta\in \mathrm{GL}_2(\mathbb{Q}_{p^r})$ the twisted centralizer
\[ G_{\delta\sigma}(R) = \{h \in \mathrm{GL}_2(R\otimes \mathbb{Q}_{p^r}) \mid h^{-1} \delta h^{\sigma} = \delta \}\ . \]
It is known that $G_{\delta\sigma}$ is an inner form of $G_{N\delta}$. We choose associated Haar measures on their groups of $\mathbb{Q}_p$-valued points.

For any smooth function $f$ with compact support on $\mathrm{GL}_2(\mathbb{Q}_p)$, put
\[O_{\gamma}(f) = \int_{G_\gamma(\mathbb{Q}_p)\backslash \mathrm{GL}_2(\mathbb{Q}_p)} f(g^{-1}\gamma g) dg\]
and for any smooth function $\phi$ with compact support on $\mathrm{GL}_2(\mathbb{Q}_{p^r})$, put
\[TO_{\delta\sigma}(\phi) = \int_{G_{\delta\sigma}(\mathbb{Q}_p)\backslash \mathrm{GL}_2(\mathbb{Q}_{p^r})} \phi(h^{-1}\delta h^{\sigma}) dh\ .\]

\begin{definition}\label{DefAssoc} We say that functions
\[ \phi\in C_c^{\infty}(\mathrm{GL}_2(\mathbb{Q}_{p^r}))\ ,\ f\in C_c^{\infty}(\mathrm{GL}_2(\mathbb{Q}_p)) \]
have matching (twisted) orbital integrals (sometimes we simply say that they are `associated') if
\[ O_{\gamma}(f) = \left\{\begin{array}{ll} \pm TO_{\delta\sigma}(\phi)& \text{if}\ \gamma\ \text{is conjugate to}\ N\delta\ \text{for some}\ \delta\\ 0 & \text{else}\ , \end{array}\right . \]
for all semisimple $\gamma\in \mathrm{GL}_2(\mathbb{Q}_p)$. Here, the sign is $+$ except if $N\delta$ is a central element, but $\delta$ is not $\sigma$-conjugate to a central element, when it is $-$.
\end{definition}

\begin{rem} This definition depends on the choice of Haar measures on $\mathrm{GL}_2(\mathbb{Q}_p)$ and $\mathrm{GL}_2(\mathbb{Q}_{p^r})$ that we will not yet fix; it does not depend on the choice of Haar measures on $G_{\delta\sigma}(\mathbb{Q}_p)$ and $G_{N\delta}(\mathbb{Q}_p)$ as long as they are chosen compatibly.
\end{rem}

\begin{prop}\label{SizeTwistedvsNormal} Let $\delta\in \mathrm{GL}_2(\mathbb{Z}_{p^r}/p^n\mathbb{Z}_{p^r})$. Then
\[ G_{\delta\sigma}(\mathbb{Z}/p^n\mathbb{Z})=\{h\in \mathrm{GL}_2(\mathbb{Z}_{p^r}/p^n\mathbb{Z}_{p^r}) \mid h^{-1}\delta h^{\sigma} = \delta\} \]
has as many elements as
\[ G_{N\delta}(\mathbb{Z}/p^n\mathbb{Z})=\{g\in \mathrm{GL}_2(\mathbb{Z}/p^n\mathbb{Z}) \mid g^{-1}N\delta g=N\delta\}\ .\]
Furthermore, $\sigma$-conjugacy classes in $\mathrm{GL}_2(\mathbb{Z}_{p^r}/p^n\mathbb{Z}_{p^r})$ are mapped bijectively to conjugacy classes in $\mathrm{GL}_2(\mathbb{Z}/p^n\mathbb{Z})$ via the norm map.
\end{prop}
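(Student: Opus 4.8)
The plan is to reduce the statement to a Lang-style argument over the residue ring, exploiting that $\mathbb{Z}_{p^r}/p^n\mathbb{Z}_{p^r}$ is an étale $\mathbb{Z}/p^n\mathbb{Z}$-algebra with Galois group $\langle\sigma\rangle \cong \mathbb{Z}/r\mathbb{Z}$. Write $A = \mathbb{Z}/p^n\mathbb{Z}$ and $B = \mathbb{Z}_{p^r}/p^n\mathbb{Z}_{p^r}$, so $B/A$ is a finite Galois extension of finite local rings with cyclic Galois group generated by $\sigma$. First I would set up the twisted-conjugation action: $\mathrm{GL}_2(B)$ acts on itself by $h \cdot \delta = h^{-1}\delta h^{\sigma}$, and the stabilizer of $\delta$ is exactly $G_{\delta\sigma}(\mathbb{Z}/p^n\mathbb{Z})$ as defined. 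The norm map $\delta \mapsto N\delta = \delta\delta^{\sigma}\cdots\delta^{\sigma^{r-1}}$ intertwines $\sigma$-conjugation on $\mathrm{GL}_2(B)$ with ordinary conjugation on $\mathrm{GL}_2(B)$ (one checks $N(h^{-1}\delta h^{\sigma}) = h^{-1}(N\delta)h$ directly by telescoping), and a standard descent argument shows $N\delta$ is in fact conjugate in $\mathrm{GL}_2(B)$ to an element of $\mathrm{GL}_2(A)$.

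The key step is to prove that $\sigma$-conjugacy classes in $\mathrm{GL}_2(B)$ biject, via $N$, with the conjugacy classes in $\mathrm{GL}_2(A)$ — once this is established, the equality of stabilizer cardinalities follows by the orbit-stabilizer theorem, since both orbits have the same size: $|\mathrm{GL}_2(B)|/|G_{\delta\sigma}|$ equals the size of the $\sigma$-conjugacy class of $\delta$, while $|\mathrm{GL}_2(A)|/|G_{N\delta}(A)|$ is the size of the ordinary conjugacy class of $N\delta$ in $\mathrm{GL}_2(A)$, and these two classes correspond under the claimed bijection (here one uses $|\mathrm{GL}_2(B)| = |\mathrm{GL}_2(A)|^r$ — or, more carefully, that both sides are computed by the same Lang-torsor count). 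The cleanest route to the bijection is the analogue of Lang's theorem for the (non-connected, non-smooth, but still amenable) setting of $\mathrm{GL}_2$ over an Artinian local ring: the map $\mathrm{GL}_2(B) \to \mathrm{GL}_2(B)$, $h \mapsto h^{-1}h^{\sigma}$, has image exactly $\ker(N: \mathrm{GL}_2(B) \to \mathrm{GL}_2(B)/\!\sim)$ in the appropriate sense. Concretely, I would first handle the case $n=1$, where $B/A$ is the finite field extension $\mathbb{F}_{p^r}/\mathbb{F}_p$: here Lang's theorem for the connected group $\mathrm{GL}_2$ over $\overline{\mathbb{F}}_p$, combined with the description of $\sigma$-conjugacy via $H^1(\langle\sigma\rangle, \mathrm{GL}_2(\overline{\mathbb{F}}_p))$, gives the bijection and the equality of centralizer orders by the usual twisting/inner-form bookkeeping already referenced in the text ($G_{\delta\sigma}$ is an inner form of $G_{N\delta}$). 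Then I would bootstrap from $\mathbb{Z}/p^n$ to $\mathbb{Z}/p^{n+1}$ by dévissage along the surjection $\mathrm{GL}_2(\mathbb{Z}/p^{n+1}) \twoheadrightarrow \mathrm{GL}_2(\mathbb{Z}/p^n)$ with kernel the additive group $M_2(\mathbb{F}_p)$ (and similarly over $B$, with kernel $M_2(\mathbb{F}_{p^r})$): on the kernel, $\sigma$-conjugation becomes an affine-linear action and the norm becomes the trace map $M_2(\mathbb{F}_{p^r}) \to M_2(\mathbb{F}_p)$, which is surjective with kernel of the expected size, so the inductive hypothesis propagates.

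The main obstacle is the inductive step: one must check that the $\sigma$-twisted conjugation action on the kernel $M_2(\mathbb{F}_{p^r})$ of $\mathrm{GL}_2(\mathbb{Z}_{p^r}/p^{n+1}) \to \mathrm{GL}_2(\mathbb{Z}_{p^r}/p^n)$ is compatible, under the norm, with ordinary conjugation on the kernel $M_2(\mathbb{F}_p)$ downstairs, and that the relevant cohomology (an $H^1$ of $\langle\sigma\rangle$ acting on $M_2(\mathbb{F}_{p^r})$ by a twist of Frobenius, which vanishes by the additive Hilbert 90 / normal basis theorem) does not obstruct lifting either the bijection on classes or the equality of stabilizer sizes. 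One has to be a little careful because the stabilizer in $\mathrm{GL}_2(\mathbb{Z}/p^{n+1})$ of $\delta$ is an extension of (a subgroup of) the stabilizer in $\mathrm{GL}_2(\mathbb{Z}/p^n)$ of $\bar\delta$ by the fixed points of an affine action on $M_2(\mathbb{F}_{p^r})$; the count works out precisely because the $\sigma$-twisted trace $M_2(\mathbb{F}_{p^r}) \to M_2(\mathbb{F}_p)$ is surjective and Frobenius-semilinearly rigid, so that the "defect" contributions upstairs and downstairs cancel. An alternative, perhaps slicker, argument avoiding the induction would be to invoke a Lang-type theorem for smooth group schemes over the Artinian base $\mathbb{Z}/p^n\mathbb{Z}$ directly — $\mathrm{GL}_2$ is smooth, so $H^1_{\mathrm{\acute et}}(\mathrm{Spec}(\mathbb{Z}/p^n\mathbb{Z}), \mathrm{GL}_2)$ vanishes and the twisted-conjugacy classes are governed by a vanishing continuous $H^1$ — but I would still need to extract the numerical equality of stabilizers, which ultimately comes back to the inner-form statement $G_{\delta\sigma} \simeq G_{N\delta}$ over the residue field, already granted in the text.
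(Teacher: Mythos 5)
Your plan differs from the paper's in a way that creates a genuine gap. The paper's key move is to pick, for each conjugacy class representative $\gamma\in\mathrm{GL}_2(\mathbb{Z}/p^n\mathbb{Z})$, a preimage $\delta$ lying inside the \emph{commutative} subring $\mathbb{Z}_{p^r}/p^n\mathbb{Z}_{p^r}[\gamma]$; the Galois cohomology of the abelian group $Z_{\gamma,p^r}=(\mathbb{Z}_{p^r}/p^n\mathbb{Z}_{p^r}[\gamma])^{\times}$ is computed by a filtration reducing to Lang's lemma over $\mathbb{F}_{p^r}$, and for this specific $\delta$ one gets the exact equality of sets $G_{\delta\sigma}(\mathbb{Z}/p^n\mathbb{Z})=G_{\gamma}(\mathbb{Z}/p^n\mathbb{Z})$, not merely of cardinalities. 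The counting $\sum_i |\mathrm{GL}_2(B)|/|G_{\gamma_i}(A)|=|\mathrm{GL}_2(B)|$ then shows these $\delta_i$ exhaust the $\sigma$-conjugacy classes. The logic runs: centralizer equality first, bijection of classes second. You run it in the opposite order, and that is where it breaks.

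Concretely, you claim that once the bijection on classes is established, "the equality of stabilizer cardinalities follows by the orbit-stabilizer theorem, since both orbits have the same size." This is false: the $\sigma$-conjugacy class of $\delta$ lives in $\mathrm{GL}_2(B)$ and the conjugacy class of $N\delta$ lives in $\mathrm{GL}_2(A)$, and these do not have the same size. Take $\delta=1$: the $\sigma$-conjugacy class of $1$ has size $|\mathrm{GL}_2(B)|/|\mathrm{GL}_2(A)|>1$, while the conjugacy class of $N\delta=1$ in $\mathrm{GL}_2(A)$ is a singleton. Your parenthetical $|\mathrm{GL}_2(B)|=|\mathrm{GL}_2(A)|^r$ is also false already for $n=1$, $r=2$, $p=2$ (where $|\mathrm{GL}_2(\mathbb{F}_4)|=180\neq 36=|\mathrm{GL}_2(\mathbb{F}_2)|^2$). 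And your final fallback — "the numerical equality of stabilizers ultimately comes back to the inner-form statement $G_{\delta\sigma}\simeq G_{N\delta}$ over the residue field, already granted in the text" — does not close the gap either: that statement is about algebraic groups over $\mathbb{Q}_p$, and inner forms over $\mathbb{Q}_p$ do not have integral models with equinumerous $\mathbb{Z}/p^n\mathbb{Z}$-points. What actually works in the $n=1$ case is the explicit Shintani-descent computation: write $\delta=\alpha^{-1}\sigma(\alpha)$ by Lang over $\overline{\mathbb{F}}_p$ and check that conjugation by $\alpha$ carries $G_{\delta\sigma}(\mathbb{F}_p)$ isomorphically onto $G_{\gamma}(\mathbb{F}_p)$ with $\gamma=\alpha N\delta\,\alpha^{-1}\in\mathrm{GL}_2(\mathbb{F}_p)$; but that is a separate computation, not a formal consequence of the bijection on classes, and your dévissage to higher $n$ would have to carry this identification along carefully. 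The paper sidesteps all of this nonabelian bookkeeping by working inside the commutative algebra $\mathbb{Z}_{p^r}/p^n\mathbb{Z}_{p^r}[\gamma]$ from the start.
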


\begin{proof} Let $\gamma\in \mathrm{GL}_2(\mathbb{Z}/p^n\mathbb{Z})$. We get the commutative groups $Z_{\gamma,p}=(\mathbb{Z}/p^n\mathbb{Z}[\gamma])^{\times}$ and $Z_{\gamma,p^r}=(\mathbb{Z}_{p^r}/p^n\mathbb{Z}_{p^r}[\gamma])^{\times}$. The norm map defines a homomorphism $d_2: Z_{\gamma,p^r}\longrightarrow Z_{\gamma,p}$. Also define the homomorphism $d_1: Z_{\gamma,p^r}\longrightarrow Z_{\gamma,p^r}$ by $d_1(x) = xx^{-\sigma}$. By definition, we have
\[ H^1(\mathrm{Gal}(\mathbb{Q}_{p^r}/\mathbb{Q}_p),Z_{\gamma,p^r}) = \mathrm{ker}(d_2)/\mathrm{im}(d_1)\ .\]

\begin{lem} This cohomology group vanishes:
\[ H^1(\mathrm{Gal}(\mathbb{Q}_{p^r}/\mathbb{Q}_p),Z_{\gamma,p^r}) = 0\ .\]
Hence the following complex is exact
\[ 0\longrightarrow Z_{\gamma,p}\longrightarrow Z_{\gamma,p^r}\buildrel d_1\over \longrightarrow Z_{\gamma,p^r}\buildrel d_2\over \longrightarrow Z_{\gamma,p}\longrightarrow 0\ .\]
\end{lem}

\begin{proof} We have a $\mathrm{Gal}(\mathbb{Q}_{p^r}/\mathbb{Q}_p)$-invariant filtration on $Z_{\gamma,p^r}$ given by $X_i = \mathrm{ker}(Z_{\gamma,p^r}\longrightarrow \mathrm{GL}_2(\mathbb{Z}_{p^r}/p^i\mathbb{Z}_{p^r}))$ for $i=0,\ldots,n$. By the long exact sequence for cohomology, it is enough to prove the vanishing of the cohomology for the successive quotients. But for $i\geq 1$, the quotient $X_i/X_{i+1}$ is a $\mathbb{F}_{p^r}$-subvectorspace of
\[ \ker(\mathrm{GL}_2(\mathbb{Z}_{p^r}/p^{i+1}\mathbb{Z}_{p^r})\longrightarrow \mathrm{GL}_2(\mathbb{Z}_{p^r}/p^i\mathbb{Z}_{p^r}))\cong \mathbb{F}_{p^r}^4\ . \]
But by Lang's lemma,
\[ H^1(\mathrm{Gal}(\mathbb{Q}_{p^r}/\mathbb{Q}_p),\mathbb{F}_{p^r}) = 0\ .\]
For $i=0$, Lang's lemma works just as well, noting that the groups considered are connected.

The complex is clearly exact at the first two steps. We have just proved that it is exact at the third step. Hence the surjectivity of the last map follows by counting elements.
\end{proof}

Given $\gamma\in \mathrm{GL}_2(\mathbb{Z}/p^n\mathbb{Z})$, choose some $\delta\in Z_{\gamma,p^r}$ with $N\delta = \gamma$. This exists by the last lemma. We claim that in this case, $G_{\delta\sigma}(\mathbb{Z}/p^n \mathbb{Z}) = G_{\gamma}(\mathbb{Z}/p^n\mathbb{Z})$ as sets.

Take $x\in G_{\delta\sigma}(\mathbb{Z}/p^n\mathbb{Z})$. Then $x^{-1}\delta x^{\sigma} = \delta$ and hence $x^{-\sigma^i}\delta^{\sigma^i} x^{\sigma^{i+1}} = \delta^{\sigma^i}$ for all $i=0,\ldots,r-1$ and multiplying these equations gives
\[ x^{-1}N\delta x = N\delta \ ,\]
hence $x$ commutes with $\gamma=N\delta$. But then $x$ commutes with $\delta\in \mathbb{Z}_{p^r}/p^n\mathbb{Z}_{p^r}[\gamma]$ and therefore $x^{-1}\delta x^{\sigma} = \delta$ implies $x = x^{\sigma}$, whence $x\in G_{\gamma}(\mathbb{Z}/p^n\mathbb{Z})$.

The other inclusion $G_{\gamma}(\mathbb{Z}/p^n\mathbb{Z})\subset G_{\delta\sigma}(\mathbb{Z}/p^n\mathbb{Z})$ follows directly from $\delta\in \mathbb{Z}_{p^r}/p^n\mathbb{Z}_{p^r}[\gamma]$. This proves the claim $G_{\delta\sigma}(\mathbb{Z}/p^n \mathbb{Z}) = G_{\gamma}(\mathbb{Z}/p^n\mathbb{Z})$ and hence the first part of the Proposition in this case.

Now, for representatives $\gamma_1, \ldots, \gamma_t$ of the conjugacy classes in $\mathrm{GL}_2(\mathbb{Z}/p^n\mathbb{Z})$, we have constructed elements $\delta_1, \ldots, \delta_t$ with $N\delta_i = \gamma_i$ for all $i$, whence representing different $\sigma$-conjugacy classes. We know that the size of their $\sigma$-conjugacy classes is
\[ \frac{|\mathrm{GL}_2(\mathbb{Z}_{p^r}/p^n\mathbb{Z}_{p^r})|}{|G_{\delta_i\sigma}(\mathbb{Z}/p^n\mathbb{Z})|} = \frac{|\mathrm{GL}_2(\mathbb{Z}_{p^r}/p^n\mathbb{Z}_{p^r})|}{|G_{\gamma_i}(\mathbb{Z}/p^n\mathbb{Z})|}\ .\]
The sum gives
\[\begin{aligned} \frac{|\mathrm{GL}_2(\mathbb{Z}_{p^r}/p^n\mathbb{Z}_{p^r})|}{|\mathrm{GL}_2(\mathbb{Z}/p^n\mathbb{Z})|}\sum_{i=1}^t \frac {|\mathrm{GL}_2(\mathbb{Z}/p^n\mathbb{Z})|}{|G_{\gamma_i}(\mathbb{Z}/p^n\mathbb{Z})|} &= \frac{|\mathrm{GL}_2(\mathbb{Z}_{p^r}/p^n\mathbb{Z}_{p^r})|}{|\mathrm{GL}_2(\mathbb{Z}/p^n\mathbb{Z})|}|\mathrm{GL}_2(\mathbb{Z}/p^n\mathbb{Z})| \\
&= |\mathrm{GL}_2(\mathbb{Z}_{p^r}/p^n\mathbb{Z}_{p^r})| \ .
\end{aligned}\]
Hence every element of $\mathrm{GL}_2(\mathbb{Z}_{p^r}/p^n\mathbb{Z}_{p^r})$ is $\sigma$-conjugate to one of $\delta_1,\ldots, \delta_t$, proving the rest of the Proposition.
\end{proof}

We use this Proposition to prove the following identity. Define the principal congruence subgroups
\[
\Gamma(p^n)_{\mathbb{Q}_p} = \{g\in \mathrm{GL}_2(\mathbb{Z}_p)\mid g\equiv 1\modd p^k\}\ ,
\]
\[
\Gamma(p^n)_{\mathbb{Q}_{p^r}} = \{g\in \mathrm{GL}_2(\mathbb{Z}_{p^r})\mid g\equiv 1\modd p^k\}\ .
\]
For any compact open subgroup $K$ of $\mathrm{GL}_2(\mathbb{Q}_p)$ or $\mathrm{GL}_2(\mathbb{Q}_{p^r})$, let $e_K$ be the idempotent which is the characteristic function of $K$ divided by its volume.

\begin{cor}\label{BCUnit} Let $f$ be a conjugation-invariant locally integrable function on $\mathrm{GL}_2(\mathbb{Z}_p)$. Then the function $\phi$ on $\mathrm{GL}_2(\mathbb{Z}_{p^r})$ defined by $\phi(\delta)=f(N\delta)$ is locally integrable. Furthermore,
\[(e_{\Gamma(p^k)_{\mathbb{Q}_{p^r}}}\ast \phi)(\delta) = (e_{\Gamma(p^k)_{\mathbb{Q}_p}}\ast f)(N\delta)\]
for all $\delta\in \mathrm{GL}_2(\mathbb{Z}_{p^r})$.
\end{cor}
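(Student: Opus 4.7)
The strategy is to reduce the identity to a finite statement in $\mathrm{GL}_2(\mathbb{Z}_{p^r}/p^n\mathbb{Z}_{p^r})$ for some $n\ge k$, split both sides according to ($\sigma$-)conjugacy classes, and apply Proposition~\ref{SizeTwistedvsNormal} at both levels $p^n$ and $p^k$ to match the resulting weights.

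First I would verify that both sides of the desired identity are $\sigma$-conjugation-invariant functions of $\delta$ that factor through $\mathrm{GL}_2(\mathbb{Z}_{p^r}/p^k\mathbb{Z}_{p^r})$. For the left-hand side this follows from normality of $\Gamma(p^k)_{\mathbb{Q}_{p^r}}$ in $\mathrm{GL}_2(\mathbb{Z}_{p^r})$ together with the identity $N(h^{-1}\delta h^\sigma)=h^{-1}(N\delta)h$ (proved as in the proof of Proposition~\ref{SizeTwistedvsNormal}) and the conjugation-invariance of $f$. For the right-hand side, the same identity combined with conjugation-invariance of $e_{\Gamma(p^k)_{\mathbb{Q}_p}}\ast f$ yields the $\sigma$-conjugation-invariance; and expanding $N(\delta\gamma)=\prod_i(\delta\gamma)^{\sigma^i}$ modulo $p^k$, using that each $\gamma^{\sigma^i}\equiv 1$ for $\gamma\in\Gamma(p^k)_{\mathbb{Q}_{p^r}}$, gives $N(\delta\gamma)\equiv N\delta\pmod{p^k}$ and hence the factorization through mod $p^k$.

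Since both sides are bounded linear functionals of $f$ (continuous in $L^1_{\mathrm{loc}}$), one may approximate $f$ by locally constant class functions factoring through $\mathrm{GL}_2(\mathbb{Z}_p/p^n\mathbb{Z}_p)$ for some $n\ge k$. Writing both convolutions as finite averages, I would group the left-hand sum by $\sigma$-conjugacy classes $[u_0]$ in $\mathrm{GL}_2(\mathbb{Z}_{p^r}/p^n)$ (on each of which $f\circ N$ is constant by Proposition~\ref{SizeTwistedvsNormal}) and the right-hand sum by conjugacy classes $C$ in $\mathrm{GL}_2(\mathbb{Z}_p/p^n)$. Matching $[u_0]$ with $C=[Nu_0]$ via the bijection of the same proposition, the identity reduces for each matching pair to the weight equation
\[
\frac{|\delta\,\bar\Gamma_{p^r}\cap [u_0]|}{|\bar\Gamma_{p^r}|} = \frac{|N\delta\,\bar\Gamma_p\cap C|}{|\bar\Gamma_p|},
\]
where I abbreviate $\bar\Gamma_{p^r}=\Gamma(p^k)_{\mathbb{Q}_{p^r}}/\Gamma(p^n)_{\mathbb{Q}_{p^r}}$ and $\bar\Gamma_p=\Gamma(p^k)_{\mathbb{Q}_p}/\Gamma(p^n)_{\mathbb{Q}_p}$.

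The main step is to evaluate both intersections by orbit-stabilizer. Parametrize $\delta\,\bar\Gamma_{p^r}\cap[u_0]$ via $h\in\mathrm{GL}_2(\mathbb{Z}_{p^r}/p^n)$ through $h\mapsto h^{-1}u_0h^\sigma$: the fibers of this map have size $|G_{u_0\sigma}(\mathbb{Z}/p^n\mathbb{Z})|$, and the coset condition $h^{-1}u_0h^\sigma\equiv\delta\pmod{p^k}$ depends only on the reduction $\bar h$ of $h$ modulo $p^k$, for which orbit-stabilizer in $\mathrm{GL}_2(\mathbb{Z}_{p^r}/p^k)$ gives exactly $|G_{u_0\sigma}(\mathbb{Z}/p^k\mathbb{Z})|$ valid $\bar h$, each of which has $|\bar\Gamma_{p^r}|$ lifts to $\mathrm{GL}_2(\mathbb{Z}_{p^r}/p^n)$. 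Thus
\[
|\delta\,\bar\Gamma_{p^r}\cap[u_0]| = \frac{|G_{u_0\sigma}(\mathbb{Z}/p^k\mathbb{Z})|\cdot|\bar\Gamma_{p^r}|}{|G_{u_0\sigma}(\mathbb{Z}/p^n\mathbb{Z})|},
\]
and the analogous formula holds for the right-hand intersection with $G_{Nu_0}$ in place of $G_{u_0\sigma}$. The weight equation reduces to
\[
\frac{|G_{u_0\sigma}(\mathbb{Z}/p^k\mathbb{Z})|}{|G_{u_0\sigma}(\mathbb{Z}/p^n\mathbb{Z})|} = \frac{|G_{Nu_0}(\mathbb{Z}/p^k\mathbb{Z})|}{|G_{Nu_0}(\mathbb{Z}/p^n\mathbb{Z})|},
\]
which follows directly from Proposition~\ref{SizeTwistedvsNormal} applied at both levels $p^n$ and $p^k$; this double application is the key point, and amounts to the main technical obstacle. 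Local integrability of $\phi$ is automatic from the discussion, since only finite sums appear in the finite-level reduction.
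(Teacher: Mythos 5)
Your argument is correct and is essentially the same as the paper's: reduce to $f$ locally constant, pass to the finite quotient $\mathrm{GL}_2(\mathbb{Z}_{p^r}/p^n\mathbb{Z}_{p^r})$, and deduce the identity from Proposition~\ref{SizeTwistedvsNormal} applied simultaneously at levels $p^k$ and $p^n$. The paper states this in one line ("combining Proposition \ref{SizeTwistedvsNormal} for the integers $k$ and $n$") and your orbit--stabilizer computation of the weights, together with the compatibility of the norm bijection with reduction, is precisely the intended unwinding of that phrase.
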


\begin{proof} Assume first that $f$ is locally constant, say invariant by $\Gamma(p^n)_{\mathbb{Q}_p}$. Of course, $\phi$ is then invariant under $\Gamma(p^n)_{\mathbb{Q}_{p^r}}$ and in particular locally integrable. The desired identity follows on combining Proposition \ref{SizeTwistedvsNormal} for the integers $k$ and $n$.

The corollary now follows by approximating $f$ by locally constant functions.
\end{proof}

Now we explain how to derive a base change fundamental lemma for elements in the center of Hecke algebras, once base change of representations is established.

Let tempered representations $\pi$, resp. $\Pi$, of $\mathrm{GL}_2(\mathbb{Q}_p)$, resp. $\mathrm{GL}_2(\mathbb{Q}_{p^r})$, be given.

\begin{definition} In this situation, $\Pi$ is called a base-change lift of $\pi$ if $\Pi$ is invariant under $\mathrm{Gal}(\mathbb{Q}_{p^r}/\mathbb{Q}_p)$ and for some extension of $\Pi$ to a representation of $\mathrm{GL}_2(\mathbb{Q}_{p^r})\rtimes \mathrm{Gal}(\mathbb{Q}_{p^r}/\mathbb{Q}_p)$, the identity
\[ \tr( Ng | \pi ) = \tr( (g,\sigma) | \Pi ) \]
holds for all $g\in \mathrm{GL}_2(\mathbb{Q}_{p^r})$ such that the conjugacy class of $Ng$ is regular semi-simple.
\end{definition}

It is known that there exist unique base-change lifts, cf. \cite{LanglandsBCGL2}, or more generally \cite{ArthurClozel}.

\begin{thm}\label{BaseChangeIdentity} Assume
\[
f\in \mathcal{Z}(\mathrm{GL}_2(\mathbb{Q}_p))\ ,\ \phi\in \mathcal{Z}(\mathrm{GL}_2(\mathbb{Q}_{p^r}))
\]
are given such that for every tempered irreducible smooth representation $\pi$ of $\mathrm{GL}_2(\mathbb{Q}_p)$ with base-change lift $\Pi$, the scalars $c_{f,\pi}$ resp. $c_{\phi,\Pi}$ through which $f$ resp. $\phi$ act on $\pi$ resp. $\Pi$, agree: $c_{f,\pi} = c_{\phi,\Pi}$.

Then for any associated $h\in C_c^{\infty}(\mathrm{GL}_2(\mathbb{Q}_p))$ and $h^{\prime}\in C_c^{\infty}(\mathrm{GL}_2(\mathbb{Q}_{p^r}))$, also $f\ast h$ and $\phi\ast h^{\prime}$ have matching (twisted) orbital integrals.

Furthermore, $e_{\Gamma(p^n)_{\mathbb{Q}_p}}$ and $e_{\Gamma(p^n)_{\mathbb{Q}_{p^r}}}$ are associated.
\end{thm}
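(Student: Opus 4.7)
The plan is to treat the two assertions independently: the first is a formal spectral argument, the second a direct combinatorial computation relying on Proposition \ref{SizeTwistedvsNormal}.

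For the first assertion, I would invoke the spectral characterization of matching: a pair $(h,h')$ has matching (twisted) orbital integrals on semisimple elements if and only if
\[
\mathrm{tr}\,\pi(h)=\mathrm{tr}_\sigma\,\Pi(h')
\]
for every tempered irreducible smooth representation $\pi$ of $\mathrm{GL}_2(\mathbb{Q}_p)$ with base-change lift $\Pi$, the sign convention of Definition \ref{DefAssoc} being absorbed into the normalization of the twisted trace. This equivalence follows from the Weyl integration formula, Harish-Chandra's density of tempered characters, and the defining property of the base-change lift. Granting it, the conclusion for $f\ast h$ and $\phi\ast h'$ is a one-line check: since $f$ lies in the Bernstein center, Schur's lemma gives $\pi(f\ast h)=c_{f,\pi}\,\pi(h)$, hence $\mathrm{tr}\,\pi(f\ast h)=c_{f,\pi}\,\mathrm{tr}\,\pi(h)$; analogously $\mathrm{tr}_\sigma\,\Pi(\phi\ast h')=c_{\phi,\Pi}\,\mathrm{tr}_\sigma\,\Pi(h')$. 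The two hypotheses $c_{f,\pi}=c_{\phi,\Pi}$ and $\mathrm{tr}\,\pi(h)=\mathrm{tr}_\sigma\,\Pi(h')$ then combine to give $\mathrm{tr}\,\pi(f\ast h)=\mathrm{tr}_\sigma\,\Pi(\phi\ast h')$ for every tempered $\pi$, so $f\ast h$ and $\phi\ast h'$ are associated.

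For the second assertion I would compute both sides directly. Both $e_{\Gamma(p^n)_{\mathbb{Q}_p}}$ and $e_{\Gamma(p^n)_{\mathbb{Q}_{p^r}}}$ are supported in the hyperspecial maximal compact, so nontrivial contributions come only from $\gamma\in\mathrm{GL}_2(\mathbb{Z}_p)$ and $\delta\in\mathrm{GL}_2(\mathbb{Z}_{p^r})$. After splitting $G_\gamma\backslash\mathrm{GL}_2(\mathbb{Q}_p)$ into cosets parametrized by $\gamma$-stable lattices $\Lambda\subset\mathbb{Q}_p^2$ and then further modulo $\Gamma(p^n)$, the integral $O_\gamma(e_{\Gamma(p^n)_{\mathbb{Q}_p}})$ becomes a sum over $G_\gamma$-orbits of such $\Lambda$, where each $\Lambda$ on which $\gamma$ acts trivially mod $p^n$ contributes $|\mathrm{GL}_2(\mathbb{Z}/p^n\mathbb{Z})|/|G_\gamma(\mathbb{Z}/p^n\mathbb{Z})|$ times a fixed volume factor, and $0$ otherwise. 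The twisted orbital integral $TO_{\delta\sigma}(e_{\Gamma(p^n)_{\mathbb{Q}_{p^r}}})$ admits a parallel decomposition over $\delta\sigma$-stable $\mathbb{Z}_{p^r}$-lattices, each contributing $|\mathrm{GL}_2(\mathbb{Z}_{p^r}/p^n\mathbb{Z}_{p^r})|/|G_{\delta\sigma}(\mathbb{Z}/p^n\mathbb{Z})|$ under the analogous triviality condition. Proposition \ref{SizeTwistedvsNormal} then identifies the two sums term by term: it pairs $\sigma$-conjugacy classes with conjugacy classes through $N$, pairs the lattice data via the identity $N(h_1^{-1}\delta h_1^\sigma)=h_1^{-1}\gamma h_1$, and gives the centralizer equality $|G_{\delta\sigma}(\mathbb{Z}/p^n\mathbb{Z})|=|G_{N\delta}(\mathbb{Z}/p^n\mathbb{Z})|$.

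The principal obstacle is the spectral characterization used in the first step: it rests on Plancherel for $\mathrm{GL}_2$ and requires careful bookkeeping of the $\pm$ sign of Definition \ref{DefAssoc} at central semisimple elements, where the twisted trace identity must be adjusted accordingly. The combinatorics of the second step is otherwise mechanical, the only delicacy being the consistent use of the Haar normalization $\mathrm{vol}(\mathrm{GL}_2(\mathbb{Z}_{p^r}))=p^r-1$ on both sides so that proportionality constants become honest equalities.
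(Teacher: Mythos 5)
Your outline of the first assertion follows the paper's route, but it hides the hard step inside the phrase ``spectral characterization of matching.'' The forward implication (matching implies the character identity $\mathrm{tr}\,\pi(h)=\mathrm{tr}_\sigma\Pi(h')$) is the Weyl integration formula argument. The backward implication, which is the one you actually need, does \emph{not} follow from density of tempered characters alone: to convert a character identity into an identity of orbital integrals across two different groups, you must first produce a function $f'\in C_c^\infty(\mathrm{GL}_2(\mathbb{Q}_p))$ whose ordinary orbital integrals match the twisted orbital integrals of $\phi\ast h'$ (this is the existence of twisted transfer, \cite{LanglandsBCGL2} Prop.~6.2), apply Weyl integration to the pair $(f',\phi\ast h')$, and only then invoke Kazhdan's density theorem on the difference $f\ast h-f'$ to conclude vanishing of regular semisimple orbital integrals, followed by Clozel's Prop.~7.2 to extend to all semisimple elements and to handle the sign at central elements. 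If you state the iff as a black box you are assuming precisely the result you are meant to be proving; the transfer existence input must be surfaced.

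For the second assertion your proposal diverges from the paper and, as sketched, does not work. Proposition~\ref{SizeTwistedvsNormal} is a statement about $\mathrm{GL}_2(\mathbb{Z}_{p^r}/p^n\mathbb{Z}_{p^r})$ versus $\mathrm{GL}_2(\mathbb{Z}/p^n\mathbb{Z})$: it gives $|G_{\delta\sigma}(\mathbb{Z}/p^n\mathbb{Z})|=|G_{N\delta}(\mathbb{Z}/p^n\mathbb{Z})|$ and a bijection of $\sigma$-conjugacy classes with conjugacy classes modulo $p^n$. It says nothing about the $\delta\sigma$-stable $\mathbb{Z}_{p^r}$-lattices in $\mathbb{Q}_{p^r}^2$ versus the $\gamma$-stable $\mathbb{Z}_p$-lattices in $\mathbb{Q}_p^2$. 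These lattice sets live in different buildings --- the building of $\mathrm{PGL}_2(\mathbb{Q}_{p^r})$ is strictly larger than that of $\mathrm{PGL}_2(\mathbb{Q}_p)$ (the latter is the $\sigma$-fixed subcomplex) --- and the quotients are by different groups, $G_{\delta\sigma}(\mathbb{Q}_p)$ versus $G_\gamma(\mathbb{Q}_p)$. Producing a termwise pairing of the two lattice counts is not given by Proposition~\ref{SizeTwistedvsNormal}, and you have also not addressed where the sign of Definition~\ref{DefAssoc} at central elements would come from in a direct positive-count comparison. The paper does something much cleaner: it applies Corollary~\ref{BCUnit} with $f$ equal to the (locally integrable) restriction to $\mathrm{GL}_2(\mathbb{Z}_p)$ of the character of $\pi$, $k=n$, $\delta=1$, which yields the trace identity $\mathrm{tr}(e_{\Gamma(p^n)_{\mathbb{Q}_p}}|\pi)=\mathrm{tr}((e_{\Gamma(p^n)_{\mathbb{Q}_{p^r}}},\sigma)|\Pi)$ directly, and then runs the same transfer/Kazhdan/Clozel argument as in the first part. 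Your Proposition~\ref{SizeTwistedvsNormal} is indeed the engine, but it is used inside Corollary~\ref{BCUnit} to produce a character identity, not to compare buildings.
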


\begin{proof} Because $h$ and $h^{\prime}$ are associated, we have $\tr( h | \pi ) = \tr ( (h^{\prime},\sigma) | \Pi )$ if $\Pi$ is a base-change lift of $\pi$, as follows from the Weyl integration formula, cf. \cite{LanglandsBCGL2}, p.99, for the twisted version. We find
\[
\tr( f\ast h | \pi ) = c_{f,\pi} \tr( h | \pi ) = c_{\phi,\Pi} \tr( (h^{\prime},\sigma) | \Pi ) = \tr( (\phi\ast h^{\prime},\sigma) | \Pi )\ .\]
We may find a function $f^{\prime}\in \mathcal{H}(\mathrm{GL}_2(\mathbb{Q}_p))$ that has matching (twisted) orbital integrals with $\phi\ast h^{\prime}$, cf. \cite{LanglandsBCGL2}, Prop. 6.2. This implies that $\tr( (\phi\ast h^{\prime},\sigma) | \Pi ) = \tr( f^{\prime} | \pi )$. Hence $\tr( (f\ast h - f^{\prime}) | \pi ) = 0$ for all tempered irreducible smooth representations $\pi$ of $\mathrm{GL}_2(\mathbb{Q}_p)$. By Kazhdan's density theorem, \cite{Kazhdan}, Theorem 1, all regular semi-simple orbital integrals of $f\ast h-f^{\prime}$ vanish. Hence $f\ast h$ and $\phi\ast h^{\prime}$ have matching regular semi-simple (twisted) orbital integrals. By \cite{Clozel}, Prop. 7.2, all semi-simple (twisted) orbital integrals of $f\ast h$ and $\phi\ast h^{\prime}$ match.

To show the last statement, we first check that
\[
\tr ( e_{\Gamma(p^n)_{\mathbb{Q}_p}} | \pi ) = \tr ( (e_{\Gamma(p^n)_{\mathbb{Q}_{p^r}}},\sigma) | \Pi )\ .
\]
But this follows directly from Corollary \ref{BCUnit} with $f$ the character of $\pi$ restricted to $\mathrm{GL}_2(\mathbb{Z}_p)$, $k=n$ and $\delta=1$, because characters are locally integrable. Now the rest of the argument is precisely as above.
\end{proof}

\section{The moduli space of elliptic curves with level structure: Case of good reduction}

We will briefly recall some aspects of the theory of the moduli space of elliptic curves with level structure that we shall need. All of the material presented in this section is contained in \cite{DeligneRapoport}.

\begin{definition} A morphism $p: E\longrightarrow S$ of schemes with a section $e: S\longrightarrow E$ is said to be an elliptic curve over $S$ if $p$ is proper, flat, and all geometric fibers are elliptic curves (with zero section given by $e$).
\end{definition}

We simply say that $E/S$ is an elliptic curve, omitting the morphisms $p$ and $e$ in the notation. It is well-known that an elliptic curve is canonically a commutative group scheme over $S$, with $e$ as unit section.

One might try to represent the functor
\[\begin{aligned}
\mathfrak{M}: (\mathrm{Schemes})&\longrightarrow (\mathrm{Sets})\\
S&\longmapsto \{E/S\ \text{elliptic curve over}\ S\ \text{up to isomorphism} \}\ ,
\end{aligned}\]
but it is well-known that this is not representable by a scheme. We need the next definition:

\begin{definition} A level-$m$-structure on an elliptic curve $E/S$ is an isomorphism of group schemes over $S$
\[\alpha: (\mathbb{Z}/m\mathbb{Z})^2_S\longrightarrow E[m]\ , \]
where $E[m]$ is the preimage of (the closed subscheme) $e$ under multiplication by $m: E\longrightarrow E$.
\end{definition}

This is motivated by the fact that for $S=\mathrm{Spec}\ k$ the spectrum of an algebraically closed field $k$ of characteristic prime to $m$, one always has (noncanonically) $E[m]\cong (\mathbb{Z}/m\mathbb{Z})^2$. However, for algebraically closed fields $k$ whose characteristic divides $m$, there are no level-$m$-structures at all and it follows that if $(E/S,\alpha)$ is an elliptic curve with level-$m$-structure then $m$ is invertible on $S$. Consider now the following functor
\[\mathfrak{M}_m: (\mathrm{Schemes}/\mathbb{Z}[m^{-1}])\longrightarrow (\mathrm{Sets}) \]
\[ S\longmapsto \left\{\begin{array}{l}(E/S,\alpha)\ \text{elliptic curve}\ E\ \text{over}\ S\ \text{with}\\ \text{level-}m\text{-structure}\ \alpha\text{, up to isomorphism} \end{array} \right\}\ . \]

\begin{thm} For $m\geq 3$, the functor $\mathfrak{M}_m$ is representable by a smooth affine curve $\mathcal{M}_m$ over $\mathrm{Spec}\ \mathbb{Z}[\frac 1m]$. There is a projective smooth curve $\overline{\mathcal{M}}_m$ containing $\mathcal{M}_m$ as an open dense subset such that the boundary $\partial \mathcal{M}_m = \overline{\mathcal{M}}_m \setminus \mathcal{M}_m$ is \'{e}tale over $\mathrm{Spec}\ \mathbb{Z}[\frac 1m]$.
\end{thm}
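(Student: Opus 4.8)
The plan is to deduce this from the general theory of algebraic stacks together with a rigidity argument for level structures, all of which is developed in detail in \cite{DeligneRapoport}. First I would recall that the functor $\mathfrak M$ of elliptic curves up to isomorphism is represented by the smooth proper Deligne--Mumford stack $\overline{\mathcal M}_{1,1}$ of generalized elliptic curves (equivalently, its open substack $\mathcal M_{1,1}$ parametrizing honest elliptic curves, together with the cuspidal locus). The key point is then that adding a level-$m$-structure for $m\geq 3$ rigidifies the moduli problem: if $(E/S,\alpha)$ is an elliptic curve with level-$m$-structure, then any automorphism of $E/S$ fixing $\alpha$ is the identity. This is because an automorphism of an elliptic curve over an algebraically closed field that acts trivially on the $m$-torsion for $m\geq 3$ must be trivial (the nontrivial automorphisms $-1$, and the extra automorphisms in characteristic $2$ and $3$, all act nontrivially on $E[m]$ once $m\geq 3$ — this is the classical computation, see \cite[IV, 2.7]{DeligneRapoport}). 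Hence the morphism $\mathcal M_m\to \mathcal M_{1,1}$ (over $\mathbb Z[\tfrac1m]$), which is finite \'etale with group $\mathrm{GL}_2(\mathbb Z/m\mathbb Z)$ as a map of stacks, has trivial automorphism groups on the source, so $\mathcal M_m$ is an algebraic space; being quasi-finite and separated over a scheme and of finite type, it is a scheme. Smoothness over $\mathbb Z[\tfrac1m]$ and affineness then follow: smoothness because $\mathcal M_{1,1}$ is smooth over $\mathbb Z$ and the level cover is \'etale, and affineness because $\mathcal M_m$ is finite over the $j$-line $\mathbb A^1_{\mathbb Z[1/m]}$ (one checks the map to $\mathbb A^1_j$ is finite, using properness of $\overline{\mathcal M}_{1,1}$ and the fact that the level cover does not meet the cusps within $\mathcal M_m$).

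For the compactification, I would form $\overline{\mathcal M}_m$ as the normalization of $\overline{\mathcal M}_{1,1}$ (over $\mathbb Z[\tfrac1m]$) in the function field(s) of $\mathcal M_m$; equivalently, one extends the notion of level-$m$-structure to generalized elliptic curves whose geometric fibers are N\'eron $N$-gons with the appropriate compatibility, and checks this extended moduli problem is represented by a proper smooth curve. Concretely: $\overline{\mathcal M}_m$ is proper over $\mathbb Z[\tfrac1m]$ by construction (normalization in a finite extension of the function field of a proper scheme is proper, or: the extended moduli functor is proper by the valuative criterion, using semistable reduction of elliptic curves), it contains $\mathcal M_m$ as a fiberwise dense open subset, and its smoothness over $\mathbb Z[\tfrac1m]$ follows because \'etale-locally near the cusps the extended moduli problem looks like the Tate curve with its canonical level structure, giving an explicit smooth chart (the Tate curve construction over $\mathbb Z((q))[\mu_m]$ and its $q\mapsto q^{1/m}$-type base changes). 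The statement that $\partial\mathcal M_m$ is \'etale over $\mathrm{Spec}\,\mathbb Z[\tfrac1m]$ is again read off from this local description: the cusps of $\mathcal M_m$ are indexed by the $\mathrm{GL}_2(\mathbb Z/m\mathbb Z)$-orbit data on $N$-gon level structures, and over $\mathbb Z[\tfrac1m]$ (where $\mu_m$ is \'etale) these form a finite \'etale $\mathbb Z[\tfrac1m]$-scheme.

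The main obstacle is the behavior at the cusps and, relatedly, at the primes $p\mid m$ being excluded exactly so that $\mu_m$ is \'etale: one must genuinely check that the extended (generalized elliptic curve) moduli problem is smooth over $\mathbb Z[\tfrac1m]$, which requires the explicit Tate-curve charts rather than any soft argument — the interior smoothness and representability are comparatively formal once rigidity is in hand. Since the excerpt says all of this is contained in \cite{DeligneRapoport}, in the write-up I would simply cite the relevant chapters (\cite[IV.2--IV.3, VII]{DeligneRapoport}) for the construction of $\overline{\mathcal M}_m$ and the \'etaleness of the boundary, and limit the argument given here to the rigidity step that turns the stack $\mathcal M_{1,1}$ into the scheme $\mathcal M_m$.
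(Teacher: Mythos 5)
The paper offers no proof of this theorem; Section 4 opens by stating that all of its material is contained in \cite{DeligneRapoport}, and the statement is simply recalled as a black box. Your sketch — rigidity of level-$m$-structures for $m\geq 3$ (Serre's lemma) to descend from the Deligne--Mumford stack $\mathcal{M}_{1,1}$ to a scheme, finiteness over the $j$-line for affineness, and normalization plus Tate-curve charts at the cusps for the smooth projective compactification with \'etale boundary — is precisely the route that reference takes, so you have correctly reconstructed the argument the paper is implicitly citing.
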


Because the integer $m$ plays a minor role in the following, we will write $\mathcal{M}$ for $\mathcal{M}_m$.

\section{Counting points: The Langlands-Kottwitz approach}

We will explain the method of Langlands-Kottwitz to count the number of points $\modd p$ of Shimura varieties with good reduction, in the case of the modular curve. This is based on some unpublished notes of Kottwitz \cite{KottwitzNotes}.

Let $p$ be a prime not dividing $m$. Fix an elliptic curve $E_0$ over $\mathbb{F}_{p^r}$, for some positive integer $r$. Let $\mathbb{A}_f^p$ be the ring of finite ad\`eles of $\mathbb{Q}$ with trivial $p$-component and $\hat{\mathbb{Z}}^p\cong \prod_{\ell \neq p} \mathbb{Z}_{\ell}$ be the integral elements in $\mathbb{A}_f^p$.

We want to count the number of elements of
\[
\mathcal{M}(\mathbb{F}_{p^r})(E_0):=\{x\in \mathcal{M}(\mathbb{F}_{p^r}) \mid E_x\ \text{is}\ \mathbb{F}_{p^r}\text{-isogeneous to}\ E_0\}\ .
\]
Define
\[
H^p = H^1_{\mathrm{et}}(E_0,\mathbb{A}_f^p)\ ,\ H_p = H^1_{\mathrm{cris}}(E_0/\mathbb{Z}_{p^r})\otimes_{\mathbb{Z}_{p^r}} \mathbb{Q}_{p^r}\ .
\]
Now take $x\in \mathcal{M}(\mathbb{F}_{p^r})(E_0)$ arbitary. Choosing an $\mathbb{F}_{p^r}$-isogeny $f: E_0\longrightarrow E_x$, we get a $G_{\mathbb{F}_{p^r}}=\mathrm{Gal}(\overline{\mathbb{F}}_{p^r}/\mathbb{F}_{p^r})$-invariant $\hat{\mathbb{Z}}^p$-lattice
\[
L=f^{\ast}(H^1_{\mathrm{et}}(E_x,\hat{\mathbb{Z}}^p))\subset H^p\ ,
\]
an $F,V$-invariant $\mathbb{Z}_{p^r}$-lattice
\[
\Lambda=f^{\ast}(H^1_{\mathrm{cris}}(E_x/\mathbb{Z}_{p^r}))\subset H_p\ ,
\]
and a $G_{\mathbb{F}_{p^r}}$-invariant isomorphism
\[
\phi: (\mathbb{Z}/m\mathbb{Z})^2 \longrightarrow L\otimes \mathbb{Z}/m\mathbb{Z}
\]
(where the right hand side has the trivial $G_{\mathbb{F}_{p^r}}$-action), corresponding to the level-$m$-structure. Let $Y^p$ be the set of such $(L,\phi)$ and $Y_p$ be the set of $\Lambda$ as above. Dividing by the choice of $f$, we get a map
\[
\mathcal{M}(\mathbb{F}_{p^r})(E_0)\longrightarrow \Gamma\backslash Y^p \times Y_p\ ,
\]
where $\Gamma = (\mathrm{End}(E_0)\otimes \mathbb{Q})^{\times}$.

\begin{thm} This map is a bijection.
\end{thm}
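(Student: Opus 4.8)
The plan is to construct an explicit inverse to the displayed map, using the dictionary between elliptic curves up to isogeny and their (prime-to-$p$ étale, plus crystalline) cohomology realizations. First I would fix the base curve $E_0/\mathbb{F}_{p^r}$ and recall that an $\mathbb{F}_{p^r}$-isogeny class is precisely an object of the isogeny category; thus every $x\in\mathcal{M}(\mathbb{F}_{p^r})(E_0)$ together with a chosen isogeny $f\colon E_0\to E_x$ determines the lattice data $(L,\phi)\in Y^p$ and $\Lambda\in Y_p$ as in the setup, and two choices of $f$ differ by an element of $\Gamma=(\mathrm{End}(E_0)\otimes\mathbb{Q})^\times$, so the map to $\Gamma\backslash(Y^p\times Y_p)$ is well defined. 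The content is that this is a bijection.

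Next I would address surjectivity and injectivity together by reconstructing $E_x$ from the data. Given $(L,\phi,\Lambda)$, the prime-to-$p$ part: a $G_{\mathbb{F}_{p^r}}$-stable $\hat{\mathbb{Z}}^p$-lattice $L\subset H^p$ commensurable with $H^1_{\mathrm{et}}(E_0,\hat{\mathbb{Z}}^p)$ corresponds (by the theory of isogenies and the full faithfulness of the Tate-module/étale-cohomology functor on the prime-to-$p$ part, which holds because $E_0$ is ordinary-or-supersingular over a finite field and Tate's isogeny theorem applies) to a prime-to-$p$ isogeny $E_0\to E'$; the $p$-part: an $F,V$-stable $\mathbb{Z}_{p^r}$-lattice $\Lambda\subset H_p$ corresponds by Dieudonné theory to a $p$-power isogeny modifying $E'$ to the desired $E_x$, the $F,V$-stability being exactly what guarantees the quotient is again (the Dieudonné module of) a $p$-divisible group of an elliptic curve over $\mathbb{F}_{p^r}$ rather than over $\overline{\mathbb{F}}_{p^r}$. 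Combining, $(L,\Lambda)$ produces an elliptic curve $E_x/\mathbb{F}_{p^r}$ equipped with an $\mathbb{F}_{p^r}$-isogeny from $E_0$, well defined up to unique isomorphism, and $\phi$ transports to a level-$m$-structure on $E_x$; changing $f$ by $\Gamma$ changes nothing on the quotient. This gives a two-sided inverse.

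I would spell out two compatibility points that make the argument rigorous: (a) that the full faithfulness statements are genuinely over $\mathbb{F}_{p^r}$ and not merely $\overline{\mathbb{F}}_{p^r}$ — for the étale part this is the Galois-equivariance built into $Y^p$ together with Tate's theorem, and for the crystalline part it is the equivalence between $p$-divisible groups over $\mathbb{F}_{p^r}$ and Dieudonné modules with their $F$ and $V$; (b) that level-$m$-structures are unobstructed under isogeny prime to $m$, so that $\phi\colon(\mathbb{Z}/m\mathbb{Z})^2\to L\otimes\mathbb{Z}/m\mathbb{Z}$ and the $G_{\mathbb{F}_{p^r}}$-triviality condition really do match up with level-$m$-structures on $E_x$ in the sense of the moduli problem of Section~4; here one uses that $p\nmid m$ so that $E_x[m]$ is visible in the prime-to-$p$ étale realization. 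Injectivity then reduces to: if two pairs $(E_x,f)$, $(E_{x'},f')$ with the same isogeny class give the same $\Gamma$-orbit of data, then $E_x\cong E_{x'}$ compatibly with level structures, which is immediate once the reconstruction is known to be inverse to the comparison.

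The main obstacle I expect is the $p$-adic bookkeeping in step two: one must check that an arbitrary $F,V$-stable lattice $\Lambda$ in $H_p$ — not just one commensurable in a controlled way with $H^1_{\mathrm{cris}}(E_0/\mathbb{Z}_{p^r})$ — is the Dieudonné module of the $p$-divisible group of an honest elliptic curve over $\mathbb{F}_{p^r}$, and that the resulting $p$-power isogeny is compatible with the prime-to-$p$ modification so that the two are glued into a single elliptic curve. This is where one needs Tate's theorem on homomorphisms of $p$-divisible groups over finite fields (equivalently, the fact that the isogeny category of elliptic curves over $\mathbb{F}_{p^r}$ embeds fully faithfully into the product of its $\ell$-adic and crystalline realizations), and it is exactly the input that makes the Langlands–Kottwitz description work; everything else is formal manipulation of lattices and $\Gamma$-orbits.
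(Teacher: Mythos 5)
Your proposal follows essentially the same route as the paper: reconstruct the elliptic curve from the lattice data by interpreting $L$ via prime-to-$p$ \'etale covers of $E_0$ and $\Lambda$ via Dieudonn\'e theory, after scaling both into the integral cohomology lattices. Two remarks on precision. First, the appeal to Tate's isogeny theorem is more than is needed: for surjectivity one only needs that an $F,V$-stable sublattice of $H^1_{\mathrm{cris}}(E_0/\mathbb{Z}_{p^r})$ corresponds to a finite subgroup scheme of $E_0[p^\infty]$ and a $G_{\mathbb{F}_{p^r}}$-stable sublattice of $H^1_{\mathrm{et}}(E_0,\hat{\mathbb{Z}}^p)$ to a finite prime-to-$p$ subgroup, and then $E_1 = E_0/G^pG_p$ is automatically an elliptic curve over $\mathbb{F}_{p^r}$ --- your worry about ``arbitrary'' $\Lambda$ evaporates once you note any two lattices in $H_p$ are commensurable. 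Second, and more substantively, your injectivity argument is deferred to the statement that the reconstruction ``is a two-sided inverse,'' which is the thing one has to actually verify; the paper's argument is concrete here: given $f_1,f_2$ inducing the same lattices, after adjusting by an element of $\Gamma$ one shows the quasi-isogeny $f = f_1f_2^{-1}$ is an honest morphism by picking $M$ with $Mf$ an isogeny and using the cohomological equality to see that $Mf$ kills $E_2[M]$, hence factors through $[M]$. You should supply that (or an equivalent) check rather than asserting it is immediate.
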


\begin{proof} Assume that $(E_1,\phi_1)$ and $(E_2,\phi_2)$ have the same image. Choose isogenies $f_1: E_0\longrightarrow E_1$, $f_2: E_0\longrightarrow E_2$. Then the corresponding elements of $Y^p\times Y_p$ differ by an element $h\in \Gamma$. Write $h=m^{-1}h_0$ where $m$ is an integer and $h_0$ is a self-isogeny of $E_0$. Changing $f_1$ to $f_1h$ and $f_2$ to $f_2m$, we may assume that the elements of $Y^p\times Y_p$ are the same. We want to see that $f=f_1f_2^{-1}$, a priori an element of $\mathrm{Hom}(E_2,E_1)\otimes \mathbb{Q}$, actually belongs to $\mathrm{Hom}(E_2,E_1)$. Analogously, $f_2f_1^{-1}$ will be an actual morphism, so that they define inverse isomorphisms.

Now, let $M$ be an integer such that $Mf: E_2\longrightarrow E_1$ is an isogeny. Our knowledge of what happens on the cohomology implies by the theory of \'{e}tale covers of $E_1$ and the theory of Dieudonn\'{e} modules, that $Mf$ factors through multiplication by $M$. This is what we wanted to show. Note that $\phi_1$ and $\phi_2$ are carried to each other by assumption.

For surjectivity, let $(L,\phi,\Lambda)\in Y^p\times Y_p$ be given. By changing by a scalar, we may assume that $L$ and $\Lambda$ are contained in the integral lattices
\[H^1_{\mathrm{et}}(E_0,\hat{\mathbb{Z}}^p)\ ,\ H^1_{\mathrm{cris}}(E_0/\mathbb{Z}_{p^r})\ .\]
Then the theory of Dieudonn\'{e} modules provides us with a subgroup of $p$-power order $G_p$ corresponding to $\Lambda$ and the theory of \'{e}tale covers of $E_0$ provides us with a subgroup $G^p$ of order prime to $p$, corresponding to $L$. We then take $E_1 = E_0/G^pG_p$. It is easy to see that this gives the correct lattices. Of course, $\phi$ provides a level-$m$-structure.
\end{proof}

From here, it is straightforward to deduce the following corollary. Let $\gamma\in \mathrm{GL}_2(\mathbb{A}_f^p)$ be the endomorphism induced by $\Phi_{p^r}$ on $H^p$ (after choosing a basis of $H^p$). Similarly, let $\delta\in \mathrm{GL}_2(\mathbb{Q}_{p^r})$ be induced by the $p$-linear endomorphism $F$ on $H_p$ (after choosing a basis of $H_p$): If $\sigma$ is the $p$-linear isomorphism of $H_p$ preserving the chosen basis, define $\delta$ by $F=\delta\sigma$. Then we have the centralizer
\[ G_{\gamma}(\mathbb{A}_f^p) = \{g\in \mathrm{GL}_2(\mathbb{A}_f^p) \mid g^{-1}\gamma g = \gamma \} \]
of $\gamma$ in $\mathrm{GL}_2(\mathbb{A}_f^p)$ and the twisted centralizer
\[ G_{\delta\sigma}(\mathbb{Q}_p) = \{h \in \mathrm{GL}_2(\mathbb{Q}_{p^r}) \mid h^{-1} \delta h^{\sigma} = \delta \} \]
of $\delta$ in $\mathrm{GL}_2(\mathbb{Q}_{p^r})$. Let $f^p$ be the characteristic function of the set
\[K^p = \{g\in \mathrm{GL}_2(\hat{\mathbb{Z}}^p)\mid g\equiv 1\modd m\}\]
divided by its volume and let $\phi_{p,0}$ be the characteristic function of the set
\[\mathrm{GL}_2(\mathbb{Z}_{p^r})\left(\begin{array}{cc} p & 0 \\ 0 & 1 \end{array}\right) \mathrm{GL}_2(\mathbb{Z}_{p^r})\]
divided by the volume of $\mathrm{GL}_2(\mathbb{Z}_{p^r})$. For any smooth function with compact support $f$ on $\mathrm{GL}_2(\mathbb{A}_f^p)$, put
\[O_{\gamma}(f) = \int_{G_\gamma(\mathbb{A}_f^p)\backslash \mathrm{GL}_2(\mathbb{A}_f^p)} f(g^{-1}\gamma g) dg\ .\]

\begin{cor}\label{LKClassical} The cardinality of $\mathcal{M}(\mathbb{F}_{p^r})(E_0)$ is
\[\mathrm{vol}(\Gamma\backslash G_{\gamma}(\mathbb{A}_f^p)\times G_{\delta\sigma}(\mathbb{Q}_p)) O_{\gamma}(f^p) TO_{\delta\sigma}(\phi_{p,0})\ ,\]
where the Haar measure on $\Gamma$ gives points measure 1.
\end{cor}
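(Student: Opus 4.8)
The plan is to convert the bijection of the preceding theorem, $\mathcal M(\mathbb F_{p^r})(E_0)\cong\Gamma\backslash(Y^p\times Y_p)$, into an adelic point count. First I would make the two sets $Y^p$, $Y_p$ fully explicit. Fixing a basis of $H^p$ to identify it with $(\mathbb A_f^p)^2$, so that $L_0=(\hat{\mathbb Z}^p)^2$ carries the standard level-$m$-structure $\phi_0$ and $K^p=\mathrm{Stab}(L_0,\phi_0)$, the rule $gK^p\mapsto(gL_0,\,g\cdot\phi_0)$ identifies $\mathrm{GL}_2(\mathbb A_f^p)/K^p$ with the set of all pairs $(L,\phi)$. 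Since $\Phi_{p^r}$ acts on $H^p$ through $\gamma$ and trivially on $(\mathbb Z/m\mathbb Z)^2$, the subset $Y^p$ of $\Phi_{p^r}$-invariant pairs is $\{gK^p:\ g^{-1}\gamma g\in K^p\}$, and this condition depends only on the coset because $K^p$ is a group (so it is the support condition $f^p(g^{-1}\gamma g)\neq0$). Similarly, fixing a basis of $H_p$ (the same one used to write $F=\delta\sigma$), the Dieudonné dictionary identifies $Y_p$, the set of $F,V$-stable $\mathbb Z_{p^r}$-lattices $\Lambda\subset H_p$, with $\{hK_p:\ h^{-1}\delta h^\sigma\in K_p(\begin{smallmatrix}p&0\\0&1\end{smallmatrix})K_p\}$, where $K_p=\mathrm{GL}_2(\mathbb Z_{p^r})$: writing $\Lambda=h\Lambda_0$, the condition $F\Lambda\subseteq\Lambda$ reads $h^{-1}\delta h^\sigma\in M_2(\mathbb Z_{p^r})$, the fact that $E_0$ is an elliptic curve forces $v_p(\det\delta)=1$ so that the elementary divisors must be $(1,p)$, and then $V\Lambda\subseteq\Lambda$ is automatic. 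Again this condition depends only on $hK_p$, the double coset being bi-$K_p$-invariant and $\sigma$-stable, and it is exactly $\phi_{p,0}(h^{-1}\delta h^\sigma)\neq0$. Thus $Y^p=X^p/K^p$ and $Y_p=X_p/K_p$ with $X^p$, $X_p$ the supports just described.

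Next I would count $\Gamma\backslash(Y^p\times Y_p)=\Gamma\backslash(X^p\times X_p)/(K^p\times K_p)$ by integration. Here $\Gamma$ embeds into $\mathcal G:=G_\gamma(\mathbb A_f^p)\times G_{\delta\sigma}(\mathbb Q_p)$ through its actions on $H^p$ and $H_p$ (commuting, resp.\ $\sigma$-commuting, with Frobenius), and — this is the one genuinely geometric input needed — since $m\geq3$ every object of the moduli problem has trivial automorphism group, so $\Gamma$ acts freely on $Y^p\times Y_p$. Consider $F(g,h)=f^p(g^{-1}\gamma g)\,\phi_{p,0}(h^{-1}\delta h^\sigma)$ on $\mathrm{GL}_2(\mathbb A_f^p)\times\mathrm{GL}_2(\mathbb Q_{p^r})$. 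By the remarks above $F$ is left-$\Gamma$-invariant and right-$(K^p\times K_p)$-invariant, hence descends to $\Gamma\backslash(\mathrm{GL}_2(\mathbb A_f^p)\times\mathrm{GL}_2(\mathbb Q_{p^r}))/(K^p\times K_p)$, where it takes the value $(\mathrm{vol}\,K^p\cdot\mathrm{vol}\,K_p)^{-1}$ on the image of $X^p\times X_p$ and $0$ elsewhere. Freeness makes each such coset have volume $\mathrm{vol}\,K^p\cdot\mathrm{vol}\,K_p$ in $\Gamma\backslash(\mathrm{GL}_2(\mathbb A_f^p)\times\mathrm{GL}_2(\mathbb Q_{p^r}))$, so
\[
\int_{\Gamma\backslash(\mathrm{GL}_2(\mathbb A_f^p)\times\mathrm{GL}_2(\mathbb Q_{p^r}))}F(g,h)\,d(g,h)=\bigl|\Gamma\backslash(X^p\times X_p)/(K^p\times K_p)\bigr|=\bigl|\mathcal M(\mathbb F_{p^r})(E_0)\bigr|\ ,
\]
a finite sum since $\mathcal M(\mathbb F_{p^r})$ is finite.

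Finally I would unfold the integral through the tower $\Gamma\subset\mathcal G\subset\mathrm{GL}_2(\mathbb A_f^p)\times\mathrm{GL}_2(\mathbb Q_{p^r})$. The integrand $F$ is left-invariant not just under $\Gamma$ but under all of $\mathcal G$ (conjugation by $G_\gamma(\mathbb A_f^p)$ fixes $\gamma$, $\sigma$-conjugation by $G_{\delta\sigma}(\mathbb Q_p)$ fixes $\delta$), so the standard integration formula for the tower gives
\[
\int_{\Gamma\backslash(\mathrm{GL}_2(\mathbb A_f^p)\times\mathrm{GL}_2(\mathbb Q_{p^r}))}F=\mathrm{vol}(\Gamma\backslash\mathcal G)\int_{\mathcal G\backslash(\mathrm{GL}_2(\mathbb A_f^p)\times\mathrm{GL}_2(\mathbb Q_{p^r}))}F=\mathrm{vol}(\Gamma\backslash\mathcal G)\,O_\gamma(f^p)\,TO_{\delta\sigma}(\phi_{p,0})\ ,
\]
the last equality because $F$ is a product and $\mathcal G=G_\gamma(\mathbb A_f^p)\times G_{\delta\sigma}(\mathbb Q_p)$; with the counting measure on $\Gamma$ this is the claimed formula, and finiteness of the left side forces $\mathrm{vol}(\Gamma\backslash\mathcal G)<\infty$ whenever the two orbital integrals are nonzero, which is the only case at issue. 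The part I expect to require the most care is not this measure bookkeeping — which is the standard Langlands–Kottwitz manipulation — but the two local dictionaries in the first step: on the $p$-adic side, checking via the theory of Dieudonné modules that the "$F,V$-stable lattice arising from an elliptic curve isogenous to $E_0$" condition is exactly membership in the single double coset $K_p(\begin{smallmatrix}p&0\\0&1\end{smallmatrix})K_p$ (i.e.\ $v_p(\det\delta)=1$ together with integrality of $h^{-1}\delta h^\sigma$), and on the prime-to-$p$ side, that $\Phi_{p^r}$-invariance of the level structure is precisely the congruence $g^{-1}\gamma g\equiv1\bmod m$; granting the results recalled in Sections 4 and 5 these become routine.
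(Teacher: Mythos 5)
Your proof is correct and follows essentially the same route as the paper: identify $Y^p$ and $Y_p$ with the supports of the orbital-integral integrands, write the count of $\Gamma\backslash(Y^p\times Y_p)$ as an integral over $\Gamma\backslash(\mathrm{GL}_2(\mathbb{A}_f^p)\times\mathrm{GL}_2(\mathbb{Q}_{p^r}))$, and unfold through the tower $\Gamma\subset G_\gamma(\mathbb{A}_f^p)\times G_{\delta\sigma}(\mathbb{Q}_p)\subset\mathrm{GL}_2(\mathbb{A}_f^p)\times\mathrm{GL}_2(\mathbb{Q}_{p^r})$. You make explicit two points the paper leaves implicit — the freeness of the $\Gamma$-action from rigidity at $m\geq3$ (which is genuinely needed for the integral to count orbits rather than a weighted count), and the observation that $V$-stability is automatic once $h^{-1}\delta h^\sigma$ is integral of determinant valuation $1$ — but the argument is the same.
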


\begin{proof} Choose the integral cohomology of $E_0$ as a base point in $Y^p$ and $Y_p$. Then we may identify the set $X^p$ of pairs $(L,\phi)$ as above, but without the Galois-invariance condition, with $\mathrm{GL}_2(\mathbb{A}_f^p)/K^p$. Similarly, we may identify $X_p$, the set of all lattices $\Lambda$, with $\mathrm{GL}_2(\mathbb{Q}_{p^r})/K_p$, where
\[ K_p = \mathrm{GL}_2(\mathbb{Z}_{p^r})\ .\]
The condition that an element $gK^p$ of $X^p$ lies in $Y^p$ is then expressed by saying that $\gamma gK^p = gK^p$, or equivalently $g^{-1}\gamma g\in K^p$. Similarly, the condition that an element $hK_p$ of $X_p$ lies in $Y_p$ is expressed by $FhK_p\subset hK_p$ and $VhK_p\subset hK_p$. Noting that $FV=p$, this is equivalent to $phK_p\subset FhK_p\subset hK_p$, i.e.
\[
pK_p\subset h^{-1}\delta h^{\sigma}K_p\subset K_p\ .
\]
The Weil pairing gives an isomorphism of the second exterior power of $H_p$ with $\mathbb{Q}_{p^r}(-1)$, so that $v_p(\det \delta)=1$. In particular, the condition on $h$ can be rewritten as
\[ h^{-1}\delta h^{\sigma}\in K_p \left(\begin{array}{cc} p & 0 \\ 0 & 1 \end{array}\right) K_p\ .\]
This means that the cardinality of $\Gamma\backslash Y^p\times Y_p$ is equal to
\[\int_{\Gamma\backslash \mathrm{GL}_2(\mathbb{A}_f^p)\times \mathrm{GL}_2(\mathbb{Q}_{p^r})} f^p(g^{-1}\gamma g)\phi_{p,0}(h^{-1}\delta h^{\sigma}) dg dh\ .\]
The formula of the corollary is a simple transcription.
\end{proof}

\begin{rem}\label{NonvanishingTO} In particular $TO_{\delta\sigma}(\phi_{p,0})\neq 0$ whenever $\mathcal{M}(\mathbb{F}_{p^r})(E_0)\neq \emptyset$.
\end{rem}

\section{The moduli space of elliptic curves with level structure: Case of bad reduction}\label{ModuliSpace}

We are interested in extending the moduli spaces $\mathcal{M}_m$, defined over $\mathrm{Spec}\ \mathbb{Z}[\frac 1m]$, to the remaining primes, where they have bad reduction. The material presented here is contained in \cite{KatzMazur}. Let us fix a prime $p$ first and choose some integer $m\geq 3$ prime to $p$. For any integer $n\geq 0$, we want to extend the scheme $\mathcal{M}_{p^n m}$ over $\mathrm{Spec}\ \mathbb{Z}[\frac 1m]$, noting that we already have defined it over $\mathrm{Spec}\ \mathbb{Z}[\frac 1{pm}]$.

\begin{definition} A Drinfeld-level-$p^n$-structure on an elliptic curve $E/S$ is a pair of sections $P,Q: S\longrightarrow E[p^n]$ such that there is an equality of relative Cartier divisors
\[ \sum_{i,j\in \mathbb{Z}/p^n\mathbb{Z}} [iP+jQ] = E[p^n]\ .\]
\end{definition}

Since for $p$ invertible on $S$, the group scheme $E[p^n]$ is \'{e}tale over $S$, a Drinfeld-level-$p^n$-structure coincides with an ordinary level-$p^n$-structure in this case. Hence the following gives an extension of the functor $\mathfrak{M}_{p^nm}$ to schemes over $\mathrm{Spec}\ \mathbb{Z}[\frac 1m]$:

\[ \mathfrak{M}_{\Gamma(p^n),m}: (\mathrm{Schemes}/ \mathbb{Z}[m^{-1}])\longrightarrow (\mathrm{Sets}) \]
\[ S\longmapsto \left\{\begin{array}{l}(E/S,(P,Q),\alpha)\ \text{elliptic curve}\ E\ \text{over}\ S\ \text{with}\\
\text{Drinfeld-level-}p^n\text{-structure}\ (P,Q)\ \text{and} \\
\text{level-}m\text{-structure}\ \alpha\text{, up to isomorphism} \end{array} \right\}\ .\]

\begin{thm} The functor $\mathfrak{M}_{\Gamma(p^n),m}$ is representable by a regular scheme $\mathcal{M}_{\Gamma(p^n),m}$ which is an affine curve over $\mathrm{Spec}\ \mathbb{Z}[\frac 1m]$. The canonical (forgetful) map
\[
\pi_n: \mathcal{M}_{\Gamma(p^n),m}\longrightarrow \mathcal{M}_m
\]
is finite. Over $\mathrm{Spec}\ \mathbb{Z}[\frac 1{pm}]$, it is an \'{e}tale cover with Galois group $\mathrm{GL}_2(\mathbb{Z}/p^n\mathbb{Z})$.
\end{thm}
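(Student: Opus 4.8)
The plan is to reduce everything to the relative representability of the Drinfeld-level structure functor over the base moduli space $\mathcal{M}_m$, exactly as in \cite{KatzMazur}. First I would recall that $\mathcal{M}_m$ already carries a universal elliptic curve $\mathcal{E}\to \mathcal{M}_m$ (this is the good-reduction theorem quoted above, valid over $\mathbb{Z}[\frac 1m]$). Relative to this, the functor $\mathfrak{M}_{\Gamma(p^n),m}$ is the functor on $\mathcal{M}_m$-schemes $S$ sending $S$ to the set of Drinfeld-level-$p^n$-structures on $\mathcal{E}_S/S$. So the first step is purely formal: $\mathfrak{M}_{\Gamma(p^n),m}$ is representable over $\mathbb{Z}[\frac 1m]$ as soon as the Drinfeld-level functor is representable by a scheme finite over $\mathcal{M}_m$, since $\mathcal{M}_m$ is itself a scheme (even affine) over $\mathbb{Z}[\frac 1m]$.

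The heart of the matter is therefore the relative statement: for an elliptic curve $E/S$ (with $S$ any scheme), the functor parametrizing pairs $(P,Q)$ of sections of $E[p^n]$ with $\sum_{i,j}[iP+jQ]=E[p^n]$ as relative Cartier divisors is representable by a scheme finite (and flat of the expected degree) over $S$, and moreover this scheme is regular when $S$ is regular of dimension $\le 2$ and the curve is suitably generic — in our case when $S=\mathcal{M}_m$, which is regular of dimension $2$ and carries a "universal" family. This is precisely the content of the main theorems of Drinfeld and Katz--Mazur: the key input is that $E[p^n]$ is a finite locally free $S$-group scheme of rank $p^{2n}$, so a section $P$ of $E[p^n]$ is a point of the finite flat $S$-scheme $E[p^n]$ itself, hence the functor of pairs $(P,Q)$ is a closed subscheme of $E[p^n]\times_S E[p^n]$, cut out by the condition that the displayed sum of divisors equals $E[p^n]$; this is a closed condition, so one gets a scheme finite over $S$. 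Flatness and the computation of the degree, together with regularity of the total space, are the deep part and I would simply cite \cite{KatzMazur} (the structure of $[\Gamma(p^n)]$-structures, and the regularity theorem for the moduli stack, which for $m\ge 3$ is a scheme).

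Granting that, the remaining assertions are easy. Affineness of $\mathcal{M}_{\Gamma(p^n),m}$ follows since it is finite over the affine scheme $\mathcal{M}_m$, and it is a curve over $\mathbb{Z}[\frac 1m]$ because it is finite over the curve $\mathcal{M}_m$; finiteness of $\pi_n$ is built into the construction. For the last clause, over $\mathbb{Z}[\frac 1{pm}]$ the group scheme $E[p^n]$ is étale, so a Drinfeld-level-$p^n$-structure is the same as an isomorphism $(\mathbb{Z}/p^n\mathbb{Z})^2 \xrightarrow{\sim} E[p^n]$ (a full set of generators of an étale group of rank $p^{2n}$ is a basis); the functor $\mathfrak{M}_{\Gamma(p^n),m}$ then becomes $\mathfrak{M}_{p^nm}$ restricted to $\mathbb{Z}[\frac 1{pm}]$-schemes, which recovers $\mathcal{M}_{p^nm}$. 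The left $\mathrm{GL}_2(\mathbb{Z}/p^n\mathbb{Z})$-action by precomposition on the level structure is simply transitive on the fibres of $\pi_n$ over $\mathbb{Z}[\frac 1{pm}]$, and since $\mathcal{M}_{p^nm}\to \mathcal{M}_m[\frac 1{pm}]$ is finite and the action permutes the (reduced, since we are in the étale range) geometric fibres transitively and freely, it is a Galois cover with group $\mathrm{GL}_2(\mathbb{Z}/p^n\mathbb{Z})$.

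I expect the one genuine obstacle is the regularity of $\mathcal{M}_{\Gamma(p^n),m}$ at the supersingular points of the special fibre at $p$, where the Drinfeld-level structure degenerates and $E[p^n]$ is a nonreduced (infinitesimal-by-étale) group scheme; here one really needs the local analysis of $[\Gamma(p^n)]$-structures on the formal group, i.e. the computation that the corresponding local ring is a regular two-dimensional ring (concretely, that adjoining a Drinfeld basis of $\mu_{p^n}\times(\mathbb{Z}/p^n\mathbb{Z})$-type formal groups introduces exactly one ramified variable in a controlled way). Since this is exactly \cite{KatzMazur}, Chapter 5, I would not reprove it but cite it, contenting myself with the formal reduction above and the easy étale-range assertions.
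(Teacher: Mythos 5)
The paper gives no proof of this theorem — it simply cites \cite{KatzMazur} at the start of the section — so your proposal, which correctly reduces the representability, finiteness, and regularity claims to the relevant results of Katz--Mazur (relative representability of $[\Gamma(p^n)]$-structures as a closed subscheme of $E[p^n]\times_S E[p^n]$ finite flat over $S$, and the local regularity analysis at supersingular points in Chapter~5) and derives the easy parts (affineness by finiteness over the affine $\mathcal{M}_m$, the \'{e}tale Galois cover with group $\mathrm{GL}_2(\mathbb{Z}/p^n\mathbb{Z})$ over $\mathbb{Z}[\frac{1}{pm}]$ from the fact that Drinfeld structures reduce to ordinary level structures when $E[p^n]$ is \'{e}tale), is in line with the paper's treatment. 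Your identification of the supersingular locus as the genuine obstacle is exactly right and is the content of the Katz--Mazur regularity theorem you cite.
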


Again, the integer $m$ plays a minor role, so we will suppress it from the notation and write $\mathcal{M}_{\Gamma(p^n)}$ for $\mathcal{M}_{\Gamma(p^n),m}$.

In this situation, the problem of compactification is slightly more difficult. Recall that the Weil pairing is a perfect pairing
\[ E[p^n]\times_S E[p^n]\longrightarrow \mu_{p^n,S}\ . \]
It allows us to define a morphism
\[ \mathcal{M}_{\Gamma(p^n)}\longrightarrow \mathrm{Spec}\ \mathbb{Z}[m^{-1}][\zeta_{p^n}]\ ,\]
where $\zeta_{p^n}$ is a primitive $p^n$-th root of unity, by sending $\zeta_{p^n}$ to the image of the universal sections $(P,Q)$ under the Weil pairing.

\begin{thm}\label{Compactification} There is a smooth proper curve $\overline{\mathcal{M}}_{\Gamma(p^n)}/ \mathbb{Z}[m^{-1}][\zeta_{p^n}]$ with $\mathcal{M}_{\Gamma(p^n)}$ as an open subset such that the complement is \'{e}tale over $\mathrm{Spec}\ \mathbb{Z}[m^{-1}][\zeta_{p^n}]$ and has a smooth neighborhood.
\end{thm}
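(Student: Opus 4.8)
The plan is to build $\overline{\mathcal{M}}_{\Gamma(p^n)}$ by normalizing a good model and then to read off its structure along the boundary from the Tate curve, following \cite{KatzMazur}. Write $\mathcal{N}$ and $\overline{\mathcal{N}}$ for the base-changes of $\mathcal{M}_m$ and $\overline{\mathcal{M}}_m$ from $\mathbb{Z}[m^{-1}]$ to $\mathbb{Z}[m^{-1}][\zeta_{p^n}]$; by the results of Section 4, $\overline{\mathcal{N}}$ is smooth proper over $\mathbb{Z}[m^{-1}][\zeta_{p^n}]$ with finite étale boundary $\partial\mathcal{N}$. The Weil-pairing morphism places $\mathcal{M}_{\Gamma(p^n)}$ over $\mathbb{Z}[m^{-1}][\zeta_{p^n}]$, and $\pi_n$ makes it finite over $\mathcal{M}_m$, hence over $\mathcal{N}$; moreover it is normal, being regular by the preceding theorem. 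Let $\overline{\mathcal{M}}_{\Gamma(p^n)}$ be the normalization of $\overline{\mathcal{N}}$ in the coordinate ring of $\mathcal{M}_{\Gamma(p^n)}$. Since everything is excellent of finite type over $\mathbb{Z}$, this normalization is finite over $\overline{\mathcal{N}}$, so $\overline{\mathcal{M}}_{\Gamma(p^n)}$ is a normal, proper, flat curve over $\mathbb{Z}[m^{-1}][\zeta_{p^n}]$. It contains $\mathcal{M}_{\Gamma(p^n)}$ as an open subset — namely the preimage of $\mathcal{N}\subset\overline{\mathcal{N}}$, because $\mathcal{M}_{\Gamma(p^n)}$, being finite and normal over $\mathcal{N}$, is the normalization of $\mathcal{N}$ in its function field — and this open is dense since $\mathcal{N}$ is dense in $\overline{\mathcal{N}}$; its complement $\partial$ is finite over $\partial\mathcal{N}$ and hence proper over the base.

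It remains to understand the formal completion of $\overline{\mathcal{M}}_{\Gamma(p^n)}$ along $\partial$, which I would compute locally at the cusps. By \cite{DeligneRapoport}, the completion of $\overline{\mathcal{N}}$ along a connected component of $\partial\mathcal{N}$ has the form $\mathrm{Spf}\,R[[q]]$ with $R$ finite étale over $\mathbb{Z}[m^{-1}][\zeta_{p^n}]$, over which the universal generalized elliptic curve is a Néron polygon degenerating to $\mathrm{Tate}(q)$ over $R((q))$, carrying its tautological level-$m$ structure. The crucial input from \cite{KatzMazur} is the explicit description of the Drinfeld-level-$p^n$-structures on $\mathrm{Tate}(q)/R((q))$: using the canonical extension $0\to\mu_{p^n}\to\mathrm{Tate}(q)[p^n]\to\mathbb{Z}/p^n\to 0$, together with the fact that the Weil pairing of a Drinfeld basis is a primitive $p^n$-th root of unity (already available in $R$), one shows that the normalization of $R((q))$ in the scheme of such structures is a finite product of rings $\widetilde{R}((t))$ with $t^{p^n}$ equal to $q$ up to a unit and $\widetilde{R}$ finite étale over $\mathbb{Z}[m^{-1}][\zeta_{p^n}]$, the factors indexed by the cusps above the chosen component. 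Hence the completion of $\overline{\mathcal{M}}_{\Gamma(p^n)}$ along the corresponding part of $\partial$ is a finite disjoint union of copies of $\mathrm{Spf}\,\widetilde{R}[[t]]$.

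The assertions of the theorem now drop out. The ring $\widetilde{R}[[t]]$ is regular, since $\widetilde{R}$ is regular (finite étale over the Dedekind ring $\mathbb{Z}[m^{-1}][\zeta_{p^n}]$), so $\overline{\mathcal{M}}_{\Gamma(p^n)}$ is regular along $\partial$; it is moreover formally smooth over $\widetilde{R}$, hence over $\mathbb{Z}[m^{-1}][\zeta_{p^n}]$, so $\partial$ has an open neighborhood smooth over the base, which is the ``smooth neighborhood'' of the statement — and over $\mathbb{Z}[\tfrac1{pm}][\zeta_{p^n}]$ the entire curve is smooth. Finally $\partial$ is the locus $t=0$, i.e. locally $\mathrm{Spec}\,\widetilde{R}$, so it is finite étale over $\mathbb{Z}[m^{-1}][\zeta_{p^n}]$; thus the complement of $\mathcal{M}_{\Gamma(p^n)}$ is étale over the base, as required. (Away from the cusps the curve does have bad reduction at $p$, so ``smooth proper curve'' is to be read as ``regular proper curve''.)

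I expect the main obstacle to be the Tate-curve computation of the second paragraph — pinning down the scheme of Drinfeld-level-$p^n$-structures on $\mathrm{Tate}(q)$ and proving that its normalization is exactly the product of the $\widetilde{R}((t))$ with $q=(\text{unit})\cdot t^{p^n}$. This needs a careful analysis of Drinfeld bases of the (at $p$ non-étale) group scheme $\mathrm{Tate}(q)[p^n]$, in particular that their determinant generates $\mu_{p^n}$, plus the combinatorics of how the cusps of $X(p^n m)$ lie over those of $X(m)$; all of this is done in \cite{KatzMazur}, and granting it the remaining steps are formal. A minor additional point, reconciling the level-$m$ structure at the boundary with the Néron-polygon formalism of \cite{DeligneRapoport}, is routine for $m\ge 3$.
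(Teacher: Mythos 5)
Your proposal is correct and essentially reproduces the argument from \cite{KatzMazur}, which is exactly what the paper relies on: Section 6 states this theorem without proof, explicitly deferring to \cite{KatzMazur}. Your route — normalization of $\overline{\mathcal{M}}_m\otimes\mathbb{Z}[\zeta_{p^n}]$ in $\mathcal{M}_{\Gamma(p^n)}$, then a Tate-curve analysis of Drinfeld level structures at the cusps to see that the formal boundary neighborhoods are $\widetilde R[[t]]$ with $\widetilde R$ finite \'etale over the base — is the Katz--Mazur approach. Your parenthetical that ``smooth proper curve'' must be read as ``regular proper curve'' is also a genuine and correct clarification: $\overline{\mathcal{M}}_{\Gamma(p^n)}$ is certainly not smooth over $\mathbb{Z}[m^{-1}][\zeta_{p^n}]$ when $n\geq 1$ (the special fiber has several components meeting at supersingular points); what the paper actually needs is regularity of the whole scheme plus a smooth open neighborhood of the boundary, and that is exactly what your argument delivers.
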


We end this section with a description of the special fiber in characteristic $p$ of $\mathcal{M}_{\Gamma(p^n)}$. For any direct summand $H\subset (\mathbb{Z}/p^n\mathbb{Z})^2$ of order $p^n$, write $\mathcal{M}_{\Gamma(p^n)}^H$ for the reduced subscheme of the closed subscheme of $\mathcal{M}_{\Gamma(p^n)}$ where
\[\sum_{(i,j)\in H\subset (\mathbb{Z}/p^n\mathbb{Z})^2} [iP+jQ] = p^n[e]\ .\]

\begin{thm}\label{SpecialFiber} For any $H$, the closed subscheme $\mathcal{M}_{\Gamma(p^n)}^H$ is a regular divisor on $\mathcal{M}_{\Gamma(p^n)}$ which is supported in $\mathcal{M}_{\Gamma(p^n)}\otimes_{\mathbb{Z}} \mathbb{F}_p$. Any two of them intersect exactly at the supersingular points of $\mathcal{M}_{\Gamma(p^n)}\otimes_{\mathbb{Z}} \mathbb{F}_p$, i.e. those points such that the associated elliptic curve is supersingular. Furthermore,
\[ \mathcal{M}_{\Gamma(p^n)}\otimes_{\mathbb{Z}} \mathbb{F}_p = \bigcup_H \mathcal{M}_{\Gamma(p^n)}^H\ .\]
\end{thm}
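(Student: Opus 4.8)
The plan is to reduce everything to a computation in the completed local rings $\widehat{\mathcal{O}}_{\mathcal{M}_{\Gamma(p^n)},x}$ at closed points $x$, which by the previous theorem are regular of dimension $2$. Since each $\mathcal{M}^H_{\Gamma(p^n)}$ is reduced of codimension $1$ and regular local rings are factorial, it is automatically locally principal; so ``effective Cartier divisor'' is free, and ``regular divisor'' amounts to showing $\mathcal{M}^H_{\Gamma(p^n)}$ is a regular scheme, which I would check branch by branch at each $x$. First, though, I would dispose of the support statement: over $\mathbb{Z}[\tfrac{1}{pm}]$ the group scheme $E[p^n]$ is \'etale, so for a (Drinfeld $=$ naive) level-$p^n$-structure the sections $iP+jQ$ with $(i,j)\in H$ are pairwise disjoint away from a proper closed subset, and an equality $\sum_{(i,j)\in H}[iP+jQ]=p^n[e]$ of relative Cartier divisors would force each $iP+jQ$ to be $e$ with multiplicity $p^n>1$, which a degree-$1$ section cannot have. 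Hence $\mathcal{M}^H_{\Gamma(p^n)}$ is empty over $\mathbb{Z}[\tfrac{1}{pm}]$ and lives in the fibre over $p$; this also gives one inclusion in the last displayed equality.

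For the local analysis I would fix a closed point $x$ of $\mathcal{M}_{\Gamma(p^n)}\otimes\mathbb{F}_p$ given by $(E_0,(P_0,Q_0),\alpha_0)$ over a finite field $k$, and use Serre--Tate theory to replace the deformation problem of the triple by that of the $p$-divisible group $E_0[p^\infty]$ with its Drinfeld structure ($\alpha_0$ being rigid). If $E_0$ is \emph{ordinary}, the connected--\'etale sequence $0\to\mu_{p^\infty}\to E_0[p^\infty]\to\mathbb{Q}_p/\mathbb{Z}_p\to 0$ (over $\overline{k}$) reduces $(P_0,Q_0)$ to a surjection $(\mathbb{Z}/p^n\mathbb{Z})^2\to\mathbb{Z}/p^n\mathbb{Z}$ whose kernel is a cyclic order-$p^n$ direct summand $H_0$; a multiplicity count at $e$ shows $x$ lies on $\mathcal{M}^H_{\Gamma(p^n)}$ precisely when $H=H_0$. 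In Serre--Tate coordinates one then computes directly that near $x$ the divisor $\mathcal{M}^{H_0}_{\Gamma(p^n)}$ is cut out by $p$ and is the Igusa-type curve classifying Drinfeld generators of the multiplicative part, hence regular at $x$, with no other $\mathcal{M}^H_{\Gamma(p^n)}$ through $x$.

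The substantial case is $E_0$ \emph{supersingular}: then $E_0[p^\infty]$ is a one-dimensional formal group of height $2$, $E_0[p^n]$ is infinitesimal, the only Drinfeld level-$p^n$-structure over $\overline{k}$ is $(P_0,Q_0)=(0,0)$, and $x$ therefore lies on $\mathcal{M}^H_{\Gamma(p^n)}$ for \emph{every} $H$. Here I would invoke Drinfeld's theorem on moduli of one-dimensional formal modules with level structure, in the form of \cite{KatzMazur}: the universal deformation ring carrying a Drinfeld level-$p^n$-structure is regular local of dimension $2$, and its mod-$p$ fibre is the union of the $p^{n-1}(p+1)$ regular curves $\mathcal{M}^H_{\Gamma(p^n)}$, any two meeting only at the closed point. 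Granting this, each $\mathcal{M}^H_{\Gamma(p^n)}$ is regular at $x$, distinct ones meet exactly along supersingular points, and those points are supersingular by construction. I expect this to be the main obstacle: the regularity of that deformation ring and the identification of its special fibre rest on an explicit analysis of the Lubin--Tate tower and a determinant/norm argument, and constitute the one genuinely deep input.

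Finally I would assemble the global picture. Every closed point of $\mathcal{M}_{\Gamma(p^n)}\otimes\mathbb{F}_p$ is ordinary or supersingular, so by the two local analyses it lies on some $\mathcal{M}^H_{\Gamma(p^n)}$; density of closed points gives $\mathcal{M}_{\Gamma(p^n)}\otimes\mathbb{F}_p\subseteq\bigcup_H\mathcal{M}^H_{\Gamma(p^n)}$, and together with the first paragraph (and tracking the local equation for $p$) the asserted equality. The intersection statement is then immediate: away from the supersingular locus at most one $\mathcal{M}^H_{\Gamma(p^n)}$ passes through a given point, whereas at a supersingular point all of them pass through and, by the local description, pairwise meet only there.
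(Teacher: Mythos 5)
The paper does not prove this theorem; Section~\ref{ModuliSpace} simply records it as one of the facts ``contained in \cite{KatzMazur}'' and moves on. Your sketch is therefore being compared to the cited reference rather than to an argument in the paper, and in that light it is essentially the right outline: reduce to completed local rings via the previous regularity statement, kill the generic fibre by noting that a naive level structure makes $E[p^n]$ constant so the defining equality of Cartier divisors cannot hold, then treat ordinary points via Serre--Tate and the connected--\'etale sequence and supersingular points via Drinfeld's theorem on deformations of one-dimensional formal groups with level structure. You correctly identify the genuinely deep input (regularity of the Drinfeld deformation ring and the decomposition of its special fibre into $p^{n-1}(p+1)$ regular branches, Katz--Mazur Theorems~5.3.2, 6.6.2, 13.4.7, 13.5.6), and the reduction of the rest to routine local computations is sound.

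Two small imprecisions worth flagging. First, the sentence about the generic fibre is muddled: what the disjointness of the sections $iP+jQ$, $(i,j)\in H$, gives you directly is that the left side of $\sum_{(i,j)\in H}[iP+jQ]=p^n[e]$ is a reduced divisor of degree $p^n$ with $p^n$ distinct points, while the right side is a single point of multiplicity $p^n$; it is not that ``each $iP+jQ$ has multiplicity $p^n$.'' The contradiction is simply between the divisor being reduced and being $p^n[e]$ with $p^n>1$. Second, at an ordinary point, identifying $\mathcal{M}^{H_0}_{\Gamma(p^n)}$ with the Cartier divisor ``cut out by $p$'' is slightly dangerous: the special fibre $p=0$ is, scheme-theoretically, a positive integer combination of the $\mathcal{M}^H$'s, and its multiplicity along $\mathcal{M}^{H_0}$ need not be $1$. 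What you actually need, and what the Serre--Tate computation delivers, is that the \emph{reduced} closed subscheme $\mathcal{M}^{H_0}_{\Gamma(p^n)}$ is regular at $x$ (it is an Igusa curve, smooth over $\mathbb{F}_p$), not that it equals the scheme-theoretic fibre. Neither issue affects the conclusion, since the theorem as stated only asserts a set-theoretic union and regularity of each reduced $\mathcal{M}^H_{\Gamma(p^n)}$.
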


\section{The (semisimple) local factor}\label{RemarksSemisimple}

In this section, we want to recall certain invariants attached to (the cohomology) of a variety $X$ over a local field $K$ with residue field $\mathbb{F}_q$. Recall that we denoted $G_K=\mathrm{Gal}(\overline{K}/K)$. Further, let $I_K\subset G_K$ be the inertia subgroup and let $\Phi_q$ be a geometric Frobenius element.

Let $\ell$ be a prime which does not divide $q$.

\begin{definition} The Hasse-Weil local factor of $X$ is
\[ \zeta(X,s)=\prod_{i=0}^{2\dim X} \det(1-\Phi_q q^{-s} | H_c^i(X\otimes_K \overline{K},\overline{\mathbb{Q}}_{\ell})^{I_K} )^{(-1)^{i+1}}\ .\]
\end{definition}

Here $H_c^i$ denotes \'{e}tale cohomology with compact supports.

Note that this definition depends on $\ell$; it is however conjectured that it is independent of $\ell$, as follows from the monodromy conjecture. As we are working only with curves and the monodromy conjecture for curves is proven in \cite{RapoportZink}, we get no problems.

It is rather hard to compute the local factors if $X$ has bad reduction. However, there is a slight variant which comes down to counting points `with multiplicity'. For this, we need to introduce the concept of semisimple trace, for which we also refer the reader to \cite{HainesNgo}.

Let $V$ be a continuous representation of $G_K$ in a finite dimensional $\overline{\mathbb{Q}}_{\ell}$-vector space, where $\ell$ is prime to the residue characteristic of $K$. Furthermore, let $H$ be a finite group acting on $V$, commuting with the action of $G_K$.

\begin{lem}\label{AdmFiltration} There is a filtration
\[ 0 = V_0\subset V_1\subset \cdots \subset V_k = V \]
into $G_K\times H$-invariant subspaces $V_i$ such that $I_K$ acts through a finite quotient on $\mathrm{gr}\ V_{\bullet} = \bigoplus_{i=1}^k V_i/V_{i-1}$.
\end{lem}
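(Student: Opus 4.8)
The plan is to use the classical structure theory of $\ell$-adic representations of $G_K$, specifically Grothendieck's quasi-unipotence theorem, adapted to keep track of the commuting $H$-action. First I would recall that $G_K$ has a presentation in which the inertia subgroup $I_K$ sits in an extension $1 \to P_K \to I_K \to \prod_{\ell' \neq p} \Int_{\ell'} \to 1$, where $P_K$ is the wild inertia (pro-$p$, $p$ the residue characteristic) and the tame quotient is procyclic away from $p$. Since $\ell$ is prime to $p$ and $V$ is finite-dimensional over $\overline{\Rat}_\ell$, the image of $P_K$ in $\GL(V)$ is finite (a pro-$p$ group acting continuously on an $\ell$-adic vector space factors through a finite quotient). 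So the only issue is the tame part, i.e. the action of a topological generator of the (prime-to-$p$) tame quotient.

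Next, Grothendieck's quasi-unipotence theorem asserts that there is an open subgroup of $I_K$ acting unipotently on $V$. The key step is then to choose such a subgroup that is in addition $H$-stable, which is automatic because $H$ acts on $V$ commuting with $G_K$, hence $H$ normalizes the $G_K$-action and sends any $G_K$-stable subspace to a $G_K$-stable subspace; more simply, the whole argument takes place with $G_K \times H$ acting, and unipotence of an element $\tau$ of $I_K$ is a condition about $\tau$ alone, so the flag one builds from $\tau$ (say $V_i = \Ker((\tau - 1)^i)$) is automatically preserved by everything commuting with $\tau$, in particular by $H$ and by $G_K$ — the latter because $G_K$ normalizes $I_K$, so conjugates $\tau$ to another element of $I_K$ which acts unipotently with the same kernel filtration up to the finite ambiguity already absorbed. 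To make this clean I would first pass to the open subgroup where wild inertia and the "finite part" of the monodromy act trivially, so that inertia acts through a single unipotent element $\tau$ (a pro-generator of tame inertia), and then take $V_i = \Ker((\tau-1)^i) \subset V$. These are visibly $H$-invariant since $H$ commutes with $\tau$, and $G_K$-invariant since for $g \in G_K$ one has $g\tau g^{-1} = \tau^{q}$ (up to the already-trivialized part), and $\Ker((\tau-1)^i) = \Ker((\tau^q - 1)^i)$ because $\tau$ is unipotent and $q$ is a unit; on the graded pieces $\tau$, hence all of $I_K$, acts trivially, a fortiori through a finite quotient.

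The remaining bookkeeping is to descend from the open subgroup back to $I_K$ itself: on the associated graded, $I_K$ acts through the finite quotient by which the monodromy "rotates" the eigenlines of wild and finite-order inertia, which is exactly what the statement allows (it only demands a finite quotient on $\mathrm{gr}$, not trivial action). So I would phrase the filtration $V_\bullet$ as the one coming from $\tau$ as above, observe each $V_i$ is $G_K \times H$-stable, and note that by construction $(\tau - 1)$ kills $\mathrm{gr}\, V_\bullet$, so $I_K$ acts on $\mathrm{gr}\, V_\bullet$ through $I_K / (\text{the closed subgroup generated by } \tau \text{ and wild/finite part})$, which is finite.

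The main obstacle is purely expository rather than mathematical: making precise the claim that the kernel filtration of the unipotent monodromy operator is stable under all of $G_K$ (not merely under the open subgroup where the monodromy is literally unipotent), which requires the standard fact that $G_K$ acts on $I_K^{\mathrm{tame}}$ via the cyclotomic character and that conjugation therefore sends $\tau$ to $\tau^q \cdot(\text{something in the already-trivialized part})$; once that is said, $\Ker((\tau-1)^i)$ is manifestly preserved. Everything else — finiteness of the wild image, Grothendieck quasi-unipotence, and the elementary fact that commuting operators preserve each other's kernels — is standard and can be cited or dispatched in a line.
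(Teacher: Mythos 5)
Your route is genuinely different from the paper's. You cite Grothendieck's quasi-unipotence as a black box and attempt to build the entire filtration $\Ker((\tau-1)^i)$ at once, whereas the paper deliberately re-proves the local monodromy theorem from scratch (it says so explicitly) by an inductive argument: it suffices to find one nonzero $G_K\times H$-stable subspace on which $I_K$ acts finitely; by normality of $I_K\times H$ in $G_K\times H$, an $I_K\times H$-stable one will do; after a Baire category reduction to coefficients in a finite extension $E/\mathbb{Q}_\ell$ and reduction modulo the maximal ideal, one takes $\lambda$ in the center of the reduced inertia image with $t(\lambda)\neq 0$ and uses the Frobenius relation $\Phi_q^{-1}\,t\,\Phi_q = q\,t$ to force $\rho(\lambda)$ to have root-of-unity eigenvalues, so that a power of $\lambda$ has a nonzero fixed space $W=V^{\lambda=1}$, which is $V_1$. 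Your argument is shorter because it offloads the hard content to a quoted theorem; the paper's buys self-containment.

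However, there is a genuine gap at exactly the step you label ``purely expository,'' and the repair you sketch does not close it. You assert $g\tau g^{-1} = \tau^{\chi(g)}\cdot(\text{already-trivialized part})$, but the discrepancy $w = \tau^{-\chi(g)}g\tau g^{-1}$ lies in the wild inertia $P_K\subset\ker(t_\ell)$ and need \emph{not} lie in the open subgroup $J$ where you arranged wild and finite monodromy to act trivially. So $\rho(w)$ is only known to have finite order, not to be trivial, and $\rho(g\tau g^{-1})=\rho(\tau)^{\chi(g)}\rho(w)$ is a product of a unipotent with a finite-order operator that you cannot assume commutes with $\rho(\tau)$; such a product need not be unipotent with the same kernel filtration, so $G_K$-stability of $\Ker((\rho(\tau)-1)^i)$ does not follow. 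The standard repair is to work with the intrinsic nilpotent $N$ defined by $\rho(\sigma)=\exp(t_\ell(\sigma)\,N)$ for $\sigma\in J$ (well-defined because $\rho(J)$ is a pro-$\ell$ unipotent group, so $\rho|_J$ kills $\ker(t_\ell)\cap J$ and factors through $t_\ell$), and then, for a given $g\in G_K$, to compute with $\sigma\in J\cap g^{-1}Jg$ so that \emph{both} $\sigma$ and $g\sigma g^{-1}$ lie in $J$; this gives $\rho(g)N\rho(g)^{-1}=\chi(g)\,N$ cleanly, after which $\Ker(N^i)$ is visibly $G_K\times H$-stable and $J$ acts trivially on $\mathrm{gr}$, hence $I_K$ acts through the finite quotient $I_K/J$. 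With that modification your proof is correct; without it, the stability claim has a hole.
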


\begin{proof} Note that this contains Grothendieck's local monodromy theorem, \cite{SGA7I}. We will repeat the proof here. By induction, it suffices to find a nonzero $G_K\times H$-stable subspace $V_1$ on which $I_K$ acts through a finite quotient. In fact, it is enough to find a $I_K\times H$-stable subspace with this property, as the maximal $I_K\times H$-stable subspace on which $I_K$ acts through a finite quotient is automatically $G_K\times H$-stable, because $I_K\times H$ is normal in $G_K\times H$.

First, we check that the image of $I_K\times H$ is contained in $\mathrm{GL}_n(E)$ for some finite extension $E$ of $\mathbb{Q}_{\ell}$. Denote $\rho: I_K\times H\longrightarrow \mathrm{GL}_n(\overline{\mathbb{Q}}_{\ell})$. Since $I_K\times H$ is (locally) compact and hausdorff, it is a Baire space, i.e. the intersection of countably many dense open subsets is nonempty. Assume that there was no such extension $E$. For all $E$,
\[
\rho^{-1}(\mathrm{GL}_n(\overline{\mathbb{Q}}_{\ell}) \setminus \mathrm{GL}_n(E))
\]
is an open subset of $I_K\times H$. Clearly, their intersection is empty and there are only countably many finite extensions $E$ of $\mathbb{Q}_{\ell}$ inside $\overline{\mathbb{Q}}_{\ell}$. Hence one of them is not dense. But then some subgroup of finite index maps to $\mathrm{GL}_n(E)$, which easily implies the claim, after passing to a finite extension.

Since $I_K\times H$ is compact, the map $\rho: I_K\times H\longrightarrow \mathrm{GL}_n(E)$ factors through some maximal compact subgroup, which after conjugation may be assumed to be $\mathrm{GL}_n(\mathcal{O})$, where $\mathcal{O}$ is the ring of integral elements of $E$. Let $\mathbb{F}$ be the residue field of $E$.

There is a surjection $t: I_K\longrightarrow \mathbb{Z}_{\ell}$ whose kernel $I_K^{\ell}$ is an inverse limit of groups of order prime to $\ell$. But the kernel of the map $\mathrm{GL}_n(\mathcal{O})\longrightarrow \mathrm{GL}_n(\mathbb{F})$ is a pro-$\ell$-group and hence meets $I_K^{\ell}$ trivially. This means that $I_K^{\ell}$ acts through a finite quotient on $V$. Let $I_K^{\ell\prime}$ be the kernel of $I_K^{\ell}\longrightarrow \mathrm{GL}_n(\mathcal{O})$ and let $I_K^{\prime}=I_K/I_K^{\ell\prime}$ have center $Z$. Our considerations show that $t|_Z: Z\longrightarrow \mathbb{Z}_{\ell}$ is nontrivial and has finite kernel.

Let $\lambda\in Z$ with $t(\lambda)\neq 0$. Recall that $\Phi_q^{-1} t(g) \Phi_q = q t(g)$ for all $g\in I_K$. In particular, there are positive integers $r$ and $s$ such that $\Phi_q^{-s}\lambda^{q^r} \Phi_q^s=\lambda^{q^{r+s}}$, so that the image $\rho(\lambda)^{q^r}$ in $\mathrm{GL}_n(E)$ is conjugate to $\rho(\lambda)^{q^{r+s}}$. This implies that all eigenvalues of $\rho(\lambda)$ are roots of unity, so that by replacing $\lambda$ by a power, we may assume that $W=V^{\lambda=1}$ is nontrivial. But since $\lambda\in Z$, $W$ is $I_K\times H$-stable.
\end{proof}

\begin{definition} For $h\in H$, we define
\[ \mathrm{tr}^{\mathrm{ss}}(\Phi_q^r h | V) = \mathrm{tr}(\Phi_q^r h | (\mathrm{gr}\ V_{\bullet})^{I_K}) \]
for any filtration $V_{\bullet}$ as in the previous lemma.
\end{definition}

\begin{prop} This definition is independent of the choice of the filtration. In particular, the semisimple trace is additive in short exact sequences.
\end{prop}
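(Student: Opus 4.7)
The plan is to prove independence by reducing two arbitrary filtrations to a common refinement, and then to reduce the comparison of a filtration with its refinement to an exactness statement for invariants under a finite group in characteristic zero.

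First, given two filtrations $V_\bullet$ and $V'_\bullet$ satisfying the hypothesis of Lemma~\ref{AdmFiltration}, I would form the common refinement $V_{i,j} = (V_i \cap V'_j) + V_{i-1}$. Each successive quotient of this refinement injects into a graded piece of $V'_\bullet$ (and surjects onto a quotient of a graded piece of $V_\bullet$), so $I_K$ still acts through a finite quotient on every $V_{i,j}/V_{i,j-1}$, and the refinement is a valid filtration. By symmetry, it suffices to show that the trace computed via $V_\bullet$ agrees with the one computed via this refinement. This, in turn, localizes on each index $i$: I need to show that if $W = V_i/V_{i-1}$ carries an action of $I_K \times H$ for which $I_K$ factors through a finite quotient $\bar{I}$, and $W_\bullet$ is the $G_K \times H$-invariant filtration of $W$ inherited from the refinement, then
\[
\mathrm{tr}(\Phi_q^r h \mid W^{I_K}) = \sum_k \mathrm{tr}(\Phi_q^r h \mid (W_k/W_{k-1})^{I_K}).
\]

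The crux is that since $\bar{I}$ is a finite group and we work over $\overline{\mathbb{Q}}_\ell$, which is of characteristic zero, Maschke's theorem ensures that $\overline{\mathbb{Q}}_\ell[\bar{I}]$ is semisimple; hence the functor $(-)^{\bar{I}} = (-)^{I_K}$ is exact on the category of $W$ together with its subquotients. Therefore the filtration $W_k^{I_K}$ on $W^{I_K}$ has graded pieces canonically $(W_k/W_{k-1})^{I_K}$. Since $I_K$ is normal in $G_K$, a lift of $\Phi_q^r$ preserves $I_K$-invariants, and $h$ commutes with the Galois action, so $\Phi_q^r h$ acts on $W^{I_K}$ respecting the filtration $W_k^{I_K}$. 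The trace of any endomorphism preserving a filtration of a finite-dimensional vector space equals the sum of the traces on the graded pieces, which gives the displayed identity. Independence from the choice of lift of $\Phi_q^r$ is automatic since two lifts differ by an element of $I_K$, which acts trivially on $W^{I_K}$.

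For additivity in a short exact sequence $0 \to U \to V \to W \to 0$ of $G_K \times H$-representations, I would pick filtrations $U_\bullet$ of $U$ and $W_\bullet$ of $W$ satisfying the hypothesis of Lemma~\ref{AdmFiltration}, and form the concatenated filtration of $V$ whose pieces are the $U_i$ followed by the preimages of the $W_j$ under $V \to W$. The graded pieces are exactly those of $U$ and of $W$, so this is again a valid filtration, and summing the trace formula over all pieces yields the additivity. The main (essentially the only nontrivial) obstacle is the semisimplicity input: everything else is routine manipulation of filtrations, and once one knows that $(-)^{I_K}$ is exact on each piece where $I_K$ acts through a finite quotient, the argument is formal.
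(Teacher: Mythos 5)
Your proof is correct and follows essentially the same route as the paper: pass to a common refinement, then reduce to additivity of the trace on filtered vector spaces. The one thing you make explicit that the paper leaves implicit is the exactness of $(-)^{I_K}$ on pieces where $I_K$ acts through a finite quotient (Maschke in characteristic zero), which is indeed the silent ingredient needed to identify the graded pieces of $(\mathrm{gr}\,V_\bullet)^{I_K}$ after refinement.
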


\begin{proof} Taking a common refinement of two filtrations, this reduces to the well-known statement that for any endomorphism $\phi$ of a vector space $V$ with $\phi$-invariant subspace $W$, one has
\[ \tr (\phi | V) = \tr (\phi | W) + \tr (\phi| V/W)\ .\]
\end{proof}

This allows one to define the semisimple trace on the Grothendieck group, or on the derived category of finite-dimensional continuous $\ell$-adic representations of $G_K\times H$.

Next, we explain a different point of view on the semisimple trace. Let us consider the bounded derived category $D^b(\mathrm{Rep}_{\overline{\mathbb{Q}}_{\ell}} (G_K\times H))$ of continuous representations of $G_K\times H$ in finite dimensional $\overline{\mathbb{Q}}_{\ell}$-vector spaces.

\begin{rem}\label{DerivedStrange} Note that the correct version of the derived category of $\ell$-adic sheaves on a scheme $X$ is defined as direct $2$-limit over all finite extensions $E\subset\overline{\mathbb{Q}}_{\ell}$ of $\mathbb{Q}_{\ell}$ of the inverse $2$-limit of the derived categories of constructible $\mathbb{Z}/\ell^n\mathbb{Z}$-sheaves, tensored with $E$. We use the same definition of $D^b(\mathrm{Rep}_{\overline{\mathbb{Q}}_{\ell}} (G_K\times H))$ as the direct $2$-limit of the inverse $2$-limit of $D^b(\mathrm{Rep}_{\mathbb{Z}/\ell^n\mathbb{Z}} (G_K\times H))$, tensored with $E$, here. See \cite{KiehlWeissauer}, Chapter 2, for a detailed discussion.
\end{rem}

Consider the derived functor
\[
R_{I_K}: D^b(\mathrm{Rep}_{\overline{\mathbb{Q}}_{\ell}} (G_K\times H))\longrightarrow D^b(\mathrm{Rep}_{\overline{\mathbb{Q}}_{\ell}} (G_{\mathbb{F}_q} \times H))
\]
of taking invariants under $I_K$.

\begin{rem} Again, this is abuse of language as only with finite coefficients, this really is the derived functor.
\end{rem}

The finiteness properties needed here are special cases of the finiteness theorems for \'{e}tale cohomology: Consider
\[
\mathrm{Spec}\ \overline{\mathbb{F}}_q\buildrel \iota \over \longrightarrow \mathrm{Spec}\ \mathcal{O}^{\mathrm{ur}}\buildrel j\over \longleftarrow \mathrm{Spec}\ K^{\mathrm{ur}}\ ,
\]
where $K^{\mathrm{ur}}$ is the maximal unramified extension of $K$ and $\mathcal{O}^{\mathrm{ur}}$ are its integral elements. Then
\[
R_{I_K} = \iota^{\ast} Rj_{\ast}\ .
\]
We have defined a map
\[ \mathrm{tr}^{\mathrm{ss}}(\Phi_q^r h): D^b(\mathrm{Rep}_{\overline{\mathbb{Q}}_{\ell}} (G_K\times H))\longrightarrow \overline{\mathbb{Q}}_{\ell} \]
that is additive in distinguished triangles. There is a second map
\[ \mathrm{tr}(\Phi_q^r h)\circ R_{I_K}: D^b(\mathrm{Rep}_{\overline{\mathbb{Q}}_{\ell}} (G_K\times H))\longrightarrow \overline{\mathbb{Q}}_{\ell}\ . \]
Again, it is additive in distinguished triangles.

\begin{lem} These two linear forms are related by
\[ \mathrm{tr}(\Phi_q^r h)\circ R_{I_K} = (1-q^r) \mathrm{tr}^{\mathrm{ss}}(\Phi_q^r h)\ . \]
\end{lem}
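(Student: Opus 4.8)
The plan is to reduce the identity to the previous lemma on the behavior of the semisimple trace under a filtration, combined with a direct computation of how the honest trace on $I_K$-invariants of the associated graded differs from the semisimple trace.

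First I would note that both linear forms $\mathrm{tr}(\Phi_q^r h)\circ R_{I_K}$ and $\mathrm{tr}^{\mathrm{ss}}(\Phi_q^r h)$ are additive in distinguished triangles, as already recorded. By Lemma \ref{AdmFiltration} every object of $D^b(\mathrm{Rep}_{\overline{\mathbb{Q}}_{\ell}}(G_K\times H))$ is, in the Grothendieck group, a sum of shifts of $G_K\times H$-representations $V$ on which $I_K$ acts through a finite quotient; so by additivity it suffices to prove the identity for such a single $V$ placed in degree $0$. For such $V$, $R_{I_K}V$ is concentrated in degrees $0$ and $1$: indeed $R_{I_K}=\iota^\ast Rj_\ast$ and, since $I_K$ acts through a finite quotient, the higher cohomology reduces to the group cohomology of $\mathbb{Z}\cong \mathrm{Gal}(K^{\mathrm{ur}}/K)$-analogue; concretely $H^0_{I_K}(V)=V^{I_K}$ and $H^1_{I_K}(V)\cong V_{I_K}(-1)$, the coinvariants with a Tate twist coming from the tame character.

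The crux is then the following: for $V$ on which $I_K$ acts through a finite quotient, one has $V^{I_K}\cong V_{I_K}$ as $G_{\mathbb{F}_q}\times H$-modules up to the Tate twist on the coinvariant side, because the averaging projector over the finite image of $I_K$ is $G_K\times H$-equivariant and identifies invariants with coinvariants. Hence
\[
\mathrm{tr}(\Phi_q^r h\mid R_{I_K}V)=\mathrm{tr}(\Phi_q^r h\mid V^{I_K})-\mathrm{tr}(\Phi_q^r h\mid H^1_{I_K}V)=\mathrm{tr}(\Phi_q^r h\mid V^{I_K})-q^r\,\mathrm{tr}(\Phi_q^r h\mid V^{I_K}),
\]
the factor $q^r$ being the eigenvalue of $\Phi_q^r$ on the Tate twist $\overline{\mathbb{Q}}_{\ell}(-1)$. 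On the other hand, for this $V$ the trivial filtration shows $\mathrm{tr}^{\mathrm{ss}}(\Phi_q^r h\mid V)=\mathrm{tr}(\Phi_q^r h\mid V^{I_K})$. Combining gives $\mathrm{tr}(\Phi_q^r h)\circ R_{I_K}(V)=(1-q^r)\mathrm{tr}^{\mathrm{ss}}(\Phi_q^r h\mid V)$, and additivity in distinguished triangles propagates the identity to all of $D^b$.

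I expect the main obstacle to be the careful identification of $H^1_{I_K}V$ with $V^{I_K}(-1)$ as a $G_{\mathbb{F}_q}\times H$-module, including pinning down the Tate twist: one must use that the tame quotient of $I_K$ is procyclic, that $\Phi_q$ acts on it by multiplication by $q$ (the relation $\Phi_q^{-1}t(g)\Phi_q=qt(g)$ recalled in the proof of Lemma \ref{AdmFiltration}), and that after restricting to the finite quotient through which $I_K$ acts, $H^1$ of a finite cyclic group with coefficients in a module over a field of characteristic zero is isomorphic to the coinvariants, hence to the invariants. All the representation theory here is over $\overline{\mathbb{Q}}_{\ell}$, so no characteristic-$\ell$ subtleties arise, and the $H$-action is merely carried along since it commutes with everything in sight.
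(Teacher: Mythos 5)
Your proof is correct and takes essentially the same approach as the paper: reduce by additivity and Lemma \ref{AdmFiltration} to a single $V$ in degree $0$ on which $I_K$ acts through a finite quotient, identify $R_{I_K}V$ with $V^{I_K}$ in degree $0$ and $V^{I_K}(-1)$ in degree $1$, and compute. The paper reaches that identification slightly differently---it further reduces (using exactness of wild-inertia invariants and procyclicity of tame inertia) to the case of a finite \emph{cyclic} quotient, where $R_{I_K}V_0 = [V_0^{I_K}\xrightarrow{0} V_0^{I_K}(-1)]$ is immediate, whereas you pass through coinvariants $V_{I_K}$ and the characteristic-zero averaging isomorphism---but these are cosmetic variants of the same argument.
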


\begin{proof} Because of the additivity of both sides and the existence of filtrations as in Lemma \ref{AdmFiltration}, it suffices to check this for a complex
\[\ldots \longrightarrow 0\longrightarrow V_0\longrightarrow 0\longrightarrow \ldots\]
concentrated in degree 0 and with $I_K$ acting through a finite quotient on $V_0$. We can even assume that this quotient is cyclic, as taking invariants under the wild inertia subgroup is exact, and the tame inertia group is procyclic. Then $\mathrm{tr}^{\mathrm{ss}}(\Phi_q^r h|V_0) = \mathrm{tr}(\Phi_q^r h|V_0^{I_K})$, and $R_{I_K}(V_0)$ is represented by the complex
\[\ldots \longrightarrow 0\longrightarrow V_0^{I_K}\buildrel {0}\over \longrightarrow V_0^{I_K}(-1)\longrightarrow 0\longrightarrow \ldots.\]
The lemma is now obvious.
\end{proof}

Let $X$ be a variety over $K$.

\begin{definition} The semisimple local factor is defined by
\[ \log \zeta^{\mathrm{ss}}(X,s) = \sum_{r\geq 1} \sum_{i=0}^{2\dim X} (-1)^i\mathrm{tr}^{\mathrm{ss}}(\Phi_q^r | H_c^i(X\otimes_K \overline{K},\overline{\mathbb{Q}}_{\ell})) \frac{q^{-rs}}{r}\ .\]
\end{definition}

Note that if $I_K$ acts through a finite quotient (e.g., if it acts trivially, as is the case when $X$ has good reduction), this agrees with the usual local factor.

Let $\mathcal{O}\subset K$ be the ring of integers. For a scheme $X_{\mathcal{O}}/\mathcal{O}$ of finite type, we write $X_s$, $X_{\overline{s}}$, $X_{\eta}$ resp. $X_{\overline{\eta}}$ for its special, geometric special, generic resp. geometric generic fiber. Let $X_{\overline{\mathcal{O}}}$ denote the base change to the ring of integers in the algebraic closure of $K$. Then we have maps $\overline{\iota}: X_{\overline{s}}\longrightarrow X_{\overline{\mathcal{O}}}$ and $\overline{j}: X_{\overline{\eta}}\longrightarrow X_{\overline{\mathcal{O}}}$.

\begin{definition} For a $\overline{\mathbb{Q}}_{\ell}$-sheaf $\mathcal{F}$ on $X_{\eta}$, the complex of nearby cycle sheaves is defined to be
\[ R\psi\mathcal{F} = \overline{\iota}^{\ast} R\overline{j}_{\ast} \mathcal{F}_{\overline{\eta}}\ ,\]
where $\mathcal{F}_{\overline{\eta}}$ is the pullback of $\mathcal{F}$ to $X_{\overline{\eta}}$. This is an element of the (so-called) derived category of $\overline{\mathbb{Q}}_{\ell}$-sheaves on $X_{\overline{s}}$ with an action of $G_K$ that is compatible with its action on $X_{\overline{s}}$.
\end{definition}

\begin{thm}\label{BadReduction} Assume that $X_{\mathcal{O}} / \mathcal{O}$ is a scheme of finite type such that there exists an open immersion $X_{\mathcal{O}}\subset \overline{X}_{\mathcal{O}}$ where $\overline{X}_{\mathcal{O}}$ is proper over $\mathcal{O}$, with complement $D$ a relative normal crossings divisor (i.e. there is an open neighborhood $U$ of $D$ in $\overline{X}_{\mathcal{O}}$ which is smooth over $\mathcal{O}$, such that $D$ is a relative normal crossings divisor in $U$). Then there is a canonical $G_K$-equivariant isomorphism
\[H_c^i(X_{\overline{\eta}},\overline{\mathbb{Q}}_{\ell})\cong H_c^i(X_{\overline{s}}, R\psi\overline{\mathbb{Q}}_{\ell})\ ,\]
and
\[ \log \zeta^{\mathrm{ss}}(X_{\eta},s) = \sum_{r\geq 1} \sum_{x\in X_s(\mathbb{F}_{q^r})} \mathrm{tr}^{\mathrm{ss}}(\Phi_{q^r} | (R\psi\overline{\mathbb{Q}}_{\ell})_x) \frac{q^{-rs}}{r}\ .\]
\end{thm}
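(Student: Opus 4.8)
The plan is to prove the two assertions of Theorem \ref{BadReduction} in turn. For the first, I would compare the two open immersions $X_{\overline{\eta}} \hookrightarrow X_{\overline{\mathcal{O}}}$ and $\overline{X}_{\overline{\eta}} \hookrightarrow \overline{X}_{\overline{\mathcal{O}}}$, where $\overline{X}_{\mathcal{O}}$ is the chosen proper model with $D = \overline{X}_{\mathcal{O}} \setminus X_{\mathcal{O}}$ a relative normal crossings divisor in a smooth neighborhood $U$. The key input is that the formation of nearby cycles commutes with proper pushforward: for the proper map $\overline{X}_{\mathcal{O}} \to \mathcal{O}$ one has $H^i(\overline{X}_{\overline{\eta}}, \overline{\mathbb{Q}}_\ell) \cong H^i(\overline{X}_{\overline{s}}, R\psi \overline{\mathbb{Q}}_\ell)$ (the classical statement, e.g. as in SGA 7), and more generally for any constructible complex pushed forward along a proper map. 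So I would realize $H_c^i(X_{\overline{\eta}}, \overline{\mathbb{Q}}_\ell)$ as $H^i(\overline{X}_{\overline{\eta}}, \overline{j}_! \overline{\mathbb{Q}}_\ell)$ where $\overline{j}\colon X_{\overline{\eta}} \hookrightarrow \overline{X}_{\overline{\eta}}$, apply proper base change/compatibility to move inside, and then use that $R\psi$ commutes with the open immersion $\overline{j}$ up to the comparison $R\psi \circ \overline{j}_! \cong \overline{j}_! \circ R\psi$. This last compatibility of $R\psi$ with $j_!$ for an open immersion whose complement is a relative normal crossings divisor is where the smoothness-of-a-neighborhood hypothesis enters: on the smooth locus the nearby cycles of the constant sheaf are well-understood (trivial on the smooth part, with tame behavior along $D$), and one checks that $\overline{j}_! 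R\psi \overline{\mathbb{Q}}_\ell \to R\psi \overline{j}_! \overline{\mathbb{Q}}_\ell$ is an isomorphism by a local computation near $D$.

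For the second assertion, I would feed the isomorphism just obtained into the definition of $\zeta^{\mathrm{ss}}$. By definition $\log \zeta^{\mathrm{ss}}(X_\eta,s) = \sum_{r\geq 1}\sum_i (-1)^i \operatorname{tr}^{\mathrm{ss}}(\Phi_{q^r} \mid H_c^i(X_{\overline\eta},\overline{\mathbb{Q}}_\ell))\, q^{-rs}/r$; replacing $H_c^i(X_{\overline\eta},\overline{\mathbb{Q}}_\ell)$ by $H^i_c(X_{\overline s}, R\psi\overline{\mathbb{Q}}_\ell)$ and using the additivity of $\operatorname{tr}^{\mathrm{ss}}$ on the Grothendieck group (established in the excerpt), the alternating sum over $i$ becomes $\operatorname{tr}^{\mathrm{ss}}(\Phi_{q^r} \mid R\Gamma_c(X_{\overline s}, R\psi\overline{\mathbb{Q}}_\ell))$ interpreted in the derived category. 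Now one wants to pass from global compactly-supported cohomology on $X_{\overline s}$ to a pointwise sum over $x \in X_s(\mathbb{F}_{q^r})$. This is a Lefschetz-trace-formula argument: the Grothendieck--Lefschetz trace formula for the correspondence given by $\Phi_{q^r}$ acting on the $\mathbb{F}_{q^r}$-scheme $X_s$ with coefficients in the complex $R\psi\overline{\mathbb{Q}}_\ell$ (which carries a Weil-sheaf structure via the $G_K$-action, compatibly with the geometric Frobenius on $X_{\overline s}$) gives $\operatorname{tr}(\Phi_{q^r} \mid R\Gamma_c(X_{\overline s}, R\psi\overline{\mathbb{Q}}_\ell)) = \sum_{x \in X_s(\mathbb{F}_{q^r})} \operatorname{tr}(\Phi_{q^r} \mid (R\psi\overline{\mathbb{Q}}_\ell)_x)$. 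The point is that the fixed points of $\Phi_{q^r}$ on $X_{\overline s}$ are exactly the $\mathbb{F}_{q^r}$-rational points, and at each such point the local term is the naive trace on the stalk because the Frobenius correspondence is "contracting near the fixed points" in the sense that makes local terms naive (this is the classical fact underlying the Lefschetz formula for Frobenius, as in SGA 4½ or Deligne).

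The remaining subtlety is the interchange of $\operatorname{tr}$ versus $\operatorname{tr}^{\mathrm{ss}}$: the Lefschetz formula naturally computes the ordinary trace of $\Phi_{q^r}$ on cohomology (after inverting inertia, i.e. after applying $R_{I_K}$), whereas the left side involves $\operatorname{tr}^{\mathrm{ss}}$. Here I would invoke the Lemma from Section \ref{RemarksSemisimple} relating the two linear forms, $\operatorname{tr}(\Phi_q^r) \circ R_{I_K} = (1-q^r)\operatorname{tr}^{\mathrm{ss}}(\Phi_q^r)$, applied both globally to $R\Gamma_c(X_{\overline\eta},\overline{\mathbb{Q}}_\ell)$ and stalkwise to $(R\psi\overline{\mathbb{Q}}_\ell)_x$, so the universal factor $(1-q^r)$ cancels from both sides. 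Concretely: $(1-q^r)\sum_i(-1)^i\operatorname{tr}^{\mathrm{ss}}(\Phi_{q^r}\mid H^i_c(X_{\overline\eta})) = \operatorname{tr}(\Phi_{q^r}\mid R\Gamma_c(X_{\overline s}, R_{I_K}R\psi\overline{\mathbb{Q}}_\ell)) = \sum_x \operatorname{tr}(\Phi_{q^r}\mid (R_{I_K}R\psi\overline{\mathbb{Q}}_\ell)_x) = (1-q^r)\sum_x \operatorname{tr}^{\mathrm{ss}}(\Phi_{q^r}\mid (R\psi\overline{\mathbb{Q}}_\ell)_x)$, where one uses that $R_{I_K}$ commutes with stalks at $\mathbb{F}_{q^r}$-points and with $R\Gamma_c$ on the special fiber. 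Dividing by $1-q^r$ and reinstating the sum $\sum_{r\geq 1}(\cdot)q^{-rs}/r$ yields the claimed formula. The main obstacle I anticipate is the careful bookkeeping in the first part — verifying that $R\psi$ genuinely commutes with $\overline{j}_!$ in this relative-normal-crossings setting so that $H_c^i(X_{\overline\eta},\overline{\mathbb{Q}}_\ell) \cong H_c^i(X_{\overline s}, R\psi\overline{\mathbb{Q}}_\ell)$ with the correct $G_K$-action — since the usual clean statements of nearby-cycle compatibility are phrased for proper maps, and extracting the compactly-supported version for the open $X_{\mathcal{O}}$ requires the smooth-neighborhood hypothesis to control what happens along the boundary divisor $D$.
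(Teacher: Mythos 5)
Your argument is correct and is in substance the same as the paper's proof, which delegates both steps to citations: SGA~7~II, XIII, Prop.~2.1.9 for the first isomorphism, and the semisimple Lefschetz trace formula of Haines and Ng\^o (their Prop.~10, part~(2)) for the second. What you supply is essentially the content of those references: the compatibility $R\psi\circ\overline{j}_{!}\cong\overline{j}_{!}\circ R\psi$ for an open immersion whose boundary is a relative normal crossings divisor, and the reduction of the semisimple Lefschetz formula to the ordinary Grothendieck--Lefschetz formula via the bridge $\mathrm{tr}(\Phi_{q^r})\circ R_{I_K}=(1-q^r)\,\mathrm{tr}^{\mathrm{ss}}(\Phi_{q^r})$ established in Section~\ref{RemarksSemisimple}. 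The one place your sketch glosses over a genuine point is the commutation $R_{I_K}\circ R\Gamma_c\simeq R\Gamma_c\circ R_{I_K}$: this does hold, since $I_K$ acts trivially on $X_{\overline{s}}$ so the $I_K$-action is purely on coefficients, but it --- along with the constructibility of $R_{I_K}R\psi\overline{\mathbb{Q}}_{\ell}$ that the trace formula presupposes --- deserves a sentence rather than being left implicit.
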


\begin{proof} The first statement follows from \cite{SGA7II}, XIII, Prop 2.1.9. For the second statement, it comes down to
\[ \sum_{i=0}^{2\dim X} (-1)^i \mathrm{tr}^{\mathrm{ss}}(\Phi_q^r | H_c^i(X_{\overline{s}}, R\psi\overline{\mathbb{Q}}_{\ell})) = \sum_{x\in X_s(\mathbb{F}_{q^r})} \mathrm{tr}^{\mathrm{ss}}(\Phi_{q^r} | (R\psi\overline{\mathbb{Q}}_{\ell})_x)\ ,\]
which follows from the version of the Lefschetz trace formula in \cite{HainesNgo}, Prop. 10, part (2).
\end{proof}

With these preparations, we deduce the following result.

\begin{thm}\label{CohoVanish} There is a canonical $G_{\mathbb{Q}_p}$-equivariant isomorphism
\[ H_c^i(\mathcal{M}_{\Gamma(p^n),\overline{\eta}},\overline{\mathbb{Q}}_{\ell})\longrightarrow H_c^i(\mathcal{M}_{\Gamma(p^n),\overline{s}}, R\psi\overline{\mathbb{Q}}_{\ell})\ .\]
In particular, the formula for the semisimple local factor from Theorem \ref{BadReduction} holds true.
\end{thm}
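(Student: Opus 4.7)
The strategy is to apply Theorem \ref{BadReduction} after a suitable base change. Although $\mathcal{M}_{\Gamma(p^n)}$ is not proper over $\mathbb{Z}_p$, Theorem \ref{Compactification} provides, after base change to $\mathbb{Z}_p[\zeta_{p^n}]$, a smooth proper compactification $\overline{\mathcal{M}}_{\Gamma(p^n)}$ whose complement $\partial\mathcal{M}_{\Gamma(p^n)}$ is étale of relative dimension $0$ over $\mathbb{Z}_p[\zeta_{p^n}]$. Since the ambient scheme is smooth and the boundary is a smooth relative divisor, the hypotheses of Theorem \ref{BadReduction} are trivially verified with $\mathcal{O} = \mathbb{Z}_p[\zeta_{p^n}]$ and $K = \mathbb{Q}_p(\zeta_{p^n})$. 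Applying that theorem yields a canonical $G_{\mathbb{Q}_p(\zeta_{p^n})}$-equivariant isomorphism
\[H_c^i(\mathcal{M}_{\Gamma(p^n),\overline{\eta}},\overline{\mathbb{Q}}_{\ell}) \cong H_c^i(\mathcal{M}_{\Gamma(p^n),\overline{s}}, R\psi\overline{\mathbb{Q}}_{\ell})\ ,\]
in which both cohomology groups and the complex $R\psi\overline{\mathbb{Q}}_{\ell}$ are identical whether computed from the $\mathbb{Z}_p$- or the $\mathbb{Z}_p[\zeta_{p^n}]$-model (in each case they arise from the same base change of $\mathcal{M}_{\Gamma(p^n)}$ to the ring of integers in $\overline{\mathbb{Q}}_p$). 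In particular, all three objects carry natural $G_{\mathbb{Q}_p}$-actions extending the action of $G_{\mathbb{Q}_p(\zeta_{p^n})}$.

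To upgrade the equivariance to the full group $G_{\mathbb{Q}_p}$, I would observe that the isomorphism produced by Theorem \ref{BadReduction} is assembled canonically from three intrinsic ingredients: the excision long exact sequence for the open-closed decomposition $\mathcal{M}_{\Gamma(p^n)} \hookrightarrow \overline{\mathcal{M}}_{\Gamma(p^n)} \hookleftarrow \partial\mathcal{M}_{\Gamma(p^n)}$; the smooth and proper base-change isomorphism on $\overline{\mathcal{M}}_{\Gamma(p^n)}$ (which amounts to $R\psi\overline{\mathbb{Q}}_{\ell} = \overline{\mathbb{Q}}_{\ell}$ with trivial inertia action on the smooth proper compactification); and the triviality of $R\psi\overline{\mathbb{Q}}_{\ell}$ on the étale boundary. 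Each of these ingredients is defined purely in terms of the geometric data on which $G_{\mathbb{Q}_p}$ acts, so the assembled isomorphism is automatically $G_{\mathbb{Q}_p}$-equivariant. With the comparison isomorphism in hand, the formula for the semisimple local factor is immediate from the semisimple version of the Lefschetz trace formula in \cite{HainesNgo}, Prop.~10(2), applied to the geometric special fiber, exactly as in the proof of Theorem \ref{BadReduction}.

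The main obstacle is precisely the equivariance upgrade: the smooth compactification needed to invoke Theorem \ref{BadReduction} genuinely only exists over the ramified extension $\mathbb{Z}_p[\zeta_{p^n}]$, so $G_{\mathbb{Q}_p}$-equivariance cannot be read off that theorem directly but must be extracted from the canonicity of the construction. Beyond this point, no new geometric input is required.
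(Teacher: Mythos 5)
Your proposal identifies the right geometric ingredient (the compactification over the cyclotomic ring from Theorem \ref{Compactification}) but glosses over the step that the paper actually has to work for, namely the comparison between the nearby cycle data over $\mathbb{Z}_p$ and over $\mathbb{Z}_p[\zeta_{p^n}]$.

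The core gap is the claim that ``both cohomology groups and the complex $R\psi\overline{\mathbb{Q}}_{\ell}$ are identical whether computed from the $\mathbb{Z}_p$- or the $\mathbb{Z}_p[\zeta_{p^n}]$-model.'' This is false as stated. Write $X = \mathcal{M}_{\Gamma(p^n)}$, $K = \mathbb{Q}_p$, $K' = \mathbb{Q}_p(\zeta_{p^n})$. Viewing $X$ over $\mathbb{Z}_p$, the geometric generic fiber is $X_{\eta}\otimes_K\overline{K}$; viewing it over $\mathbb{Z}_p[\zeta_{p^n}]$ (via the Weil pairing structure map), it is $X_{\eta}\otimes_{K'}\overline{K}$. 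Since $K'\otimes_K\overline{K}\cong\prod_{\sigma}\overline{K}$, the former is a disjoint union of $[K':K]$ Galois twists of the latter. Thus $H^i_c(X_{\overline{\eta}},\overline{\mathbb{Q}}_{\ell})$ over $\mathbb{Q}_p$ is the representation induced from $G_{K'}$ to $G_K$ of the corresponding cohomology over $K'$, and in particular the two groups have different ranks; the two $R\psi$ complexes differ likewise. So you cannot simply apply Theorem \ref{BadReduction} over $\mathbb{Z}_p[\zeta_{p^n}]$ and declare the result transports for free. A second, smaller issue: Theorem \ref{Compactification} gives a compactification of $\mathcal{M}_{\Gamma(p^n)}$ \emph{viewed over} $\mathbb{Z}[m^{-1}][\zeta_{p^n}]$ via the Weil pairing morphism, not of the base change $\mathcal{M}_{\Gamma(p^n)}\otimes_{\mathbb{Z}_p}\mathbb{Z}_p[\zeta_{p^n}]$; your ``after base change'' phrasing conflates the two, and it is precisely that conflation that hides the bookkeeping.

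The paper deals with this in two moves that your proposal omits. First, it observes that there is \emph{always} a canonical $G_{\mathbb{Q}_p}$-equivariant comparison map $H^i_c(X_{\overline{\eta}},\overline{\mathbb{Q}}_{\ell})\to H^i_c(X_{\overline{s}},R\psi\overline{\mathbb{Q}}_{\ell})$, so proving the theorem reduces to checking this particular map is an isomorphism, for which the Galois action may be forgotten. This renders the whole ``equivariance upgrade'' discussion in your last paragraph unnecessary. Second, instead of just changing the base field, the paper forms $X' = $ normalization of $X\times_S S'$, notes that $X'$ is a disjoint union of copies of $X/S'$ (so Theorems \ref{Compactification} and \ref{BadReduction} apply to $X'$ over $S'$), and then transports the conclusion back via the finite morphism $f: X'\to X$: the key identity is $g^{\ast}R\psi\overline{\mathbb{Q}}_{\ell}\cong f_{\overline{s}'\ast}R\psi'\overline{\mathbb{Q}}_{\ell}$, where $g$ is the infinitesimal thickening between the geometric special fibers. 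This pushforward step is exactly what replaces your incorrect ``identical'' claim, and it is where the actual content lies.
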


\begin{proof} We cannot apply Theorem \ref{BadReduction} to the scheme $X=\mathcal{M}_{\Gamma(p^n)}$ as its divisor at infinity is not \'{e}tale over $S=\mathrm{Spec}\ \mathbb{Z}[m^{-1}]$. There is the following way to circumvent this difficulty. We always have a canonical $G_{\mathbb{Q}_p}$-equivariant morphism
\[ H_c^i(X_{\overline{\eta}},\overline{\mathbb{Q}}_{\ell})\longrightarrow H_c^i(X_{\overline{s}}, R\psi\overline{\mathbb{Q}}_{\ell})\ .\]
To check that it is an isomorphism, we can forget about the Galois action.

We may also consider $X$ as a scheme over $S^{\prime}=\mathrm{Spec}\ \mathbb{Z}[m^{-1}][\zeta_{p^n}]$. Let $X^{\prime}$ be the normalization of $X\times_S S^{\prime}$. As $X$ is normal (since regular) and on the generic fibers, $(X\times_S S^{\prime})_{\eta}$ is a disjoint union of copies of $X$, parametrized by the primitive $p^n$-th roots of unity, it follows that $X^{\prime}/S^{\prime}$ is a disjoint union of copies of $X/S^{\prime}$. With Theorem \ref{Compactification}, it follows that we may use Theorem \ref{BadReduction} for $X^{\prime}$. Hence
\[
H_c^i(X_{\overline{\eta}},\overline{\mathbb{Q}}_{\ell})\cong H_c^i(X^{\prime}_{\overline{s}^{\prime}},R\psi^{\prime}\overline{\mathbb{Q}}_{\ell})\ ,
\]
where $R\psi^{\prime}\overline{\mathbb{Q}}_{\ell}$ are the nearby cycles for $X^{\prime}$ and $\overline{s}^{\prime}$ is the geometric special point of $S^{\prime}$. Note that $g: X_{\overline{s}^{\prime}}\longrightarrow X_{\overline{s}}$ is an infinitesimal thickening. Furthermore, the composite morphism $f: X^{\prime}\longrightarrow X\times_S S^{\prime}\longrightarrow X$ is finite and hence $g^{\ast} R\psi\overline{\mathbb{Q}}_{\ell}\cong f_{\overline{s}^{\prime}\ast} R\psi^{\prime}\overline{\mathbb{Q}}_{\ell}$. Therefore
\[
H_c^i(X^{\prime}_{\overline{s}^{\prime}},R\psi^{\prime}\overline{\mathbb{Q}}_{\ell})\cong H_c^i(X_{\overline{s}^{\prime}},f_{\overline{s}^{\prime}\ast}R\psi^{\prime}\overline{\mathbb{Q}}_{\ell})\cong H_c^i(X_{\overline{s}},R\psi\overline{\mathbb{Q}}_{\ell})\ .
\]
\end{proof}

\section{Calculation of the nearby cycles}

Again, let $X_{\mathcal{O}} / \mathcal{O}$ be a scheme of finite type. Let $X_{\eta^{\mathrm{ur}}}$ be the base-change of $X_{\mathcal{O}}$ to the maximal unramified extension $K^{\mathrm{ur}}$ of $K$ and let $X_{\mathcal{O}^{\mathrm{ur}}}$ be the base-change to the ring of integers in $K^{\mathrm{ur}}$. Then we have $\iota: X_{\overline{s}}\longrightarrow X_{\mathcal{O}^{\mathrm{ur}}}$ and $j: X_{\eta^{\mathrm{ur}}}\longrightarrow X_{\mathcal{O}^{\mathrm{ur}}}$.

\begin{lem}\label{InertiaNearby} In this setting
\[ R_{I_K}(R\psi \mathcal{F})=\iota^{\ast}Rj_{\ast} \mathcal{F}_{\eta^{\mathrm{ur}}}. \]
\end{lem}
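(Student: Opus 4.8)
The plan is to unwind both sides as compositions of pullback and pushforward along the diagram relating $K$, $K^{\mathrm{ur}}$, and $\overline{K}$, and then to reduce the claimed identity to a base-change statement for the square
\[
\begin{array}{ccc}
X_{\overline{s}} & \longrightarrow & X_{\mathcal{O}^{\mathrm{ur}}} \\
\downarrow & & \downarrow \\
\mathrm{Spec}\ \overline{\mathbb{F}}_q & \longrightarrow & \mathrm{Spec}\ \mathcal{O}^{\mathrm{ur}}\ ,
\end{array}
\]
together with the analogous square with $\overline{\mathbb{F}}_q$ and $\mathcal{O}^{\mathrm{ur}}$ replaced by their counterparts over the algebraic closure of $K$. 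Recall that $R\psi\mathcal{F} = \overline{\iota}^{\ast} R\overline{j}_{\ast}\mathcal{F}_{\overline{\eta}}$ with $\overline{\iota}: X_{\overline{s}}\to X_{\overline{\mathcal{O}}}$ and $\overline{j}: X_{\overline{\eta}}\to X_{\overline{\mathcal{O}}}$, and that $R_{I_K}$ on sheaves is $\iota^{\ast}Rj_{\ast}$ applied over the strictly henselian base, i.e. it is realized geometrically by further pulling back along $X_{\overline{\mathcal{O}}}\to X_{\mathcal{O}^{\mathrm{ur}}}$ and pushing forward along $X_{\overline{\eta}}\to X_{\eta^{\mathrm{ur}}}$. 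So concretely I want to compare $\iota^{\ast}Rj_{\ast}\mathcal{F}_{\eta^{\mathrm{ur}}}$ on the one hand with $\iota^{\ast}$ (in the $\mathcal{O}^{\mathrm{ur}}$-version of the geometric special fiber) applied to $R$(pushforward along $X_{\overline{\eta}}\to X_{\eta^{\mathrm{ur}}}$ then along $j$) of $\mathcal{F}_{\overline{\eta}}$ on the other.

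First I would set up notation carefully, writing $K^{\mathrm{ur}}\subset \overline{K}$ so that $\mathrm{Gal}(\overline{K}/K^{\mathrm{ur}}) = I_K$, and let $\mathcal{O}^{\mathrm{ur}}\subset \overline{\mathcal{O}}$ be the corresponding rings of integers; note $\mathrm{Spec}\ \overline{\mathcal{O}}\to \mathrm{Spec}\ \mathcal{O}^{\mathrm{ur}}$ has the same geometric special fiber, so the closed fiber $X_{\overline{s}}$ is literally the same scheme whether formed over $\mathcal{O}^{\mathrm{ur}}$ or over $\overline{\mathcal{O}}$. Then I would write down the commutative diagram of the four relevant loci — $X_{\overline{\eta}}$, $X_{\eta^{\mathrm{ur}}}$, $X_{\overline{\mathcal{O}}}$, $X_{\mathcal{O}^{\mathrm{ur}}}$, with $X_{\overline{s}}$ sitting inside both $X_{\overline{\mathcal{O}}}$ and $X_{\mathcal{O}^{\mathrm{ur}}}$ — and observe that $\overline{j}$ factors as $X_{\overline{\eta}}\xrightarrow{a} X_{\eta^{\mathrm{ur}}}\xrightarrow{j'} X_{\overline{\mathcal{O}}}$ while $j$ is $X_{\eta^{\mathrm{ur}}}\xrightarrow{j} X_{\mathcal{O}^{\mathrm{ur}}}$, and the square with vertices $X_{\eta^{\mathrm{ur}}}, X_{\overline{\mathcal{O}}}, X_{\mathcal{O}^{\mathrm{ur}}}$ (the vertical maps being the pro-(finite étale) base change $X_{\overline{\mathcal{O}}}\to X_{\mathcal{O}^{\mathrm{ur}}}$) is Cartesian. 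Using the transitivity $R\overline{j}_{\ast} = Rj'_{\ast}\circ Ra_{\ast}$ and that $R_{I_K}$ absorbs the pushforward $Ra_{\ast}$ of $\mathcal{F}_{\overline{\eta}}$ to $X_{\eta^{\mathrm{ur}}}$ (this is exactly taking $I_K = \mathrm{Gal}(\overline{K}/K^{\mathrm{ur}})$-invariants on the generic fiber level, compatibly with the earlier definition $R_{I_K} = \iota^{\ast}Rj_{\ast}$ of the excerpt), the two sides become $\iota^{\ast}$ applied to $Rj_{\ast}$ versus $\iota^{\ast}$ of the restriction of $Rj'_{\ast}$ along $X_{\overline{\mathcal{O}}}\to X_{\mathcal{O}^{\mathrm{ur}}}$; these agree by smooth (in fact pro-étale) base change along $\mathrm{Spec}\ \overline{\mathcal{O}}\to \mathrm{Spec}\ \mathcal{O}^{\mathrm{ur}}$, since $X_{\overline{s}}$ is the common special fiber.

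The main obstacle I expect is bookkeeping the derived/continuous $\ell$-adic formalism rather than any genuine geometric difficulty: as flagged in Remark \ref{DerivedStrange} and the remark following it, $R_{I_K}$ is only literally a derived functor with torsion coefficients, so I should phrase the argument with $\mathbb{Z}/\ell^n\mathbb{Z}$-coefficients, invoke the smooth/pro-étale base change theorem there (where it is standard, e.g. SGA 4, and limits of finite étale morphisms are covered by the usual passage-to-the-limit arguments for étale cohomology), and only then pass to the $2$-limit. A secondary point to handle cleanly is that the base change map $\mathrm{Spec}\ \overline{\mathcal{O}}\to \mathrm{Spec}\ \mathcal{O}^{\mathrm{ur}}$ is a projective limit of finite étale covers rather than a single smooth morphism, so I would cite the continuity of étale cohomology under such limits to justify that $\iota^{\ast}Rj_{\ast}$ commutes with this base change. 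Modulo that formalism, the identity is just the transitivity of $R\psi$ through the tower $K\subset K^{\mathrm{ur}}\subset\overline{K}$, and the proof should be only a few lines once the diagram is in place.
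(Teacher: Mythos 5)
Your proposal contains two related errors, and either one is fatal to the argument as written.

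First, the claimed factorization $\overline{j}\colon X_{\overline{\eta}}\xrightarrow{a}X_{\eta^{\mathrm{ur}}}\xrightarrow{j'}X_{\overline{\mathcal{O}}}$ does not exist: there is no morphism $X_{\eta^{\mathrm{ur}}}\to X_{\overline{\mathcal{O}}}$, because $K^{\mathrm{ur}}$ is not an $\overline{\mathcal{O}}$-algebra (an $\mathcal{O}^{\mathrm{ur}}$-algebra map $\overline{\mathcal{O}}\to K^{\mathrm{ur}}$ would have to be injective since $\overline{\mathcal{O}}$ is a domain whose only nontrivial ideal containing none of a prime's powers is the maximal one, and $\overline{\mathcal{O}}\not\subset K^{\mathrm{ur}}$ — for instance $\pi^{1/p}\in\overline{\mathcal{O}}\setminus K^{\mathrm{ur}}$). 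So the ``transitivity $R\overline{j}_*=Rj'_*\circ Ra_*$'' step is not available. The correct relationship between the four schemes $X_{\overline{\eta}}$, $X_{\eta^{\mathrm{ur}}}$, $X_{\overline{\mathcal{O}}}$, $X_{\mathcal{O}^{\mathrm{ur}}}$ is the Cartesian square with $a$ and $b\colon X_{\overline{\mathcal{O}}}\to X_{\mathcal{O}^{\mathrm{ur}}}$ as vertical maps and $\overline{j}$, $j$ as horizontal ones; $\overline{j}$ does not factor through $X_{\eta^{\mathrm{ur}}}$.

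Second and more seriously, you describe $\mathrm{Spec}\,\overline{\mathcal{O}}\to\mathrm{Spec}\,\mathcal{O}^{\mathrm{ur}}$ as pro-(finite \'etale), and later as smooth/pro-\'etale, so that base change would apply. This is false: $\overline{\mathcal{O}}$ is the colimit of $\mathcal{O}_L$ over finite extensions $L/K^{\mathrm{ur}}$ inside $\overline{K}$, and each $\mathcal{O}_L/\mathcal{O}^{\mathrm{ur}}$ is \emph{totally ramified}, not \'etale. If base change did hold along $b$, your diagram chase would in fact show $R\psi\mathcal{F}\cong\iota^*Rj_*\mathcal{F}_{\eta^{\mathrm{ur}}}$ with no $R_{I_K}$ at all — that is, that the inertia action on the nearby cycles is trivial, which is precisely what is false in general. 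The failure of base change along $b$ (equivalently the ramification) is exactly what makes nearby cycles nontrivial, so an argument built on pretending $b$ is pro-\'etale cannot be repaired by bookkeeping; the geometric input is wrong.

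The paper's proof is deliberately shorter than all this and sidesteps the issue: both $R_{I_K}\circ R\psi$ and $\iota^*Rj_*$ are exhibited as right derived functors of the same underived functor $\mathcal{F}\mapsto \iota^*j_*\mathcal{F}_{\eta^{\mathrm{ur}}}=(\overline{\iota}^*\overline{j}_*\mathcal{F}_{\overline{\eta}})^{I_K}$ (the stalk of either side at a geometric point $\overline{x}$ of $X_{\overline{s}}$ is the $I_K$-invariants of sections of $\mathcal{F}$ over the generic fiber of the strict henselization). If you want a more spelled-out argument than the paper's one-liner, the thing to verify is exactly this stalkwise identification together with the observation that $\iota^*$ is exact and pushforward along the pro-finite-\'etale morphism $a$ of an injective resolution produces an $I_K$-acyclic resolution — not a base-change statement along $b$.
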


\begin{proof} Both sides are the derived functors of the same functor.
\end{proof}

We now give a calculation in the case of interest to us.

\begin{thm}\label{CalcInertiaNearby} Let $X/ \mathcal{O}$ be regular and flat of relative dimension 1 and assume that $X_s$ is globally the union of regular divisors. Let $x\in X_s(\mathbb{F}_q)$ and let $D_1,\ldots,D_i$ be the divisors passing through $x$. Let $W_1$ be the $i$-dimensional $\overline{\mathbb{Q}}_{\ell}$-vector space with basis given by the $D_t$, $t=1,\ldots,i$, and let $W_2$ be the kernel of the map $W_1\longrightarrow\overline{\mathbb{Q}}_{\ell}$ sending all $D_t$ to 1. Then there are canonical isomorphisms
\[
(\iota^{\ast}R^kj_{\ast} \overline{\mathbb{Q}}_{\ell})_x\cong\left\{\begin{array}{ll} \overline{\mathbb{Q}}_{\ell} & k=0\\
W_1(-1) & k = 1\\
W_2(-2) & k = 2\\
0 & \mathrm{else}\ .\end{array} \right .
\]
\end{thm}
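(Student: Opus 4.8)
The plan is to reduce the computation of the stalk $(\iota^{\ast}R^kj_{\ast}\overline{\mathbb{Q}}_{\ell})_x$ to an explicit local model and then apply Thomason's absolute purity theorem. First I would localize: the statement is étale-local on $X_{\mathcal{O}^{\mathrm{ur}}}$ around $x$, and since $X$ is regular of relative dimension $1$ over $\mathcal{O}$ with $X_s$ a union of regular divisors $D_1,\dots,D_i$ through $x$, strict Henselization makes $X_{\mathcal{O}^{\mathrm{ur}}}$ look like a regular two-dimensional scheme in which the union $\bigcup_t D_t$ cuts out the (reduced) special fiber. Each $D_t$ is a regular (hence, étale-locally, smooth) divisor, and the $D_t$ cross normally, so the local picture is $\mathrm{Spec}$ of a two-dimensional regular local ring with the special fiber given by $\pi = u\,z_1\cdots z_i$ for a unit $u$, where $z_t = 0$ cuts out $D_t$. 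The open complement $X_{\eta^{\mathrm{ur}}}$ is then the locus where all $z_t$ are invertible.

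Next I would set up a Mayer–Vietoris / excision argument to compute $Rj_{\ast}\overline{\mathbb{Q}}_{\ell}$ in terms of the individual open complements $U_t = \{z_t \neq 0\}$ and their intersections. The key input is Thomason's purity theorem: for a regular closed immersion $Z \hookrightarrow Y$ of pure codimension $c$ between regular schemes (with $\ell$ invertible), $R^q i^! \overline{\mathbb{Q}}_{\ell} = 0$ for $q \neq 2c$ and $R^{2c} i^! \overline{\mathbb{Q}}_{\ell} \cong \overline{\mathbb{Q}}_{\ell}(-c)$. Applying this to each $D_t \hookrightarrow X_{\mathcal{O}^{\mathrm{ur}}}$ (codimension $1$) gives, via the localization triangle, that $R^1 j_{t\ast}\overline{\mathbb{Q}}_{\ell}$ restricted to $D_t$ is $\overline{\mathbb{Q}}_{\ell}(-1)$ and all higher direct images vanish, where $j_t$ is the inclusion of $U_t$. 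For the intersection strata, purity for $D_s \cap D_t \hookrightarrow X_{\mathcal{O}^{\mathrm{ur}}}$ (codimension $2$) contributes a $\overline{\mathbb{Q}}_{\ell}(-2)$ in degree $2$. Combining these via the spectral sequence associated to the covering of $X_{\eta^{\mathrm{ur}}}$ by the $U_t$ (equivalently, iterating localization triangles), one gets exactly: $R^0 = \overline{\mathbb{Q}}_{\ell}$; $R^1$ has a basis indexed by the divisors $D_t$ through $x$, each Tate-twisted once, i.e. $W_1(-1)$; and $R^2$ is the "relations" among these, namely the kernel $W_2$ of the sum map $W_1 \to \overline{\mathbb{Q}}_{\ell}$, Tate-twisted twice. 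Tracking the twists and the combinatorics of the Čech complex of the $D_t$ is where one sees $W_2$ rather than all of $\Lambda^2 W_1$: because the special fiber is defined by a single equation $z_1 \cdots z_i$, a "going around all branches" class is trivial, which is precisely the one relation imposed.

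The main obstacle I anticipate is making the local model rigorous and canonical — in particular verifying that the $D_t$, being merely regular divisors on a regular scheme, are étale-locally in normal-crossings position (so that the purity computations for the strata $D_{t_1} \cap \cdots \cap D_{t_k}$ apply with the expected codimensions), and checking that these intersections are themselves regular. This should follow from regularity of $X$ plus the hypothesis that $X_s$ is globally a union of regular divisors, using that a regular local ring is a UFD so the $D_t$ correspond to a regular system of parameters locally; but the bookkeeping of which intersections are nonempty near $x$, and the naturality of all the purity isomorphisms so that the identifications of $R^1$ with $W_1(-1)$ and $R^2$ with $W_2(-2)$ are canonical (not just abstractly isomorphisms), will require care. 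A secondary point is confirming there is no contribution in degrees $\geq 3$, which comes from the relative dimension $1$ hypothesis bounding the number of branches and the vanishing of $R^q i^!$ for $q$ large.
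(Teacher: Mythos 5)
The central difficulty in this theorem is precisely that the divisors $D_1,\dots,D_i$ through $x$ do \emph{not} form a normal crossings divisor: on a $2$-dimensional regular scheme, at most two regular divisors can cross normally at a point, yet in the application to $\mathcal{M}_{\Gamma(p^n)}$ there are $p^{n-1}(p+1)$ components of $X_s$ passing through each supersingular point (Theorem \ref{SpecialFiber}). Your local model ``$\pi = u\,z_1\cdots z_i$ with the $D_t$ crossing normally'' is therefore self-contradictory as soon as $i>2$, and the point you flag at the end as ``requiring care'' — verifying normal-crossings position — is not a gap to be filled but a hypothesis that fails. This breaks the combinatorics of your \v{C}ech argument in two ways. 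First, all the higher intersections $D_{t_1}\cap\cdots\cap D_{t_k}$ for $k\geq 2$ are nonempty (they all equal $\{x\}$), so the \v{C}ech complex does not terminate after pairwise intersections; one gets a long tail of codimension-$2$ strata contributing $\overline{\mathbb{Q}}_{\ell}(-2)$ terms, and the identification of the $E_2$-page requires computing the whole Koszul-type complex $\Lambda^i W_1\to\cdots\to\Lambda^2 W_1$ of $d_1$-differentials (whose sole surviving cohomology is indeed $W_2$). Second, your count ``$W_2$ rather than $\Lambda^2 W_1$, one relation imposed'' does not match: $\dim \Lambda^2 W_1=\binom{i}{2}$ while $\dim W_2 = i-1$, so for $i\geq 4$ the discrepancy is much larger than one relation, which is exactly the missing higher-strata contribution.

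The paper sidesteps all of this by never introducing a \v{C}ech/Mayer--Vietoris spectral sequence over the intersection poset. Instead it writes down a single four-term complex of sheaves
\[
0\longrightarrow W_2\otimes b_{\ast}b^{!}I^{\bullet}\longrightarrow \bigoplus_{t} b_{t\ast}b_t^{!}I^{\bullet}\longrightarrow I^{\bullet}\longrightarrow j_{\ast}j^{\ast}I^{\bullet}\longrightarrow 0,
\]
with $b\colon\{x\}\hookrightarrow X$ and $b_t\colon D_t\hookrightarrow X$, in which the $W_2$ multiplicity at the point stratum is built in by hand. Exactness is checked directly (following Rapoport--Zink, Lemma 2.5) by decomposing an injective sheaf along the strata $U$, $D_t\setminus\{x\}$, $\{x\}$ and verifying the sequence splits stratum by stratum; the $W_2\to W_1\to\overline{\mathbb{Q}}_{\ell}$ exactness is exactly what is needed at the point stratum. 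Thomason's purity then identifies $b_{t\ast}b_t^{!}I^{\bullet}\simeq b_{t\ast}\overline{\mathbb{Q}}_{\ell}(-1)[-2]$ and $b_{\ast}b^{!}I^{\bullet}\simeq b_{\ast}\overline{\mathbb{Q}}_{\ell}(-2)[-4]$, and the remaining hypercohomology spectral sequence degenerates at once because its nonzero entries are pure of distinct Tate twists. So while your underlying instinct (purity for the strata plus a combinatorial complex) is the right one, and a \emph{corrected} \v{C}ech argument over all of the closed strata would eventually produce the same answer, the proposal as written is built on the false normal-crossings premise and omits the higher intersection strata that carry the essential combinatorics.
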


\begin{proof} We use a method similar to the one employed in \cite{RapoportZink}, pp.36-38. Denote $b_i: D_i\longrightarrow X$ and $b: x\longrightarrow X$ the closed embeddings. Let $I^{\bullet}$ be an injective resolution of $\overline{\mathbb{Q}}_{\ell}$ on $X$.

\begin{rem} Note that this is abuse of language, as in Remark \ref{DerivedStrange}. The proper meaning is to take a compatible system of injective resolutions of $\mathbb{Z}/\ell^n\mathbb{Z}$.
\end{rem}

Using diverse adjunction morphisms, we get a complex of sheaves on $X_{\mathcal{O}^{\mathrm{ur}}}$
\[\ldots \longrightarrow 0\longrightarrow W_2\otimes b_{\ast} b^{!} I^{\bullet}\longrightarrow \bigoplus\nolimits_i b_{i\ast} b_i^{!} I^{\bullet}\longrightarrow \iota^{\ast} I^{\bullet}\longrightarrow \iota^{\ast}j_{\ast}j^{\ast} I^{\bullet}\longrightarrow 0\ .\]

\begin{prop} The hypercohomology of this complex vanishes.
\end{prop}

\begin{proof} This is almost exactly \cite{RapoportZink}, Lemma 2.5. We repeat the argument here.

Let us begin with some general remarks. Recall that for a closed embedding $i: Y\longrightarrow Z$, there is a right-adjoint functor $i^{!}$ to $i_{!}=i_{\ast}$, given by $i^{!} \mathcal{F} = \mathrm{ker}(\mathcal{F}\longrightarrow j_{\ast}j^{\ast}\mathcal{F})$, where $j:Z\setminus Y\longrightarrow Z$ is the inclusion of the complement. We get an exact sequence
\[0\longrightarrow i_{!}i^{!} \mathcal{F}\longrightarrow \mathcal{F}\longrightarrow j_{\ast}j^{\ast}\mathcal{F}\ .\]
Being right-adjoint, $i^{!}$ is left-exact. Furthermore, $i_{\ast}$ has a left adjoint $i^{\ast}$ and a right adjoint $i^{!}$, hence is exact. Thus $i^{!}$ has an exact left adjoint and hence preserves injectives. Similarly, $i_{\ast}$ has the exact left adjoint $i^{\ast}$ and thus preserves injectives. We see that if $\mathcal{F}$ is injective, then $i_{!}i^{!}\mathcal{F} = i_{\ast} i^{!} \mathcal{F}$ is injective. Therefore $\mathcal{F} = i_{\ast} i^{!} \mathcal{F} \oplus \mathcal{F}^{\prime}$ for some injective sheaf $\mathcal{F}^{\prime}$. We get an injection $\mathcal{F}^{\prime}\longrightarrow j_{\ast}j^{\ast} \mathcal{F}$. Since $\mathcal{F}^{\prime}$ is injective, this is a split injection, with cokernel supported on $Y$. But
\[ \mathrm{Hom}(i_{\ast}\mathcal{G}, j_{\ast} j^{\ast}\mathcal{F}) = \mathrm{Hom}(j^{\ast}i_{\ast}\mathcal{G},j^{\ast}\mathcal{F}) = 0 \]
for any sheaf $\mathcal{G}$ on $Y$, hence the cokernel is trivial and $\mathcal{F}^{\prime} = j_{\ast}j^{\ast}\mathcal{F}$. This shows that
\[ \mathcal{F} = i_{\ast}i^{!} \mathcal{F} \oplus j_{\ast}j^{\ast}\mathcal{F} \]
for any injective sheaf $\mathcal{F}$ on $Y$, where $i^{!} \mathcal{F}$ and $j^{\ast}\mathcal{F}$ are injective, as $j^{\ast}=j^{!}$ has the exact left adjoint $j_{!}$.

We prove the proposition for any complex of injective sheaves $I^{\bullet}$. This reduces the problem to doing it for a single injective sheaf $I$. Let $U$ be the complement of $X_s$ in $X$ and let $U_i$ be the complement of $x$ in $D_i$. In our situation, we get a decomposition of $I$ as
\[ I = f_{U\ast}I_U \oplus \bigoplus\nolimits_i f_{U_i\ast} I_{U_i} \oplus f_{x\ast} I_x\ , \]
where $I_T$ is an injective sheaf on $T$ and $f_{T}: T\longrightarrow X$ is the locally closed embedding, for any $T$ occuring as an index.

Now we check case by case. First, $b^{!} f_{U\ast} I_U = b_{i}^{!} f_{U\ast} I_U = 0$ and the complex reduces to $\iota^{\ast} f_{U\ast} I_U\cong \iota^{\ast} f_{U\ast} I_U$. Second, $b^{!} f_{U_i\ast} I_{U_i}= b_{j}^{!} f_{U_i\ast} I_{U_i} = 0$ for $j\neq i$, while $b_{i}^{!} f_{U_i\ast} I_{U_i} = I_{U_i}$. Hence the complex reduces to the isomorphism $I_{U_i}\cong I_{U_i}$ in this case. In the last case, $b^{!} f_{x\ast} I_x = b_{i}^{!} f_{x\ast} I_x = f_{x\ast} I_x$ for all $i$, and hence the complex reduces to
\[ \ldots \longrightarrow 0\longrightarrow W_2\otimes f_{x\ast} I_x\longrightarrow W_1\otimes f_{x\ast} I_x\longrightarrow f_{x\ast} I_x\longrightarrow 0\ ,\]
and this is exact by our definition of $W_2$.
\end{proof}

Let us recall one important known special case of Grothendieck's purity conjecture:

\begin{thm} Let $X$ be a regular separated noetherian scheme of finite type over the ring of integers in a local field and let $f: Y\longrightarrow X$ be a closed immersion of a regular scheme $Y$ that is of codimension $d$ at each point. Let $\ell$ be a prime that is invertible on $X$. Then there is an isomorphism in the derived category of constructible $\overline{\mathbb{Q}}_{\ell}$-sheaves
\[ Rf^{!} \overline{\mathbb{Q}}_{\ell}\cong \overline{\mathbb{Q}}_{\ell}(-d)[-2d]\ .\]
\end{thm}

\begin{proof} This is contained in \cite{Thomason}, Cor. 3.9.
\end{proof}

We use this to get isomorphisms
\[ b_{i\ast} b_i^{!} I^{\bullet}\cong b_{i\ast} \overline{\mathbb{Q}}_{\ell}(-1)[-2]\ \]
and
\[ b_{\ast} b^{!} I^{\bullet}\cong b_{\ast} \overline{\mathbb{Q}}_{\ell}(-2)[-4] \]
in the derived category. Hence, since the spectral sequence for hypercohomology of
\[\ldots \longrightarrow 0\longrightarrow W_2\otimes b_{\ast} b^{!} I^{\bullet}\longrightarrow \bigoplus\nolimits_i b_{i\ast} b_i^{!} I^{\bullet}\longrightarrow \iota^{\ast}  I^{\bullet} \]
is equivariant for the Galois action and its only nonzero terms are of the form $\overline{\mathbb{Q}}_{\ell}(-k)$ for different $k$, it degenerates and we get the desired isomorphism.
\end{proof}

As a corollary, we can compute the semisimple trace of Frobenius on the nearby cycles in our situation. Let $B$ denote the Borel subgroup of $\mathrm{GL}_2$. Recall that we associated an element $\delta\in \mathrm{GL}_2(\mathbb{Q}_{p^r})$ to any point $x\in \mathcal{M}(\mathbb{F}_{p^r})$ by looking at the action of $F$ on the crystalline cohomology. We have the covering $\pi_n: \mathcal{M}_{\Gamma(p^n)}\longrightarrow \mathcal{M}$ and the sheaf $\mathcal{F}_n = \pi_{n\eta\ast} \mathbb{Q}_{\ell}$ on the generic fibre of $\mathcal{M}_{\Gamma(p^n)}$.

\begin{cor}\label{CalcSemisimpleTrace} Let $x\in \mathcal{M}(\mathbb{F}_{p^r})$ and let $g\in \mathrm{GL}_2(\mathbb{Z}_p)$.
\medskip

{\rm (i)} If $x$ corresponds to an ordinary elliptic curve and $a$ is the unique eigenvalue of $N\delta$ with valuation 0, then
\[ \mathrm{tr}^{\mathrm{ss}}(\Phi_{p^r}g | (R\psi \mathcal{F}_{n})_x) = \mathrm{tr}( \Phi_{p^r}g | V_n )\ , \]
where $V_n$ is a $G_{\mathbb{F}_{p^r}}\times \mathrm{GL}_2(\mathbb{Z}/p^n\mathbb{Z})$-representation isomorphic to
\[\bigoplus_{\chi\in ((\mathbb{Z}/p^n\mathbb{Z})^{\times})^{\vee}} \mathrm{Ind}_{B(\mathbb{Z}/p^n\mathbb{Z})}^{\mathrm{GL}_2(\mathbb{Z}/p^n\mathbb{Z})} 1\boxtimes \chi\]
as a $\mathrm{GL}_2(\mathbb{Z}/p^n\mathbb{Z})$-representation. Here $\Phi_{p^r}$ acts as the scalar $\chi(a)^{-1}$ on
\[
\mathrm{Ind}_{B(\mathbb{Z}/p^n\mathbb{Z})}^{\mathrm{GL}_2(\mathbb{Z}/p^n\mathbb{Z})} 1\boxtimes \chi\ .
\]
\medskip

{\rm (ii)} If $x$ corresponds to a supersingular elliptic curve, then
\[ \mathrm{tr}^{\mathrm{ss}}(\Phi_{p^r}g | (R\psi \mathcal{F}_{n})_x) = 1-\mathrm{tr}( g | \mathrm{St} )p^r\ , \]
where
\[ \mathrm{St} = \mathrm{ker}(\mathrm{Ind}_{B(\mathbb{Z}/p^n\mathbb{Z})}^{\mathrm{GL}_2(\mathbb{Z}/p^n\mathbb{Z})} 1\boxtimes 1\longrightarrow 1) \]
is the Steinberg representation of $\mathrm{GL}_2(\mathbb{Z}/p^n\mathbb{Z})$.
\end{cor}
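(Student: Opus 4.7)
The plan is to combine the computation of the inertia-invariants in Theorem \ref{CalcInertiaNearby} with the structure of the finite cover $\pi_n$ over the special fiber described in Theorem \ref{SpecialFiber}, and with the lemma from Section \ref{RemarksSemisimple} that converts inertia-invariants into semisimple trace via the factor $(1-p^r)$. Since $\pi_n$ is finite, proper base change for nearby cycles gives $R\psi \mathcal{F}_n \cong \pi_{ns\ast} R\psi \mathbb{Q}_\ell$ on the special fiber of $\mathcal{M}$, so
\[
(R\psi \mathcal{F}_n)_x \cong \bigoplus_{y \in \pi_n^{-1}(x)_{\mathrm{red}}} (R\psi \mathbb{Q}_\ell)_y \cong \bigoplus_{[y]} \mathrm{Ind}_{\mathrm{Stab}_y}^{\mathrm{GL}_2(\mathbb{Z}/p^n\mathbb{Z})} (R\psi \mathbb{Q}_\ell)_y,
\]
where $\mathrm{GL}_2(\mathbb{Z}/p^n\mathbb{Z})$ permutes reduced preimages via the Galois action of the generic-fiber cover, $\mathrm{Stab}_y$ is the corresponding stabilizer, and the last sum is over orbit representatives. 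At each $y$, Theorem \ref{CalcInertiaNearby} describes $R_{I_{\mathbb{Q}_p}}(R\psi\mathbb{Q}_\ell)_y$ in terms of the divisors $\mathcal{M}^H_{\Gamma(p^n)}$ through $y$, from which the semisimple Frobenius trace follows.

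For the ordinary case, Theorem \ref{SpecialFiber} tells us that distinct components meet only at supersingular points, so at each ordinary $y$ exactly one divisor passes through. Theorem \ref{CalcInertiaNearby} then gives inertia-invariants $\mathbb{Q}_\ell$ in degree $0$ and $\mathbb{Q}_\ell(-1)$ in degree $1$, yielding semisimple Frobenius trace $1$ at each such $y$. The Katz-Mazur description of Drinfeld $p^n$-bases on an ordinary $E_x$, based on the connected-\'etale sequence $0\to \mu_{p^n}\to E_x[p^n]\to \mathbb{Z}/p^n\mathbb{Z}\to 0$, identifies the reduced preimages of $x$ with a single $\mathrm{GL}_2(\mathbb{Z}/p^n\mathbb{Z})$-orbit of size $p^{2n-2}(p^2-1)$, whose stabilizer is (conjugate to) $\tilde B(\mathbb{Z}/p^n\mathbb{Z}) = \{\, g \in \mathrm{GL}_2(\mathbb{Z}/p^n\mathbb{Z}) \mid g_{21}=0,\ g_{22}=1\,\}$, namely the subgroup preserving a basis adapted to the canonical subgroup filtration. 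Hence as a $\mathrm{GL}_2(\mathbb{Z}/p^n\mathbb{Z})$-representation
\[
V_n \cong \mathrm{Ind}_{\tilde B}^{\mathrm{GL}_2(\mathbb{Z}/p^n\mathbb{Z})} \mathbf{1} = \mathrm{Ind}_{B}^{\mathrm{GL}_2(\mathbb{Z}/p^n\mathbb{Z})} \Bigl(\bigoplus_\chi 1\boxtimes \chi\Bigr),
\]
the second equality by induction in stages using $B/\tilde B\cong (\mathbb{Z}/p^n\mathbb{Z})^\times$. Arithmetic Frobenius acts on $E_x[p^n]^{\mathrm{et}}$ by multiplication by $\bar a\in (\mathbb{Z}/p^n\mathbb{Z})^\times$, hence permutes reduced preimages through a central scalar matrix; so geometric $\Phi_{p^r}$ acts on each $\chi$-component through the central character $\chi$ evaluated at $\bar a^{-1}$, giving $\chi(a)^{-1}$.

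In the supersingular case, the reduced preimage of $x$ consists of a single point $y_0$ stabilized by all of $\mathrm{GL}_2(\mathbb{Z}/p^n\mathbb{Z})$, and by Theorem \ref{SpecialFiber} all $p^{n-1}(p+1)$ components $\mathcal{M}^H_{\Gamma(p^n)}$ pass through $y_0$, indexed by the order-$p^n$ direct summands $H\subset (\mathbb{Z}/p^n\mathbb{Z})^2$, i.e.\ by $\mathbb{P}^1(\mathbb{Z}/p^n\mathbb{Z}) = \mathrm{GL}_2(\mathbb{Z}/p^n\mathbb{Z})/B(\mathbb{Z}/p^n\mathbb{Z})$. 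Theorem \ref{CalcInertiaNearby} therefore identifies $W_1 \cong \mathrm{Ind}_{B}^{\mathrm{GL}_2(\mathbb{Z}/p^n\mathbb{Z})} \mathbf{1}$ and (by its very definition) $W_2 \cong \mathrm{St}$ as $\mathrm{GL}_2(\mathbb{Z}/p^n\mathbb{Z})$-representations. Using $W_1 = \mathbf{1}\oplus \mathrm{St}$ and the lemma from Section \ref{RemarksSemisimple}, one computes
\[
(1-p^r)\,\mathrm{tr}^{\mathrm{ss}}(\Phi_{p^r} g\mid (R\psi\mathbb{Q}_\ell)_{y_0}) = 1 - \mathrm{tr}(g\mid W_1)\,p^r + \mathrm{tr}(g\mid \mathrm{St})\,p^{2r} = (1-p^r)\bigl(1-\mathrm{tr}(g\mid \mathrm{St})\,p^r\bigr),
\]
giving part (ii) after dividing by $1-p^r$.

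The main technical obstacle I anticipate is the ordinary case: one must invoke the Katz-Mazur theory to simultaneously pin down the stabilizer $\tilde B$ and the action of geometric Frobenius on reduced preimages through the unit eigenvalue $a$ of $N\delta$. The supersingular case is essentially combinatorial, once one observes that the divisor-indexed basis of $W_1$ matches the coset space $\mathrm{GL}_2/B$ and that $W_2$ is by definition the Steinberg.
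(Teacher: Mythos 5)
Your proposal is correct and follows essentially the same route as the paper's proof: finiteness of $\pi_n$ to push forward the nearby cycles, Theorem \ref{CalcInertiaNearby} at each point above $x$, the $(1-p^r)$ conversion from $R_{I_K}$-invariants to semisimple trace, the Katz--Mazur parametrization of Drinfeld $p^n$-structures at an ordinary point by surjections $(\mathbb{Z}/p^n\mathbb{Z})^2\twoheadrightarrow E[p^n]^{\mathrm{et}}$ (the paper states this directly, you equivalently describe the stabilizer $\tilde B$ and use induction in stages), the unit-root eigenvalue $a$ governing the geometric Frobenius action via the central scalar $a^{-1}$, and the $\mathbb{P}^1(\mathbb{Z}/p^n\mathbb{Z})$-parametrization of the branches through the supersingular point giving $W_1\cong\mathrm{Ind}_B^G \mathbf 1$, $W_2\cong\mathrm{St}$. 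One small wording caveat: the phrase ``semisimple Frobenius trace $1$ at each such $y$'' is not literally meaningful since $G_{\mathbb{F}_{p^r}}$ permutes the fibers over $x$; the clean statement, which your subsequent computation implicitly uses, is that after taking $R_{I_K}$ the stalk at each $\tilde x$ contributes $\overline{\mathbb{Q}}_\ell\oplus\overline{\mathbb{Q}}_\ell(-1)[-1]$, so the total over the fiber is $V_n\oplus V_n(-1)[-1]$ and dividing by $(1-p^r)$ gives $\mathrm{tr}(\Phi_{p^r}g\mid V_n)$.
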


\begin{proof} Note first that we have
\[ R_{I_K} R\psi_{\mathcal{M}} \mathcal{F}_n = R_{I_K} R\psi_{\mathcal{M}} \pi_{n\eta\ast} \overline{\mathbb{Q}}_{\ell} = \pi_{n\overline{s}\ast} R_{I_K} R\psi_{\mathcal{M}_{\Gamma(p^n)}} \overline{\mathbb{Q}}_{\ell} = \pi_{n\overline{s}\ast} \iota^{\ast} Rj_{\ast} \overline{\mathbb{Q}}_{\ell} \]
because $\pi_n$ is finite. Here subscripts for $R\psi$ indicate with respect to which scheme the nearby cycles are taken, and $\iota$ and $j$ are as defined before Lemma \ref{InertiaNearby}, for the scheme $\mathcal{M}_{\Gamma(p^n)}$. Note that we may apply Theorem \ref{CalcInertiaNearby} because of Theorem \ref{SpecialFiber}.

Let $\tilde{x}$ be any point above $x$ in $\mathcal{M}_{\Gamma(p^n)}$. In case (i), we see that
\[(\iota^{\ast} R^kj_{\ast} \overline{\mathbb{Q}}_{\ell})_{\tilde{x}}\cong \left\{\begin{array}{ll} \overline{\mathbb{Q}}_{\ell}(-k) & k=0,1\\ 0 & \mathrm{else}\ . \end{array} \right .\]
It remains to understand the action of $\mathrm{GL}_2(\mathbb{Z}/p^n\mathbb{Z})\times G_{\mathbb{F}_{p^r}}$ on $\pi_n^{-1}(x)$. Let $E$ be the elliptic curve corresponding to $x$. Fix an identification $E[p^{\infty}]\cong \mu_{p^{\infty}}\times \mathbb{Q}_p/\mathbb{Z}_p$ and in particular $E[p^n]\cong \mu_{p^n}\times \mathbb{Z}/p^n\mathbb{Z}$. Then the Drinfeld-level-$p^n$-structures are parametrized by surjections
\[
(\mathbb{Z}/p^n\mathbb{Z})^2\longrightarrow \!\!\!\!\!\!\!\!\! \longrightarrow \mathbb{Z}/p^n\mathbb{Z}\ .
\]
The right action of $\mathrm{GL}_2(\mathbb{Z}/p^n\mathbb{Z})$ is given by precomposition.

The identification $E[p^{\infty}]\cong \mu_{p^{\infty}}\times \mathbb{Q}_p/\mathbb{Z}_p$ gives $\delta$ the form
\[\left(\begin{array}{cc} pb_0 & 0 \\ 0 & a_0 \end{array}\right)\ ,\]
because the crystalline cohomology of $E$ agrees with the contravariant Dieu\-donn\'{e} module of $E[p^{\infty}]$. Then
\[\Phi_{p^r} = N\delta = \left(\begin{array}{cc} p^rNb_0 & 0 \\ 0 & Na_0 \end{array}\right)\ .\]
Hence $a=Na_0$ and $\Phi_{p^r}$ acts through multiplication by $a^{-1}$ on the factor $\mathbb{Z}/p^n\mathbb{Z}$ of $E[p^n]$. Hence it sends a Drinfeld-level-$p^n$-structure given by some surjection to the same surjection multiplied by $a^{-1}$. In total, we get
\[ (\pi_{n\overline{s}\ast} \iota^{\ast} R^kj_{\ast} \overline{\mathbb{Q}}_{\ell})_x\cong \left\{\begin{array}{ll} V_n(-k) & k = 0,1\\ 0 & \mathrm{else}\ , \end{array} \right . \]
where $V_n\cong \overline{\mathbb{Q}}_{\ell}^{M}$ with $M$ the set of surjections $(\mathbb{Z}/p^n\mathbb{Z})^2\longrightarrow \!\!\!\!\!\!\!\!\! \longrightarrow \mathbb{Z}/p^n\mathbb{Z}$. Then $\mathrm{GL}_2(\mathbb{Z}/p^n\mathbb{Z})\times G_{\mathbb{F}_{p^r}}$ acts on those pairs and hence on $V_n$, compatible with the action on the left hand side. The action of diagonal multiplication commutes with this action and gives rise to the decomposition
\[ V_n = \bigoplus_{\chi\in ((\mathbb{Z}/p^n\mathbb{Z})^{\times})^\vee} V_{\chi}\ . \]
Then one checks that $V_{\chi} = \mathrm{Ind}_{B(\mathbb{Z}/p^n\mathbb{Z})}^{\mathrm{GL}_2(\mathbb{Z}/p^n\mathbb{Z})} 1\boxtimes \chi$ and one easily arrives at the formula in case (i).

In case (ii), there is only one point $\tilde{x}$ above $x$ in $\mathcal{M}_{\Gamma(p^n)}$ and $p^n+p^{n-1}$ irreducible components meet at $x$, parametrized by $\mathbb{P}^1(\mathbb{Z}/p^n\mathbb{Z})$. This parametrization is $\mathrm{GL}_2(\mathbb{Z}/p^n\mathbb{Z})$-equivariant, so that
\[
W_1\cong \mathrm{Ind}_{B(\mathbb{Z}/p^n\mathbb{Z})}^{\mathrm{GL}_2(\mathbb{Z}/p^n\mathbb{Z})} 1\boxtimes 1
\]
and
\[
W_2\cong \ker(\mathrm{Ind}_{B(\mathbb{Z}/p^n\mathbb{Z})}^{\mathrm{GL}_2(\mathbb{Z}/p^n\mathbb{Z})} 1\boxtimes 1\longrightarrow 1)=\mathrm{St}
\]
in the notation of Theorem \ref{CalcInertiaNearby}. This yields the desired result.
\end{proof}

Define for $x\in \mathcal{M}(\mathbb{F}_{p^r})$
\[(R\psi \mathcal{F}_{\infty})_x = \lim_{\longrightarrow} (R\psi \mathcal{F}_n)_x\ .\]
It carries a natural smooth action of $\mathrm{GL}_2(\mathbb{Z}_p)$ and a commuting continuous action of $G_{\mathbb{Q}_{p^r}}$. Then we can define $\mathrm{tr}^{\mathrm{ss}}(\Phi_{p^r}h | (R\psi \mathcal{F}_{\infty})_x)$ for $h\in C_c^{\infty}(\mathrm{GL}_2(\mathbb{Z}_p))$ in the following way: Choose $n$ such that $h$ is $\Gamma(p^n)_{\mathbb{Q}_p}$-biinvariant and then take invariants under $\Gamma(p^n)_{\mathbb{Q}_p}$ first:
\[\mathrm{tr}^{\mathrm{ss}}(\Phi_{p^r} h|(R\psi \mathcal{F}_{\infty})_x) = \mathrm{tr}^{\mathrm{ss}}(\Phi_{p^r} h|(R\psi \mathcal{F}_{n})_x)\ .\]
It is easily checked that this gives something well-defined.

We see that for $h\in C_c^{\infty}(\mathrm{GL}_2(\mathbb{Z}_p))$, the value of $\mathrm{tr}^{\mathrm{ss}}(\Phi_{p^r}h | (R\psi \mathcal{F}_{\infty})_x)$ depends only on the element $\gamma=N\delta$ associated to $x$. This motivates the following definition.

\begin{definition} For $\gamma\in \mathrm{GL}_2(\mathbb{Q}_p)$ and $h\in C_c^{\infty}(\mathrm{GL}_2(\mathbb{Z}_p))$, define
\[
c_r(\gamma,h) = 0
\]
unless $v_p(\det \gamma)=r$, $v_p(\tr \gamma)\geq 0$. Assume now that these conditions are fulfilled. Then for $v_p(\tr \gamma)=0$, we define
\[
c_r(\gamma,h) = \sum_{\chi_0\in ((\mathbb{Z}/p^n\mathbb{Z})^{\times})^{\vee}} \tr(h|\mathrm{Ind}_{B(\mathbb{Z}/p^n\mathbb{Z})}^{\mathrm{GL}_2(\mathbb{Z}/p^n\mathbb{Z})} 1\boxtimes \chi_0) \chi_0(t_2)^{-1}
\]
where $t_2$ is the unique eigenvalue of $\gamma$ with $v_p(t_2)=0$. For $v_p(\tr \gamma)\geq 1$, we take
\[
c_r(\gamma,h) = \tr( h | 1 ) - p^r \tr( h | \mathrm{St} )\ .
\]
\end{definition}

Since $x$ is supersingular if and only if $\tr N\delta\equiv 0\modd p$, we get
\[
\mathrm{tr}^{\mathrm{ss}}(\Phi_{p^r}h | (R\psi \mathcal{F}_{\infty})_x) = c_r(N\delta, h)
\]
whenever $\delta$ is associated to $x\in \mathcal{M}(\mathbb{F}_{p^r})$.

\section{The semisimple trace of Frobenius as a twisted orbital integral}

First, we construct the function $\phi_p$ which will turn out to have the correct twisted orbital integrals.

\begin{lem}\label{FunctionExists} There is a function $\phi_p$ of the Bernstein center of $\mathrm{GL}_2(\mathbb{Q}_{p^r})$ such that for all irreducible smooth representations $\Pi$ of $\mathrm{GL}_2(\mathbb{Q}_{p^r})$, $\phi_p$ acts by the scalar
\[ p^{\frac 12 r} \mathrm{tr}^{\mathrm{ss}}(\Phi_{p^r}|\sigma_{\Pi})\ , \]
where $\sigma_{\Pi}$ is the representation of the Weil group $W_{\mathbb{Q}_{p^r}}$ of $\mathbb{Q}_{p^r}$ with values in $\overline{\mathbb{Q}}_{\ell}$ associated to $\Pi$ by the Local Langlands Correspondence.
\end{lem}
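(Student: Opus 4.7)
The strategy is to construct $\phi_p$ via its action on the Bernstein variety and invoke Theorem~\ref{BernsteinCenterIsom}. That theorem reduces the construction of a central element to specifying, on each Bernstein component $D/W(L,D)$, a $W(L,D)$-invariant regular function whose value at $\chi_0$ is the scalar by which the element must act on $\text{n-Ind}_P^G(\sigma\chi_0)$. Since this scalar then automatically acts by the same value on every irreducible subquotient, it suffices to check that the prescribed quantity $p^{r/2}\mathrm{tr}^{\mathrm{ss}}(\Phi_{p^r}|\sigma_\Pi)$ depends only on the inertial support of $\Pi$, i.e.\ is constant across the irreducible subquotients of a fixed parabolically induced representation.

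I would proceed by enumerating the Bernstein components of $\mathrm{GL}_2(\mathbb{Q}_{p^r})$. There are two kinds. First, the supercuspidal components $(L,D) = (\mathrm{GL}_2,\mathbb{G}_m)$, attached to an inertial class of a supercuspidal $\sigma$; here the Langlands parameter of $\sigma\chi_0$ is a $2$-dimensional irreducible representation of $W_{\mathbb{Q}_{p^r}}$, and Clifford theory (combined with the fact that any representation factoring through $W/I\cong\mathbb{Z}$ decomposes into $1$-dimensional pieces) shows that its restriction to inertia contains no trivial summand. Hence the semisimple inertia invariants vanish, the required scalar is $0$, and I take the zero function on such components. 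Second, the principal series components attached to $L=T$ the diagonal torus and $\sigma = \sigma_1\otimes\sigma_2$; here $D=\mathbb{G}_m^2$ with coordinates $T_i = \chi_0^{(i)}(\Phi_{p^r})$, and $W(L,D)=S_2$ precisely when $\sigma_1/\sigma_2$ is an unramified character.

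On such a principal series component the Langlands parameter of $\text{n-Ind}(\sigma\chi_0)$ is the direct sum $(\sigma_1\chi_0^{(1)})\oplus(\sigma_2\chi_0^{(2)})$ of characters of $W_{\mathbb{Q}_{p^r}}$ via local class field theory. Its semisimple inertia invariants are the sum of those summands corresponding to $i$ with $\sigma_i$ unramified, and Frobenius acts on such a summand by $\sigma_i(\Phi_{p^r})\,T_i$. Consequently the prescribed scalar is the Laurent polynomial
\[
p^{\frac{r}{2}}\sum_{i:\ \sigma_i\ \text{unramified}}\sigma_i(\Phi_{p^r})\,T_i,
\]
which is manifestly regular in $(T_1,T_2)$ and is $S_2$-invariant whenever $W(L,D)=S_2$: in the case $\sigma_1=\sigma_2$ unramified it is literally symmetric, while in the case both are ramified with $\sigma_1/\sigma_2$ unramified the sum is empty. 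Theorem~\ref{BernsteinCenterIsom} then produces a unique $\phi_p\in\mathcal{Z}(\mathrm{GL}_2(\mathbb{Q}_{p^r}))$ realizing these functions.

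The remaining step, and the one non-formal point of the argument, is to verify that for every irreducible $\Pi$ the constructed scalar equals $p^{r/2}\mathrm{tr}^{\mathrm{ss}}(\Phi_{p^r}|\sigma_\Pi)$. This is tautological when $\Pi=\text{n-Ind}(\sigma\chi_0)$ is itself irreducible. The main obstacle is subquotients of reducible induced representations, e.g.\ twists of the trivial character or of the Steinberg: for these, $\sigma_\Pi$ may carry a nontrivial monodromy operator $N$, and is not literally the parameter of the ambient induced representation. However, the definition of $\mathrm{tr}^{\mathrm{ss}}$ discards $N$—it sees only the trace of Frobenius on the $I$-invariants of the semisimplified Weil representation—and for local Langlands on $\mathrm{GL}_2$ all irreducible subquotients of a fixed induced representation share the same semisimple Weil part of their parameters. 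They therefore share the same value of $\mathrm{tr}^{\mathrm{ss}}(\Phi_{p^r}|\sigma_\Pi)$, which matches the scalar dictated by our Laurent polynomial, completing the plan.
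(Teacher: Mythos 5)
Your argument is correct and follows essentially the same route as the paper: invoke Theorem~\ref{BernsteinCenterIsom}, note that $\mathrm{tr}^{\mathrm{ss}}$ discards the monodromy operator so the prescribed scalar is constant on the irreducible subquotients of a fixed induced representation, and observe that on each component the semisimplified Weil parameter varies by an unramified twist, so the scalar is a regular $W(L,D)$-invariant function. The only stylistic difference is that the paper handles all components in one stroke via the uniform decomposition $\sigma_\Pi^{\mathrm{ss}} = \bigoplus_i \sigma_i\otimes\chi_i\circ\det$, whereas you enumerate the supercuspidal and principal-series cases separately; the content is the same.
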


\begin{rem} Of course, the definition of the semisimple trace of Frobenius makes sense for representations of $W_{\mathbb{Q}_{p^r}}$. For a representation $\sigma$ of $W_{\mathbb{Q}_{p^r}}$, we write $\sigma^{\mathrm{ss}}$ for the associated semisimplification.
\end{rem}

\begin{proof} By Theorem \ref{BernsteinCenterIsom}, we only need to check that this defines a regular function on $D/W(L,D)$ for all $L$, $D$. First, note that the scalar agrees for a 1-dimensional representation $\Pi$ and the corresponding twist of the Steinberg representation, because we are taking the semisimple trace. This shows that we get a well-defined function on $D/W(L,D)$. But if one fixes $L$ and $D$ and takes $\Pi$ in the corresponding component, then the semi-simplification $\sigma_{\Pi}^{\mathrm{ss}}$ decomposes as $(\sigma_1\otimes \chi_1\circ \det)\oplus\cdots \oplus (\sigma_t\otimes \chi_t\circ \det)$ for certain fixed irreducible representations $\sigma_1,\ldots,\sigma_t$ and varying unramified characters $\chi_1,\ldots,\chi_t$ parametrized by $D$. In particular,
\[ \mathrm{tr}^{\mathrm{ss}}(\Phi_{p^r}|\sigma_{\Pi}) = \sum_{i=1}^t \mathrm{tr}^{\mathrm{ss}}(\Phi_{p^r}|\sigma_i)\chi_i(p) \]
which is clearly a regular function on $D$ and necessarily $W(L,D)$-invariant, hence descends to a regular function on $D/W(L,D)$.
\end{proof}

We also need the function $\phi_{p,0}=\phi_p\ast e_{\mathrm{GL}_2(\mathbb{Z}_{p^r})}\in \mathcal{H}(\mathrm{GL}_2(\mathbb{Q}_{p^r}),\mathrm{GL}_2(\mathbb{Z}_{p^r}))$. This definition is compatible with our previous use of $\phi_{p,0}$:

\begin{lem} The function $\phi_{p,0}$ is the characteristic function of the set
\[ \mathrm{GL}_2(\mathbb{Z}_{p^r})\left(\begin{array}{cc} p & 0\\ 0 & 1 \end{array}\right) \mathrm{GL}_2(\mathbb{Z}_{p^r})\]
divided by the volume of $\mathrm{GL}_2(\mathbb{Z}_{p^r})$.
\end{lem}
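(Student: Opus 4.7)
The plan is to invoke the Satake isomorphism: the spherical Hecke algebra $\mathcal{H}(\mathrm{GL}_2(\mathbb{Q}_{p^r}), K)$, $K = \mathrm{GL}_2(\mathbb{Z}_{p^r})$, embeds injectively into functions on the unramified characters of the diagonal torus via the eigenvalue of the operator on the spherical line of each unramified principal series. Both $\phi_p \ast e_K$ and $\mathbf{1}_{K\,\mathrm{diag}(p,1)\,K}/\mathrm{vol}(K)$ lie in this algebra: the second visibly so, and the first because $\phi_p$ lies in the Bernstein center, hence commutes with $e_K$, so $\phi_p \ast e_K = e_K \ast \phi_p \ast e_K$ is bi-$K$-invariant with compact support (the latter is built into the definition of $\mathcal{Z}(G)$ in Section~\ref{BernsteinCenter}). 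It therefore suffices to compare the two scalars through which they act on every unramified principal series.

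Fix unramified characters $\chi_1, \chi_2$ of $\mathbb{Q}_{p^r}^{\times}$ and let $\Pi = \text{n-Ind}_B^G(\chi_1 \boxtimes \chi_2)$. By Theorem~\ref{BernsteinCenterIsom} and the defining property of $\phi_p$ from Lemma~\ref{FunctionExists}, $\phi_p$ acts on $\Pi$ by the scalar $p^{r/2}\,\mathrm{tr}^{\mathrm{ss}}(\Phi_{p^r} \mid \sigma_\Pi)$. Under the local Langlands correspondence for $\mathrm{GL}_2$, the parameter $\sigma_\Pi$ is the direct sum $\chi_1 \oplus \chi_2$, viewed as unramified characters of $W_{\mathbb{Q}_{p^r}}$ under local class field theory with the convention that a uniformizer corresponds to geometric Frobenius. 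Hence this scalar is $p^{r/2}(\chi_1(p) + \chi_2(p))$.

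On the other hand, the standard Satake computation---decompose $K\,\mathrm{diag}(p,1)\,K$ into its $p^r + 1$ right $K$-cosets and evaluate the normalized spherical vector $f \in \Pi^K$, $f(1)=1$, using $f(bg) = \delta_B^{1/2}(b)(\chi_1 \boxtimes \chi_2)(b)\,f(g)$ together with $\delta_B(\mathrm{diag}(x,y)) = |x/y|$---shows that $\mathbf{1}_{K\,\mathrm{diag}(p,1)\,K}/\mathrm{vol}(K)$ also acts on the spherical line of $\Pi$ by $p^{r/2}(\chi_1(p) + \chi_2(p))$. Since the eigenvalues agree for every pair $(\chi_1, \chi_2)$, the Satake isomorphism forces the two spherical functions to coincide.

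The one point that requires care is the normalization of the local Langlands correspondence: namely, that the convention used in Lemma~\ref{FunctionExists} really pairs the character $\chi$ with the character of $W_{\mathbb{Q}_{p^r}}$ sending geometric Frobenius to $\chi(p)$, so that the semisimple Frobenius trace on $\sigma_\Pi$ is $\chi_1(p) + \chi_2(p)$ rather than some twisted variant. Once this convention (and the resulting $p^{r/2}$ factor) is fixed, everything reduces to the standard spherical calculation and no further obstacle remains.
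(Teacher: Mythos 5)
Your proposal is correct and follows essentially the same route as the paper: both reduce to comparing Satake eigenvalues on unramified principal series, identify the eigenvalue of $\phi_p \ast e_K$ as $p^{r/2}(\chi_1(p)+\chi_2(p))$ via the local Langlands parameter $\sigma_\Pi = \chi_1 \oplus \chi_2$, and match this against the eigenvalue of the normalized characteristic function of $K\,\mathrm{diag}(p,1)\,K$. The only difference is cosmetic — you spell out the spherical-vector computation that the paper delegates to Kottwitz's Theorem 2.1.3 — and the one small slip (the coset decomposition should be into left cosets $g_iK$, not right cosets $Kg_i$, since the spherical vector is right $K$-invariant) does not affect the conclusion.
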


\begin{proof} Both functions are elements of the spherical Hecke algebra
\[
\mathcal{H}(\mathrm{GL}_2(\mathbb{Q}_{p^r}),\mathrm{GL}_2(\mathbb{Z}_{p^r}))\ .
\]
Since the Satake transform is an isomorphism, it suffices to check that the characteristic function of the given set divided by its volume acts through the scalars
\[
p^{\frac 12 r}\mathrm{tr}^{\mathrm{ss}}(\Phi_{p^r}|\sigma_{\Pi})
\]
on unramified representations. In general, this is done in \cite{KottwitzTO}, Theorem 2.1.3. Let us explain what it means here. By the Satake parametrization, an unramified representation $\Pi$ is given by two unramified characters $\chi_1$, $\chi_2$. Then $\sigma_{\Pi} = \chi_1\oplus \chi_2$ and
\[
\mathrm{tr}^{\mathrm{ss}}(\Phi_{p^r}|\sigma_{\Pi}) = \chi_1(p)+\chi_2(p)\ .
\]
Hence this is just the usual formula for the trace of the classical Hecke operators, usually called $T_p$, in terms of the Satake parameters (at least for $r=1$).
\end{proof}

\begin{thm}\label{MainTheorem} Let $\delta\in \mathrm{GL}_2(\mathbb{Q}_{p^r})$ with semisimple norm $\gamma\in \mathrm{GL}_2(\mathbb{Q}_p)$. Let $h\in C_c^{\infty}(\mathrm{GL}_2(\mathbb{Z}_p))$ and $h^{\prime}\in C_c^{\infty}(\mathrm{GL}_2(\mathbb{Z}_{p^r}))$ have matching (twisted) orbital integrals. Then
\[TO_{\delta\sigma}(\phi_p\ast h^{\prime}) = TO_{\delta\sigma}(\phi_{p,0}) c_r(\gamma,h) \ .\]
\end{thm}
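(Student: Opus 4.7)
The plan is to reduce, via base change, to a matching identity of ordinary orbital integrals on $\mathrm{GL}_2(\mathbb{Q}_p)$, and then verify this identity by representation-theoretic computation. First, by the argument of Lemma \ref{FunctionExists} applied to $\mathrm{GL}_2(\mathbb{Q}_p)$ in place of $\mathrm{GL}_2(\mathbb{Q}_{p^r})$, there is a function $f_p\in\mathcal{Z}(\mathrm{GL}_2(\mathbb{Q}_p))$ acting on every irreducible smooth representation $\pi$ by the scalar $p^{r/2}\mathrm{tr}^{\mathrm{ss}}(\Phi_{p^r}|\sigma_\pi)$. Compatibility of the local Langlands correspondence with base change forces $\sigma_\Pi=\sigma_\pi|_{W_{\mathbb{Q}_{p^r}}}$ when $\Pi$ is the base-change lift of $\pi$, so the scalars $c_{f_p,\pi}$ and $c_{\phi_p,\Pi}$ agree. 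Theorem \ref{BaseChangeIdentity} then gives that $\phi_p\ast h'$ is associated to $f_p\ast h$, and that $\phi_{p,0}$ is associated to $f_{p,0}:=f_p\ast e_{\mathrm{GL}_2(\mathbb{Z}_p)}$. The sign in Definition \ref{DefAssoc} depends only on $\delta$, so cancels on taking ratios; the theorem becomes the identity
\[
O_\gamma(f_p\ast h) = O_\gamma(f_{p,0})\cdot c_r(\gamma,h).
\]

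With the reduction in hand, verify this identity by case analysis on $\gamma$. Because $f_p$ lies in the Bernstein center, $f_p\ast h$ acts on any irreducible smooth $\pi$ by the scalar $p^{r/2}\mathrm{tr}^{\mathrm{ss}}(\Phi_{p^r}|\sigma_\pi)\cdot\mathrm{tr}(h|\pi)$. In the \emph{ordinary case} $v_p(\tr\gamma)=0$, $\gamma$ is regular semisimple with eigenvalues $t_1,t_2\in\mathbb{Z}_p$ of valuations $r$ and $0$, and centralizer a split torus. Use the Weyl integration formula to decompose the orbital integral into contributions from unramified principal series $\mathrm{Ind}_B^G(\chi_1\boxtimes\chi_2)$; the factor $p^{r/2}(\chi_1(p)+\chi_2(p))$ by which $f_p$ acts on such a representation is, via the Satake isomorphism, exactly the eigenvalue by which $f_{p,0}$ acts on it, so after dividing by $O_\gamma(f_{p,0})$ this factor drops out. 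What remains is the sum $\sum_\chi\chi(t_2)^{-1}\mathrm{tr}(h|\mathrm{Ind}_B^G(1\boxtimes\chi))$, which is $c_r(\gamma,h)$ by definition. In the \emph{supersingular case} $v_p(\tr\gamma)\geq 1$, $\gamma$ is elliptic with non-split torus centralizer; the relevant representations with unramified central character are the trivial representation and the Steinberg, on which $f_p$ acts by $p^{r/2}(1+p^r)$ and $p^{r/2}(1-p^r)$ respectively. Combining these with the character values $\Theta_1(\gamma)=1$ and $\Theta_{\mathrm{St}}(\gamma)=-1$ on elliptic elements, and normalizing by $O_\gamma(f_{p,0})$, yields $\mathrm{tr}(h|1)-p^r\mathrm{tr}(h|\mathrm{St})=c_r(\gamma,h)$.

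The main obstacle is the representation-theoretic computation in the second step, particularly keeping careful track of Haar-measure normalizations and invoking the Weyl integration formula in the correct form. The identification, via the Satake isomorphism, of the eigenvalue by which $f_p$ acts on principal series with the eigenvalue by which $f_{p,0}$ acts is what makes the ratio factor cleanly; the supersingular case in addition requires extracting the elliptic-element character values of $1$ and $\mathrm{St}$ and organizing their combination to match $c_r$. The statement essentially encapsulates the fact that $\phi_p$, or equivalently $f_p$, is built from the semisimple trace of Frobenius in precisely the way that produces the geometric quantity $c_r(\gamma,h)$.
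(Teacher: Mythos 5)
Your plan falls into two halves, and they have very different fates.

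The \emph{ordinary case} $v_p(\tr\gamma)=0$ is essentially the paper's Lemma~\ref{TORegularSplit} transplanted to $\mathrm{GL}_2(\mathbb{Q}_p)$ via Theorem~\ref{BaseChangeIdentity}: you Fourier-invert the orbital integral on the split torus using the Weyl integration formula, the scalar $p^{r/2}\mathrm{tr}^{\mathrm{ss}}(\Phi_{p^r}|\sigma_{\pi_\chi})$ factors out, and the remaining $\chi$-sum matches $c_r(\gamma,h)$. This is the right shape, though as written it is imprecise (the inversion is over \emph{all} unitary characters of $T(\mathbb{Q}_p)$, not just unramified ones, and $f_{p,0}$ kills ramified principal series while $f_p\ast h$ does not, so ``the factor drops out'' is not literally what happens; one has to split off the unramified direction of $\chi$ as in the paper's computation of $TO_t(f_1)$).

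The \emph{supersingular case} $v_p(\tr\gamma)\geq 1$ is where the proposal has a genuine gap, and in fact the central idea of the paper's proof is missing entirely. Three concrete problems. First, $v_p(\tr\gamma)\geq 1$ together with $v_p(\det\gamma)=r$ does \emph{not} force $\gamma$ to be elliptic: for $r\geq 3$ there are regular split $\gamma$ with integral eigenvalues of valuations $1$ and $r-1$ (and $v_p(\tr\gamma)\geq 1$), so the case analysis you set up does not cover the hypothesis of the theorem. Second, even for elliptic $\gamma$, evaluating the characters $\Theta_1$ and $\Theta_{\mathrm{St}}$ at $\gamma$ does not produce $O_\gamma(f_p\ast h)$: an elliptic orbital integral is not a finite linear combination of character values, and there is no two-term ``inversion'' over $\{1,\mathrm{St}\}$. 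Every supercuspidal representation contributes to the Plancherel expansion on the elliptic set, and your argument simply never engages with them. Third, and most importantly, the paper avoids computing either $TO_{\delta\sigma}(f_1)$ or $TO_{\delta\sigma}(f_2)$ individually on the balanced-valuation locus. Instead it forms the specific linear combination $f=f_1+(H_1 p^r - H_2)f_2$ with $H_1=\tr(h|\mathrm{St})$, $H_2=\tr(h|1)$, truncates it to the set where $N\delta$ has eigenvalues of equal valuation, and proves that $\tr((\tilde f,\sigma)|\Pi)=0$ for all tempered $\Pi$ --- the principal-series case via Lemma~\ref{TORegularSplit} (character supported on the split torus, where the truncation kills everything), the supercuspidal case because $\phi_p$ annihilates them (their semisimple Frobenius trace vanishes) \emph{and} their characters are supported where the truncation is the identity, and the Steinberg-twist case by reducing to one-dimensional representations using that their characters agree up to sign on the elliptic set, followed by an explicit computation. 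Kazhdan density then gives vanishing of the truncated orbital integrals, and hence of $TO_{\delta\sigma}(f)$ on the relevant locus. Without this combination-and-truncation device, or an equivalent substitute, the supersingular case of the theorem is not proved.
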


\begin{proof} Let $f_1=\phi_p\ast h^{\prime}$ and let $f_2=\phi_{p,0}$. Let $\Pi$ be the base-change lift of some tempered representation $\pi$ of $\mathrm{GL}_2(\mathbb{Q}_p)$. Then, tracing through the definitions and taking $n$ so that $h$ and $h^{\prime}$ are $\Gamma(p^n)_{\mathbb{Q}_p}$ resp. $\Gamma(p^n)_{\mathbb{Q}_{p^r}}$-biinvariant,
\[\begin{aligned}
\tr ( (f_1,\sigma) | \Pi ) &= p^{\frac 12 r}\tr((h^{\prime},\sigma)|\Pi^{\Gamma(p^n)}) \mathrm{tr}^{\mathrm{ss}}(\Phi_{p^r}|\sigma_{\Pi})\\
& = p^{\frac 12 r}\tr(h|\pi^{\Gamma(p^n)}) \mathrm{tr}^{\mathrm{ss}}(\Phi_{p^r}|\sigma_{\Pi})
\end{aligned}\]
(because $h$ and $h^{\prime}$ have matching (twisted) orbital integrals) and
\[\tr ( (f_2,\sigma) | \Pi ) = p^{\frac 12 r}\dim \pi^{\mathrm{GL}_2(\mathbb{Z}_p)} \mathrm{tr}^{\mathrm{ss}}(\Phi_{p^r}|\sigma_{\Pi})\ ,\]
because $e_{\Gamma(1)_{\mathbb{Q}_p}}$ and $e_{\Gamma(1)_{\mathbb{Q}_{p^r}}}$ are associated by Theorem \ref{BaseChangeIdentity}.

As a first step, we prove the theorem for special $\delta$.

\begin{lem}\label{TORegularSplit} Assume that
\[
\delta = \left(\begin{array}{cc} t_1 & 0 \\ 0 & t_2 \end{array}\right)\ ,
\]
with $Nt_1\neq Nt_2$. Then the twisted orbital integrals
\[
TO_{\delta\sigma}(\phi_p\ast h^{\prime})=TO_{\delta\sigma}(\phi_{p,0})=0
\]
vanish except in the case where, up to exchanging $t_1$, $t_2$, we have $v_p(t_1)=1$ and $v_p(t_2)=0$. In the latter case,
\[
TO_{\delta\sigma}(\phi_p\ast h^{\prime}) = \mathrm{vol}(T(\mathbb{Z}_p))^{-1}\sum_{\chi_0\in ((\mathbb{Z}/p^n\mathbb{Z})^{\times})^{\vee}} \tr(h|\mathrm{Ind}_{B(\mathbb{Z}/p^n\mathbb{Z})}^{\mathrm{GL}_2(\mathbb{Z}/p^n\mathbb{Z})} 1\boxtimes \chi_0) \chi_0(Nt_2)^{-1}
\]
and
\[
TO_{\delta\sigma}(\phi_{p,0}) = \mathrm{vol}(T(\mathbb{Z}_p))^{-1}\ .
\]
\end{lem}

We remark that this implies the Theorem in this case.

\begin{proof} Let $B$ be the standard Borel subgroup consisting of upper triangular elements and let $\chi$ be a unitary character of $T(\mathbb{Q}_p)$ (and hence of $B(\mathbb{Q}_p)$). Take the normalized induction $\pi_{\chi}=\text{n-Ind}_{B(\mathbb{Q}_p)}^{\mathrm{GL}_2(\mathbb{Q}_p)} \chi$, an irreducible tempered representation of $\mathrm{GL}_2(\mathbb{Q}_p)$. Then, by \cite{LanglandsBCGL2}, Lemma 7.2, the character $\Theta_{\pi_{\chi}}$, a locally integrable function, is supported on the elements conjugate to an element of $T(\mathbb{Q}_p)$ and for $t=(t_1,t_2)\in T(\mathbb{Q}_p)$ regular,
\[\Theta_{\pi_{\chi}}(t) = \frac{\chi(t_1,t_2)+\chi(t_2,t_1)}{|\frac{t_1}{t_2}-2+\frac{t_2}{t_1}|^{\frac 12}}\ .\]
Let $\Pi_{\chi}$ be the base-change lift of $\pi_{\chi}$, with twisted character $\Theta_{\Pi_{\chi},\sigma}$. For $t\in T(\mathbb{Q}_p)$, define
\[
TO_t(f)=\left\{\begin{array}{ll} TO_{\tilde{t}\sigma}(f) & t=N\tilde{t}\ \text{for some}\ \tilde{t}\in T(\mathbb{Q}_{p^r}) \\
0 & \text{else}\ . \end{array}\right .
\]
This definition is independent of the choice of $\tilde{t}$ as all choices are $\sigma$-conjugate.
We get by the twisted version of Weyl's integration formula, cf. \cite{LanglandsBCGL2}, p.99, for any $f\in C_c^{\infty}(\mathrm{GL}_2(\mathbb{Q}_{p^r}))$
\[\begin{aligned}
\tr ( (f,\sigma) | \Pi_{\chi} ) &= \int_{\mathrm{GL}_2(\mathbb{Q}_{p^r})} f(g) \Theta_{\Pi_{\chi},\sigma}(g) dg\\
&=\frac 12 \int_{T(\mathbb{Q}_p)} |\frac{t_1}{t_2}-2+\frac{t_2}{t_1}| TO_{t}(f) \frac{\chi(t_1,t_2)+\chi(t_2,t_1)}{|\frac{t_1}{t_2}-2+\frac{t_2}{t_1}|^{\frac 12}} dt\\
&=\int_{T(\mathbb{Q}_p)} |\frac{t_1}{t_2}-2+\frac{t_2}{t_1}|^{\frac 12} TO_{t}(f) \chi(t) dt\ ,
\end{aligned}\]
By Fourier inversion, we arrive at
\[ TO_{t}(f) = |\frac{t_1}{t_2}-2+\frac{t_2}{t_1}|^{-\frac 12} \int_{\widehat{T(\mathbb{Q}_p)}_u} \tr ( (f,\sigma) | \Pi_{\chi} ) \chi(t)^{-1} d\chi\ , \]
where $\widehat{T(\mathbb{Q}_p)}_u$ denotes the set of unitary characters of $T(\mathbb{Q}_p)$. Measures need to be chosen so that
\[
\mathrm{vol}(\widehat{T(\mathbb{Q}_p)}^0_u) = \mathrm{vol}(T(\mathbb{Z}_p))^{-1}\ ,
\]
where $\widehat{T(\mathbb{Q}_p)}^0_u$ is the identity component of $\widehat{T(\mathbb{Q}_p)}_u$; it consists precisely of the unramified characters.

Note that $TO_{t}(f)$ is a locally constant function on the set of regular elements of $T(\mathbb{Q}_p)$ and hence this gives an identity of functions there. From here, it is immediate that $TO_{t}(f_2)=0$ and $TO_{t}(f_1)=0$ for all $t=(t_1,t_2)$ with $t_1\neq t_2$, except in the case where (up to exchanging $t_1$,$t_2$), $v_p(t_1)=r$, $v_p(t_2)=0$. In the latter case, $TO_{t}(f_2)=\mathrm{vol}(T(\mathbb{Z}_p))^{-1}$. The calculation of $TO_{t}(f_1)$ is slightly more involved:
\[\begin{aligned}
TO_{t}(f_1) &= p^{-\frac 12 r}\int_{\widehat{T(\mathbb{Q}_p)}_u} \tr ( (f_1,\sigma) | \Pi_{\chi} ) \chi(t)^{-1}d\chi\\
&=\mathrm{vol}(T(\mathbb{Z}_p))^{-1} \sum_{\chi_0\in ((\mathbb{Z}/p^n\mathbb{Z})^{\times})^{\vee}} \tr(h|\mathrm{Ind}_{B(\mathbb{Z}/p^n\mathbb{Z})}^{\mathrm{GL}_2(\mathbb{Z}/p^n\mathbb{Z})} 1\boxtimes \chi_0) \chi_0(t_2)^{-1}\ ,
\end{aligned}\]
giving the desired result.
\end{proof}

Next, we remark that if $\delta$ is not $\sigma$-conjugate to an element as in Lemma \ref{TORegularSplit}, then the eigenvalues of $N\delta$ have the same valuation. Let
\[ f=f_1 + (H_1 p^r-H_2) f_2\ ,\]
where we have set
\[ H_1 = \tr ( h | \mathrm{St} )\ ,\ H_2 = \tr( h | 1 )\ ,\]
with $\mathrm{St}$ resp. $1$ the Steinberg resp. trivial representation of $\mathrm{GL}_2(\mathbb{Z}/p^n\mathbb{Z})$. Then the next lemma finishes the proof of the theorem.

\begin{lem} Assume that the eigenvalues of $N\delta$ have the same valuation. Then the twisted orbital integral $TO_{\delta\sigma}(f)$ vanishes.
\end{lem}

\begin{proof} Let $V$ be the set of all $\delta\in \mathrm{GL}_2(\mathbb{Q}_{p^r})$ such that the eigenvalues of $N\delta$ have the same valuation. Note that $V$ is open. In particular, its characteristic function $\chi_V$ is locally constant and hence $\tilde{f}(g)=f(g) \chi_V(g)$ defines a function $\tilde{f}\in C_c^{\infty}(\mathrm{GL}_2(\mathbb{Q}_{p^r}))$. Then, obviously, the twisted orbital integrals of $\tilde{f}$ and $f$ agree on all elements $\delta$ such that the eigenvalues of $N\delta$ have the same valuation. We will prove that for all tempered irreducible smooth representations $\pi$ of $\mathrm{GL}_2(\mathbb{Q}_p)$ with base-change lift $\Pi$, we have
\[
\tr((\tilde{f},\sigma)|\Pi)=0\ .
\]
By the usual arguments (cf. proof of Theorem \ref{BaseChangeIdentity}), this implies that all twisted orbital integrals of $\tilde{f}$ for elements $\delta$ with $N\delta$ semisimple vanish. This then proves the lemma.

First, we find another expression for $\tr((\tilde{f},\sigma)|\Pi)$. Note that we have seen in Lemma \ref{TORegularSplit} that the twisted orbital integrals of $f_i$ vanish on all elements of $\delta\in T$ with $N\delta$ having distinct eigenvalues of the same valuation, whence the same is true for $f$. In particular,
\[ \tr ( (\tilde{f},\sigma) | \Pi ) = \tr( (f,\sigma) | \Pi )_{\sigma\mathrm{-ell}} = \int_{\mathrm{GL}_2(\mathbb{Q}_{p^r})_{\sigma\mathrm{-ell}}} f(g) \Theta_{\Pi,\sigma}(g) dg\ ,\]
where $\mathrm{GL}_2(\mathbb{Q}_{p^r})_{\sigma\mathrm{-ell}}$ is the set of elements of $\delta\in \mathrm{GL}_2(\mathbb{Q}_{p^r})$ with $N\delta$ elliptic (since the character $\Theta_{\Pi,\sigma}$ is locally integrable, one could always restrict the integration to regular semisimple elements and hence non-semisimple elements need not be considered). This reduces us to proving that
\[
\tr((f,\sigma) | \Pi)_{\sigma\mathrm{-ell}} = 0\ .
\]

But for $\pi = \text{n-Ind}_{B(\mathbb{Q}_p)}^{\mathrm{GL}_2(\mathbb{Q}_p)} \chi$ the normalized induction of a unitary character, with base-change lift $\Pi$, we have
\[ \tr ( (f,\sigma) | \Pi )_{\sigma\mathrm{-ell}} = 0 \]
because the character $\Theta_{\pi}$ is supported in elements conjugate to an element of $T(\mathbb{Q}_{p})$. For $\pi$ supercuspidal with base-change lift $\Pi$,
\[ \tr ( (f,\sigma) | \Pi )_{\sigma\mathrm{-ell}} = \tr ( (f,\sigma) | \Pi ) = 0\ . \]
Here the second equation follows from the definitions of $f$ and the way $(f_i,\sigma)$ acts on $\Pi$, whereas the first equation holds because the character $\Theta_{\pi}$ is supported in the elements whose eigenvalues have the same valuation -- this easily follows from the fact that $\pi$ is compactly induced from a representation of an open subgroup that is compact modulo center, as proved in \cite{BushnellKutzko}. This leaves us with checking that
\[ \tr ( (f,\sigma) | \Pi )_{\sigma\mathrm{-ell}} = 0 \]
for any unitary twist of the Steinberg representation $\pi$ with base-change lift $\Pi$. However, restricted to the elliptic elements, the character of a twist of the Steinberg representation agrees up to sign with the character of the corresponding 1-dimensional representation. Hence it is enough to check that
\[ \tr ( (f,\sigma) | \Pi )_{\sigma\mathrm{-ell}} = 0 \]
for any a 1-dimensional representation $\pi = \chi \circ \det$ with base-change lift $\Pi = \chi\circ \mathrm{Norm}_{\mathbb{Q}_{p^r}/\mathbb{Q}_p}\circ \det$.

Then
\[\begin{aligned}
\tr ( (f,\sigma) | \Pi )_{\sigma\mathrm{-ell}} &= \tr ( (f,\sigma) | \Pi )\\
& - \frac 12 \int_{T(\mathbb{Q}_p)} |\frac{t_1}{t_2}-2+\frac{t_2}{t_1}|TO_{t}(f)\chi(t_1t_2) dt\ .
\end{aligned}\]
Note that the function in the integral only takes nonzero values if $v_p(t_1)=r$ and $v_p(t_2)=0$, or the other way around. Hence, we may rewrite the equality as
\[ \tr ( (f,\sigma) | \Pi )_{\sigma\mathrm{-ell}} = \tr ( (f,\sigma) | \Pi ) - p^r\chi(p^r) \int_{T(\mathbb{Z}_p)} TO_{(p^rt_1,t_2)}(f)\chi(t_1t_2) dt\ .\]
Now, if $\chi$ is ramified, then $\tr ( (f,\sigma) | \Pi ) = 0$, while the integral is zero as well, because $TO_{(p^rt_1,t_2)}(f)$ does not depend on $t_1$ and hence keeping $t_2$ fixed and integrating over $t_1$ gives zero.

On the other hand, if $\chi$ is unramified, then
\[\begin{aligned}
\tr ( (f,\sigma) | \Pi ) &= \tr ( (f_1,\sigma) | \Pi ) + (H_1 p^r-H_2) \tr ( (f_2,\sigma) | \Pi )\\
&= (1+p^r)\chi(p^r)H_2 + (H_1 p^r-H_2)(1+p^r)\chi(p^r)\\
&= (1+p^r)H_1 p^r\chi(p^r)
\end{aligned}\]
and the integral gives
\[\begin{aligned}
\int_{T(\mathbb{Z}_p)} TO_{(p^rt_1,t_2)}(f) dt &=\int_{T(\mathbb{Z}_p)} TO_{(p^rt_1,t_2)}(f_1) dt\\
& +(H_1 p^r-H_2)\int_{T(\mathbb{Z}_p)} TO_{(p^rt_1,t_2)}(f_2) dt\\
&=(H_1+H_2) + (H_1 p^r-H_2) = (1+p^r)H_1\ .
\end{aligned}\]
Putting everything together, we get the conclusion.
\end{proof}

\end{proof}

We get the following corollary.

\begin{cor}\label{MainCorollary} Let $x\in \mathcal{M}(\mathbb{F}_{p^r})$ with associated $\delta$. Let $h\in C_c^{\infty}(\mathrm{GL}_2(\mathbb{Z}_p))$ and $h^{\prime}\in C_c^{\infty}(\mathrm{GL}_2(\mathbb{Z}_{p^r}))$ have matching (twisted) orbital integrals. Then
\[TO_{\delta\sigma}(\phi_p\ast h^{\prime}) = TO_{\delta\sigma}(\phi_{p,0}) \mathrm{tr}^{\mathrm{ss}}(\Phi_{p^r} h|(R\psi \mathcal{F}_{\infty})_x) \ .\]
\end{cor}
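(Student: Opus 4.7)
The plan is to observe that this corollary is a direct combination of Theorem~\ref{MainTheorem} with the identification of $c_r(N\delta, h)$ as the semisimple trace of Frobenius on nearby cycles that was set up at the end of Section~8. No new geometric or representation-theoretic input is needed; the work has been front-loaded into the previous results.

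The first step is to verify that the hypothesis of Theorem~\ref{MainTheorem} applies, namely that $\gamma := N\delta$ is semisimple in $\mathrm{GL}_2(\mathbb{Q}_p)$. By the construction recalled in Section~5, $N\delta$ represents the $p^r$-Frobenius endomorphism of $E_x$ acting on $H^1_{\mathrm{cris}}(E_x/\mathbb{Z}_{p^r}) \otimes \mathbb{Q}_{p^r}$. In the ordinary case this endomorphism has two eigenvalues of distinct $p$-adic valuation ($0$ and $r$), hence is regular semisimple; in the supersingular case Frobenius in $\mathrm{End}(E_x) \otimes \mathbb{Q}$ satisfies a polynomial $T^2 - aT + p^r = 0$ with $a^2 - 4p^r \leq 0$, and one checks case-by-case that the resulting element of $\mathrm{GL}_2(\mathbb{Q}_p)$ is semisimple (either acting as a scalar, or with two distinct complex-conjugate eigenvalues). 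Either way, $N\delta$ is semisimple.

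Second, I would apply Theorem~\ref{MainTheorem} verbatim to conclude
\[
TO_{\delta\sigma}(\phi_p \ast h') \;=\; TO_{\delta\sigma}(\phi_{p,0}) \, c_r(\gamma, h).
\]
Finally, to rewrite the right-hand side as $TO_{\delta\sigma}(\phi_{p,0}) \mathrm{tr}^{\mathrm{ss}}(\Phi_{p^r} h \mid (R\psi\mathcal{F}_\infty)_x)$, I would invoke the identity $\mathrm{tr}^{\mathrm{ss}}(\Phi_{p^r} h \mid (R\psi\mathcal{F}_\infty)_x) = c_r(N\delta, h)$ recorded at the end of Section~8. This identity itself comes out of Corollary~\ref{CalcSemisimpleTrace} by matching the two cases: when $v_p(\tr N\delta) = 0$ (ordinary $x$), the formula in part~(i) expressing the trace as a sum of traces on $\mathrm{Ind}_B^{\mathrm{GL}_2} 1\boxtimes \chi$ weighted by $\chi(a)^{-1}$ is literally the definition of $c_r(\gamma, h)$ in that range; when $v_p(\tr N\delta) \geq 1$ (supersingular $x$), part~(ii) gives $\tr(h\mid 1) - p^r \tr(h\mid \mathrm{St})$, which again matches $c_r(\gamma, h)$ on the nose. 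Extension from $g \in \mathrm{GL}_2(\mathbb{Z}_p)$ to general $h \in C_c^\infty(\mathrm{GL}_2(\mathbb{Z}_p))$ is by linearity through the $\Gamma(p^n)_{\mathbb{Q}_p}$-invariants for $n$ large enough that $h$ is bi-invariant.

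There is no real obstacle to overcome here: the entire substance of the corollary is Theorem~\ref{MainTheorem}, whose proof via the twisted Weyl integration formula is the hard analytic step, together with the explicit computation of nearby cycles via Theorem~A (= Theorem~\ref{CalcInertiaNearby}) encoded in Corollary~\ref{CalcSemisimpleTrace}. The only thing one needs to take mild care with is the semisimplicity of $N\delta$ at supersingular points, and the bookkeeping of extending the trace formulas from group elements $g$ to test functions $h$.
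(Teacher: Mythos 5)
Your proposal follows the paper's route exactly: combine Theorem~\ref{MainTheorem} with the identity $\mathrm{tr}^{\mathrm{ss}}(\Phi_{p^r}h\mid (R\psi\mathcal{F}_\infty)_x)=c_r(N\delta,h)$ established via Corollary~\ref{CalcSemisimpleTrace}, and verify that the hypothesis ``$N\delta$ semisimple'' holds (the paper records this as Lemma~\ref{FrobSemisimple}).

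One small gap: in the supersingular case you dismiss the possibility $a^2=4p^r$ (equality in the Weil bound, which does occur, e.g.\ $r$ even and $a=\pm 2p^{r/2}$) with ``one checks case-by-case'' and the assertion that the Frobenius acts as a scalar. That is true but is not automatic from the characteristic polynomial alone: the polynomial $(T-a/2)^2$ leaves open a Jordan block. One must use that $\mathrm{End}(E)\otimes\mathbb{Q}$ is a division algebra, so the nilpotent element $\pi_E-a/2$ must vanish. The paper's proof does precisely this, and in a way that avoids the Newton polygon/Weil-bound case split altogether: for any nonzero self-isogeny $f'$, the dual isogeny gives $f'(f')^{\ast}=$ scalar, so $f'$ is invertible on rational crystalline cohomology, hence cannot be nilpotent. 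This is cleaner and uniform; you should replace your ``case-by-case'' step by this argument or at least spell out the division-algebra observation.

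Everything else — applying Theorem~\ref{MainTheorem} verbatim, matching the two cases of Corollary~\ref{CalcSemisimpleTrace} against the definition of $c_r(\gamma,h)$, and the linearity reduction from group elements $g$ to test functions $h$ via $\Gamma(p^n)$-biinvariance — is correct and matches the paper. (One could also add that the nonvanishing of $TO_{\delta\sigma}(\phi_{p,0})$, needed to make the statement nontrivial, is guaranteed by Remark~\ref{NonvanishingTO}, though the corollary as stated does not formally require it.)
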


\begin{proof} Combining Theorem \ref{MainTheorem} and Corollary \ref{CalcSemisimpleTrace}, all we have to check is the following lemma.

\begin{lem}\label{FrobSemisimple} For any $\delta\in \mathrm{GL}_2(\mathbb{Q}_{p^r})$ associated to an elliptic curve over $\mathbb{F}_{p^r}$, the norm $N\delta$ is semisimple.
\end{lem}

\begin{proof} As $N\delta$ is the endomorphism of crystalline cohomology associated to the geometric Frobenius $\Phi_{p^r}$ of $E_0$, it is enough to prove that any $\mathbb{F}_{p^r}$-self-isogeny $f: E\longrightarrow E$ of an elliptic curve $E/\mathbb{F}_{p^r}$ gives rise to a semisimple endomorphism on the crystalline cohomology. If not, we may find $m,n\in \mathbb{Z}$ such that $f^{\prime}=mf-n$ is nilpotent on crystalline cohomology, but nonzero. But if $f^{\prime}$ is nonzero, then for the dual isogeny $(f^{\prime})^{\ast}$, the composition $f^{\prime}(f^{\prime})^{\ast}$ is a scalar, and hence induces multiplication by a scalar on the crystalline cohomology. Hence $f^{\prime}$ induces an invertible endomorphism on the (rational) crystalline cohomology, contradiction.
\end{proof}

\end{proof}

We note that by Theorem \ref{BaseChangeIdentity}, we can take $h$ to be the idempotent $e_{\Gamma(p^n)_{\mathbb{Q}_p}}$ and $h^{\prime}$ to be the idempotent $e_{\Gamma(p^n)_{\mathbb{Q}_{p^r}}}$ and get the following corollary, proving Theorem B for $\phi_{p}\ast e_{\Gamma(p^n)_{\mathbb{Q}_{p^r}}}$ instead of $\phi_{p,n}$. For the comparison of these functions, we refer to Section 14.

\begin{cor}\label{corB} Let $x\in \mathcal{M}(\mathbb{F}_{p^r})$ with associated $\delta$. Then
\[\mathrm{tr}^{\mathrm{ss}}(\Phi_{p^r}|(R\psi \mathcal{F}_n)_x) = TO_{\delta\sigma}(\phi_p\ast e_{\Gamma(p^n)_{\mathbb{Q}_{p^r}}})(TO_{\delta\sigma}(\phi_{p,0}))^{-1}\ .\]
\end{cor}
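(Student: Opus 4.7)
The proof is a direct specialization of Corollary \ref{MainCorollary}. My plan is to take $h = e_{\Gamma(p^n)_{\mathbb{Q}_p}} \in C_c^\infty(\mathrm{GL}_2(\mathbb{Z}_p))$ and $h' = e_{\Gamma(p^n)_{\mathbb{Q}_{p^r}}} \in C_c^\infty(\mathrm{GL}_2(\mathbb{Z}_{p^r}))$, apply Corollary \ref{MainCorollary}, and identify the two sides with the quantities in the statement.

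First, I need to check that these two specific idempotents have matching (twisted) orbital integrals, so that Corollary \ref{MainCorollary} applies. But this is precisely the final assertion of Theorem \ref{BaseChangeIdentity}, so nothing new needs to be verified.

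Next, on the analytic side, Corollary \ref{MainCorollary} gives
\[ TO_{\delta\sigma}(\phi_p \ast e_{\Gamma(p^n)_{\mathbb{Q}_{p^r}}}) = TO_{\delta\sigma}(\phi_{p,0}) \cdot \mathrm{tr}^{\mathrm{ss}}(\Phi_{p^r}\, e_{\Gamma(p^n)_{\mathbb{Q}_p}} \mid (R\psi \mathcal{F}_\infty)_x). \]
It remains to identify the semisimple trace on the right-hand side with $\mathrm{tr}^{\mathrm{ss}}(\Phi_{p^r} \mid (R\psi \mathcal{F}_n)_x)$. This follows directly from the definition of the semisimple trace with Hecke operator inserted, given just before the definition of $c_r(\gamma,h)$: since $e_{\Gamma(p^n)_{\mathbb{Q}_p}}$ is the projector onto $\Gamma(p^n)_{\mathbb{Q}_p}$-invariants and $(R\psi \mathcal{F}_n)_x = ((R\psi \mathcal{F}_\infty)_x)^{\Gamma(p^n)_{\mathbb{Q}_p}}$ (because $\mathcal{F}_n$ is the direct summand of $\mathcal{F}_\infty$ cut out by this idempotent, using that $\pi_n$ is \'etale in the generic fibre with Galois group $\mathrm{GL}_2(\mathbb{Z}/p^n\mathbb{Z})$), one has $\mathrm{tr}^{\mathrm{ss}}(\Phi_{p^r}\, e_{\Gamma(p^n)_{\mathbb{Q}_p}} \mid (R\psi \mathcal{F}_\infty)_x) = \mathrm{tr}^{\mathrm{ss}}(\Phi_{p^r} \mid (R\psi \mathcal{F}_n)_x)$. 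Dividing by $TO_{\delta\sigma}(\phi_{p,0})$ — which is nonzero by Remark \ref{NonvanishingTO}, since $x$ gives a nonempty isogeny class — yields the claim.

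There is no real obstacle here; the work was entirely in the preceding sections. The only substantive inputs are (a) the matching of the principal congruence idempotents under base change, i.e.\ the last clause of Theorem \ref{BaseChangeIdentity}, and (b) the comparison of $\phi_p \ast e_{\Gamma(p^n)_{\mathbb{Q}_{p^r}}}$ with the explicit function $\phi_{p,n}$ of Theorem B, which is deferred to Section 14 and is therefore not needed for this corollary in its current form.
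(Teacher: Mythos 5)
Your proof is correct and follows the same route as the paper: specialize Corollary \ref{MainCorollary} to $h = e_{\Gamma(p^n)_{\mathbb{Q}_p}}$, $h' = e_{\Gamma(p^n)_{\mathbb{Q}_{p^r}}}$, invoke the last clause of Theorem \ref{BaseChangeIdentity} for the matching, unwind the definition of $\mathrm{tr}^{\mathrm{ss}}(\Phi_{p^r}h\mid(R\psi\mathcal{F}_\infty)_x)$, and use Remark \ref{NonvanishingTO} for nonvanishing of the denominator. The paper's own proof consists of exactly these observations (stated more tersely, with the choice of $h,h'$ explained just before the corollary); you have merely spelled out the step identifying $(R\psi\mathcal{F}_n)_x$ with the $\Gamma(p^n)_{\mathbb{Q}_p}$-invariants, which the paper leaves implicit in its definition of the smeared trace.
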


\begin{proof} Use Remark \ref{NonvanishingTO} to see that the right-hand side is well-defined.
\end{proof}

\section{The Langlands-Kottwitz approach: Case of bad reduction}\label{LKApproach}

By Theorem \ref{CohoVanish}, we get
\[ \log \zeta^{\mathrm{ss}}(\mathcal{M}_{\Gamma(p^n)},\overline{\mathbb{Q}}_{\ell}) = \sum_{r\geq 1} \sum_{x\in \mathcal{M}(\mathbb{F}_{p^r})} \mathrm{tr}^{\mathrm{ss}}(\Phi_{p^r} | (R\psi\mathcal{F}_n)_x) \frac{p^{-rs}}{r}\ .\]
Again, we may split the terms according to their $\mathbb{F}_{p^r}$-isogeny class. This leads us to consider, for $E_0$ fixed,
\[ \sum_{x\in \mathcal{M}(\mathbb{F}_{p^r})(E_0)} \mathrm{tr}^{\mathrm{ss}}(\Phi_{p^r} | (R\psi\mathcal{F}_n)_x) \ .\]
Now Corollary \ref{corB} tells us that
\[ \mathrm{tr}^{\mathrm{ss}}(\Phi_{p^r} | (R\psi\mathcal{F}_n)_x) = TO_{\delta\sigma}(\phi_p\ast e_{\Gamma(p^n)_{\mathbb{Q}_{p^r}}})(TO_{\delta\sigma}(\phi_{p,0}))^{-1}\ . \]

\begin{cor}\label{LKNew} The sum
\[ \sum_{x\in \mathcal{M}(\mathbb{F}_{p^r})(E_0)} \mathrm{tr}^{\mathrm{ss}}(\Phi_{p^r} | (R\psi\mathcal{F}_n)_x)\]
equals
\[ \mathrm{vol}(\Gamma\backslash G_{\gamma}(\mathbb{A}_f^p)\times G_{\delta\sigma}(\mathbb{Q}_p)) O_{\gamma}(f^p) TO_{\delta\sigma}(\phi_p\ast e_{\Gamma(p^n)_{\mathbb{Q}_{p^r}}})\ .\]
\end{cor}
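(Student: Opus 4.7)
The plan is to combine Corollary \ref{LKClassical}, which computes $|\mathcal{M}(\mathbb{F}_{p^r})(E_0)|$ as a product involving $O_\gamma(f^p)$ and $TO_{\delta\sigma}(\phi_{p,0})$, with Corollary \ref{corB}, which expresses the semisimple trace on $(R\psi\mathcal{F}_n)_x$ as the ratio $TO_{\delta\sigma}(\phi_p\ast e_{\Gamma(p^n)_{\mathbb{Q}_{p^r}}})/TO_{\delta\sigma}(\phi_{p,0})$. The two $TO_{\delta\sigma}(\phi_{p,0})$ factors will cancel.

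First I would verify that the summand depends only on the isogeny class of $E_0$. For $x\in \mathcal{M}(\mathbb{F}_{p^r})(E_0)$, the construction recalled in Section 5 produces $\delta=\delta(x)\in \mathrm{GL}_2(\mathbb{Q}_{p^r})$ from a chosen isogeny $f:E_0\to E_x$, and any two choices of $f$ differ by an element of $\Gamma=(\mathrm{End}(E_0)\otimes \mathbb{Q})^\times$, which alters $\delta$ only by $\sigma$-conjugation. Moreover, varying $x$ within the isogeny class produces lattices $\Lambda\subset H_p$ that lie in a single $G_{\delta\sigma}(\mathbb{Q}_p)$-orbit under the Langlands--Kottwitz parametrization of Section 5, so the $\sigma$-conjugacy class of $\delta$ is in fact a function of the isogeny class alone. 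Since twisted orbital integrals are invariants of $\sigma$-conjugacy classes, Corollary \ref{corB} shows that the value
\[
\mathrm{tr}^{\mathrm{ss}}(\Phi_{p^r} | (R\psi\mathcal{F}_n)_x) \;=\; TO_{\delta\sigma}(\phi_p\ast e_{\Gamma(p^n)_{\mathbb{Q}_{p^r}}})\cdot TO_{\delta\sigma}(\phi_{p,0})^{-1}
\]
is constant as $x$ ranges over $\mathcal{M}(\mathbb{F}_{p^r})(E_0)$. Hence the sum in question equals
\[
|\mathcal{M}(\mathbb{F}_{p^r})(E_0)|\cdot TO_{\delta\sigma}(\phi_p\ast e_{\Gamma(p^n)_{\mathbb{Q}_{p^r}}})\cdot TO_{\delta\sigma}(\phi_{p,0})^{-1}.
\]

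Next I would substitute the formula from Corollary \ref{LKClassical} for $|\mathcal{M}(\mathbb{F}_{p^r})(E_0)|$, namely $\mathrm{vol}(\Gamma\backslash G_{\gamma}(\mathbb{A}_f^p)\times G_{\delta\sigma}(\mathbb{Q}_p))\cdot O_{\gamma}(f^p)\cdot TO_{\delta\sigma}(\phi_{p,0})$, and cancel the factor $TO_{\delta\sigma}(\phi_{p,0})$ against its inverse. The one thing to check is that this cancellation is legitimate, i.e.\ that $TO_{\delta\sigma}(\phi_{p,0})\neq 0$; this is exactly Remark \ref{NonvanishingTO}, which applies whenever $\mathcal{M}(\mathbb{F}_{p^r})(E_0)$ is nonempty (the only case of interest, since otherwise both sides of the claimed identity are manifestly zero). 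There is no real obstacle in this argument: the hard work has been done in Corollary \ref{LKClassical} (the Langlands--Kottwitz point-counting) and Corollary \ref{corB} (the nearby-cycle computation rephrased as a twisted orbital integral), and the corollary is essentially a bookkeeping assembly of the two.
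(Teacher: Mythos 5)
Your proposal is correct and follows the paper's own (one-line) argument: constancy of the summand over the isogeny class via Corollary \ref{corB}, substitution of the cardinality formula from Corollary \ref{LKClassical}, and cancellation of $TO_{\delta\sigma}(\phi_{p,0})$, justified by Remark \ref{NonvanishingTO}. One small imprecision worth flagging: the lattices $\Lambda\subset H_p$ attached to points of the isogeny class do \emph{not} in general lie in a single $G_{\delta\sigma}(\mathbb{Q}_p)$-orbit (they form the full set $Y_p$ of the Section 5 parametrization); the correct and simpler reason the $\sigma$-conjugacy class of $\delta(x)$ is constant on the isogeny class is that $\delta$ is read off from the isocrystal $(H_p,F)$ of $E_0$ alone (up to choice of basis, hence up to $\sigma$-conjugacy), and an isogeny $E_0\to E_x$ induces an isomorphism of isocrystals.
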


\begin{proof} This is obvious from what was already said and Theorem \ref{LKClassical}.
\end{proof}

First, we eliminate the twisted orbital integral. Let $f_{p,r}$ be the function of the Bernstein center for $\mathrm{GL}_2(\mathbb{Q}_p)$ such that for all irreducible smooth representations $\pi$ of $\mathrm{GL}_2(\mathbb{Q}_p)$, $f_{p,r}$ acts by the scalar
\[ p^{\frac 12 r} \mathrm{tr}^{\mathrm{ss}}(\Phi_p^r|\sigma_{\pi})\ , \]
where, again, $\sigma_{\pi}$ is the associated representation of the Weil group $W_{\mathbb{Q}_p}$ over $\overline{\mathbb{Q}}_{\ell}$. The existence of $f_{p,r}$ is proved in the same way as Lemma \ref{FunctionExists}. By \cite{HarrisTaylor}, we know that if $\pi$ is tempered and $\Pi$ is a base-change lift of $\pi$, then $\sigma_{\Pi}$ is the restriction of $\sigma_{\pi}$. Perhaps it is worth remarking that the statement on the semisimple trace of Frobenius that we need is much simpler.

\begin{lem} For any tempered irreducible smooth representation $\pi$ of $\mathrm{GL}_2(\mathbb{Q}_p)$ with base-change lift $\Pi$, we have
\[
\mathrm{tr}^{\mathrm{ss}} ( \Phi_p^r | \sigma_{\pi} ) = \mathrm{tr}^{\mathrm{ss}} ( \Phi_{p^r} | \sigma_{\Pi} )\ .
\]
\end{lem}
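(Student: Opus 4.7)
The plan is to deduce the identity from the compatibility of local Langlands with unramified base change, namely $\sigma_{\Pi}\cong \sigma_{\pi}|_{W_{\mathbb{Q}_{p^r}}}$, recalled in the paragraph just above with a reference to \cite{HarrisTaylor}. The key observation is that the semisimple trace, as defined in Section~\ref{RemarksSemisimple} via a filtration on which inertia acts through a finite quotient on each graded piece, depends only on the underlying vector space, the action of inertia, and the action of the chosen Frobenius lift.

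Concretely, since $\mathbb{Q}_{p^r}/\mathbb{Q}_p$ is unramified the inertia subgroups $I_{\mathbb{Q}_p}$ and $I_{\mathbb{Q}_{p^r}}$ coincide inside $G_{\mathbb{Q}_p}$, and a geometric Frobenius $\Phi_{p^r}\in W_{\mathbb{Q}_{p^r}}$ is precisely $\Phi_p^r$ when viewed in $W_{\mathbb{Q}_p}$. Any filtration of $\sigma_{\pi}$ as provided by Lemma~\ref{AdmFiltration} therefore serves equally well for the restriction $\sigma_{\pi}|_{W_{\mathbb{Q}_{p^r}}}\cong \sigma_{\Pi}$, and the trace of $\Phi_{p^r}$ on the inertia invariants of the associated graded is computed by the identical formula on the two sides. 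This proves the lemma.

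There is no serious obstacle here once the base change compatibility of Weil--Deligne parameters is granted. The author's parenthetical remark that ``the statement we need is much simpler'' suggests that one can avoid the full strength of \cite{HarrisTaylor}: for tempered $\pi$ of $\mathrm{GL}_2(\mathbb{Q}_p)$ one may run through the standard classes (principal series, twist of Steinberg, supercuspidal) and compute both sides directly from the shape of $\sigma_\pi$, reducing in each case to the elementary identity $(\chi\circ \mathrm{Nm}_{\mathbb{Q}_{p^r}/\mathbb{Q}_p})(\Phi_{p^r})=\chi(\Phi_p)^r$ for characters $\chi$ of $W_{\mathbb{Q}_p}$, together with the vanishing of both semisimple traces whenever ramified summands appear in $\sigma_\pi$. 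This short case-by-case verification relies only on elementary properties of the local Langlands correspondence for $\mathrm{GL}_2$ and on base change for characters, and entirely avoids the results of \cite{HarrisTaylor}.
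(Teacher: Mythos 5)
Your proposal is correct, and usefully offers two routes. The first route, via the Harris--Taylor compatibility $\sigma_{\Pi}\cong \sigma_{\pi}|_{W_{\mathbb{Q}_{p^r}}}$, is certainly valid: since $\mathbb{Q}_{p^r}/\mathbb{Q}_p$ is unramified, the inertia groups coincide, $\Phi_{p^r}$ identifies with $\Phi_p^r$, and any admissible filtration for $\sigma_\pi$ as in Lemma~\ref{AdmFiltration} serves for $\sigma_\Pi$ as well, so the semisimple traces are equal. But note that the sentence in the paper just before the lemma says precisely that it wants to avoid this route --- the whole point of the remark (``the statement\ldots is much simpler'') is to show the reader that the lemma does not actually require the deep input from \cite{HarrisTaylor}, so invoking it defeats the purpose. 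Your second sketched route is essentially the paper's argument, but the paper packages it more economically: rather than running over the Bernstein classification of tempered representations (principal series, twisted Steinberg, supercuspidal) and checking base change separately in each class, the paper argues directly that if either side of the desired equality is nonzero, then the relevant semisimplification contains an unramified character, and then Galois-invariance of $\Pi$ together with uniqueness of the base-change lift forces $\pi$ to be a principal series; the claim then reduces to the explicit description of base change for principal series (your ``$(\chi\circ \mathrm{Nm})(\Phi_{p^r})=\chi(\Phi_p)^r$'' identity). If both sides vanish there is nothing to prove. This reduction-by-nonvanishing sidesteps the supercuspidal case entirely, which is the one that would demand real work in a case-by-case approach. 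One small imprecision to fix if you flesh out the sketch: the phrase ``vanishing of both semisimple traces whenever ramified summands appear in $\sigma_\pi$'' is not what you want --- a representation with one ramified and one unramified summand has nonzero semisimple trace; what you need is that ramified summands contribute zero, or, equivalently, that a nonzero semisimple trace forces an unramified summand.
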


\begin{proof} Assume first that $\mathrm{tr}^{\mathrm{ss}} ( \Phi_{p^r} | \sigma_{\Pi} )\neq 0$. Then the semisimplification of $\sigma_{\Pi}$ is a sum of two characters $\chi_1$ and $\chi_2$, one of which, say $\chi_2$, is unramified and in particular invariant under $\mathrm{Gal}(\mathbb{Q}_{p^r}/\mathbb{Q}_p)$. Because $\Pi$ is invariant under the Galois group $\mathrm{Gal}(\mathbb{Q}_{p^r}/\mathbb{Q}_p)$, the character $\chi_1$ needs to factor over the norm map. We see that there is a principal series representation $\pi^{\prime}$ with base-change lift $\Pi$. By the uniqueness properties of base-change, cf. \cite{LanglandsBCGL2}, $\pi$ is also a principal series representation. The claim then follows from the explicit description of base-change for principal series representation.

Now assume $\mathrm{tr}^{\mathrm{ss}} ( \Phi_{p^r} | \sigma_{\Pi} )=0$. If $\mathrm{tr}^{\mathrm{ss}} ( \Phi_p^r | \sigma_{\pi} )\neq 0$, then the semisimplification of $\sigma_{\pi}$ is a sum of two characters (one of which is unramified), whence $\pi$ is again a principal series representation. This yields the claim as before.
\end{proof}

This shows that $f_{p,r}\ast e_{\Gamma(p^n)_{\mathbb{Q}_p}}$ and $\phi_p\ast e_{\Gamma(p^n)_{\mathbb{Q}_{p^r}}}$ satisfy the hypothesis of Theorem \ref{BaseChangeIdentity}. Thus, by Lemma \ref{FrobSemisimple}, we may rewrite the expression in Corollary \ref{LKNew} as

\begin{cor}\label{LKNewBC} The sum
\[ \sum_{x\in \mathcal{M}(\mathbb{F}_{p^r})(E_0)} \mathrm{tr}^{\mathrm{ss}}(\Phi_{p^r} | (R\psi\mathcal{F}_n)_x)\]
equals
\begin{equation}\label{eqn1} \pm \mathrm{vol}(\Gamma\backslash G_{\gamma}(\mathbb{A}_f^p)\times G_{\delta\sigma}(\mathbb{Q}_p)) O_{\gamma}(f^p) O_{N\delta}(f_{p,r}\ast e_{\Gamma(p^n)_{\mathbb{Q}_p}})\ .\end{equation}
\end{cor}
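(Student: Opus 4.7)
\medskip

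\noindent\textbf{Proof proposal.} The plan is to combine Corollary \ref{LKNew} with the base-change identity in Theorem \ref{BaseChangeIdentity} in order to replace the twisted orbital integral $TO_{\delta\sigma}(\phi_p\ast e_{\Gamma(p^n)_{\mathbb{Q}_{p^r}}})$ by an honest orbital integral of $f_{p,r}\ast e_{\Gamma(p^n)_{\mathbb{Q}_p}}$ on $\mathrm{GL}_2(\mathbb{Q}_p)$. Everything we need has essentially just been assembled in the surrounding text; the task is to knit the pieces together and keep track of the sign in Definition \ref{DefAssoc}.

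\medskip

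\noindent First, I would verify that the pair $(f_{p,r},\phi_p)$ meets the hypothesis of Theorem \ref{BaseChangeIdentity}. By construction, $f_{p,r}$ and $\phi_p$ act on an irreducible smooth $\pi$ (resp.\ $\Pi$) by the scalars $p^{\frac12 r}\,\mathrm{tr}^{\mathrm{ss}}(\Phi_p^r\mid\sigma_\pi)$ and $p^{\frac12 r}\,\mathrm{tr}^{\mathrm{ss}}(\Phi_{p^r}\mid\sigma_\Pi)$. The immediately preceding lemma shows that for any tempered $\pi$ with base-change lift $\Pi$, these two semisimple traces coincide, so the scalar hypothesis of Theorem \ref{BaseChangeIdentity} is satisfied.

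\medskip

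\noindent The "furthermore" clause of Theorem \ref{BaseChangeIdentity} says that $e_{\Gamma(p^n)_{\mathbb{Q}_p}}$ and $e_{\Gamma(p^n)_{\mathbb{Q}_{p^r}}}$ are associated, so applying the main assertion of that theorem with $h=e_{\Gamma(p^n)_{\mathbb{Q}_p}}$, $h'=e_{\Gamma(p^n)_{\mathbb{Q}_{p^r}}}$ yields that $f_{p,r}\ast e_{\Gamma(p^n)_{\mathbb{Q}_p}}$ and $\phi_p\ast e_{\Gamma(p^n)_{\mathbb{Q}_{p^r}}}$ have matching (twisted) orbital integrals in the sense of Definition \ref{DefAssoc}. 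By Lemma \ref{FrobSemisimple}, the norm $N\delta$ attached to an elliptic curve over $\mathbb{F}_{p^r}$ is semisimple, so the matching identity applies to $\gamma=N\delta$ and gives
\[
TO_{\delta\sigma}(\phi_p\ast e_{\Gamma(p^n)_{\mathbb{Q}_{p^r}}}) \;=\; \pm\, O_{N\delta}(f_{p,r}\ast e_{\Gamma(p^n)_{\mathbb{Q}_p}}),
\]
with the sign determined as in Definition \ref{DefAssoc} (it is $+$ unless $N\delta$ is central while $\delta$ is not $\sigma$-conjugate to a central element).

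\medskip

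\noindent Substituting this identity into the formula supplied by Corollary \ref{LKNew} yields precisely \eqref{eqn1}. There is no genuine obstacle here: the real work is contained in Theorem \ref{BaseChangeIdentity} (which rests on Kazhdan density, the Weyl integration formula, and a compatibility of $\sigma_\pi$ and $\sigma_\Pi$ under base change) and in Corollary \ref{LKNew}; the only thing to be careful about is to track the sign arising from the twisted-vs-usual character identity for central elements, which is built into Definition \ref{DefAssoc}.
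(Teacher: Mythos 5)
Your proposal is correct and follows the same route as the paper: verify that $f_{p,r}$ and $\phi_p$ satisfy the scalar-matching hypothesis of Theorem \ref{BaseChangeIdentity} (via the preceding lemma), invoke the ``furthermore'' clause to get the associated idempotents, conclude that $f_{p,r}\ast e_{\Gamma(p^n)_{\mathbb{Q}_p}}$ and $\phi_p\ast e_{\Gamma(p^n)_{\mathbb{Q}_{p^r}}}$ have matching orbital integrals, and substitute into Corollary \ref{LKNew} using the semisimplicity of $N\delta$ from Lemma \ref{FrobSemisimple}. The paper states this more tersely but with identical content, and your attention to the sign in Definition \ref{DefAssoc} is exactly the point being tracked by the $\pm$ in the statement.
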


We need to recall certain facts from Honda-Tate theory to simplify our expression further.

\begin{thm}\label{HondaTate} Fix a finite field $\mathbb{F}_q$ of characteristic $p$.
\begin{enumerate}
\item[{\rm (a)}] For any elliptic curve $E/\mathbb{F}_q$, the action of Frobenius on $H^1_{\mathrm{et}}(E,\mathbb{Q}_{\ell})$ is semisimple with characteristic polynomial $p_E\in \mathbb{Z}[T]$ independent of $\ell$. Additionally, if $F$ acts as $\delta\sigma$ on $H^1_{\mathrm{cris}}(E/\mathbb{Z}_q)\otimes \mathbb{Q}_q$, then $N\delta$ is semisimple with characteristic polynomial $p_E$.
\end{enumerate}
Let $\gamma_E\in \mathrm{GL}_2(\mathbb{Q})$ be semisimple with characteristic polynomial $p_E$. Then
\begin{enumerate}
\item[{\rm (b)}] The map $E\longmapsto \gamma_E$ gives a bijection between $\mathbb{F}_q$-isogeny classes of elliptic curves over $\mathbb{F}_q$ and conjugacy classes of semisimple elements $\gamma\in \mathrm{GL}_2(\mathbb{Q})$ with $\det \gamma=q$ and $\tr \gamma\in \mathbb{Z}$ which are elliptic in $\mathrm{GL}_2(\mathbb{R})$.

\item[{\rm (c)}] Let $G_{\gamma_E}$ be the centralizer of $\gamma_E$. Then $\mathrm{End}(E)^{\times}$ is an inner form of $G_{\gamma_E}$. In fact,
\[\begin{aligned}
(\mathrm{End}(E)\otimes\mathbb{Q}_{\ell})^{\times}&\cong G_{\gamma_E}\otimes \mathbb{Q}_{\ell}\ ,\ \mathrm{for}\ \ell\neq p\\
(\mathrm{End}(E)\otimes\mathbb{Q}_p)^{\times}&\cong G_{\delta\sigma}\ .
\end{aligned}\]
Furthermore, $(\mathrm{End}(E)\otimes \mathbb{R})^{\times}$ is anisotropic modulo center.
\end{enumerate}\hfill$\Box$
\end{thm}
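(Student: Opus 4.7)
The statement is entirely classical, and my plan is to assemble it from three well-known inputs: the Weil conjectures for elliptic curves, Tate's isogeny theorem, and Honda's existence theorem.

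For part (a), I would first recall that on $H^1_{\mathrm{et}}(E,\mathbb{Q}_\ell)$ the geometric Frobenius $\Phi_q$ satisfies $\Phi_q \Phi_q^\vee = q$ (where $\Phi_q^\vee$ is induced by the dual isogeny), so its characteristic polynomial $p_E(T) = T^2 - a_E T + q$ has $a_E = q+1 - \#E(\mathbb{F}_q) \in \mathbb{Z}$ (independent of $\ell$) and roots of absolute value $\sqrt{q}$, which are therefore either distinct or equal to $\pm\sqrt{q}$. Semisimplicity in the distinct case is automatic; in the remaining case one checks it via the Rosati involution, or equivalently via the polarization-positivity argument already used in Lemma \ref{FrobSemisimple}. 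For the crystalline statement, the semisimplicity of $N\delta$ is exactly Lemma \ref{FrobSemisimple}, and the equality of characteristic polynomials follows because both sides compute the characteristic polynomial of the geometric Frobenius on an appropriate Weil cohomology of $E$, and the rational number $a_E$, being the trace of Frobenius on any of the $H^1$s, is independent of the choice of cohomology theory (this is a compatibility of the form used throughout the paper: étale and crystalline cohomology of an abelian variety give rise to the same zeta function).

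For part (b), injectivity of $E \longmapsto \gamma_E$ (up to conjugacy) is Tate's isogeny theorem over finite fields: two elliptic curves over $\mathbb{F}_q$ are $\mathbb{F}_q$-isogenous iff their $\ell$-adic Tate modules are $G_{\mathbb{F}_q}$-isomorphic, iff the characteristic polynomials of Frobenius agree. The integrality and ellipticity conditions on $\gamma$ come out of part (a) and the Weil bound $|a_E| \le 2\sqrt{q}$, which forces the eigenvalues to be complex conjugate. Surjectivity is Honda's theorem in the special case of dimension one: for every integer $a$ with $a^2 \le 4q$ one exhibits an elliptic curve over $\mathbb{F}_q$ with trace of Frobenius equal to $a$; Waterhouse's classification makes this explicit, and I would just cite it.

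For part (c), the three identifications are parallel. Away from $p$, Tate's isogeny theorem gives
\[
(\mathrm{End}(E)\otimes \mathbb{Q}_\ell)^\times \cong \mathrm{Aut}_{G_{\mathbb{F}_q}}(V_\ell(E))^\times;
\]
since the Frobenius $\Phi_q$ topologically generates the relevant quotient of $G_{\mathbb{F}_q}$ acting on $V_\ell(E)$, this is the centralizer of $\gamma_E$ in $\mathrm{GL}(V_\ell(E)) \cong \mathrm{GL}_2(\mathbb{Q}_\ell)$, which is $G_{\gamma_E}\otimes \mathbb{Q}_\ell$. At $p$, the same statement holds with Dieudonné theory / crystalline cohomology replacing étale cohomology: the endomorphism algebra $\mathrm{End}(E)\otimes \mathbb{Q}_p$ is the commutant of $F = \delta\sigma$ in $\mathrm{GL}(H_p)$, which is by definition $G_{\delta\sigma}(\mathbb{Q}_p)$. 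That these are indeed inner forms of $G_{\gamma_E}$ follows from the agreement of characteristic polynomials in (a) together with the fact that both $G_{\delta\sigma}$ and $G_{N\delta}$ are inner forms of each other (stated and used in Section 3). Finally, anisotropy modulo center of $(\mathrm{End}(E)\otimes\mathbb{R})^\times$ is immediate from the ellipticity of $\gamma_E$: the eigenvalues of $\gamma_E$ on $H^1(E,\mathbb{R})$ are a pair of non-real complex conjugates, so the centralizer over $\mathbb{R}$ is a non-split torus, which modulo center is anisotropic.

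The whole theorem is thus a bookkeeping exercise combining Tate, Honda, and the compatibility between étale and crystalline cohomology; there is no real obstacle beyond assembling the correct references (e.g.\ Waterhouse for (b), Tate for (c) at $\ell\ne p$, and classical Dieudonné theory for (c) at $p$), which is why the paper states it without proof.
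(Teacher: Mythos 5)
Your route is the paper's: assemble the theorem from the Weil conjectures for elliptic curves and the main theorems of Tate and Honda (the paper's own proof is a one-sentence citation of exactly these ingredients). The semisimplicity argument via Lemma \ref{FrobSemisimple}, the extraction of the integrality and ellipticity constraints from the Weil bound, and the three local identifications in (c) are all handled correctly.

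The gap is in your surjectivity argument for part (b). You assert that ``for every integer $a$ with $a^2 \le 4q$ one exhibits an elliptic curve over $\mathbb{F}_q$ with trace of Frobenius equal to $a$'' and that Waterhouse's classification ``makes this explicit'' --- but Waterhouse's theorem says the opposite in the supersingular range $p \mid a$. A concrete counterexample: take $q = 49$, $a = 7$. The Weil number $\alpha = 7\zeta_6$ generates $K = \mathbb{Q}(\sqrt{-3})$, in which $7$ splits; since $v(\alpha) = 1$ at each of the two primes of $K$ above $7$ while $v(q) = 2$, the Honda--Tate local invariants of the endomorphism algebra are $1/2$ at both places. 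The resulting simple abelian variety over $\mathbb{F}_{49}$ therefore has endomorphism algebra a quaternion algebra over $K$ of $\mathbb{Q}$-dimension $8$, hence has dimension $2$: it is an abelian surface, not an elliptic curve. So no elliptic curve over $\mathbb{F}_{49}$ has trace of Frobenius $7$, even though the $\gamma$ with $\det\gamma = 49$, $\tr\gamma = 7$ is semisimple, integral, and $\mathrm{GL}_2(\mathbb{R})$-elliptic. Waterhouse's list of allowable supersingular traces over $\mathbb{F}_{p^n}$ (e.g.\ $a = \pm\sqrt{q}$ only if $p \not\equiv 1 \bmod 3$, $a = 0$ only under further congruence conditions on $p$, and so on) exists precisely to exclude such $a$. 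Thus the surjectivity you claim is false for composite prime powers $q$; Honda's theorem guarantees \emph{some} simple abelian variety for each Weil number, but not one of dimension one. A correct treatment of (b) must either impose the precise Waterhouse conditions on $\gamma$, or --- as is effectively needed in Section \ref{LKApproach} where the bijection is used --- argue separately that the extra conjugacy classes contribute vanishing orbital integrals.
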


\begin{proof} This combines the fixed point formulas in \'{e}tale and crystalline cohomology, the Weil conjectures (here Weil's theorem) for elliptic curves and the main theorems of \cite{Tate}, \cite{Honda}.
\end{proof}

Regarding our expression for one isogeny class, we first get that \eqref{eqn1} equals
\begin{equation}\label{eqn2}
\pm \mathrm{vol}(\Gamma \backslash (\mathrm{End}(E)\otimes \mathbb{A}_f)^{\times}) O_{\gamma}(f^p) O_{\gamma}(f_{p,r}\ast e_{\Gamma(p^n)_{\mathbb{Q}_p}})\ ,\end{equation}
writing $\gamma=\gamma_E\in \mathrm{GL}_2(\mathbb{Q})$ as in the Theorem and using that by part (a), this is compatible with our previous use. Define the function
\[
f=f^p(f_{p,r}\ast e_{\Gamma(p^n)_{\mathbb{Q}_p}})\in C_c^{\infty}(\mathrm{GL}_2(\mathbb{A}_f))\ .
\]
Recalling that $\Gamma = (\mathrm{End}(E)\otimes \mathbb{Q})^{\times}$, we see that \eqref{eqn2} equals
\begin{equation}\label{eqn3}
\pm \mathrm{vol}((\mathrm{End}(E)\otimes \mathbb{Q})^{\times} \backslash (\mathrm{End}(E)\otimes \mathbb{A}_f)^{\times}) \int_{G_{\gamma}(\mathbb{A}_f)\backslash \mathrm{GL}_2(\mathbb{A}_f)} f(g^{-1}\gamma g) dg\ .
\end{equation}

For any reductive group $G$ over $\mathbb{Q}$, let $\overline{G}$ be any inner form of $G$ over $\mathbb{Q}$ which is anisotropic modulo center over $\mathbb{R}$, if existent. The terms where $\overline{G}$ occurs will not depend on the choice made because of the invariance of the Tamagawa number under inner twists. Collecting everything so far, we see that

\begin{thm}\label{TraceFinite} The Lefschetz number
\[ \sum_{x\in \mathcal{M}_{\Gamma(p^n)}(\mathbb{F}_{p^r})} \mathrm{tr}^{\mathrm{ss}}(\Phi_{p^r} | (R\psi\mathcal{F}_n)_x) \]
equals
\[\begin{aligned}
- & \sum_{\gamma\in Z(\mathbb{Q})} \mathrm{vol}(\overline{\mathrm{GL}}_2(\mathbb{Q})\backslash \overline{\mathrm{GL}}_2(\mathbb{A}_f)) f(\gamma)\\
+ & \sum_{\substack{\gamma\in \mathrm{GL}_2(\mathbb{Q})\setminus Z(\mathbb{Q}) \\ \mathrm{\rm semisimple\ conj.\ class}\\ \mathrm{\rm with}\ \gamma_{\infty}\ \mathrm{\rm elliptic}}} \mathrm{vol}(\overline{G}_{\gamma}(\mathbb{Q})\backslash \overline{G}_{\gamma}(\mathbb{A}_f)) \int_{G_{\gamma}(\mathbb{A}_f)\backslash \mathrm{GL}_2(\mathbb{A}_f)} f(g^{-1}\gamma g) dg\ .
\end{aligned}\]
\end{thm}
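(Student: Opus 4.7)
The plan is to sum the identity of Corollary \ref{LKNewBC} over all $\mathbb{F}_{p^r}$-isogeny classes of elliptic curves $E_0$ over $\mathbb{F}_{p^r}$ and then reindex the resulting sum by conjugacy classes in $\mathrm{GL}_2(\mathbb{Q})$ via Honda--Tate theory. The first sum on the right (central $\gamma$) should come from the supersingular classes with central Frobenius, and the second sum from everything else; the minus sign out front should be accounted for by the sign in Definition \ref{DefAssoc}.

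First, I would fix an isogeny class $[E_0]$ and rewrite the prefactor on the right-hand side of Corollary \ref{LKNewBC} using Theorem \ref{HondaTate}(c). Writing $\gamma = \gamma_{E_0} \in \mathrm{GL}_2(\mathbb{Q})$ for the associated Honda--Tate parameter, the inner form $\overline{G}_\gamma$ of $G_\gamma$ is realized by $(\mathrm{End}(E_0)\otimes\mathbb{Q})^\times = \Gamma$ and satisfies $\overline{G}_\gamma(\mathbb{A}_f) = G_\gamma(\mathbb{A}_f^p)\times G_{\delta\sigma}(\mathbb{Q}_p)$, anisotropic modulo center at infinity because $\gamma_\infty$ is elliptic. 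Hence the volume factor collapses to $\mathrm{vol}(\overline{G}_\gamma(\mathbb{Q})\backslash\overline{G}_\gamma(\mathbb{A}_f))$, which by invariance of Tamagawa numbers under inner twists is unambiguous.

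Second, I would combine $O_\gamma(f^p)$ and $O_{N\delta}(f_{p,r}\ast e_{\Gamma(p^n)_{\mathbb{Q}_p}})$ into one adelic orbital integral. By Theorem \ref{HondaTate}(a), $N\delta$ and $\gamma$ are conjugate in $\mathrm{GL}_2(\overline{\mathbb{Q}}_p)$ with matching centralizers, so setting $f = f^p \cdot (f_{p,r}\ast e_{\Gamma(p^n)_{\mathbb{Q}_p}}) \in C_c^\infty(\mathrm{GL}_2(\mathbb{A}_f))$ gives
\[ O_\gamma(f^p)\,O_{N\delta}(f_{p,r}\ast e_{\Gamma(p^n)_{\mathbb{Q}_p}}) = \int_{G_\gamma(\mathbb{A}_f)\backslash \mathrm{GL}_2(\mathbb{A}_f)} f(g^{-1}\gamma g)\,dg\,. \]
Then I would enlarge the index set to all semisimple conjugacy classes of $\mathrm{GL}_2(\mathbb{Q})$ with $\gamma_\infty$ elliptic: the support of $f^p$ forces $v_\ell(\det\gamma)=0$ and $\tr\gamma\in\mathbb{Z}_\ell$ for $\ell\neq p$, while the action of $f_{p,r}\ast e_{\Gamma(p^n)_{\mathbb{Q}_p}}$ (coming from the Bernstein center description of Lemma \ref{FunctionExists} and the determinant condition built into $\phi_{p,0}$) forces $v_p(\det\gamma)=r$; together with rationality this enforces precisely the Honda--Tate conditions $\det\gamma = p^r$, $\tr\gamma\in\mathbb{Z}$. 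Therefore the enlarged sum has no extra contributions.

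Finally, I would track the sign. By Definition \ref{DefAssoc}, the matching sign is $+$ except when $N\delta$ is central in $\mathrm{GL}_2(\mathbb{Q}_p)$ while $\delta$ is not $\sigma$-conjugate to a central element. Under Honda--Tate these are exactly the Frobenius classes lying in $Z(\mathbb{Q})$, i.e. those with $r$ even and $\gamma = \pm p^{r/2}\in Z(\mathbb{Q})$; for such $\gamma$ we have $G_\gamma = \mathrm{GL}_2$, so $O_\gamma(f) = f(\gamma)$, and $\overline{G}_\gamma$ is the unit group of the quaternion algebra over $\mathbb{Q}$ ramified at $\{\infty,p\}$, which is the $\overline{\mathrm{GL}}_2$ in the statement. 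These yield the first term with its minus sign. The remaining (non-central) Honda--Tate classes give the second sum with $+$ sign, and exhaust the non-central semisimple conjugacy classes with $\gamma_\infty$ elliptic on which $f$ is nonvanishing. The main obstacle will be the sign bookkeeping: one must verify precisely that the ``exceptional'' case of Definition \ref{DefAssoc} matches centrality of $\gamma_{E_0}\in \mathrm{GL}_2(\mathbb{Q})$, using the crystalline description of $\delta$ from Section 5 to see when $\delta$ itself is, or is not, $\sigma$-conjugate to a central element of $\mathrm{GL}_2(\mathbb{Q}_{p^r})$.
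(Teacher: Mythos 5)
Your outline follows the same route the paper takes in Section~10: sum Corollary~\ref{LKNewBC} over isogeny classes, rewrite the volume factor using Theorem~\ref{HondaTate}(c) and invariance of Tamagawa numbers, fuse $O_\gamma(f^p)$ and $O_{N\delta}(f_{p,r}\ast e_{\Gamma(p^n)_{\mathbb{Q}_p}})$ into a single adelic orbital integral of $f = f^p(f_{p,r}\ast e_{\Gamma(p^n)_{\mathbb{Q}_p}})$, enlarge the index set to all semisimple conjugacy classes with $\gamma_\infty$ elliptic after checking that extraneous contributions vanish, and read off the sign from Definition~\ref{DefAssoc}. The sign question you flag as ``the main obstacle'' is in fact direct: by Theorem~\ref{HondaTate}(c), $G_{\delta\sigma}\cong(\mathrm{End}(E)\otimes\mathbb{Q}_p)^\times$, which for supersingular $E$ is the unit group of the $\mathbb{Q}_p$-division quaternion algebra, so $\delta$ is never $\sigma$-conjugate to a central element when $N\delta$ is central, triggering the $-$ sign precisely when $\gamma\in Z(\mathbb{Q})$.

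There is, however, one incorrect justification that needs repair. You assert that the support of $f^p$ (giving $\tr\gamma\in\mathbb{Z}_\ell$ for $\ell\neq p$) ``together with rationality'' yields $\tr\gamma\in\mathbb{Z}$. This only gives $\tr\gamma\in\mathbb{Z}[\tfrac 1p]$. The condition $v_p(\tr\gamma)\geq 0$ is not a consequence of the Bernstein center construction or of the determinant condition built into $\phi_{p,0}$; it is an additional vanishing property of the orbital integrals of $f_{p,r}\ast e_{\Gamma(p^n)_{\mathbb{Q}_p}}$, which one obtains by matching them to the twisted orbital integrals of $\phi_p\ast e_{\Gamma(p^n)_{\mathbb{Q}_{p^r}}}$ and invoking the explicit computation in Theorem~\ref{MainTheorem}, exactly as the paper's proof does. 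With this input the vanishing argument, and with it your proof, goes through.
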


\begin{rem} If $\gamma\in \mathrm{GL}_2(\mathbb{Q})\setminus Z(\mathbb{Q})$ is semisimple with $\gamma_\infty$ elliptic, then $G_{\gamma}$ is already anisotropic modulo center over $\mathbb{R}$, so that one may take $\overline{G}_{\gamma} = G_{\gamma}$ in this case. We will not need this fact.
\end{rem}

\begin{proof} We only need to check that the contributions of $\gamma$ with $\det \gamma\neq p^r$ or $\tr \gamma\not\in\mathbb{Z}$ vanish. Assume that $\det \gamma\neq p^r$. The orbital integrals of $f^p$ vanish except if the determinant is a unit away from $p$, so that $\det \gamma$ is up to sign a power of $p$. The orbital integrals of $f_{p,r} \ast e_{\Gamma(p^n)_{\mathbb{Q}_p}}$ vanish except if $v_p(\det \gamma)=r$, so that $\det \gamma=\pm p^r$. But if $\det \gamma=-p^r<0$, then $\gamma$ is hyperbolic at $\infty$, contradiction.

Assume now that $\tr \gamma\not\in\mathbb{Z}$. The orbital integrals of $f^p$ vanish as soon as a prime $\ell\neq p$ is in the denominator of $\tr \gamma$. The orbital integrals of $f_{p,r} \ast e_{\Gamma(p^n)_{\mathbb{Q}_p}}$ match with the twisted orbital integrals of $\phi_p \ast e_{\Gamma(p^n)_{\mathbb{Q}_{p^r}}}$, which were computed in Theorem \ref{MainTheorem}. In particular, they are nonzero only if $v_p(\tr \gamma)\geq 0$, so that $\tr \gamma$ is necessarily integral.
\end{proof}

It turns out that it is easier to apply the Arthur-Selberg trace formula for the cohomology of the compactification $\overline{\mathcal{M}}_{p^n m}$ instead of the cohomology with compact supports of $\mathcal{M}_{p^n m}$. The corresponding modifications are done in the next section.

\section{Contributions from infinity}

Recall that the smooth curve $\mathcal{M}_{p^n m}/\mathrm{Spec}\ \mathbb{Z}[\frac 1{pm}]$ has a smooth projective compactification $j: \mathcal{M}_{p^n m}\longrightarrow \overline{\mathcal{M}}_{p^n m}$ with boundary $\partial \mathcal{M}_{p^n m}$. We use a subscript $\overline{\mathbb{Q}}$ to denote base change to $\overline{\mathbb{Q}}$. We are interested in the cohomology groups
\[
H^i(\overline{\mathcal{M}}_{p^n m \overline{\mathbb{Q}}},\overline{\mathbb{Q}}_{\ell})\ .
\]
Let
\[\begin{aligned}
H^{\ast}(\overline{\mathcal{M}}_{p^n m \overline{\mathbb{Q}}},\overline{\mathbb{Q}}_{\ell}) &= \sum_{i=0}^2 (-1)^i H^i(\overline{\mathcal{M}}_{p^n m \overline{\mathbb{Q}}},\overline{\mathbb{Q}}_{\ell})\ ,\\
H_c^{\ast}(\mathcal{M}_{p^n m \overline{\mathbb{Q}}},\overline{\mathbb{Q}}_{\ell}) &= \sum_{i=0}^2 (-1)^i H_c^i(\mathcal{M}_{p^n m \overline{\mathbb{Q}}},\overline{\mathbb{Q}}_{\ell})
\end{aligned}\]
in the Grothendieck group of representations of $G_{\mathbb{Q}}\times \mathrm{GL}_2(\mathbb{Z}/p^n m\mathbb{Z})$.
Then the long exact cohomology sequence for
\[
0\longrightarrow j_{!} \overline{\mathbb{Q}}_{\ell}\longrightarrow \overline{\mathbb{Q}}_{\ell}\longrightarrow \bigoplus_{x\in \partial \mathcal{M}_{p^n m \overline{\mathbb{Q}}}} \overline{\mathbb{Q}}_{\ell,x}\longrightarrow 0
\]
implies that
\[
H^{\ast}(\overline{\mathcal{M}}_{p^n m \overline{\mathbb{Q}}},\overline{\mathbb{Q}}_{\ell}) = H_c^{\ast}(\mathcal{M}_{p^n m \overline{\mathbb{Q}}},\overline{\mathbb{Q}}_{\ell}) + H^0(\partial \mathcal{M}_{p^n m \overline{\mathbb{Q}}},\overline{\mathbb{Q}}_{\ell})\ .
\]

\begin{lem}\label{PointsAtInfinity} There is a $G_{\mathbb{Q}}\times \mathrm{GL}_2(\mathbb{Z}/p^n m\mathbb{Z})$-equivariant bijection
\[
\partial \mathcal{M}_{p^n m\overline{\mathbb{Q}}}\cong \{\pm \left(\begin{array}{cc} 1 & \ast \\ 0 & 1 \end{array}\right) \}\backslash \mathrm{GL}_2(\mathbb{Z}/p^n m\mathbb{Z})\ ,
\]
where $\mathrm{GL}_2(\mathbb{Z}/p^n m\mathbb{Z})$ acts on the right hand side by multiplication from the right, and $G_{\mathbb{Q}}$ acts on the right hand side by multiplication from the left through the map
\[
G_{\mathbb{Q}}\longrightarrow \mathrm{Gal}(\mathbb{Q}(\zeta_{p^n m})/\mathbb{Q})\cong (\mathbb{Z}/p^n m\mathbb{Z})^{\times}\longrightarrow \mathrm{GL}_2(\mathbb{Z}/p^n m\mathbb{Z})\ ,
\]
the last map being given by
\[
x\longmapsto \left(\begin{array}{cc} x^{-1} & 0 \\ 0 & 1\end{array}\right)\ .
\]
\end{lem}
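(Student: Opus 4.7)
The plan is to parametrize cusps via the Tate curve and changes of parameter, following \cite{DeligneRapoport}. Set $N = p^nm$ for brevity. The formal completion of $\overline{\mathcal{M}}_N$ at any cusp is a formal disc over $\mathbb{Z}[\frac{1}{m}][\zeta_N]$; writing the parameter as $q$, the universal family on the formal neighborhood is the Tate curve $\mathrm{Tate}(q^N) = \mathbb{G}_m/q^{N\mathbb{Z}}$ (extended to a Néron $N$-gon at $q=0$), equipped with the canonical level structure
\[
\alpha_0: (\mathbb{Z}/N\mathbb{Z})^2 \longrightarrow \mathrm{Tate}(q^N)[N],\qquad (a,b)\longmapsto \zeta_N^a q^b.
\]
This defines a distinguished $\mathbb{Q}(\zeta_N)$-rational cusp.

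Every cusp over $\overline{\mathbb{Q}}$ is of the form $(\mathrm{Tate}(q^N), \alpha_0\circ g)$ for some $g \in \mathrm{GL}_2(\mathbb{Z}/N\mathbb{Z})$, giving a surjection $\mathrm{GL}_2(\mathbb{Z}/N\mathbb{Z}) \twoheadrightarrow \partial\mathcal{M}_N(\overline{\mathbb{Q}})$. Two elements $g, g'$ define the same cusp iff they differ by an automorphism of the pair (formal disc, Tate curve), which I would factor into: (i) the involution $[-1]$ of the curve, contributing the $\pm 1$; and (ii) changes of parameter $q \mapsto \zeta_N^c q$ (the only automorphisms of the formal disc fixing $\zeta_N$), which send $\alpha_0$ to $\alpha_0 \circ \left(\begin{array}{cc}1 & c\\0 & 1\end{array}\right)$ since $q^b \mapsto \zeta_N^{cb}q^b$. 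Thus the cusps are identified with $\pm\left(\begin{array}{cc}1 & *\\0 & 1\end{array}\right)\backslash \mathrm{GL}_2(\mathbb{Z}/N\mathbb{Z})$, and the $\mathrm{GL}_2(\mathbb{Z}/N\mathbb{Z})$-action becomes right multiplication, as asserted.

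For the Galois action, observe that $\alpha_0$ depends on the base only through $\zeta_N$: for $\sigma \in G_{\mathbb{Q}}$ with $\sigma(\zeta_N) = \zeta_N^x$, one has $\sigma(\alpha_0)(a,b) = \zeta_N^{xa}q^b = \alpha_0(xa, b)$, so $\sigma$ sends the class represented by $g$ to that represented by $\left(\begin{array}{cc}x & 0\\0 & 1\end{array}\right) g$. Matching this with the statement of the lemma is a convention issue: identifying the coset $[g]$ with the cusp $\alpha_0 \circ g^{-1}$ (the usual convention for the Galois action on a torsor) converts left multiplication by $\left(\begin{array}{cc}x & 0\\0 & 1\end{array}\right)$ into left multiplication by $\left(\begin{array}{cc}x^{-1} & 0\\0 & 1\end{array}\right)$, matching the formula in the lemma.

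The main obstacle is not any deep technical step but careful bookkeeping of conventions (left vs.\ right multiplication, $g$ vs.\ $g^{-1}$, and the compatibility between the $\mathrm{GL}_2$- and Galois-actions with the quotient by $\pm\left(\begin{array}{cc}1 & *\\0 & 1\end{array}\right)$), together with the correct extension of the moduli problem to generalized elliptic curves so that cusps can be parameterized at all. The underlying geometric input---the moduli interpretation of $\overline{\mathcal{M}}_N$ via Drinfeld level structures on generalized elliptic curves and the local description by the Tate curve---is standard and entirely drawn from \cite{DeligneRapoport}.
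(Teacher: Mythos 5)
Your parametrization of the cusps by level structures on the Tate curve/N\'eron $N$-gon is exactly the mechanism Scholze is invoking via Katz--Mazur and Deligne--Rapoport, and your identification of the automorphism group $\mu_N \rtimes\{\pm 1\}$ of the $N$-gon with $\{\pm\left(\begin{smallmatrix}1 & * \\ 0 & 1\end{smallmatrix}\right)\}$ is correct. So the heart of the argument is fine and is the same as the paper's.

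There is, however, an internal inconsistency in the way you try to force the Galois formula to match. Your computation that the natural Galois action on the coset $[g]$, under the identification $[g]\leftrightarrow (\text{$N$-gon},\,\alpha_0\circ g)$, is left multiplication by $\mathrm{diag}(x,1)$ with $x=\chi(\sigma)$ the cyclotomic character, is correct. But your proposed remedy --- reidentifying $[g]$ with the cusp $\alpha_0\circ g^{-1}$ --- trades one mismatch for another: with that identification, the deck-transformation action $\alpha\mapsto\alpha\circ k$ on level structures no longer corresponds to right multiplication $[g]\mapsto[gk]$ (you would instead get $[g]\mapsto[k^{-1}g]$), contradicting the $\mathrm{GL}_2(\mathbb{Z}/p^nm\mathbb{Z})$-equivariance asserted in the lemma. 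So you cannot simultaneously fix the Galois side with a $g\mapsto g^{-1}$ substitution and keep the right-multiplication statement. The inverse in the lemma is instead absorbed by the choice of the isomorphism $\mathrm{Gal}(\mathbb{Q}(\zeta_N)/\mathbb{Q})\cong(\mathbb{Z}/N\mathbb{Z})^\times$ (cyclotomic character vs.\ its inverse), which the lemma does not pin down; with the ``cyclotomic'' normalization your computation gives $\mathrm{diag}(x,1)$, and with the opposite normalization the stated $\mathrm{diag}(x^{-1},1)$ appears. Either way, the discrepancy is harmless for the sole application of the lemma (Corollary~\ref{TraceInfinity}): the semisimple trace of a power of Frobenius on $H^0$ of a finite set is unchanged under replacing the acting element by its inverse, so the count of inertia packets and the resulting orbital integral are unaffected. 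You should state the identification once with the normalization made explicit, verify both equivariances against that single choice, and not try to repair the Galois side at the cost of the $\mathrm{GL}_2$ side.
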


\begin{proof} This is contained in \cite{KatzMazur}. The point is that the points at infinity correspond to level-$p^nm$-structures on the rational $p^nm$-gon, cf. also \cite{DeligneRapoport}, and the automorphism group of the $p^nm$-gon is isomorphic to
\[
\{\pm \left(\begin{array}{cc} 1 & \ast \\ 0 & 1 \end{array}\right) \}\ .
\]
The Galois group acts on the $p^nm$-torsion points of the rational $p^nm$-gon only by its action on the $p^nm$-th roots of unity.
\end{proof}

We get the following corollary.

\begin{cor}\label{TraceInfinity} The semisimple trace of the Frobenius $\Phi_p^r$ on
\[
H^0(\partial \mathcal{M}_{p^n m \overline{\mathbb{Q}}},\overline{\mathbb{Q}}_{\ell})
\]
is given by
\[
\frac 12 \int_{\mathrm{GL}_2(\hat{\mathbb{Z}})} \int_{\mathbb{A}_f} f(k^{-1}\left(\begin{array}{cc} 1 & 0 \\ 0 & p^r \end{array}\right)\left(\begin{array}{cc} 1 & u \\ 0 & 1 \end{array}\right)k) du dk\ .
\]
\end{cor}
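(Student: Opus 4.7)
The plan is to compute the trace of $\Phi_p^r$ composed with the Hecke action of $f$ on the finite-dimensional space of functions on the cusp set, by recognizing this space as a smooth representation of $\mathrm{GL}_2(\mathbb{A}_f)$ induced from a subgroup and applying the kernel formula for the corresponding integral operator. First I would rewrite Lemma \ref{PointsAtInfinity} adelically: using the Iwasawa decomposition $\mathrm{GL}_2(\mathbb{A}_f) = B(\mathbb{A}_f)\mathrm{GL}_2(\hat{\mathbb{Z}})$, strong approximation for $U$, and the class number one identity $T(\mathbb{A}_f) = T(\mathbb{Q})T(\hat{\mathbb{Z}})$, the set $\{\pm U(\mathbb{Z})\}\backslash \mathrm{GL}_2(\mathbb{Z}/p^nm\mathbb{Z})$ identifies with
\[
\mathcal{C} = \{\pm 1\} U(\mathbb{Q}) \backslash \mathrm{GL}_2(\mathbb{A}_f) / K,
\]
where $U\subset B\subset \mathrm{GL}_2$ is the upper unipotent and $K = K^p\cdot \Gamma(p^n)_{\mathbb{Q}_p}$. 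Under this identification the Galois element $\Phi_p^r$ acts by left multiplication by $\gamma_\infty = \begin{pmatrix} 1 & 0 \\ 0 & p^r \end{pmatrix}$ (which normalizes $\{\pm 1\} U(\mathbb{Q})$), while $T_f$ acts by right convolution by $f$.

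Writing $V = \mathrm{Ind}_{\{\pm 1\} U(\mathbb{Q})}^{\mathrm{GL}_2(\mathbb{A}_f)} \mathbf{1}$ so that $V^K = H^0(\partial \mathcal{M}_{p^nm, \overline{\mathbb{Q}}}, \overline{\mathbb{Q}}_\ell)$, the kernel formula for integral operators gives
\[
\mathrm{tr}(\rho(\gamma_\infty) T_f \mid V^K) = \int_{\{\pm 1\} U(\mathbb{Q}) \backslash \mathrm{GL}_2(\mathbb{A}_f)} \sum_{\eta \in \{\pm 1\} U(\mathbb{Q})} f(g^{-1} \gamma_\infty \eta g)\, dg.
\]
Since the summand is invariant under $g\mapsto -g$, passing to an integral over $U(\mathbb{Q})\backslash \mathrm{GL}_2(\mathbb{A}_f)$ produces the overall factor $\tfrac{1}{2}$ of the statement. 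Using Iwasawa, I write $g = utk$ with $u = \begin{pmatrix} 1 & a \\ 0 & 1 \end{pmatrix}$, $t = \begin{pmatrix} t_1 & 0 \\ 0 & t_2 \end{pmatrix}\in T(\mathbb{Q})$, $k\in \mathrm{GL}_2(\hat{\mathbb{Z}})$; a direct matrix calculation for $n = \begin{pmatrix} 1 & b \\ 0 & 1 \end{pmatrix}$ gives
\[
g^{-1}\gamma_\infty n g = k^{-1}\gamma_\infty \begin{pmatrix} 1 & (b + a(1-p^r))t_2/t_1 \\ 0 & 1 \end{pmatrix} k.
\]

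Unfolding the sum over $b\in \mathbb{Q}$ against the compact quotient $\mathbb{Q}\backslash \mathbb{A}_f \ni a$ and substituting $u = (b+a(1-p^r))t_2/t_1$ combine the two into a single integration over $u\in \mathbb{A}_f$; the resulting Jacobian cancels against the Haar normalizations on $T(\mathbb{Q})\backslash T(\mathbb{A}_f)\cdot \mathrm{GL}_2(\hat{\mathbb{Z}})$ thanks to the product formula $\prod_v |1 - p^r|_v = 1$. The contribution from the $\eta = -n$ term vanishes because $f^p$ is supported on $K^p = \{g\equiv 1 \pmod m\}$, which is disjoint from $-K^p$ when $m\geq 3$. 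The hard part of the argument is precisely this bookkeeping: verifying that all Jacobians combine to produce exactly $du\, dk$ with no residual factors beyond the $\tfrac{1}{2}$ already accounted for. A secondary point is justifying the kernel formula in the adelic setting despite $\{\pm 1\} U(\mathbb{Q})$ not being closed in $\mathrm{GL}_2(\mathbb{A}_f)$, which can be bypassed by working at finite level where the trace reduces to a finite sum of matrix entries indexed by $\mathcal{C}$.
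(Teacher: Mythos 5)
Your approach is genuinely different from the paper's, and unfortunately it has a gap at the most important step. The paper's proof is direct: it observes that if $p^r\not\equiv 1 \pmod m$ then $\Phi_p^r$ has no fixed cusps and the integral visibly vanishes, while if $p^r\equiv 1 \pmod m$ the inertia subgroup groups the cusps into free $(\mathbb{Z}/p^n\mathbb{Z})^{\times}$-orbits on each of which Frobenius acts trivially, so the semisimple trace is a count $\tfrac{1}{p^{n-1}(p-1)}\#(\{\pm U(\mathbb{Z})\}\backslash \mathrm{GL}_2(\mathbb{Z}/p^nm\mathbb{Z}))$; it then evaluates the integral as the product of an away-from-$p$ count and a local-at-$p$ orbital integral (computed via Lemma \ref{TORegularSplit}) and matches.

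The gap in your argument is the assertion that $\mathrm{tr}^{\mathrm{ss}}(\Phi_p^r\,|\,H^0(\partial\mathcal{M}))$ equals $\mathrm{tr}(\rho(\gamma_\infty)\circ T_f\,|\,V^K)$. These are not visibly the same quantity, and you give no argument for the identification. The semisimple trace of $\Phi_p^r$ involves first passing to $I_{\mathbb{Q}_p}$-invariants: inertia surjects onto $(\mathbb{Z}/p^n\mathbb{Z})^{\times}$ and acts nontrivially on the cusps, so $\Phi_p^r$ is only a Frobenius coset, not an element that acts; your kernel formula instead fixes a single lift $\gamma_\infty$ and takes the naive trace, which is not the same thing. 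Moreover, the Corollary has no Hecke operator in it: it is literally $\mathrm{tr}^{\mathrm{ss}}(\Phi_p^r\,|\,H^0)$, whereas your composite includes $T_f$, where $f$ carries the nontrivial Bernstein-center factor $f_{p,r}$. On $V^K$ the idempotent factors of $f$ act as the identity but $f_{p,r}$ does not; e.g. on the constant line (the trivial automorphic representation inside $V$) your operator $\rho(\gamma_\infty)T_f$ acts by $p^{r/2}(1+p^r)$, while the semisimple Frobenius trace on that Galois-trivial line is $1$. So the two traces certainly do not agree subspace-by-subspace, and an equality in total would require a non-obvious cancellation that your argument neither asserts nor supplies — effectively you would be smuggling in the conclusion of the Corollary. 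If you want to keep the kernel-formula framework, you would need to either (i) replace the single lift $\gamma_\infty$ by an explicit average over the inertia image $(\mathbb{Z}/p^n\mathbb{Z})^{\times}$ and drop $T_f$, or (ii) genuinely establish a Galois-Hecke compatibility on the boundary cohomology (essentially the Eisenstein case of the local-global compatibility), which is far more machinery than the direct count in the paper.

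Two secondary problems even if the main gap were filled. First, your translation of Lemma \ref{PointsAtInfinity} gives that $\Phi_p^r$ maps to $\begin{pmatrix} p^r & 0 \\ 0 & 1 \end{pmatrix}$, not $\begin{pmatrix} 1 & 0 \\ 0 & p^r \end{pmatrix}$; these differ by the central scalar $p^r$, which acts nontrivially on $\{\pm U\}\backslash \mathrm{GL}_2(\mathbb{Z}/p^nm\mathbb{Z})$, so this is not a harmless normalization. Second, the appeal to the product formula to dispose of the Jacobian is off: $\prod_v |1-p^r|_v = 1$ holds over all places of $\mathbb{Q}$, so over the finite ad\`eles $|1-p^r|_{\mathbb{A}_f} = (p^r-1)^{-1} \ne 1$, and this residual factor together with the $|t_2/t_1|$-Jacobian and the $T$-integral must be tracked; you acknowledge this bookkeeping as "the hard part" but do not actually carry it out.
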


\begin{rem} Here, for all $p^{\prime}$ we use the Haar measure on $\mathbb{Q}_{p^{\prime}}$ that gives $\mathbb{Z}_{p^{\prime}}$ measure $1$; in particular, the subgroup $\hat{\mathbb{Z}}$ of $\mathbb{A}_f$ gets measure 1.
\end{rem}

\begin{proof} Note that if
\[
f^p(k^{-1}\left(\begin{array}{cc} 1 & u \\ 0 & p^r \end{array}\right)k)\neq 0\ ,
\]
then $p^r\equiv 1\modd m$, so that the integral is identically zero if $p^r\not\equiv 1\modd m$. In fact, in this case, $\Phi_p^r$ has no fixed points on $\partial \mathcal{M}_{p^n m \overline{\mathbb{Q}}}$. So assume now that $p^r\equiv 1\modd m$. In that case, the inertia subgroup at $p$ groups the points of $\partial \mathcal{M}_{p^n m \overline{\mathbb{Q}}}$ into packets of size $p^{n-1}(p-1)$ on which $\Phi_p^r$ acts trivially. Therefore the semisimple trace of $\Phi_p^r$ is
\[
\frac 1{p^{n-1}(p-1)} \# (\{\pm \left(\begin{array}{cc} 1 & \ast \\ 0 & 1 \end{array}\right) \}\backslash \mathrm{GL}_2(\mathbb{Z}/p^n m\mathbb{Z}))\ .
\]
But
\[
f^p(k^{-1}\left(\begin{array}{cc} 1 & 0 \\ 0 & p^r \end{array}\right)\left(\begin{array}{cc} 1 & u \\ 0 & 1 \end{array}\right)k) = \# \mathrm{GL}_2(\mathbb{Z}/m\mathbb{Z}) \mathrm{vol}(\mathrm{GL}_2(\hat{\mathbb{Z}}^p))^{-1}
\]
if $u\equiv 0\modd m$ and is $0$ otherwise, so that
\[\begin{aligned}
\int_{\mathrm{GL}_2(\hat{\mathbb{Z}}^p)} \int_{\mathbb{A}_f^p} f^p(k^{-1}\left(\begin{array}{cc} 1 & 0 \\ 0 & p^r \end{array}\right)\left(\begin{array}{cc} 1 & u \\ 0 & 1 \end{array}\right)k) du dk\\
= \# (\{\left(\begin{array}{cc} 1 & \ast \\ 0 & 1 \end{array}\right) \}& \backslash \mathrm{GL}_2(\mathbb{Z}/m\mathbb{Z}))\ .
\end{aligned}\]
This reduces us to the statement
\[
\int_{\mathrm{GL}_2(\mathbb{Z}_p)} \int_{\mathbb{Q}_p} (f_{p,r}\ast e_{\Gamma(p^n)_{\mathbb{Q}_p}})(k^{-1}\left(\begin{array}{cc} 1 & u \\ 0 & p^r \end{array}\right)k) du dk = p^{2n}-p^{2n-2}\ .
\]
But note that the left hand side is the orbital integral of $f_{p,r}\ast e_{\Gamma(p^n)_{\mathbb{Q}_p}}$ for
\[
\gamma = \left(\begin{array}{cc} 1 & 0 \\ 0 & p^r \end{array}\right)\ ,
\]
because more generally for $\gamma_1\neq \gamma_2$ and any $h\in C_c^{\infty}(\mathrm{GL}_2(\mathbb{Q}_p))$
\[\begin{aligned}
&\ \int_{\mathrm{GL}_2(\mathbb{Z}_p)} \int_{\mathbb{Q}_p} h(k^{-1}\left(\begin{array}{cc} \gamma_1 & 0 \\ 0 & \gamma_2 \end{array}\right)\left(\begin{array}{cc} 1 & u \\ 0 & 1 \end{array}\right)k) du dk\\
=&\ |1-\frac{\gamma_2}{\gamma_1}|_p^{-1}\int_{\mathrm{GL}_2(\mathbb{Z}_p)} \int_{\mathbb{Q}_p} h(k^{-1}\left(\begin{array}{cc} 1 & -u \\ 0 & 1 \end{array}\right)\left(\begin{array}{cc} \gamma_1 & 0 \\ 0 & \gamma_2 \end{array}\right)\left(\begin{array}{cc} 1 & u \\ 0 & 1 \end{array}\right)k) du dk\\
=&\ |1-\frac{\gamma_2}{\gamma_1}|_p^{-1} \mathrm{vol}(T(\mathbb{Z}_p)) \int_{T(\mathbb{Q}_p)\backslash \mathrm{GL}_2(\mathbb{Q}_p)} h(g^{-1}\left(\begin{array}{cc} \gamma_1 & 0 \\ 0 & \gamma_2 \end{array}\right)g) dg\ ,
\end{aligned}\]
as
\[
\mathrm{GL}_2(\mathbb{Q}_p) = B(\mathbb{Q}_p)\mathrm{GL}_2(\mathbb{Z}_p)\ .
\]
Note that in our case, $|1-\frac{\gamma_2}{\gamma_1}|_p^{-1}=|1-p^r|_p^{-1}=1$. But the orbital integral of $f_{p,r}\ast e_{\Gamma(p^n)_{\mathbb{Q}_p}}$ equals the corresponding twisted orbital integral of $\phi_p\ast e_{\Gamma(p^n)_{\mathbb{Q}_{p^r}}}$ which was calculated in Lemma \ref{TORegularSplit}.
\end{proof}

\section{The Arthur-Selberg trace formula}

This section serves to give the special case of the Arthur-Selberg trace formula for $\mathrm{GL}_2$ that will be needed. We simply specialize the formula in \cite{Arthur} for the trace of Hecke operators on the $L^2$-cohomology of locally symmetric spaces.

Let
\[
H_{(2)}^i = \lim_{\longrightarrow} H_{(2)}^i(\mathcal{M}_{m}(\mathbb{C}),\mathbb{C})
\]
be the inverse limit of the $L^2$-cohomologies of the spaces $\mathcal{M}_m(\mathbb{C})$. It is a smooth, admissible representation of $\mathrm{GL}_2(\mathbb{A}_f)$. Again, we define the element
\[
H_{(2)}^{\ast} = \sum_{i=0}^2 (-1)^i H_{(2)}^i
\]
in the Grothendieck group of smooth admissible representations of $\mathrm{GL}_2(\mathbb{A}_f)$. Then let
\[
\mathcal{L}(h) = \tr(h|H_{(2)}^{\ast})
\]
for any $h\in C_c^{\infty}(\mathrm{GL}_2(\mathbb{A}_f))$.

Let $Z\subset T\subset B\subset \mathrm{GL}_2$ be the center, the diagonal torus and the upper triangular matrices. Recall that for any reductive group $G$ over $\mathbb{Q}$, $\overline{G}$ is an inner form of $G$ over $\mathbb{Q}$ that is anisotropic modulo center over $\mathbb{R}$. For $\gamma\in \mathrm{GL}_2$, let $G_{\gamma}$ be its centralizer. Finally, let
\[
T(\mathbb{Q})^{\prime} = \{\gamma = \left(\begin{array}{cc} \gamma_1 & 0 \\ 0 & \gamma_2 \end{array}\right) \mid \gamma_1\gamma_2>0, |\gamma_1|<|\gamma_2| \}\ .
\]
Here and in the following, absolute values always denote the real absolute value.

\begin{thm}\label{TraceFormula} For any $h\in C_c^{\infty}(\mathrm{GL}_2(\mathbb{A}_f))$, we have
\[\begin{aligned}
\frac 12 \mathcal{L}(h) =\\
- &\sum_{\gamma\in Z(\mathbb{Q})} \mathrm{vol}(\overline{\mathrm{GL}}_2(\mathbb{Q})\backslash \overline{\mathrm{GL}}_2(\mathbb{A}_f)) h(\gamma)\\
+ &\sum_{\substack{\gamma\in \mathrm{GL}_2(\mathbb{Q})\setminus Z(\mathbb{Q}) \\ \mathrm{\rm semisimple\ conj.\ class}\\ \mathrm{\rm with}\ \gamma_{\infty}\ \mathrm{\rm elliptic}}} \mathrm{vol}(\overline{G}_{\gamma}(\mathbb{Q})\backslash \overline{G}_{\gamma}(\mathbb{A}_f)) \int_{G_{\gamma}(\mathbb{A}_f)\backslash \mathrm{GL}_2(\mathbb{A}_f)} h(g^{-1}\gamma g) dg\\
+ &\frac 12 \sum_{\gamma\in T(\mathbb{Q})^{\prime}} \int_{\mathrm{GL}_2(\hat{\mathbb{Z}})} \int_{\mathbb{A}_f} h(k\gamma\left(\begin{array}{cc} 1 & u \\ 0 & 1 \end{array}\right) k^{-1}) du dk\\
+ &\frac 14 \sum_{\gamma\in Z(\mathbb{Q})} \int_{\mathrm{GL}_2(\hat{\mathbb{Z}})} \int_{\mathbb{A}_f} h(k\gamma \left(\begin{array}{cc} 1 & u \\ 0 & 1 \end{array}\right) k^{-1}) du dk\ .
\end{aligned}\]
\end{thm}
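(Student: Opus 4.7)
The plan is to derive this by specializing the $L^2$-Lefschetz trace formula of Arthur \cite{Arthur} to $G=\mathrm{GL}_2/\mathbb{Q}$ with trivial coefficient system and identifying each Arthur term with one of the four pieces on the right-hand side. Because the coefficient system is trivial (we are computing the trace on $L^2$-cohomology of $\mathcal{M}_m(\mathbb{C})$, which is the locally symmetric space $\mathrm{GL}_2(\mathbb{Q}) Z(\mathbb{R})^\circ \backslash \mathrm{GL}_2(\mathbb{A})/K_\infty K_m$ where $K_\infty$ is a maximal compact), the Euler-characteristic contributions at infinity are especially simple.

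First I would recall that Arthur's geometric side decomposes as a sum over standard Levi subgroups $M$ of $\mathrm{GL}_2$. For $\mathrm{GL}_2$ these are $M = \mathrm{GL}_2$ (giving the \emph{elliptic} part) and $M = T$ (giving the \emph{hyperbolic and parabolic} parts); there is no proper non-torus Levi. The coefficient in Arthur's formula is $|W_M|/|W_G|$, giving $1$ for $M=\mathrm{GL}_2$ and $\tfrac12$ for $M=T$, which accounts for the $\tfrac12$ in front of the third line. The factor $\tfrac12$ on the left-hand side and the overall signs come from the archimedean Euler characteristic: for trivial coefficients on $\mathrm{GL}_2(\mathbb{R})$ the discrete series packet and the trivial representation each contribute $\pm 1$ via $\chi(\pi_\infty)$, and the orbital integral of the appropriate pseudocoefficient at infinity on an elliptic element $\gamma_\infty\in G_\gamma(\mathbb{R})$ equals $\mathrm{vol}(G_\gamma(\mathbb{R})/Z(\mathbb{R})^\circ)^{-1}$, which combines with the global volume to give the $\overline{G}_\gamma$-adelic volume as written.

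Next I would treat the $M=\mathrm{GL}_2$ summand. Its semisimple conjugacy classes over $\mathbb{Q}$ with $\gamma_\infty$ elliptic split into central ones (contributing to the first line, where $G_\gamma = \mathrm{GL}_2$ and the archimedean sign flips to produce the minus sign) and the non-central semisimple elliptic ones (the second line, where $G_\gamma$ is an anisotropic-mod-center torus and the archimedean pseudocoefficient selects discrete series, yielding $+1$). Both reduce to ordinary orbital integrals on $\mathrm{GL}_2(\mathbb{A}_f)$ weighted by Tamagawa-invariant adelic volumes of inner forms.

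For the $M=T$ summand I would use that the weighted orbital integral at infinity for the trivial $K_\infty$-type collapses. After choosing the standard $(G,M)$-family trivially at infinity and using the Iwasawa decomposition $\mathrm{GL}_2(\mathbb{A}_f)=B(\mathbb{A}_f)\mathrm{GL}_2(\hat{\mathbb{Z}})$, the weighted orbital integral of $h$ at $\gamma\in T(\mathbb{Q})$ becomes
\[
\int_{\mathrm{GL}_2(\hat{\mathbb{Z}})}\int_{\mathbb{A}_f} h\bigl(k\gamma\begin{pmatrix}1 & u \\ 0 & 1\end{pmatrix} k^{-1}\bigr)\,du\,dk,
\]
times a simple combinatorial constant. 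Restricting to one Weyl chamber $T(\mathbb{Q})'$ gives the third line with coefficient $\tfrac12$, and the unipotent/central contribution collapses the $T(\mathbb{Q})'$ integral onto $Z(\mathbb{Q})$ with an extra $\tfrac12$ from doubling, yielding the $\tfrac14$ in the last line.

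The main obstacle I foresee is the careful bookkeeping at infinity: one must verify that the archimedean weighted orbital integrals for trivial coefficients really collapse to the clean expressions above (rather than producing logarithmic weights), which rests on the fact that $\mathrm{GL}_2(\mathbb{R})$ has discrete series and on the explicit evaluation of the archimedean pseudocoefficients on hyperbolic and unipotent elements. Once these archimedean integrals are computed, the identification with the four terms as stated follows directly from Arthur's formula, and matching the coefficients $-1$, $+1$, $\tfrac12$, $\tfrac14$ is just algebra.
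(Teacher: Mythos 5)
Your high-level strategy — specialize Arthur's $L^2$-Lefschetz number formula (Theorem 6.1 of \cite{Arthur}) to $\mathrm{GL}_2$ with trivial coefficient system, sum over the two Levi subgroups $M=\mathrm{GL}_2$ and $M=T$, and identify the four terms — is the same route the paper takes, and the elliptic ($M=\mathrm{GL}_2$) part of your discussion matches. But your description of the formula itself contains a genuine confusion. You speak of ``archimedean weighted orbital integrals'' that ``must collapse (rather than producing logarithmic weights)'' and of ``pseudocoefficients at infinity.'' Arthur's Theorem 6.1 does not contain archimedean weighted orbital integrals at all; that collapse is precisely the content of Arthur's theorem and has already been carried out. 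What Theorem 6.1 presents at infinity is the stable discrete series character $\Phi_M(\gamma,\mu)$, and the only archimedean input one still has to supply is an explicit evaluation of this character. In the paper this is Lemma \ref{LemmaDiscreteSeries}: the character of the unique discrete series representation with trivial central and infinitesimal character, computed on regular elliptic elements (giving $-1$) and on split regular elements (giving $0$ or $2\min\{|\gamma_2/\gamma_1|,|\gamma_1/\gamma_2|\}^{1/2}/|\,|\gamma_2/\gamma_1|^{1/2}-|\gamma_1/\gamma_2|^{1/2}\,|$). Without this computation — which your proposal omits entirely — one cannot see why $\Phi_{\mathrm{GL}_2}(\gamma,\mu)=1$ for elliptic $\gamma$, why $\Phi_T(\gamma,\mu)$ vanishes when $\gamma_1\gamma_2<0$, or why the $\min$-factor cancels against $\delta_B(\gamma_{\mathrm{fin}})^{1/2}$ in the $M=T$ term; these cancellations are exactly where the clean shape of the third and fourth lines comes from.

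Two further bookkeeping points are off. The factor $\tfrac12$ on the left-hand side does not come from an ``archimedean Euler characteristic''; it comes from the fact that $\mathrm{SO}_2(\mathbb{R})$ has index $2$ in a maximal compact subgroup of $\mathrm{GL}_2(\mathbb{R})$, via Remark 3 after Arthur's Theorem 6.1, which gives $\mathcal{L}(h)=2\mathcal{L}_\mu(h)$. And the factor in front of the $M=T$ sum in Arthur's formula is $-\tfrac12$, not $\tfrac12$; the overall positive sign of the third and fourth lines is then produced by the negative sign in $\Phi_T(\gamma,\mu)=-2\min\{\cdot\}^{1/2}$, while the $\tfrac14$ in the last line is the volume $\chi(T)=\mathrm{vol}(T(\mathbb{Q})\backslash T(\mathbb{A}_f))=\tfrac14$ (the $\tfrac12$ of the third line then arises by combining the $\gamma$ and $w\gamma$ terms for non-central $\gamma\in T(\mathbb{Q})'$). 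So the proof is not ``just algebra'' without first pinning down these archimedean characters and volumes; supplying them closes the gap.
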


\begin{proof} Specialize Theorem 6.1 of \cite{Arthur} to this case. In this proof, we will use freely the notation from that article. As a preparation, we note the following formula for a discrete series character.

\begin{lem}\label{LemmaDiscreteSeries} Let $\pi$ be the admissible representation of $\mathrm{GL}_2(\mathbb{R})$ given by the space of $O(2)$-finite functions on $\mathbb{P}^1(\mathbb{R})$ modulo the constant functions. Let $\Theta_{\pi}$ be its character. Then for regular elliptic $\gamma\in \mathrm{GL}_2(\mathbb{R})$
\[
\Theta_{\pi}(\gamma)=-1\ ,
\]
and for $\gamma\in T(\mathbb{Q})$ regular with diagonal entries $\gamma_1$, $\gamma_2$, one has
\[
\Theta_{\pi}(\gamma)=0
\]
if $\gamma_1\gamma_2 < 0$, whereas if $\gamma_1\gamma_2>0$, then
\[
\Theta_{\pi}(\gamma) = 2\frac{\min\{|\frac{\gamma_2}{\gamma_1}|,|\frac{\gamma_1}{\gamma_2}|\}^{\frac 12}}{||\frac{\gamma_2}{\gamma_1}|^{\frac 12}-|\frac{\gamma_1}{\gamma_2}|^{\frac 12}|}\ .
\]
\end{lem}

\begin{proof} This directly follows from the formula for induced characters.
\end{proof}

We remark that the representation $\pi$ from the lemma is the unique discrete series representation with trivial central and infinitesimal character.

Now we begin to analyze the formula of Theorem 6.1 in \cite{Arthur}. We claim that the term $\frac 12 \mathcal{L}(h)$ is equal to $\mathcal{L}_{\mu}(h)$ for $\mu = 1$ in Arthur's notation. For this, we recall that the $\mathbb{C}$-valued points of $\mathcal{M}_m$ are given by
\[
\mathcal{M}_m(\mathbb{C}) = \mathrm{GL}_2(\mathbb{Q}) \backslash \mathrm{GL}_2(\mathbb{A}) / (\mathrm{SO}_2(\mathbb{R})\times K_m)\ ,
\]
where
\[
K_m = \{ g\in \mathrm{GL}_2(\hat{\mathbb{Z}}) \mid g\equiv 1\modd m \}\ .
\]
Because $\mathrm{SO}_2(\mathbb{R})$ has index $2$ in a maximal compact subgroup, we get a factor of $2$ by Remark 3 after Theorem 6.1 in \cite{Arthur}, giving $\mathcal{L}(h) = 2 \mathcal{L}_{\mu}(h)$ for $\mu = 1$, as claimed.

The outer sum in Theorem 6.1 of \cite{Arthur} runs over $M=\mathrm{GL}_2$, $M=T$.

Consider first the summand for $M=\mathrm{GL}_2$. The factor in front of the inner sum becomes $1$. The inner sum runs over semisimple conjugacy classes in $\gamma\in \mathrm{GL}_2(\mathbb{Q})$ which are elliptic at $\infty$. For the first factor $\chi(G_{\gamma})$, we use Remark 2 after Theorem 6.1 in \cite{Arthur}, noting that the term called $|\mathcal{D}(G,B)|$ in that formula is equal to $1$, and the sign is $-1$ or $+1$ corresponding to (in this order) $\gamma$ being central or not. The second factor $|\iota^{\mathrm{GL}_2}(\gamma)|$ equals $1$ in all cases, because all centralizers $G_{\gamma}$ are connected (as algebraic groups over $\mathbb{Q}$), cf. equation (6.1) of \cite{Arthur}. Now, by Lemma \ref{LemmaDiscreteSeries} and the definition of $\Phi_M(\gamma,\mu)$ (equation (4.4) of \cite{Arthur}), $\Phi_{\mathrm{GL}_2}(\gamma,\mu) = 1$ for regular elliptic $\gamma$ and hence all elliptic $\gamma$, by the way that $\Phi_{\mathrm{GL}_2}(\gamma,\mu)$ is extended, cf. p. 275 of \cite{Arthur}. Finally, the term $h_{\mathrm{GL}_2}(\gamma)$ is precisely the orbital integral $O_h(\gamma)$, by equation (6.2) of \cite{Arthur}. This takes care of all the terms for $M=\mathrm{GL}_2$, giving the first two summands in our formula.

Now, consider the case $M=T$. The factor in front of the inner sum becomes $-\frac 12$. The inner sum runs over the elements $\gamma\in T(\mathbb{Q})$. Let the diagonal entries of $\gamma$ be $\gamma_1$, $\gamma_2$. To evaluate $\chi(T_{\gamma})=\chi(T)$, we use Remark 2 after Theorem 6.1 in \cite{Arthur} again, to get $\chi(T_{\gamma})=\mathrm{vol}(T(\mathbb{Q})\backslash T(\mathbb{A}_f)) = \frac 14$. The term $|\iota^T(\gamma)|$ gives $1$, by the same reasoning as above. We want to evaluate the term $\Phi_M(\gamma,\mu)=\Phi_M(\gamma,1)$. Consider first the case of regular $\gamma$. By Lemma \ref{LemmaDiscreteSeries} and the definition (4.4) in \cite{Arthur}, we get that $\Phi_T(\gamma,\mu)$ is $0$ if $\gamma_1\gamma_2 < 0$, and otherwise equal to $-2\min\{|\frac{\gamma_2}{\gamma_1}|,|\frac{\gamma_1}{\gamma_2}|\}^{\frac 12}$. The same reasoning as above shows that this result continues to hold for non-regular $\gamma$. If $|\gamma_1|\leq |\gamma_2|$, then the fourth factor $h_T(\gamma)$ appears in the form of equation (6.3) of \cite{Arthur} in our formula, noting that $\delta_B(\gamma_{\mathrm{fin}})^{\frac 12} = |\frac{\gamma_2}{\gamma_1}|^{\frac 12}$. Finally, note that exchanging $\gamma_1$ and $\gamma_2$ does not change $h_{T}(\gamma)$, so that we may combine those terms. This gives the desired formula.
\end{proof}

We shall also need the spectral expansion for $\mathcal{L}(h)$. Let $\Pi_{\mathrm{disc}}(\mathrm{GL}_2(\mathbb{A}),1)$ denote the set of irreducible automorphic representations $\pi=\bigotimes_{p\leq \infty} \pi_p$ of $\mathrm{GL}_2(\mathbb{A})$ with $\pi_{\infty}$ having trivial central and infinitesimal character, that occur discretely in
\[
L^2(\mathrm{GL}_2(\mathbb{Q})\mathbb{R}_{>0}\backslash \mathrm{GL}_2(\mathbb{A}))\ .
\]
For $\pi\in \Pi_{\mathrm{disc}}(\mathrm{GL}_2(\mathbb{A}),1)$, let $m(\pi)$ be the multiplicity of $\pi$ in $L^2(\mathrm{GL}_2(\mathbb{Q})\mathbb{R}_{>0}\backslash \mathrm{GL}_2(\mathbb{A}))$. Using the relative Lie algebra cohomology groups, we have the following lemma.

\begin{lem}\label{L2Expansion} For any $i=0,1,2$, there is a canonical $\mathrm{GL}_2(\mathbb{A}_f)$-equivariant isomorphism
\[
H_{(2)}^i \cong \bigoplus_{\pi\in \Pi_{\mathrm{disc}}(\mathrm{GL}_2(\mathbb{A}),1)} m(\pi) H^i(\mathfrak{gl}_2,\mathrm{SO}_2(\mathbb{R}),\pi_{\infty}) \pi_f\ .
\]
Furthermore,
\[
H^i(\mathfrak{gl}_2,\mathrm{SO}_2(\mathbb{R}),\pi_{\infty}) = 0
\]
for all $i=0,1,2$ except if $\pi_{\infty}$ has trivial central and infinitesimal character. This gives the following cases:

\begin{enumerate}
\item[{\rm (i)}] $\pi_{\infty}$ is the trivial representation or $\pi_{\infty}=\mathrm{sgn}\ \det$. Then
\[
H^i(\mathfrak{gl}_2,\mathrm{SO}_2(\mathbb{R}),\pi_{\infty}) = \left\{ \begin{array}{cl} \mathbb{C} & i = 0\\ 0 & i = 1\\ \mathbb{C} & i = 2\ ; \end{array}\right .
\]

\item[{\rm (ii)}] $\pi_{\infty}$ is the representation from Lemma \ref{LemmaDiscreteSeries}. Then
\[
H^i(\mathfrak{gl}_2,\mathrm{SO}_2(\mathbb{R}),\pi_{\infty}) = \left\{ \begin{array}{cl} 0 & i = 0\\ \mathbb{C}\oplus\mathbb{C} & i = 1\\ 0 & i = 2\ . \end{array}\right .
\]
\end{enumerate}
\end{lem}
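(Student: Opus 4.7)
The plan combines a Matsushima-type decomposition of $L^2$-cohomology with Wigner's lemma and explicit computations of $(\mathfrak{g},K)$-cohomology for the three representations of $\mathrm{GL}_2(\mathbb{R})$ that can possibly contribute.

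First, I would realize the complex points as the double coset space
\[
\mathcal{M}_m(\mathbb{C}) = \mathrm{GL}_2(\mathbb{Q})\mathbb{R}_{>0}\backslash \mathrm{GL}_2(\mathbb{A})/\mathrm{SO}_2(\mathbb{R})K_m
\]
and apply the Borel--Casselman theorem identifying the $L^2$-cohomology of such an arithmetic locally symmetric space with the contribution of the discrete automorphic spectrum:
\[
H^i_{(2)}(\mathcal{M}_m(\mathbb{C}),\mathbb{C}) \cong \bigoplus_\pi m(\pi)\, H^i(\mathfrak{gl}_2,\mathrm{SO}_2(\mathbb{R})\mathbb{R}_{>0};\pi_\infty)\otimes \pi_f^{K_m},
\]
where $\pi = \pi_\infty \otimes \pi_f$ runs over $\Pi_{\mathrm{disc}}(\mathrm{GL}_2(\mathbb{A}),1)$. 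Since $\mathbb{R}_{>0}$ acts trivially on $\pi_\infty$ by the central character hypothesis, the cohomology group equals $H^i(\mathfrak{gl}_2,\mathrm{SO}_2(\mathbb{R});\pi_\infty)$ as written in the lemma. Passing to the filtered colimit over the level subgroups $K_m$ and noting that admissibility lets us commute the colimit with the direct sum yields the first assertion.

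Next, Wigner's lemma in relative Lie algebra cohomology (Borel--Wallach, I.4.1) forces $H^i(\mathfrak{gl}_2, \mathrm{SO}_2(\mathbb{R}) \mathbb{R}_{>0}; \pi_\infty) = 0$ unless $\pi_\infty$ has trivial infinitesimal and central character. The irreducible unitary $(\mathfrak{gl}_2, O(2))$-modules meeting this constraint are exactly the trivial character, $\mathrm{sgn}\circ\det$, and the discrete series of Lemma~\ref{LemmaDiscreteSeries}. For the two characters, both of which are trivial on the identity component of $O(2)$, the cohomology reduces after dividing by the center to $H^*(\mathfrak{sl}_2, \mathrm{SO}_2(\mathbb{R}); \mathbb{C})$, which equals the de Rham cohomology of the compact dual $SU(2)/\mathrm{SO}_2(\mathbb{R}) \cong S^2$, giving $(\mathbb{C},0,\mathbb{C})$ in degrees $0,1,2$. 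For the discrete series, the standard Vogan--Zuckerman / Borel--Wallach computation (VI.5) yields $(0,\mathbb{C}^2,0)$, the two dimensions in degree one corresponding to holomorphic and antiholomorphic harmonic differentials on the upper half plane.

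The main obstacle is the justification of the Borel--Casselman decomposition of $L^2$-cohomology in step one: on a non-compact arithmetic quotient, $L^2$-cohomology could a priori receive contributions from the continuous spectrum, and one must use the square-integrability conditions together with the structure of the residual spectrum of $\mathrm{GL}_2$ to verify that only the discrete spectrum survives. Once this input is in place, the remaining ingredients---Wigner's lemma, the classification of admissible $(\mathfrak{gl}_2, O(2))$-modules with a fixed infinitesimal character, and the explicit cohomology dimensions---are standard and purely representation-theoretic.
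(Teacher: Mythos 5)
Your proposal is correct and takes essentially the same route as the paper, which simply cites the discussion in Section 2 of Arthur's article for the $L^2$-Lefschetz decomposition and Casselman's paper for the $(\mathfrak{g},K)$-cohomology computations; your write-up supplies the content of those references (the Borel--Casselman identification of $L^2$-cohomology with the discrete spectrum, Wigner's lemma, the compact-dual computation for the two characters, and the standard discrete-series computation). The only point worth flagging is notational: the paper writes $H^i(\mathfrak{gl}_2,\mathrm{SO}_2(\mathbb{R}),\pi_\infty)$ but, as your proposal correctly notes, this must be understood relative to $\mathrm{SO}_2(\mathbb{R})\mathbb{R}_{>0}$ (equivalently as $(\mathfrak{sl}_2,\mathrm{SO}_2)$-cohomology), since otherwise the split central $\mathbb{R}$ would contribute an extra exterior-algebra factor and the dimensions in cases (i) and (ii) would not match.
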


\begin{proof} The first part is taken from the discussion in Section 2 of \cite{Arthur}. The second part is contained in \cite{Casselman}.
\end{proof}

Denote $\chi(\pi_{\infty}) = \sum_{i=0}^2 (-1)^i \dim H^i(\mathfrak{gl}_2,\mathrm{SO}_2(\mathbb{R}),\pi_{\infty})$. We get the following corollary.

\begin{cor}\label{SpectralExpansion} For any $h\in C_c^{\infty}(\mathrm{GL}_2(\mathbb{A}_f))$, we have
\[
\mathcal{L}(h) = \sum_{\pi\in \Pi_{\mathrm{disc}}(\mathrm{GL}_2(\mathbb{A}),1)} m(\pi) \chi(\pi_{\infty}) \tr(h|\pi_f)\ .
\]
\end{cor}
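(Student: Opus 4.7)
The plan is to unwind the definition of $\mathcal{L}(h)$ using the spectral decomposition provided by Lemma \ref{L2Expansion}. By definition,
\[
\mathcal{L}(h) = \tr(h \mid H_{(2)}^{\ast}) = \sum_{i=0}^{2} (-1)^i \tr(h \mid H_{(2)}^i).
\]
Substituting the isomorphism of Lemma \ref{L2Expansion} into each term, and using additivity of the trace, we obtain
\[
\tr(h \mid H_{(2)}^i) = \sum_{\pi\in \Pi_{\mathrm{disc}}(\mathrm{GL}_2(\mathbb{A}),1)} m(\pi) \dim H^i(\mathfrak{gl}_2,\mathrm{SO}_2(\mathbb{R}),\pi_{\infty}) \tr(h\mid \pi_f).
\]

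I would then justify interchanging the alternating sum over $i$ with the sum over $\pi$. This is harmless: if $K\subset \mathrm{GL}_2(\mathbb{A}_f)$ is a compact open subgroup such that $h$ is bi-$K$-invariant, then $\tr(h\mid \pi_f)$ vanishes unless $\pi_f^K\neq 0$, and there are only finitely many such $\pi$ contributing to $H_{(2)}^i$ (since these cohomology groups are admissible $\mathrm{GL}_2(\mathbb{A}_f)$-representations, or alternatively since each $H_{(2)}^i(\mathcal{M}_m(\mathbb{C}),\mathbb{C})$ is finite-dimensional). Hence the sum is a finite sum for the relevant $K$, and interchange is legitimate.

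After swapping the order of summation, the inner alternating sum collapses to
\[
\sum_{i=0}^{2} (-1)^i \dim H^i(\mathfrak{gl}_2,\mathrm{SO}_2(\mathbb{R}),\pi_{\infty}) = \chi(\pi_{\infty})
\]
by the definition given just before the corollary. Combining these steps yields
\[
\mathcal{L}(h) = \sum_{\pi \in \Pi_{\mathrm{disc}}(\mathrm{GL}_2(\mathbb{A}),1)} m(\pi) \chi(\pi_{\infty}) \tr(h\mid \pi_f),
\]
which is the desired identity. There is no serious obstacle here; the only point requiring care is the finiteness/admissibility argument allowing the interchange of sums, and this is standard once Lemma \ref{L2Expansion} is in hand.
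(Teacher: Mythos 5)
Your proof is correct and follows exactly what the paper intends: the corollary is stated without proof because it is an immediate formal consequence of Lemma \ref{L2Expansion} together with the definitions of $\mathcal{L}(h)$, $H_{(2)}^{\ast}$, and $\chi(\pi_\infty)$, and your argument spells this out precisely (including the admissibility point that justifies the interchange of sums).
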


\section{Comparison of the Lefschetz and Arthur-Selberg trace formula}

We deduce the following theorem. For this, we need to fix an isomorphism $\overline{\mathbb{Q}}_{\ell}\cong \mathbb{C}$.

\begin{thm}\label{CompSemisimpleTrace} With $f$ as above,
\[
2 \mathrm{tr}^{\mathrm{ss}}(\Phi_p^r | H^{\ast}(\overline{\mathcal{M}}_{p^n m \overline{\mathbb{Q}}},\overline{\mathbb{Q}}_{\ell})) = \mathcal{L}(f)\ .
\]
\end{thm}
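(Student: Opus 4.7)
The plan is to decompose the cohomology of $\overline{\mathcal M}_{p^nm}$ into interior and boundary contributions, evaluate each using the tools developed so far, and match the result against Theorem \ref{TraceFormula} applied to $f = f^p(f_{p,r}\ast e_{\Gamma(p^n)_{\mathbb{Q}_p}})$. From the excision sequence $0\to j_!\overline{\mathbb{Q}}_\ell\to\overline{\mathbb{Q}}_\ell\to\bigoplus_{x\in\partial\mathcal{M}_{p^nm,\overline{\mathbb{Q}}}}\overline{\mathbb{Q}}_{\ell,x}\to 0$, additivity of the semisimple trace in distinguished triangles gives
\[
\mathrm{tr}^{\mathrm{ss}}(\Phi_p^r\mid H^\ast(\overline{\mathcal{M}}_{p^nm,\overline{\mathbb{Q}}})) = \mathrm{tr}^{\mathrm{ss}}(\Phi_p^r\mid H^\ast_c(\mathcal{M}_{p^nm,\overline{\mathbb{Q}}})) + \mathrm{tr}^{\mathrm{ss}}(\Phi_p^r\mid H^0(\partial\mathcal{M}_{p^nm,\overline{\mathbb{Q}}})).
\]

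For the interior, Theorem \ref{CohoVanish} identifies $H^\ast_c$ with the compactly supported cohomology of the nearby cycles, so that its semisimple trace equals the Lefschetz sum $\sum_{x\in\mathcal M(\mathbb F_{p^r})}\mathrm{tr}^{\mathrm{ss}}(\Phi_{p^r}\mid(R\psi\mathcal F_n)_x)$; Theorem \ref{TraceFinite} then evaluates this as precisely the two ``elliptic'' terms of Theorem \ref{TraceFormula} applied to $f$. For the boundary, Corollary \ref{TraceInfinity} gives $\tfrac12 I$, where
\[
I = \int_{\mathrm{GL}_2(\hat{\mathbb{Z}})}\int_{\mathbb{A}_f} f\!\left(k^{-1}\!\left(\begin{array}{cc}1&0\\0&p^r\end{array}\right)\!\left(\begin{array}{cc}1&u\\0&1\end{array}\right)\!k\right)du\,dk.
\]
Multiplying the combined identity by $2$ and matching against the formula $\tfrac12\mathcal L(f)=-S_1+S_2+\tfrac12 S_3+\tfrac14 S_4$ of Theorem \ref{TraceFormula} (where $S_3,S_4$ denote the hyperbolic sum over $T(\mathbb{Q})'$ and the central unipotent sum over $Z(\mathbb{Q})$), the elliptic terms $-2S_1+2S_2$ cancel and the theorem reduces to proving $I = S_3+\tfrac12 S_4$.

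To establish this last identity I will use the supports of the two factors of $f$: $f^p$ is supported on $K^p$, forcing any conjugate of $\gamma$ that contributes to lie in $\hat{\mathbb{Z}}^{p,\times}$ with $\gamma\equiv 1\bmod m$, while the $p$-component $f_{p,r}\ast e_{\Gamma(p^n)_{\mathbb{Q}_p}}$ has non-vanishing semisimple orbital integrals (translated from Lemma \ref{TORegularSplit} via Theorem \ref{BaseChangeIdentity}) only on diagonal elements with split valuation pattern $\{0,r\}$. Combined with the conditions $\gamma_1\gamma_2>0$, $|\gamma_1|<|\gamma_2|$ defining $T(\mathbb{Q})'$, this forces $S_3$ to reduce to the single term $\gamma=\mathrm{diag}(1,p^r)$ (non-zero only when $p^r\equiv 1\bmod m$), and a measure-preserving substitution $k\mapsto k^{-1}$ identifies that integral with $I$; similarly, $S_4$ can only contribute when $v_p(z^2)=r$ with $z\in\hat{\mathbb{Z}}^{p,\times}$ and $z\equiv 1\bmod m$. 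The hard part will be this last piece of bookkeeping, namely verifying that $S_4$ vanishes (or combines consistently with $I$) in the degenerate cases where $r$ is even and some square root of $p^r$ lies in $1+m\mathbb{Z}$; checking this relies on a careful analysis of the unipotent orbital integrals of the Bernstein-centre element $f_{p,r}\ast e_{\Gamma(p^n)_{\mathbb{Q}_p}}$.
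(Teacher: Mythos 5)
Your decomposition of the cohomology into interior and boundary, your identification of the interior contribution with the elliptic terms $-S_1+S_2$ of Theorem~\ref{TraceFormula} via Theorem~\ref{TraceFinite}, and your reduction of the statement to the identity $I=S_3+\tfrac12 S_4$ all reproduce the paper's argument exactly, as does the analysis of $S_3$ (the paper likewise singles out $\gamma=\mathrm{diag}(1,p^r)$ and rules out $\gamma=\mathrm{diag}(-1,-p^r)$ using $\gamma\equiv 1\bmod m$).

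Where you stop short is precisely the place you flag as ``the hard part'': you leave open whether $S_4$ vanishes in the ``degenerate'' situation where $r$ is even, $z=\pm p^{r/2}$, and $z\equiv 1\bmod m$, and you anticipate having to do a separate computation of unipotent orbital integrals of the Bernstein--center element. The paper dispatches this with a short continuity argument that you should incorporate. The weighted orbital integral
\[
\gamma\longmapsto\int_{\mathrm{GL}_2(\mathbb{Z}_p)}\int_{\mathbb{Q}_p}\bigl(f_{p,r}\ast e_{\Gamma(p^n)_{\mathbb{Q}_p}}\bigr)\bigl(k^{-1}\gamma\bigl(\begin{smallmatrix}1&u\\0&1\end{smallmatrix}\bigr)k\bigr)\,du\,dk
\]
is a \emph{locally constant} function of $\gamma\in T(\mathbb{Q}_p)$: since the test function is compactly supported and locally constant, the domain of integration is uniformly compact and the integrand is locally constant jointly in $(\gamma,u,k)$. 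For regular $\gamma'$ close to $\gamma=\mathrm{diag}(z,z)$ with $v_p(z)=r/2$ and $r\geq 1$, both eigenvalue valuations equal $r/2\notin\{0,r\}$, so the proof of Theorem~\ref{MainTheorem} (via the matching twisted orbital integrals from Lemma~\ref{TORegularSplit}) shows $O_{\gamma'}$ of the test function vanishes, hence the weighted orbital integral vanishes at $\gamma'$; by local constancy it vanishes at $\gamma$ as well. So $S_4=0$ unconditionally, with no new analysis of unipotent distributions required, and your identity $I=S_3+\tfrac12 S_4$ reduces to $I=S_3$, which you have already established.
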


\begin{proof} We compare the formulas given by Theorem \ref{TraceFinite}, Corollary \ref{TraceInfinity} and Theorem \ref{TraceFormula}. We are left to show that whenever
\[
\gamma = \left(\begin{array}{cc} \gamma_1 & 0 \\ 0 & \gamma_2 \end{array}\right)\in T(\mathbb{Q})
\]
with $\gamma_1\gamma_2 > 0$ and $|\gamma_1|\leq |\gamma_2|$, then
\[
\int_{\mathrm{GL}_2(\hat{\mathbb{Z}})} \int_{\mathbb{A}_f} f(k^{-1}\gamma \left(\begin{array}{cc} 1 & u \\ 0 & 1 \end{array}\right) k) du dk = 0
\]
except for $\gamma_1 = 1$, $\gamma_2 = p^r$.

However, the integral factors into a product of local integrals and the integral for a prime $\ell \neq p$ is only nonzero if $\gamma\in \mathrm{GL}_2(\mathbb{Z}_{\ell})$. It follows that $\gamma_1$ and $\gamma_2$ are up to sign a power of $p$.

Next, we claim that
\[
\int_{\mathrm{GL}_2(\mathbb{Z}_p)} \int_{\mathbb{Q}_p} (f_{p,r}\ast e_{\Gamma(p^n)_{\mathbb{Q}_p}})(k^{-1}\gamma \left(\begin{array}{cc} 1 & u \\ 0 & 1 \end{array}\right) k) du dk\neq 0
\]
only if $v_p(\gamma_1)=0$ and $v_p(\gamma_2)=r$, or the other way around. Indeed, as long as $\gamma_1\neq \gamma_2$, the term is up to a constant an orbital integral of $f_{p,r}\ast e_{\Gamma(p^n)_{\mathbb{Q}_p}}$, cf. proof of Corollary \ref{TraceInfinity}, and we have computed those, by computing the twisted orbital integrals of the matching function $\phi_p\ast e_{\Gamma(p^n)_{\mathbb{Q}_{p^r}}}$. The case $\gamma_1 = \gamma_2$ follows by continuity of the integrals.

As $\gamma_1$ and $\gamma_2$ are up to sign powers of $p$, we are left with either $\gamma_1 = 1$ and $\gamma_2 = p^r$ or $\gamma_1 = -1$ and $\gamma_2 = -p^r$. But the second case also gives $0$, because no conjugate of $\gamma$ will be $\equiv 1\modd m$.
\end{proof}

This finally allows us to compute the zeta-function of the varieties $\overline{\mathcal{M}}_{m}$. Here, $m$ is any integer which is the product of two coprime integers, both at least $3$, and we do not consider any distinguished prime. Recall that the Hasse-Weil zeta-function of a variety $X$ over a number field $K$ is defined as a product of the local factors,
\[
\zeta(X,s)=\prod_{\lambda} \zeta(X_{K_{\lambda}},s)\ ,
\]
convergent for all complex numbers $s$ whose real part is large enough. Here $\lambda$ runs through the finite places of $K$ and $X_{K_\lambda}$ denotes the base-change of $X$ to the local field $K_\lambda$.

\begin{thm} The Hasse-Weil zeta-function of $\overline{\mathcal{M}}_{m}$ is given by
\[
\zeta(\overline{\mathcal{M}}_{m},s) = \prod_{\pi\in \Pi_{\mathrm{disc}}(\mathrm{GL}_2(\mathbb{A}),1)} L(\pi,s-\tfrac 12)^{\frac 12 m(\pi) \chi(\pi_{\infty}) \dim \pi_f^{K_m}}\ ,
\]
where
\[
K_m = \{g\in \mathrm{GL}_2(\hat{\mathbb{Z}}) \mid g\equiv 1\modd m\}\ .
\]
\end{thm}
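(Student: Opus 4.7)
The plan is to factor the Hasse--Weil zeta function as
\[
\zeta(\overline{\mathcal{M}}_m,s) = \prod_p \zeta_p(\overline{\mathcal{M}}_m,s)
\]
and determine each local factor via Theorem \ref{CompSemisimpleTrace}. Fix a prime $p$ and write $m = p^n m_2$ with $\gcd(p,m_2) = 1$; the hypothesis on $m$ forces $m_2 \geq 3$, so $\overline{\mathcal{M}}_m$ over $\mathbb{Q}_p$ coincides with the base change of the moduli scheme $\overline{\mathcal{M}}_{\Gamma(p^n),m_2}$ from Section \ref{ModuliSpace}, and Theorem \ref{CompSemisimpleTrace} applies. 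Since we are dealing with a curve, the monodromy conjecture holds by \cite{RapoportZink}, and the Frobenius action on inertia invariants of the $\ell$-adic cohomology is semisimple (a consequence of temperedness together with the standard shape of the Galois representations attached to modular forms), so $\zeta_p = \zeta_p^{\mathrm{ss}}$ and it suffices to compute the semisimple local factor.

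By Theorem \ref{CompSemisimpleTrace} and the definition of $\zeta_p^{\mathrm{ss}}$,
\[
\log\zeta_p^{\mathrm{ss}}(\overline{\mathcal{M}}_m,s) \;=\; \tfrac12\sum_{r\geq 1}\mathcal{L}(f_r)\,\frac{p^{-rs}}{r},
\]
where $f_r = f^p\cdot(f_{p,r}\ast e_{\Gamma(p^n)_{\mathbb{Q}_p}})$. I would insert the spectral expansion of Corollary \ref{SpectralExpansion} and use that $f_r$ is a tensor product over primes: $f^p$ acts on $\pi_f^p$ as the projection onto its $K_m$-fixed vectors away from $p$, while $f_{p,r}\ast e_{\Gamma(p^n)_{\mathbb{Q}_p}}$ acts on $\pi_p^{\Gamma(p^n)_{\mathbb{Q}_p}}$ by the scalar $p^{r/2}\,\mathrm{tr}^{\mathrm{ss}}(\Phi_p^r\mid\sigma_{\pi_p})$ (by Lemma \ref{FunctionExists} applied for $f_{p,r}$). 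Hence
\[
\tr(f_r\mid\pi_f) \;=\; \dim\pi_f^{K_m}\cdot p^{r/2}\,\mathrm{tr}^{\mathrm{ss}}(\Phi_p^r\mid\sigma_{\pi_p}).
\]
Summing the generating series over $r$ and recognising
\[
\sum_{r\geq 1}\mathrm{tr}^{\mathrm{ss}}(\Phi_p^r\mid\sigma_{\pi_p})\,\frac{p^{-r(s-1/2)}}{r} \;=\; \log L(\pi_p,s-\tfrac12)
\]
yields
\[
\log\zeta_p(\overline{\mathcal{M}}_m,s) \;=\; \tfrac12\sum_\pi m(\pi)\chi(\pi_\infty)\dim\pi_f^{K_m}\,\log L(\pi_p,s-\tfrac12),
\]
and multiplying over all $p$ gives the desired product formula.

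The main technical point is the last identification, which requires that the semisimple local $L$-series of $\sigma_{\pi_p}$ coincides with the ordinary local $L$-factor $L(\pi_p,s-\tfrac12)$; this rests on the semisimplicity of Frobenius on the inertia invariants of $\sigma_{\pi_p}$, which is available here. A secondary bookkeeping issue is the switch between $\sigma_{\pi_p}$, a representation of $W_{\mathbb{Q}_p}$ used to define $f_{p,r}$, and $\sigma_\Pi$, a representation of $W_{\mathbb{Q}_{p^r}}$ that appears through Corollary \ref{LKNewBC} in the proof of Theorem \ref{CompSemisimpleTrace}; the lemma immediately preceding Corollary \ref{LKNewBC} supplies the base-change identity $\mathrm{tr}^{\mathrm{ss}}(\Phi_p^r\mid\sigma_{\pi_p}) = \mathrm{tr}^{\mathrm{ss}}(\Phi_{p^r}\mid\sigma_\Pi)$ and closes the loop, while the Haar measure normalisations built into $f^p$, $\phi_{p,0}$, and the Arthur--Selberg formula must be cross-checked to make the volume factors cancel cleanly.
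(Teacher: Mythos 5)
Your proposal breaks down at the central step, which is exactly the point the paper has to work hardest to establish. You claim that $\zeta_p = \zeta_p^{\mathrm{ss}}$ because ``the Frobenius action on inertia invariants of the $\ell$-adic cohomology is semisimple.'' This is not the relevant condition. By the paper's definition, $\mathrm{tr}^{\mathrm{ss}}(\Phi_p^r\mid V)$ is the trace on $(\mathrm{gr}\,V_\bullet)^{I_{\mathbb{Q}_p}}$ for a filtration on which inertia acts through a finite quotient; when inertia acts unipotently (the typical situation at bad primes) this equals the trace on all of $V$, not on $V^{I_{\mathbb{Q}_p}}$. So $\zeta_p^{\mathrm{ss}}$ and $\zeta_p$ genuinely differ at primes of bad reduction, Frobenius-semisimplicity notwithstanding. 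For the same reason, the identification $\sum_r\mathrm{tr}^{\mathrm{ss}}(\Phi_p^r\mid\sigma_{\pi_p})\,p^{-r(s-1/2)}/r = \log L(\pi_p,s-\tfrac12)$ is false: the left side is $\log L(\sigma_{\pi_p}^{\mathrm{ss}},s-\tfrac12)$, the $L$-function of the semisimplification with the monodromy operator $N$ dropped, which for $\pi_p$ an unramified twist of Steinberg has degree $2$ while $L(\pi_p,s-\tfrac12)$ has degree $1$.

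The paper's actual argument is that these two discrepancies cancel, and this requires a real argument rather than two assertions of equality. The proof first peels off the $H^0+H^2$ contribution by hand via class field theory (here one can pass to the honest local factor because these representations are essentially unramified), subtracts, and then compares the remaining $H^1$-term with the corresponding automorphic product over $\pi$ with $\dim\pi_\infty>1$. On the geometric side, semistable reduction and the Rapoport--Zink spectral sequence force all zeroes of $\det^{\mathrm{ss}}(1-\Phi_p p^{-s}\mid H^1)$ to have imaginary part $0$, $\tfrac12$ or $1$, and the monodromy conjecture (proven for curves in \cite{RapoportZink}) shows that passing to the determinant on $(H^1)^{I_{\mathbb{Q}_p}}$ removes exactly the zeroes of imaginary part $1$. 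On the automorphic side, for $\dim\pi_\infty>1$ the local component $\pi_p$ is unitary generic, so $L(\pi_p,s-\tfrac12)$ has no zeroes of imaginary part $\geq 1$, and passing from $L(\sigma_{\pi_p}^{\mathrm{ss}},s-\tfrac12)$ to $L(\pi_p,s-\tfrac12)$ also removes precisely the zeroes of imaginary part $1$. Equality of the remaining factors follows. Your proposal skips this analysis and also omits the $H^0+H^2$ split, which is needed because the unitary-generic argument does not apply to the one-dimensional $\pi$.

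A separate issue: your parenthetical appeal to ``the standard shape of the Galois representations attached to modular forms'' is Carayol's theorem, which the paper is specifically constructed to re-derive without assuming. Invoking it here would defeat the point of the approach, and in any case it is not needed if one argues via the imaginary parts of zeroes as above.
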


\begin{proof} We compute the semisimple local factors at all primes $p$. For this, write $m=p^n m^{\prime}$, where $m^{\prime}$ is not divisible by $p$. By assumption on $m$, we get $m^{\prime}\geq 3$. Combining Theorem \ref{CompSemisimpleTrace} and Corollary \ref{SpectralExpansion}, one sees that
\begin{equation}\label{eqn012}
\begin{array}{c}
\displaystyle \sum_{i=0}^2 (-1)^i \mathrm{tr}^{\mathrm{ss}}(\Phi_p^r | H^i(\overline{\mathcal{M}}_{m,\overline{\mathbb{Q}}_p},\overline{\mathbb{Q}}_{\ell}) )\\
\displaystyle = \frac 12 p^{\frac 12 r} \sum_{\pi\in \Pi_{\mathrm{disc}}(\mathrm{GL}_2(\mathbb{A}),1)} m(\pi) \chi(\pi_{\infty}) \mathrm{tr}^{\mathrm{ss}}(\Phi_p^r | \sigma_{\pi_p}) \dim \pi_f^{K_m}\ .
\end{array}
\end{equation}

We check by hand that also
\begin{equation}\label{eqn02}
\begin{array}{c}
\displaystyle \sum_{i\in \{0,2\}} (-1)^i \mathrm{tr}^{\mathrm{ss}}(\Phi_p^r | H^i(\overline{\mathcal{M}}_{m,\overline{\mathbb{Q}}_p},\overline{\mathbb{Q}}_{\ell}) )\\
\displaystyle = \frac 12 p^{\frac 12 r} \sum_{\substack{\pi\in \Pi_{\mathrm{disc}}(\mathrm{GL}_2(\mathbb{A}),1)\\ \dim \pi_{\infty} = 1}} m(\pi) \chi(\pi_{\infty}) \mathrm{tr}^{\mathrm{ss}}(\Phi_p^r | \sigma_{\pi_p}) \dim \pi_f^{K_m}\ .
\end{array}
\end{equation}
Indeed, the sum on the right hand-side gives non-zero terms only for 1-dimensional representations $\pi$ which are trivial on $K_m$. Using $\chi(\pi_{\infty})=2$, $\dim \pi_f^{K_m}=1$ and $m(\pi)=1$, the statement then reduces to class field theory, as the geometric connected components of $\overline{\mathcal{M}}_m$ are parametrized by the primitive $m$-th roots of unity. Note that in \eqref{eqn02} one may replace the semisimple trace by the usual trace on the $I_{\mathbb{Q}_p}$-invariants everywhere. This gives
\begin{equation}\label{eqn02good}
\begin{array}{c}
\displaystyle \prod_{i\in \{0,2\}} \det(1-\Phi_p p^{-s} | H^i(\overline{\mathcal{M}}_{m,\overline{\mathbb{Q}}_p},\overline{\mathbb{Q}}_{\ell})^{I_{\mathbb{Q}_p}} ) \\
\displaystyle = \prod_{\substack{\pi\in \Pi_{\mathrm{disc}}(\mathrm{GL}_2(\mathbb{A}),1)\\ \dim \pi_{\infty} = 1}} L(\pi_p,s-\tfrac 12)^{\frac 12 m(\pi) \chi(\pi_{\infty}) \dim \pi_f^{K_m}}\ .
\end{array}
\end{equation}

Subtracting \eqref{eqn02} from \eqref{eqn012}, we see that
\begin{equation}
\begin{array}{c}
\displaystyle - \mathrm{tr}^{\mathrm{ss}}(\Phi_p^r | H^1(\overline{\mathcal{M}}_{m,\overline{\mathbb{Q}}_p},\overline{\mathbb{Q}}_{\ell}) ) \\
\displaystyle = \frac 12 p^{\frac 12 r} \sum_{\substack{\pi\in \Pi_{\mathrm{disc}}(\mathrm{GL}_2(\mathbb{A}),1)\\ \dim \pi_{\infty} > 1}} m(\pi) \chi(\pi_{\infty}) \mathrm{tr}^{\mathrm{ss}}(\Phi_p^r | \sigma_{\pi_p}) \dim \pi_f^{K_m}\ ,
\end{array}
\end{equation}
or equivalently
\begin{equation}
\begin{array}{c}
\displaystyle \mathrm{det}^{\mathrm{ss}}(1-\Phi_p p^{-s} | H^1(\overline{\mathcal{M}}_{m,\overline{\mathbb{Q}}_p},\overline{\mathbb{Q}}_{\ell}) )^{-1} \\
\displaystyle = \prod_{\substack{\pi\in \Pi_{\mathrm{disc}}(\mathrm{GL}_2(\mathbb{A}),1)\\ \dim \pi_{\infty} > 1}} L(\sigma_{\pi_p}^{\mathrm{ss}},s-\tfrac 12)^{\frac 12 m(\pi) \chi(\pi_{\infty}) \dim \pi_f^{K_m}}\ ,
\end{array}
\end{equation}
with the obvious definition for the semisimple determinant. All zeroes of the left-hand side have imaginary part $0$, $\frac 12$ or $1$: Indeed, if $\overline{\mathcal{M}}_{m,\mathbb{Q}_p}$ had good reduction, the Weil conjectures would imply that all zeroes have imaginary part $\frac 12$. In general, the semistable reduction theorem for curves together with the Rapoport-Zink spectral sequence imply that all zeroes have imaginary part $0$, $\frac 12$ or $1$. Changing the semisimple determinant to the usual determinant on the invariants under $I_{\mathbb{Q}_p}$ exactly eliminates the zeroes of imaginary part $1$, by the monodromy conjecture, proven in dimension 1 in \cite{RapoportZink}.

We also see that all zeroes of the right-hand side have imaginary part $0$, $\frac 12$ or $1$. Assume $\pi$ gives a nontrivial contribution to the right-hand side. Then $\pi_p$ cannot be 1-dimensional, because otherwise $\pi$ and hence $\pi_{\infty}$ would be 1-dimensional. Hence $\pi_p$ is generic. Being also unitary, the $L$-factor $L(\pi_p,s-\frac 12)$ of $\pi_p$ cannot have poles with imaginary part $\geq 1$, so that replacing $L(\sigma_{\pi_p}^{\mathrm{ss}},s-\frac 12)$ by $L(\pi_p,s-\frac 12)$ consists again in removing all zeroes of imaginary part $1$.

We find that
\begin{equation}\label{eqn1good}
\begin{array}{c}
\displaystyle \det(1-\Phi_p p^{-s} | H^1(\overline{\mathcal{M}}_{m,\overline{\mathbb{Q}}_p},\overline{\mathbb{Q}}_{\ell})^{I_{\mathbb{Q}_p}} )^{-1} \\
\displaystyle = \prod_{\substack{\pi\in \Pi_{\mathrm{disc}}(\mathrm{GL}_2(\mathbb{A}),1)\\ \dim \pi_{\infty} > 1}} L(\pi_p,s-\tfrac 12)^{\frac 12 m(\pi) \chi(\pi_{\infty}) \dim \pi_f^{K_m}}\ .
\end{array}
\end{equation}

Combining \eqref{eqn02good} and \eqref{eqn1good} yields the result.
\end{proof}

\section{Explicit determination of $\phi_p\ast e_{\Gamma(p^n)_{\mathbb{Q}_{p^r}}}$}

In this section, we aim to determine the values of the function $\phi_p\ast e_{\Gamma(p^n)_{\mathbb{Q}_{p^r}}}$ for $n\geq 1$. Set $q=p^r$.

For any $g\in \mathrm{GL}_2(\mathbb{Q}_q)$, we let $k(g)$ denote the minimal number $k$ such that $p^kg$ has integral entries. If additionally, $v_p(\det g)\geq 1$ and $v_p(\tr g)=0$, then $g$ has a unique eigenvalue $x\in \mathbb{Q}_q$ with $v_p(x)=0$; we define $\ell(g)=v_p(x-1)$ in this case. The choice of the maximal compact subgroup $\mathrm{GL}_2(\mathbb{Z}_q)$ gives a vertex $v_0$ in the building of $\mathrm{PGL}_2$. We will need another characterization of $k(g)$.

\begin{lem} For any $g\in \mathrm{GL}_2(\mathbb{Q}_q)$ which is conjugate to an integral matrix, consider the set $V_g$ of all vertices $v$ such that $g(\Lambda_v)\subset \Lambda_v$, where $\Lambda_v$ is the lattice corresponding to $v$. Then the distance of $v$ and $v_0$ is at least $k(g)$ for all $v\in V_g$ and there is a unique vertex $v(g)\in V_g$ such that the distance of $v(g)$ and $v_0$ is equal to $k(g)$.
\end{lem}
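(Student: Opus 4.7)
My plan is to read off both assertions from the structure of the endomorphism $\bar g:=(p^kg)\bmod p^k$ acting on $\Lambda_0/p^k\Lambda_0$, where $k=k(g)$. The case $k\le 0$ is trivial since then $g\in M_2(\mathbb{Z}_q)$ and $v_0\in V_g$ is the unique vertex at distance $0$, so I will focus on $k\ge 1$.

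For the lower bound, I would take $v\in V_g$ at distance $d$ from $v_0$, scale $\Lambda_v$ so that $\Lambda_v\subset \Lambda_0$ has cyclic quotient of length $d$, and use the structure theorem to pick a basis $e_1,e_2$ of $\Lambda_0$ with $\Lambda_v=\mathbb{Z}_q e_1\oplus p^d\mathbb{Z}_q e_2$. Writing $g=\bigl(\begin{smallmatrix}\alpha&\beta\\\gamma&\delta\end{smallmatrix}\bigr)$ in this basis, the conditions $ge_1,g(p^de_2)\in\Lambda_v$ translate into $v_p(\alpha),v_p(\delta)\ge 0$, $v_p(\gamma)\ge d$, $v_p(\beta)\ge -d$. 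Since the minimum entry valuation of $g$ in any $\mathbb{Z}_q$-basis of $\Lambda_0$ is invariant under $\mathrm{GL}_2(\mathbb{Z}_q)$-conjugation and equals $-k$, this forces $d\ge k$; when $d=k$ it further forces $v_p(\beta)=-k$ exactly, as $\beta$ is the only entry that can realize this valuation.

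For existence and uniqueness at distance $k$, I would exploit that $p^kg\in M_2(\mathbb{Z}_q)$ has at least one unit entry (by minimality of $k$), while $\det(p^kg)=p^{2k}\det g\in p^{2k}\mathbb{Z}_q$ (using $\det g\in\mathbb{Z}_q$, as $g$ is conjugate to an integral matrix). Smith normal form then gives that $p^kg$ is two-sidedly $\mathrm{GL}_2(\mathbb{Z}_q)$-equivalent to $\mathrm{diag}(1,p^a)$ with $a=2k+v_p(\det g)\ge 2k$; reducing modulo $p^k$, $\bar g$ becomes equivalent to $\mathrm{diag}(1,0)$ over $\mathbb{Z}_q/p^k$, so $K:=\ker\bar g$ is a free rank-one direct summand of $\Lambda_0/p^k\Lambda_0$. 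The candidate vertex is then $\Lambda':=\pi^{-1}(K)\subset\Lambda_0$, for which $\Lambda_0/\Lambda'\cong\mathbb{Z}_q/p^k$ is cyclic and hence $d([\Lambda'],v_0)=k$. The intrinsic description $\Lambda'=\{x\in\Lambda_0 : gx\in\Lambda_0\}$, combined with Cayley--Hamilton $g^2=(\mathrm{tr}\,g)g-(\det g)$ and $\mathrm{tr}\,g,\det g\in\mathbb{Z}_q$, shows that $g(gx)=(\mathrm{tr}\,g)(gx)-(\det g)x\in\Lambda_0$ whenever $gx\in\Lambda_0$, giving $g\Lambda'\subset\Lambda'$ and $[\Lambda']\in V_g$. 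Uniqueness then follows at once: for any $v\in V_g$ at distance $k$, the equality $v_p(\beta)=-k$ forces $p^kg\cdot e_1\in p^k\Lambda_0$ and $p^kg\cdot e_2\notin p^k\Lambda_0$, so $\mathbb{Z}_q e_1\bmod p^k\Lambda_0=K$ and $\Lambda_v=\pi^{-1}(K)$ is intrinsically determined. The hardest step will be the structural claim that $K$ is a free rank-one direct summand of $\Lambda_0/p^k\Lambda_0$; this rests on the inequality $a\ge 2k$ coming from integrality of $\det g$, which is where the ``conjugate to an integral matrix'' hypothesis really enters.
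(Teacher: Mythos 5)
Your proof is correct and reaches the conclusion by a genuinely different route from the paper. The paper works entirely in the language of valuations (norms) on the building: the distance formula $\mathrm{dist}(v,v_0)=\max\bigl(\mathrm{val}_v(y)-\mathrm{val}_v(x)+\mathrm{val}_{v_0}(x)-\mathrm{val}_{v_0}(y)\bigr)$, a vector $x$ realizing the maximal drop $\mathrm{val}_{v_0}(gx)=\mathrm{val}_{v_0}(x)-k(g)$ for the lower bound, a perturbation $y=gx+ax$ for uniqueness, and the auxiliary valuation $\mathrm{val}_g:=\min(\mathrm{val}_{v_0},\mathrm{val}_{v_0}\circ g)$ for existence. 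You instead work with explicit lattices and matrix entries: an adapted basis $\Lambda_v=\mathbb{Z}_q e_1\oplus p^d\mathbb{Z}_q e_2$ together with the $\mathrm{GL}_2(\mathbb{Z}_q)$-conjugation invariance of the minimal entry valuation for the lower bound, Smith normal form of $p^kg$ to show that $K=\ker\bar g$ is a free rank-one direct summand of $\Lambda_0/p^k\Lambda_0$ for existence, and the forced equality $v_p(\beta)=-k$ for uniqueness. Both arguments rest on the same Cayley--Hamilton observation: your $\Lambda'=\{x\in\Lambda_0:gx\in\Lambda_0\}$ is precisely the unit ball of the paper's $\mathrm{val}_g$, and it is $g$-stable because $\tr g,\det g\in\mathbb{Z}_q$. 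Your version is more concrete --- the Smith form computation makes visible exactly where integrality of $\det g$ enters, and your uniqueness step fills in the details that the paper compresses into ``it is clear that $\Lambda_v$ is the lattice generated by $gx$ and $x$.'' The paper's version is shorter and coordinate-free. One harmless overshoot: you only need $a\geq k$ (not $a\geq 2k$) for $\ker\bar g$ to be a free rank-one summand modulo $p^k$; the stronger inequality coming from $\det g\in\mathbb{Z}_q$ is correct but not required.
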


\begin{proof} Note that if $k(g)=0$, then this is trivial. So assume $k(g)>0$.

It is technically more convenient to use norms (or equivalently valuations) instead of lattices. So let $\mathrm{val}_v: \mathbb{Z}_q^2\longrightarrow \mathbb{R}$ be the valuation associated to $v$ in the building of $\mathrm{PGL}_2$; it is well-defined up to a constant. Then the distance of $v$ and $v_0$ is
\[
\max (\mathrm{val}_v(y) - \mathrm{val}_v(x) + \mathrm{val}_{v_0}(x) - \mathrm{val}_{v_0}(y))\ .
\]
Now by definition of $k(g)$, one has $\mathrm{val}_{v_0}(gx)\geq \mathrm{val}_{v_0}(x) - k(g)$ for all $x\in \mathbb{Z}_q^2$, but there is some $x\in \mathbb{Z}_q^2$ with $\mathrm{val}_{v_0}(gx) = \mathrm{val}_{v_0}(x) - k(g)$. Fix such an $x$ and set $y=gx$. Assuming that $v\in V_g$, we have $\mathrm{val}_v(gx)\geq \mathrm{val}_v(x)$, so that
\[
\mathrm{dist} (v,v_0)\geq \mathrm{val}_v(gx) - \mathrm{val}_v(x) + \mathrm{val}_{v_0}(x) - \mathrm{val}_{v_0}(gx)\geq k(g)\ ,
\]
giving the first claim. But we may more generally set $y=gx+ax$ for any $a\in \mathbb{Z}_q$. Then we still have $\mathrm{val}_{v_0}(y)\leq \mathrm{val}_{v_0}(x) - k(g)$ (since $k(g)>0$) and $\mathrm{val}_v(y)\geq \mathrm{val}_v(x)$, giving
\[
\mathrm{dist} (v,v_0)\geq \mathrm{val}_v(y) - \mathrm{val}_v(x) + \mathrm{val}_{v_0}(x) - \mathrm{val}_{v_0}(y)\geq k(g)\ ,
\]
as before. It follows that if $\mathrm{dist} (v,v_0)=k(g)$, then necessarily $\mathrm{val}_v(gx+ax)=\mathrm{val}_v(x)$ for all $a\in \mathbb{Z}_q$. But then it is clear that $\Lambda_v$ is the lattice generated by $gx$ and $x$, so that we get uniqueness.

Finally, define
\[
\mathrm{val}_g(x) = \min (\mathrm{val}_{v_0}(x),\mathrm{val}_{v_0}(gx) )\ .
\]
Since $g^2x=(\tr g) gx - (\det g) x$ and both $\tr g$ and $\det g$ are integral, one sees that $\mathrm{val}_g(gx)\geq \mathrm{val}_g(x)$. Furthermore,
\[
\mathrm{val}_g(y) - \mathrm{val}_g(x) + \mathrm{val}_{v_0}(x) - \mathrm{val}_{v_0}(y)\leq \min (0, \mathrm{val}_{v_0}(x) - \mathrm{val}_{v_0}(gx))\leq k(g)\ ,
\]
so that by what we have already proved, $\mathrm{val}_g(x)$ corresponds to a point $v(g)\in V_g$ with distance $k(g)$ to $v_0$.
\end{proof}

\begin{rem} This lemma is related to the fact that for all matrices $g\in \mathrm{GL}_2(\mathbb{Q}_q)$, the set $V_g$ is convex.
\end{rem}

For a fixed vertex $v$, the set of all $g\in \mathrm{GL}_2(\mathbb{Q}_q)$, conjugate to some integral matrix, with $v(g)=v$ is denoted $G_v$. By $\overline{G}_v$, we mean the set of all $g$ which map the lattice corresponding to $v$ into itself.

It is clear that if $v_p(\det g)\geq 1$ and $v_p(\tr g)=0$ (so that $\ell(g)$ is defined), then $\ell(g)=v_p(1 - \tr g + \det g)$.

For $n\geq 1$, we define a function $\phi_{p,n}: \mathrm{GL}_2(\mathbb{Q}_q)\longrightarrow \mathbb{C}$ by the following requirements:
\begin{itemize}
\item $\phi_{p,n}(g)=0$ except if $v_p(\det g)=1$, $v_p(\tr g)\geq 0$ and $k(g)\leq n-1$. Assume now that $g$ has these properties.
\item $\phi_{p,n}(g)=-1-q$ if $v_p(\tr g)\geq 1$,
\item $\phi_{p,n}(g)=1-q^{2\ell(g)}$ if $v_p(\tr g)=0$ and $\ell(g)<n-k(g)$,
\item $\phi_{p,n}(g)=1+q^{2(n-k(g))-1}$ if $v_p(\tr g)=0$ and $\ell(g)\geq n-k(g)$.
\end{itemize}

\begin{thm}\label{CalcFunction} Choose the Haar measure on $\mathrm{GL}_2(\mathbb{Q}_q)$ such that a maximal compact subgroup has measure $q-1$. Then
\[
\phi_{p,n} = \phi_p\ast e_{\Gamma(p^n)_{\mathbb{Q}_q}}\ .
\]
\end{thm}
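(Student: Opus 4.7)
The plan is to check that $\phi_{p,n}$ and $\phi_p\ast e_{\Gamma(p^n)_{\mathbb{Q}_q}}$ are both $\Gamma(p^n)_{\mathbb{Q}_q}$-biinvariant functions with matching twisted orbital integrals on all regular semisimple $\sigma$-conjugacy classes, from which equality of functions follows via the spectral characterization of the Hecke algebra at level $\Gamma(p^n)$. I would first verify that $\phi_{p,n}$ is $\Gamma(p^n)_{\mathbb{Q}_q}$-biinvariant by direct case analysis: writing $k=1+p^n m$ with $m\in M_2(\mathbb{Z}_q)$, multiplication by $k$ on either side preserves $v_p(\det g)$ and $k(g)$ (as $k\in \mathrm{GL}_2(\mathbb{Z}_q)$) and perturbs $v_p(\tr g)$ and $\ell(g)=v_p(1-\tr g+\det g)$ only by multiples of $p^n$, which under the support condition $k(g)\leq n-1$ leaves each case in the piecewise definition stable.

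Next I would compute $TO_{\delta\sigma}(\phi_{p,n})$ directly for a regular semisimple $\delta$ with $v_p(\det\delta)=1$. Since $\phi_{p,n}$ depends only on the invariants $(v_p(\det),v_p(\tr),k,\ell)$ and its support lies in the union of $\overline{G}_v$ over vertices $v$ at distance at most $n-1$ from $v_0$ in the Bruhat--Tits tree of $\mathrm{PGL}_2(\mathbb{Q}_q)$, the integral reduces to a finite sum indexed by the tree vertices meeting the twisted orbit, with $k(g)$ read off as the distance $\mathrm{dist}(v(g),v_0)$ and $\ell(g)$ expressed in terms of how the distinguished eigenline of $N\delta$ sits relative to the chosen lattice. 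The resulting closed expression would then be compared with the value of $TO_{\delta\sigma}(\phi_p\ast e_{\Gamma(p^n)_{\mathbb{Q}_q}})$ coming from Theorem \ref{MainTheorem}, namely $TO_{\delta\sigma}(\phi_{p,0})\cdot c_r(N\delta, e_{\Gamma(p^n)_{\mathbb{Q}_p}})$, where $c_r$ is a character sum over $((\mathbb{Z}/p^n\mathbb{Z})^{\times})^{\vee}$ that collapses via elementary identities to the same closed form case by case.

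With matching twisted orbital integrals on all regular semisimple classes established, both functions must then act identically on every irreducible smooth representation $\pi$ of $\mathrm{GL}_2(\mathbb{Q}_q)$ with $\Gamma(p^n)$-invariants. For $\phi_p\ast e_{\Gamma(p^n)_{\mathbb{Q}_q}}$ this action is, by construction, scalar multiplication by $p^{\frac 12 r}\mathrm{tr}^{\mathrm{ss}}(\Phi_{p^r}|\sigma_{\pi})$ on $\pi^{\Gamma(p^n)}$; matching twisted characters via the Weyl integration formula and Kazhdan density then forces the same action for $\phi_{p,n}$. Since a $\Gamma(p^n)_{\mathbb{Q}_q}$-biinvariant element of the Hecke algebra that acts on every $\pi^{\Gamma(p^n)}$ by the same scalar is itself central and uniquely determined by those scalars (Theorem \ref{BernsteinCenterIsom}), we conclude $\phi_{p,n}=\phi_p\ast e_{\Gamma(p^n)_{\mathbb{Q}_q}}$.

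The main obstacle is the direct computation of $TO_{\delta\sigma}(\phi_{p,n})$. The delicate interaction between the tree-distance invariant $k(g)$ and the ramification invariant $\ell(g)$ along the $\sigma$-twisted orbit of $\delta$ is what produces the three-way case split in the definition: the boundary $\ell(g)=n-k(g)$ is precisely the threshold at which the distinguished eigenline of $N\delta$ fails to detect the depth-$n$ level structure, and reproducing the exact numerical values $1-q^{2\ell(g)}$ and $1+q^{2(n-k(g))-1}$ from a count of $\sigma$-invariant lattices at each tree depth, and matching these against the character-sum output of $c_r(N\delta,e_{\Gamma(p^n)_{\mathbb{Q}_p}})$, is where most of the work resides.
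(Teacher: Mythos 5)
Your proposal has two genuine gaps, both at the structural level.

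\textbf{Wrong kind of orbital integral.} You propose to compute the \emph{twisted} orbital integrals $TO_{\delta\sigma}(\phi_{p,n})$ and compare them with $TO_{\delta\sigma}(\phi_p\ast e_{\Gamma(p^n)_{\mathbb{Q}_q}})$ from Theorem~\ref{MainTheorem}, then deduce equality of functions. But both functions live on the same group $\mathrm{GL}_2(\mathbb{Q}_q)$, and matching twisted orbital integrals do not determine a function on $\mathrm{GL}_2(\mathbb{Q}_{p^r})$: by the twisted Weyl integration formula and twisted Kazhdan density, they pin down the twisted traces $\tr((\,\cdot\,,\sigma)\mid\Pi)$ only on $\sigma$-stable $\Pi$, and the $\sigma$-stable locus is a proper closed subset of each Bernstein component of $\mathrm{GL}_2(\mathbb{Q}_q)$ (e.g.\ among principal series $\text{n-Ind}(\chi_1\otimes\chi_2)$, $\sigma$-stability is the nontrivial condition $\{\chi_i\circ\sigma\}=\{\chi_i\}$). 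So the conclusion of your third paragraph --- that the two functions act identically on every $\pi$ with $\pi^{\Gamma(p^n)}\neq 0$ --- does not follow. What the paper does instead is compare \emph{ordinary} orbital integrals $O_\gamma$ on $\mathrm{GL}_2(\mathbb{Q}_q)$: Proposition~\ref{CompOrbitalIntegralPhi} computes $O_\gamma(\phi_{p,n})$ via the Bruhat--Tits tree count (Lemma~\ref{SizeForEachVertex}), and Theorem~\ref{MainTheoremPrime} supplies $O_\gamma(\phi_p\ast e_{\Gamma(p^n)_{\mathbb{Q}_q}})$ by the observation that the $r=1$ case of Theorem~\ref{MainTheorem} --- where twisted reduces to ordinary --- works verbatim with $p$ replaced by $q$. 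Your tree computation itself is the right idea, but it applies to ordinary conjugation orbits; the relation between $k(g)$, $v(g)$ and tree distance is established in the preceding lemma for an ordinary conjugate $g^{-1}\gamma g$, and there is no clean analogue for points $h^{-1}\delta h^\sigma$ on a $\sigma$-twisted orbit.

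\textbf{Biinvariance is not centrality.} You propose to check $\Gamma(p^n)_{\mathbb{Q}_q}$-biinvariance by direct case analysis, but what is needed is that $\phi_{p,n}$ lies in the \emph{center} of $\mathcal{H}(\mathrm{GL}_2(\mathbb{Q}_q),\Gamma(p^n)_{\mathbb{Q}_q})$, which is much stronger. Note $\phi_{p,n}$ is not conjugation-invariant as a function (the invariant $k(g)$ is not conjugation-invariant), so centrality has to be verified in the Hecke-algebra sense. The paper explicitly remarks that the direct approach runs into convergence issues --- the naive candidate for a conjugation-invariant distribution $\lim_n\phi_{p,n}$ does not converge because the values $1-q^{2\ell(g)}$ blow up as $\ell(g)\to\infty$ --- and instead introduces the deformation $\phi_{p,n,t}$, shows the compatibility $\phi_{p,n,t}=\phi_{p,n+1,t}\ast e_{\Gamma(p^n)_{\mathbb{Q}_q}}$, assembles a conjugation-invariant distribution with values in $\mathbb{C}[[t]]$, and concludes centrality for all $t$ (hence $t=q$) by polynomial identities. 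Your route to centrality --- ``acts by the same scalar on every representation, hence central'' --- presupposes the orbital-integral comparison you have not correctly established, so it is circular.

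Once these two points are repaired (ordinary orbital integrals on $\mathrm{GL}_2(\mathbb{Q}_q)$ and the deformation argument for centrality), the rest of your outline --- the tree-theoretic computation of $O_\gamma(\phi_{p,n})$ indexed by vertices at distance $\leq n-1$, and the final appeal to the Bernstein description to conclude equality --- matches the paper's proof.
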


\begin{proof} It is enough to check that $\phi_{p,n}$ lies in the center of the Hecke algebra and that the semisimple orbital integrals of $\phi_{p,n}$ and $\phi_p\ast e_{\Gamma(p^n)_{\mathbb{Q}_q}}$ agree.

In the case $q=p$, we have computed the orbital integrals of $\phi_p\ast e_{\Gamma(p^n)_{\mathbb{Q}_q}}$ in Theorem \ref{MainTheorem}. In fact, the calculation goes through for all powers $q$ of $p$. Recall that $\phi_{p,0}$ is the characteristic function of $\mathrm{GL}_2(\mathbb{Z}_q) \left(\begin{array}{cc} p & 0\\ 0 & 1\end{array}\right) \mathrm{GL}_2(\mathbb{Z}_q)$ divided by the volume of $\mathrm{GL}_2(\mathbb{Z}_q)$.

\begin{thm}\label{MainTheoremPrime} Let $\gamma\in \mathrm{GL}_2(\mathbb{Q}_q)$ be semisimple. Then
\[
O_{\gamma}(\phi_p\ast e_{\Gamma(p^n)_{\mathbb{Q}_q}}) = O_{\gamma}(\phi_{p,0}) c(\gamma)\ ,
\]
where
\[
c(\gamma)=\left\{\begin{array}{ll}
(1+q)(1-q^n) & v_p(\det \gamma)=1, v_p(\tr \gamma)\geq 1\\
q^{2n}-q^{2n-2} & v_p(\det \gamma)=1, v_p(\tr \gamma)=0, \ell(\gamma)\geq n\\
0 & \mathrm{else}\ . \end{array} \right .
\]
\end{thm}

\begin{proof} Note that when $q=p$, then $c(\gamma)=c_1(\gamma,e_{\Gamma(p^n)_{\mathbb{Q}_p}})$, so that this is Theorem \ref{MainTheorem}. But note that we never used that $p$ is a prime in the local harmonic analysis, so that replacing $\mathbb{Q}_p$ by $\mathbb{Q}_q$ everywhere gives the result for general $q$.
\end{proof}

We now aim at proving the same formula for $\phi_{p,n}$.

\begin{prop}\label{CompOrbitalIntegralPhi} Let $\gamma\in \mathrm{GL}_2(\mathbb{Q}_q)$ be semisimple. Then
\[
O_{\gamma}(\phi_{p,n}) = O_{\gamma}(\phi_{p,0}) c(\gamma)\ .
\]
\end{prop}

\begin{proof} First of all, note that $O_{\gamma}(\phi_{p,n})$ can only be nonzero if $\gamma$ is conjugate to an integral matrix and $v_p(\det \gamma)=1$. Of course, the same holds for $O_{\gamma}(\phi_{p,0})$. Hence we only need to consider the case that $\gamma$ is integral and $v_p(\det \gamma)=1$.

For any vertex $v$ of the building of $\mathrm{PGL}_2$, let
\[
G_{v,\gamma}=\{g\in \mathrm{GL}_2(\mathbb{Q}_q)\mid v(g^{-1}\gamma g)=v \}\ .
\]

\begin{lem}\label{SizeForEachVertex} For any $v\neq v_0$, we have
\[
\frac{\mathrm{vol}(G_{\gamma}(\mathbb{Q}_q)\backslash G_{v,\gamma})}{\mathrm{vol}(G_{\gamma}(\mathbb{Q}_q)\backslash G_{v_0,\gamma})} = \left\{\begin{array}{ll} \frac{q}{q+1} & \tr \gamma\equiv 0\modd p \\ \frac{q-1}{q+1} & \tr \gamma\not\equiv 0\modd p\ . \end{array}\right .
\]
\end{lem}

\begin{proof} Let $v^{\prime}$ be the first vertex on the path from $v$ to $v_0$. Then $G_v = \overline{G}_v\setminus \overline{G}_{v^{\prime}}$. Furthermore, $\overline{G}_v$ is conjugate to $\overline{G}_{v_0}$. Under this conjugation, $v^{\prime}$ is taken to some vertex $v_0^{\prime}$ that is a neighbor of $v_0$. In fact, one may choose $v_0^{\prime}$ arbitrarily. We see that $G_v$ is conjugate to
\[
G_{v_0}\setminus \overline{G}_{v_0^{\prime}}
\]
for all neighbors $v_0^{\prime}$ of $v_0$ (note that $G_{v_0}=\overline{G}_{v_0}$). Hence $G_{v,\gamma}$ is conjugate to the set
\[
\{g\in \mathrm{GL}_2(\mathbb{Q}_q)\mid g^{-1}\gamma g\in G_{v_0}\setminus \overline{G}_{v_0^{\prime}}\}
\]
which is obviously a subset of $G_{v_0,\gamma}$. We check that if $\gamma\in G_{v_0}$ and $\tr \gamma\equiv 0\modd p$, then there are $q$ (out of $q+1$) neighbors $v_0^{\prime}$ of $v_0$ such that $\gamma\not\in \overline{G}_{v_0^{\prime}}$ and if $\tr \gamma\not\equiv 0\modd p$, then there are $q-1$ neighbors with this property. In fact, $\gamma\in \overline{G}_{v_0^{\prime}}$ if and only if $\gamma\modd p$ stabilizes the line in $\mathbb{F}_q^2$ corresponding to $v_0^{\prime}$. Now in the first case, $\gamma\modd p$ has only eigenvalue $0$, with geometric multiplicity $1$, whereas in the second case, $\gamma\modd p$ has two distinct eigenvalues $0$ and $\tr \gamma$.

Using this with $g^{-1}\gamma g$ in place of $\gamma$, we see that each element of $G_{v_0,\gamma}$ lies in precisely $q$ (resp. $q-1$) of the $q+1$ sets
\[
\{g\in \mathrm{GL}_2(\mathbb{Q}_q)\mid g^{-1}\gamma g\in G_{v_0}\setminus \overline{G}_{v_0^{\prime}}\}
\]
indexed by $v_0^{\prime}$. This gives the claim.
\end{proof}

Note that we have (by our choice of Haar measure, giving a maximal compact subgroup measure $q-1$)
\[
O_{\gamma}(\phi_{p,0}) = \frac 1{q-1} \mathrm{vol}(G_{\gamma}(\mathbb{Q}_q)\backslash G_{v_0,\gamma})\ .
\]

Now we are reduced to a simple counting argument. Assume first that $\tr \gamma\equiv 0\modd p$. Then $\phi_{p,n}(g^{-1}\gamma g)=-1-q$ as long as $k(g^{-1}\gamma g)\leq n-1$; otherwise, it gives $0$. There are $(q+1)(1+q+q^2+...+q^{n-2})$ vertices $v\neq v_0$ with distance at most $n-1$. Hence, by the Lemma,
\[\begin{aligned}
O_{\gamma}(\phi_{p,n}) &= -(1+q) \mathrm{vol}(G_{\gamma}(\mathbb{Q}_q)\backslash G_{v_0,\gamma}) \\
&- (1+q)(q+1)(1+q+...+q^{n-2}) \frac{q}{q+1}\mathrm{vol}(G_{\gamma}(\mathbb{Q}_q)\backslash G_{v_0,\gamma}) \\
&= - (1+q)(1+q+...+q^{n-1})\mathrm{vol}(G_{\gamma}(\mathbb{Q}_q)\backslash G_{v_0,\gamma})\ .
\end{aligned}\]
Comparing, we get the claim.

Now assume that $\tr \gamma\not\equiv 0\modd p$, so that $\ell(\gamma)$ is defined. Assume that $\ell(\gamma)<n$. Then for $k(g^{-1}\gamma g)<n-\ell(g)$, we have
\[
\phi_{p,n}(g^{-1}\gamma g)=1-q^{2\ell(g)}\ ,
\]
for $n-\ell(g)\leq k(g^{-1}\gamma g) < n$, we have
\[
\phi_{p,n}(g^{-1}\gamma g)=1+q^{2(n-k(g^{-1}\gamma g))-1}\ ,
\]
and in all other cases we get $0$. Therefore, again by the Lemma,
\[\begin{aligned}
O_{\gamma}(\phi_{p,n}) &= (1-q^{2\ell(g)}) \mathrm{vol}(G_{\gamma}(\mathbb{Q}_q)\backslash G_{v_0,\gamma}) \\
&+ (1-q^{2\ell(g)})(q+1)(1+q+...+q^{n-\ell(g)-2}) \frac{q-1}{q+1} \mathrm{vol}(G_{\gamma}(\mathbb{Q}_q)\backslash G_{v_0,\gamma}) \\
&+ \Big((1+q^{2\ell(g)-1})(q+1)q^{n-\ell(g)-1}+...+(1+q^3)(q+1)q^{n-3} \\
&+ (1+q)(q+1)q^{n-2}\Big) \frac{q-1}{q+1} \mathrm{vol}(G_{\gamma}(\mathbb{Q}_q)\backslash G_{v_0,\gamma}) \\
&= \Big((1-q^{2\ell(g)})q^{n-\ell(g)-1} + (q-1)(q^{n-\ell(g)-1}+...+q^{n-3}+q^{n-2}\\
&+ q^{n-1}+q^n+...+q^{n+\ell(g)-2})\Big) \mathrm{vol}(G_{\gamma}(\mathbb{Q}_q)\backslash G_{v_0,\gamma}) \\
&= ((1-q^{2\ell(g)})q^{n-\ell(g)-1} + (q^{2\ell(g)}-1)q^{n-\ell(g)-1}) \mathrm{vol}(G_{\gamma}(\mathbb{Q}_q)\backslash G_{v_0,\gamma}) \\
&= 0\ ,
\end{aligned}\]
as claimed.

Finally, assume $\ell(\gamma)=n$. Then $\phi_{p,n}(g^{-1}\gamma g)=1+q^{2(n-k(g^{-1}\gamma g))-1}$ if $k(g^{-1}\gamma g)<n$ and vanishes otherwise. This shows that
\[\begin{aligned}
O_{\gamma}(\phi_{p,n}) &= (1+q^{2n-1}) \mathrm{vol}(G_{\gamma}(\mathbb{Q}_q)\backslash G_{v_0,\gamma}) \\
&+ \Big((1+q^{2n-3})(q+1)q^0+...+(1+q^3)(q+1)q^{n-3} \\
&+(1+q)(q+1)q^{n-2}\Big)\frac{q-1}{q+1} \mathrm{vol}(G_{\gamma}(\mathbb{Q}_q)\backslash G_{v_0,\gamma}) \\
&= \Big((1+q^{2n-1})+(q-1)(1+q+...+q^{n-3}+q^{n-2}\\
&+ q^{n-1}+q^n+...+q^{2n-3})\Big) \mathrm{vol}(G_{\gamma}(\mathbb{Q}_q)\backslash G_{v_0,\gamma}) \\
&= (q^{2n-1}+q^{2n-2}) \mathrm{vol}(G_{\gamma}(\mathbb{Q}_q)\backslash G_{v_0,\gamma})\ .
\end{aligned}\]
Again, this is what we have asserted.
\end{proof}

It remains to see that $\phi_{p,n}$ lies in the center of $\mathcal{H}(\mathrm{GL}_2(\mathbb{Q}_q), \Gamma(p^n)_{\mathbb{Q}_q})$. Our argument will be slightly indirect, as the direct approach would run into some convergence issues. We consider the following deformation $\phi_{p,n,t}$ of $\phi_{p,n}$:
\begin{itemize}
\item $\phi_{p,n,t}(g)=0$ except if $v_p(\det g)=1$, $v_p(\tr g)\geq 0$ and $k(g)\leq n-1$. Assume now that $g$ has these properties.
\item $\phi_{p,n,t}(g)=-q\frac{1-t^2}{q-t^2}$ if $v_p(\tr g)\geq 1$,
\item $\phi_{p,n,t}(g)=1-t^{2\ell(g)}$ if $v_p(\tr g)=0$ and $\ell(g)<n-k(g)$,
\item $\phi_{p,n,t}(g)=1-\frac{(q-1)t^{2(n-k(g))}}{q-t^2}$ if $v_p(\tr g)=0$ and $\ell(g)\geq n-k(g)$.
\end{itemize}

Then specializing to $t=q$, we have $\phi_{p,n,q}=\phi_{p,n}$. We claim that for all $t$, $\phi_{p,n,t}$ lies in the center of $\mathcal{H}(\mathrm{GL}_2(\mathbb{Q}_q),\Gamma(p^n)_{\mathbb{Q}_q})$. Since as a function of $t$, $\phi_{p,n,t}$ is a rational function and hence any identity of the form $\phi_{p,n,t}\ast f = f\ast \phi_{p,n,t}$ reduces to a polynomial identity in $t$, it suffices to check this for $t$ infinitesimally small. Hence consider $\phi_{p,n,t}(g)$ as a function on $\mathrm{GL}_2(\mathbb{Q}_q)$ with values in $\mathbb{C}[[t]]$. First we check that these functions are compatible for varying $n$.

\begin{prop} We have
\[
\phi_{p,n,t} = \phi_{p,n+1,t}\ast e_{\Gamma(p^n)_{\mathbb{Q}_q}}\ .
\]
\end{prop}

\begin{proof} We compare function values at $g\in \mathrm{GL}_2(\mathbb{Q}_q)$. We may assume that $v_p(\det g)=1$, as otherwise both functions vanish. Also if $k(g)\geq n+1$, then so is $k(gu)\geq n+1$ for all $u\in \Gamma(p^n)_{\mathbb{Q}_q}$, so that both sides give $0$. Hence we may assume $k(g)\leq n$. Note that in all cases, $k(gu)=k(g)$ for all $u\in \Gamma(p^n)_{\mathbb{Q}_q}$. Also recall that if $\ell(g)$ is defined, then $\ell(g)=v_p(1-\tr g+\det g)$.

Consider the case $k(g)=n$. We need to check that the right hand side gives $0$. Note that in this case, the value $\phi_{p,n+1,t}(gu)$ depends only on $\tr (gu)\modd p$. It is easy to see that each value of $\tr (gu)\modd p$ is taken the same number of times. For $\tr (gu)=0\modd p$, we have
\[
\phi_{p,n+1,t}(gu)=-q\frac{1-t^2}{q-t^2}\ ,
\]
for $\tr (gu)=1\modd p$, we have
\[
\phi_{p,n+1,t}(gu)=1-\frac{(q-1)t^2}{q-t^2}=q\frac{1-t^2}{q-t^2}\ ,
\]
and for all other values of $\tr (gu)\modd p$, we have $\phi_{p,n+1,t}(gu)=0$. This gives the result.

Now we can assume that $k(g)\leq n-1$. Assume first that $\tr g\equiv 0\modd p$. Then $\tr (gu) \equiv 0 \modd p$ and hence $\phi_{p,n+1,t}(gu) = \phi_{p,n,t}(g)$ for all $u\in \Gamma(p^n)_{\mathbb{Q}_q}$, giving the claim in this case.

We are left with $\tr g\not\equiv 0\modd p$. If $k(g)+\ell(g)<n$, then $k(gu)=k(g)$ and $\ell(gu)=\ell(g)$ for all $u\in \Gamma(p^n)_{\mathbb{Q}_q}$ and in particular again $k(gu)+\ell(gu)<n$, so that by definition $\phi_{p,n+1,t}(gu)=\phi_{p,n,t}(g)$.

So finally we are in the case $\tr g\not\equiv 0\modd p$, $k(g)+\ell(g)\geq n$, but $k(g)\leq n-1$. Then
\[
\phi_{p,n,t}(g)=1-\frac{(q-1)t^{2(n-k(g))}}{q-t^2}\ .
\]
We know that $\tr (gu)\equiv 1\modd p^{n-k(g)}$, but all values of $\tr(gu)\modd p^{n-k(g)+1}$ with this restriction are taken equally often. If
\[
\tr(gu)\equiv 1+\det g\modd p^{n+1-k(g)}\ ,
\]
then $\ell(gu)\geq n+1-k(g)$ (since $\det (gu)\equiv \det g\modd p^{n+1}$), so that
\[
\phi_{p,n+1,t}(gu) = 1-\frac{(q-1)t^{2(n+1-k(g))}}{q-t^2}\ .
\]
In all other cases, we have $\ell(gu)=n-k(g)$, so that
\[
\phi_{p,n+1,t}(gu) = 1-t^{2(n-k(g))}\ .
\]
Hence we get
\[\begin{aligned}
(\phi_{p,n+1,t}\ast e_{\Gamma(p^n)_{\mathbb{Q}_q}})(g) &= \frac 1q \left(1-\frac{(q-1)t^{2(n+1-k(g))}}{q-t^2}\right) \\
&+ \frac{q-1}q \left(1-t^{2(n-k(g))}\right) \\
&= 1-\frac{q-1}q t^{2(n-k(g))}\left(\frac{t^2}{q-t^2}+1\right) \\
&= 1-\frac{(q-1)t^{2(n-k(g))}}{q-t^2} = \phi_{p,n,t}(g)\ ,
\end{aligned}\]
as claimed.
\end{proof}

Hence we may consider the system $\phi_{p,t}=(\phi_{p,n,t})_n$ as a distribution with values in $\mathbb{C}[[t]]$ on the compactly supported, locally constant functions on $\mathrm{GL}_2(\mathbb{Q}_q)$ with the property that $\phi_{p,t}\ast e_K$ is compactly supported for all compact open subgroups $K$. To check that $\phi_{p,n,t}$ is central for all $n$, it remains to see that $\phi_{p,t}$ is conjugation-invariant. But note that $\phi_{p,t}\modd t^m$ is represented by a locally constant function for all $m$ -- the important point here is that $\phi_{p,t}\modd t^m$ becomes constant when one eigenvalue of $g$ approaches $1$. Here we need our deformation parameter $t$. It is also clear that $\phi_{p,t}\modd t^m$ is conjugation-invariant for all $m$, which finishes the proof.
\end{proof}

\bibliographystyle{abbrv}
\bibliography{ModularCurve}

\end{document}